\pgfplotsset{compat=1.18}
\newtheorem{theorem}{Theorem}[section]
\newtheorem{lemma}{Lemma}[section]
\newtheorem{prop}{Proposition}[section]
\newtheorem{cor}{Corollary}[section]
\newtheorem{remark}{Remark}[section]
\newtheorem{defn}{Definition}[section]
\numberwithin{equation}{section}
\def\to{\rightarrow}
\newcommand{\nchi}{{\raise.3ex\hbox{$\chi$}}}
\def\XXint#1#2#3{{\setbox0=\hbox{$#1{#2#3}{\int}$ }
		\vcen{\hbox{$#2#3$ }}\kern-.6\wd0}}
\newcommand{\justified}{%
	\rightskip\z@skip%
	\leftskip\z@skip}
\newcommand\restr[2]{{
		\left.\kern-\nulldelimiterspace 
		#1 
		\vphantom{\big|} 
		\right|_{#2} 
}}
\newcommand{\var}{\varepsilon}
\DeclareFontFamily{U}{mathx}{\hyphenchar\font45}
\DeclareFontShape{U}{mathx}{m}{n}{<-> mathx10}{}
\DeclareSymbolFont{mathx}{U}{mathx}{m}{n}
\DeclareMathAccent{\widebar}{0}{mathx}{"73}
\renewcommand{\i}{\ifmmode\mathit{\mathchar"7010 }\else\char"10 \fi}
\renewcommand{\j}{\ifmmode\mathit{\mathchar"7011 }\else\char"11 \fi}
\def\char{{1\!\mbox{\rm l}}}
\definecolor{orange}{rgb}{1,.549,0}
\definecolor{GreenYellow }{rgb}{ 0.15,   0.69, 0}
\definecolor{Yellowone}{rgb}{ 0, 1., 0} \definecolor{Goldenrod }{rgb}{  0, 0.10, 0.84}
\definecolor{Dandelion }{rgb}{ 0, 0.29, 0.84} 
\definecolor{Apricot }{rgb}{ 0, 0.32, 0.52}
\definecolor{Peach }{rgb}{ 0, 0.50, 0.70} 
\definecolor{GreenYellow}{cmyk}{0.15,0,0.69,0}
\definecolor{RoyalPurple}{cmyk}{0.75,0.90,0,0}
\definecolor{Yellow}{cmyk}{0,0,1,0}
\definecolor{BlueViolet}{cmyk}{0.86,0.91,0,0.04}
\definecolor{Goldenrod}{cmyk}{0,0.10,0.84,0}
\definecolor{Periwinkle}{cmyk}{0.57,0.55,0,0}
\definecolor{Dandelion}{cmyk}{0,0.29,0.84,0}
\definecolor{CadetBlue}{cmyk}{0.62,0.57,0.23,0}
\definecolor{Apricot}{cmyk}{0,0.32,0.52,0}
\definecolor{CornflowerBlue}{cmyk}{0.65,0.13,0,0}
\definecolor{Peach}{cmyk}{0,0.50,0.70,0}
\definecolor{MidnightBlue}{cmyk}{0.98,0.13,0,0.43}
\definecolor{Melon}{cmyk}{0,0.46,0.5,0}
\definecolor{NavyBlue}{cmyk}{0.94,0.54,0,0}
\definecolor{YellowOrange}{cmyk}{0,0.42,1,0}
\definecolor{RoyalBlue}{cmyk}{1,0.50,0,0}
\definecolor{Orange}{cmyk}{0,0.61,0.87,0}
\definecolor{Blue}{cmyk}{1,1,0,0}
\definecolor{BurntOrange}{cmyk}{0,0.51,1,0}
\definecolor{Cerulean}{cmyk}{0.94,0.11,0,0}
\definecolor{Bittersweet}{cmyk}{0,0.75,1,0.24}
\definecolor{Cyan}{cmyk}{1,0,0,0}
\definecolor{RedOrange}{cmyk}{0,0.77,0.87,0}
\definecolor{ProcessBlue}{cmyk}{0.96,0,0,0}
\definecolor{Mahogany}{cmyk}{0,0.85,0.87,0.35}
\definecolor{SkyBlue}{cmyk}{0.62,0,0.12,0}
\definecolor{Maroon}{cmyk}{0,0.87,0.68,0.32}
\definecolor{Turquoise}{cmyk}{0.85,0,0.20,0}
\definecolor{BrickRed}{cmyk}{0,0.89,0.94,0.28}
\definecolor{TealBlue}{cmyk}{0.86,0,0.34,0.02}
\definecolor{Red}{cmyk}{0,1,1,0}
\definecolor{Aquamarine}{cmyk}{0.82,0,0.30,0}
\definecolor{OrangeRed}{cmyk}{0,1,0.50,0}
\definecolor{BlueGreen}{cmyk}{0.85,0,0.33,0}
\definecolor{RubineRed}{cmyk}{0,1,0.13,0}
\definecolor{Emerald}{cmyk}{1,0,0.50,0}
\definecolor{WildStrawberry}{cmyk}{0,0.96,0.39,0}
\definecolor{JungleGreen}{cmyk}{0.99,0,0.52,0}
\definecolor{Salmon}{cmyk}{0,0.53,0.38,0}
\definecolor{SeaGreen}{cmyk}{0.69,0,0.50,0}
\definecolor{CarnationPink}{cmyk}{0,0.63,0,0}
\definecolor{Green}{cmyk}{1,0,1,0}
\definecolor{Magenta}{cmyk}{0,1,0,0}
\definecolor{ForestGreen}{cmyk}{0.91,0,0.88,0.12}
\definecolor{VioletRed}{cmyk}{0,0.81,0,0}
\definecolor{PineGreen}{cmyk}{0.92,0,0.59,0.25}
\definecolor{Rhodamine}{cmyk}{0,0.82,0,0}
\definecolor{LimeGreen}{cmyk}{0.50,0,1,0}
\definecolor{Mulberry}{cmyk}{0.34,0.90,0,0.02}
\definecolor{YellowGreen}{cmyk}{0.44,0,0.74,0}
\definecolor{RedViolet}{cmyk}{0.07,0.90,0,0.34}
\definecolor{SpringGreen}{cmyk}{0.26,0,0.76,0}
\definecolor{Fuchsia}{cmyk}{0.47,0.91,0,0.08}
\definecolor{OliveGreen}{cmyk}{0.64,0,0.95,0.40}
\definecolor{Lavender}{cmyk}{0,0.48,0,0}
\definecolor{RawSienna}{cmyk}{0,0.72,1,0.45}
\definecolor{Thistle}{cmyk}{0.12,0.59,0,0}
\definecolor{Sepia}{cmyk}{0,0.83,1,0.70}
\definecolor{Orchid}{cmyk}{0.32,0.64,0,0}
\definecolor{Brown}{cmyk}{0,0.81,1,0.60}
\definecolor{DarkOrchid}{cmyk}{0.40,0.80,0.20,0}
\definecolor{Tan}{cmyk}{0.14,0.42,0.56,0}
\definecolor{Purple}{cmyk}{0.45,0.86,0,0}
\definecolor{Gray}{cmyk}{0,0,0,0.50}
\definecolor{Plum}{cmyk}{0.50,1,0,0}
\definecolor{Black}{cmyk}{0,0,0,1}
\definecolor{Violet}{cmyk}{0.79,0.88,0,0}
\definecolor{White}{cmyk}{0,0,0,0}
\definecolor{rltred}{rgb}{0.75,0,0}
\definecolor{rltgreen}{rgb}{0,0.5,0}
\definecolor{oneblue}{rgb}{0,0,0.75}
\definecolor{marron}{rgb}{0.64,0.16,0.16}
\definecolor{forestgreen}{rgb}{0.13,0.54,0.13}
\definecolor{purple}{rgb}{0.62,0.12,0.94}
\definecolor{dockerblue}{rgb}{0.11,0.56,0.98}
\definecolor{freeblue}{rgb}{0.25,0.41,0.88}
\definecolor{myblue}{rgb}{0,0.2,0.4}
\definecolor{Melon}{rgb}{ 0.46, 0.50, 0}
\definecolor{Melone}{rgb}{ 0, 0.46, 0.50}
\subjclass[2010]{35Q20; 76N10}
\keywords{Boltzmann equation; critical regularity; global existence; large-time behavior; hypocoercivity}
\begin{document}
\title[The Boltzmann equation in the homogeneous critical regularity framework]{The Boltzmann equation in the homogeneous critical regularity framework}

\author{Jing Liu, Ling-Yun Shou, and Jiang Xu}

\date{}

\linespread{1.2}

\begin{abstract}
We construct a unique global solution with critical regularity to the Cauchy problem for the 3D Boltzmann equation with initial data near the Maxwellian in the spatially critical Besov space $\widetilde{L}^2_{\xi}(\dot{B}_{2,1}^{1/2})\cap\widetilde{L}^2_{\xi}(\dot{B}_{2,1}^{3/2})$. Furthermore, under the condition that the low-frequency part of the initial perturbation is bounded in $\widetilde{L}^2_{\xi}(\dot{B}_{2,\infty}^{\sigma_{0}})$ with $-3/2\leq\sigma_{0}<1/2$, it is shown that the solution converges to equilibrium in large times with the optimal rate of $\mathcal{O}(t^{-(\sigma-\sigma_{0})/2})$ in $\widetilde{L}^2_{\xi}(\dot{B}_{2,1}^{\sigma})$ for $\sigma>\sigma_0$, and the microscopic part decays at an enhanced rate of $\mathcal{O}(t^{-(\sigma-\sigma_{0})/2-1/2})$. This is the first work to address the global existence and large-time behavior of solutions to the Boltzmann equation in the homogeneous critical setting. We develop the hypocoercivity theory, which indicates Lyapunov functionals with different dissipation rates in low and high frequencies. Moreover, the so-called time-weighted Lyapunov energy argument is employed to obtain the optimal time-decay estimates.
\end{abstract}

\maketitle

\vspace{3mm}





\section{Introduction}\label{1dlg-S1}

It is well-known that the Boltzmann equation was written down by Ludwig Boltzmann in 1872, which describes the time evolution of a dilute gas of identical particles (see \cite{Cercignani-1994,Glassey-1996,Villanireview}). Let $F=F(t,x,\xi)\geq0$ stand for the density function of particles on the position $x\in\mathbb{R}^{3}$ with the velocity $\xi\in\mathbb{R}^{3}$ at the time $t$. We are concerned with the Cauchy problem, which is given by
\begin{equation}\label{1dlg-E1.1}
\partial_{t}F+\xi\cdot\nabla_{x}F=\mathcal{Q}(F,F),
\end{equation}
supplemented with initial data
\begin{equation}\label{1dlg-E1.2}
F(0,x,\xi)=F_{0}(x,\xi).
\end{equation}
Here, the bilinear collision operator $\mathcal{Q}$ is of the form
\begin{equation}\label{1dlg-E1.3}
\mathcal{Q}(F,G)=\int_{\mathbb{R}^{3}}\int_{\mathbb{S}^{2}}B(\xi-\xi_{\ast},\omega)\left(F_{\ast}^{\prime}G^{\prime}-F_{\ast}G\right)\,d\omega d\xi_{\ast}
\end{equation}
with 
$$F_{\ast}^{\prime}=F(t,x,\xi_{\ast}^{\prime}),\quad G^{\prime}=G(t,x,\xi^{\prime}),\quad F_{\ast}=F(t,x,\xi_{\ast}),\quad G=G(t,x,\xi).$$
The collision kernel $B(\xi-\xi_{\ast},\omega)$ stands for the physical interaction between particles. Throughout this paper, we suppose that for technical reasons
$$
B(\xi-\xi_{\ast},\omega)=|\xi-\xi_{\ast}|^{\gamma}B_0(\cos\theta),\quad 0\leq\gamma\leq1,\quad 0\leq B_0(\cos\theta)\leq C|\cos\theta|,
$$
which corresponds to the case of hard potentials with angular cutoff. Pre-collisional and post-collisional velocities are related by the following involutive transformation
$$
\xi_{\ast}^{\prime}=\xi_{\ast}+\big((\xi-\xi_{\ast})\cdot\omega\big)\,\omega,~~~\xi^{\prime}=\xi-\big((\xi-\xi_{\ast})\cdot\omega\big)\,\omega
$$
for $\xi,\xi_{\ast}\in\mathbb{R}^{3}$ and fixed $\omega\in\mathbb{S}^{2}$, according to the momentum and energy laws of two particles for elastic collisions:
$$\xi_{\ast}+\xi=\xi_{\ast}^{\prime}+\xi^{\prime},~~~|\xi_{\ast}|^{2}+|\xi|^{2}=|\xi_{\ast}^{\prime}|^{2}+|\xi^{\prime}|^{2}.$$

Mathematically, the first result is due to T. S. Carleman \cite{Carleman-1933}, who established the global existence of solutions to the homogeneous Boltzmann equation. For the inhomogeneous case, Grad \cite{Grad-1965} constructed local-in-time mild solutions to the mixed initial boundary value problem with periodic conditions. Ukai \cite{Ukai-1974} established the corresponding global existence in the small perturbation framework of the Maxwellian (Gaussian) distribution. The most general known solutions of the Boltzmann equation are the renormalized solutions introduced by Diperna and Lions \cite{Diperna-1989}, which exist globally in time for arbitrary initial data with finite mass, second moments and entropy. However, several questions about renormalized solutions still remain open. For instance, they are neither known to be unique nor known to conserve energy. Following from the line of investigation in \cite{Ukai-1974}, so far there have been fruitful efforts where initial data are assumed to be sufficiently close to the Maxwellian, see, \textit{e.g.} 
Guo \cite{Guo-2003, Guo-2004, Guo101,Guo102}, Liu and Yu \cite{Liu-Yu}, Liu, Yang and Yu \cite{Liu-20041}, Yang and Zhao \cite{Yang-Zhao},   Alexander, Morimoto, Ukai, Xu
and Yang \cite{Alexandre-2011, Alexandre-2012}, Gressman and Strain \cite{Gressman1}. These solutions exist globally in time and enjoy uniqueness and continuous dependence in appropriate functional spaces. In fact, there is a fundamental problem in the mathematical analysis of the Boltzmann equation. \textit{Is that possible to find a functional space with minimal regularity in which the global well-posedness of solutions holds?} Several years ago, Alexander, Morimoto, Ukai, Xu and Yang \cite{Alexandre-2010,Alexander-2013} discussed the local well-posedness in $L^{\infty}(0,T;L^2(\mathbb{R}^3_{\xi};H^{s}(\mathbb{R}_{x}^3)))$ with $s>3/2$ that ensures the $L^\infty$ bound (in the spatial variable $x$) by Sobolev's embedding. Inspired by this consideration, Duan, Liu and the third author \cite{Duan-2016} introduced the Chemin-Lerner type space $\widetilde{L}^{\infty}(0,T;\widetilde{L}^2_{\xi}(B^{3/2}_{2,1}))$ in three dimensions and established the global well-posedness of solutions to the Cauchy problem \eqref{1dlg-E1.1}-\eqref{1dlg-E1.2} for angular cutoff hard potentials. The motivation for the adaptation of Besov spaces with spatially critical regularity originates from the study of the fluid dynamical equations, for example, the incompressible Navier-Stokes equations \cite{Chemin-1995}, the compressible Navier-Stokes equations \cite{Danchin-2000} and so on. Later, Morimoto and Sakamoto \cite{Morimoto-2016} 
developed that idea in \cite{Duan-2016} and investigated the global existence for the Boltzmann equation without angular cutoff. Recently, Duan and Sakamoto \cite{Duan-2018} obtained the global existence and time-decay rates of solutions for the Boltzmann equation in velocity-weighted Chemin-Lerner type spaces. Actually, the theory of well-posedness for Navier-Stokes equations in critical spaces is now very mature, and it is our hope that some analysis tools in the study of Navier-Stokes equations will turn out to be applicable to the corresponding problems for the Boltzmann equation. If the theory can be made precise enough, it may turn out to be robust for addressing problems such as the well-posedness or the time asymptotics of solutions, since the Boltzmann equation shares the similar dissipation structure at the kinetic level with fluid equations. 

In the present paper, we will prove a new global existence result for the Cauchy problem of \eqref{1dlg-E1.1}-\eqref{1dlg-E1.2}, inspired by the following chain of embeddings that holds for $s>3/2$:
\begin{equation*}
\begin{aligned}
&H^{s}(\mathbb{R}^3)\hookrightarrow B_{2,1}^{3/2}(\mathbb{R}^3)\hookrightarrow \dot{B}^{1/2}_{2,1}(\mathbb{R}^3)\cap\dot{B}^{3/2}_{2,1}(\mathbb{R}^3).
\end{aligned}
\end{equation*}
Note that the latter space is topologically weaker than the inhomogeneous space $B_{2,1}^{3/2}(\mathbb{R}^3)$. Besides providing a new approach to proving the global well-posedness for \eqref{1dlg-E1.1}-\eqref{1dlg-E1.2}, we will also deduce the optimal algebraic time-decay rates of solutions converging to the Maxwellian equilibrium. To the best of our knowledge, there have been extensive investigations on the rate of convergence to equilibrium for the Boltzmann equation. The spectral analysis and fluid dynamical limits of the linearized Boltzmann were first investigated by Ellis and Pinsky in \cite{Ellis-1975}. In the context of perturbations, the first decay result was given by Ukai \cite{Ukai-1974}, where the spectral analysis was employed to obtain the exponential rates for \eqref{1dlg-E1.1} with hard potentials on the torus. Desvillettes and Villani \cite{DV1} developed the hypocoercivity method and obtained the first almost exponential rate of convergence for large amplitude solutions to \eqref{1dlg-E1.1} on the torus with cut-off soft potentials. By the direct interpolation of smooth energy estimates, Guo and Strain \cite{Strain-2006} presented a simpler alternative proof of decay results in \cite{DV1} for soft potentials as well as the Coulombic interaction. The study of optimal convergence rates in the whole space has shown to be harder than the case of spatially periodic domains due to the additional dispersion effects of the transport term in \eqref{1dlg-E1.1}. Under the $L^1$-assumption, Kawashima, Matsumura and Nishida \cite{KMN79} proved that the solutions to the Boltzmann equation and the incompressible Navier-Stokes equations for small initial perturbations were asymptotically equivalent to that of the compressible Navier-Stokes equations at the rate of $\mathcal{O}(t^{-5/4})$, as $t\rightarrow\infty$. See also earlier works by Ukai and Asano \cite{Ukai-1986, Ukai-1982}, for the spectral analysis of the Boltzmann equation with or without angular cut-off soft potential. In particular, we mention Kawashima's method of thirteen moments \cite{Kawashima1}, where the author  employed the compensating function to get optimal linear decay analysis rather than the spectral analysis. Subsequently, that method was used to investigate the long-time behavior of the Cauchy problem \eqref{1dlg-E1.1}-\eqref{1dlg-E1.2}  in \cite{Glassey-1996}. Inspired by the hypocoercivity approach associated with Kawashima's argument for hyperbolic-parabolic systems \cite{SK}, Duan \cite{Duan-2011} constructed Lyapunov functionals to exploit the dissipation mechanism in the degenerate parts of the solution. By employing the spectral analysis as in \cite{Ellis-1975,UkaiYangBook}, Zhong \cite{Zhong-2014} considered the optimal time-convergence rates of the global solution to \eqref{1dlg-E1.1}-\eqref{1dlg-E1.2}, where the solution tends to the  Maxwellian at the optimal rate of $\mathcal{O}(t^{-3/4})$ and the microscopic part decays at the optimal rate of $\mathcal{O}(t^{-5/4})$. 
Strain \cite{Strain-2012} established the same rates for the soft potential Boltzmann equation in the whole space, with or without the angular cut-off assumption. However, it is often the case that propagating bounds on the  $L^1$-norm are difficult along the time evolution. Guo and Wang \cite{Guo-2012} developed an energy method for proving the optimal time-convergence rates of solutions to a class of dissipative equations (including the Boltzmann equation) in the whole space. The negative Sobolev spaces $H^{-s}(\mathbb{R}^3)$ for some $s\in[0,3/2)$ are shown to be preserved along the time evolution and enhance the rates. Owing to the fact that the space $L^1(\mathbb{R}^3)$ cannot be embedded in $H^{-s}(\mathbb{R}^3)$, Sohinger and Strain \cite{Vedran-2014} introduced the homogeneous Besov
spaces $\dot{B}^{-\sigma}_{2,\infty}(\mathbb{R}^3)$ for $\sigma=3(1/p-1/2)(1\leq p<2)$. It should be emphasized that the Besov space with negative order can be viewed as a physical choice due to the embeddings:
$$
L^1(\mathbb{R}^3)\hookrightarrow \dot{B}^{0}_{1,\infty}(\mathbb{R}^3)\hookrightarrow\dot{B}^{-3/2}_{2,\infty}(\mathbb{R}^3)\quad\text{and}\quad \dot{H}^{-3/2}(\mathbb{R}^3)\hookrightarrow \dot{B}^{-3/2}_{2,\infty}(\mathbb{R}^3).
$$ 
Although the latter space $\dot{B}^{-3/2}_{2,\infty}(\mathbb{R}^3)$ does not embed into $\dot{B}^{0}_{2,2}(\mathbb{R}^3)\sim L^2(\mathbb{R}^{3})$, any function belonging to this space and concentrated in low frequencies does also belong to $L^2(\mathbb{R}^3)$. This actually indicates that the $L^1$-regularity imposed is stronger than the $L^2$-regularity in low frequencies. Based on the simple observation, Danchin and the third author \cite{Danchin-2017, Xu-2019} claimed a more general low-frequency assumption $\dot{B}^{\sigma_1}_{2,\infty}(\mathbb{R}^3)$ with $-3/2\leq\sigma_1<1/2$ for viscous compressible fluids in the $L^p$ critical Besov framework, in order to get the time-decay estimates of $L^1$-$L^2$ type. Furthermore, inspired by the idea of energy methods in \cite{Guo-2012,Strain-2006}, Xin and the third author \cite{Xin-2021} removed the smallness condition on the low-frequency part of initial perturbations. As an interesting extension, we develop the energy argument (see \cite{Xin-2021}) to the kinetic level and establish the optimal time-convergence rates of the global solution to the Boltzmann equation \eqref{1dlg-E1.1}-\eqref{1dlg-E1.2}, provided that the additional low-frequency part of the initial perturbation is bounded in $\widetilde{L}^2_{\xi}(\dot{B}_{2,\infty}^{\sigma_{0}})$ with $-3/2\leq\sigma_{0}<1/2$. 

\section{Main results}
In the paper, we study the solution to the Cauchy problem \eqref{1dlg-E1.1}-\eqref{1dlg-E1.2} of the Boltzmann equation around the global Maxwellian 
$$\mu=\mu(\xi)=(2\pi)^{-3/2}e^{-|\xi|^{2}/2},
$$
which has been normalized to have zero bulk velocity and unit density and temperature. For that purpose, we define the perturbation $f(t,x,\xi)$ by
$$
F(t,x,\xi)=\mu(\xi)+\mu(\xi)^{1/2}f.
$$ Then, the Cauchy problem (\ref{1dlg-E1.1})-(\ref{1dlg-E1.2}) can be reformulated as
\begin{equation}\label{1dlg-E1.4}
 \left\{
\begin{aligned}
&\partial_{t}f+\xi\cdot\nabla_{x}f+Lf=\Gamma(f,f),\\
&f(0,x,\xi)=f_{0}(x,\xi)
\end{aligned}
 \right.
\end{equation}
with $f_{0}(x,\xi)\triangleq \mu^{-1/2}(F_{0}(x,\xi)-\mu)$. Here, the linearized term $Lf$ and nonlinear collision term $\Gamma(f,f)$, respectively, are defined by
\begin{equation}\label{1dlg-E1.5}
Lf\triangleq -\mu^{-1/2}\Big(\mathcal{Q}(\mu,\mu^{1/2}f)+\mathcal{Q}(\mu^{1/2}f,\mu)\Big)
\end{equation}
and
\begin{equation}\label{1dlg-E1.51}
\Gamma(f,f)\triangleq \mu^{-1/2}\mathcal{Q}(\mu^{1/2}f,\mu^{1/2}f).
\end{equation}
Note that the linearized collision operator $L$ is nonnegative and its null space is defined by
$$\mathcal{N}\triangleq\rm{span}\Big\{\sqrt{\mu},~\xi_{1}\sqrt{\mu},~\xi_{2}\sqrt{\mu},~\xi_{3}\sqrt{\mu},~|\xi|^{2}\sqrt{\mu}\Big\}.$$
For later use, we denote by $\mathbf{P}$ the orthogonal projection from the $L^{2}_{\xi}$ to the null space $\mathcal{N}$ and write
\begin{equation}\label{1dlg-E1.6}
\mathbf{P}f(t,x,\xi)\triangleq \Big\{a(t,x)+b(t,x)\cdot\xi+c(t,x)(|\xi|^{2}-3)\Big\}\mu(\xi)^{1/2},
\end{equation}
where the moments $a$, $b$, $c$ are given by
\begin{equation}\label{1dlg-E1.7}
 \left\{
\begin{aligned}
&a(t,x)\triangleq\int_{\mathbb{R}^3_{\xi}}\sqrt{\mu} f(t,x,\xi)\,d\xi,\\
&b(t,x)\triangleq \int_{\mathbb{R}^3_{\xi}}\xi  \sqrt{\mu} f(t,x,\xi)\,d\xi,\\
&c(t,x)\triangleq \frac{1}{6}\int_{\mathbb{R}^3_{\xi}}(|\xi|^{2}-3) \sqrt{\mu}  f(t,x,\xi)\,d\xi.
\end{aligned}
\right.
\end{equation}
Consequently, we are led to the following macro-micro decomposition
$$f(t,x,\xi)=\mathbf{P}f(t,x,\xi)+\{\mathbf{I-P}\}f(t,x,\xi),$$
where $\mathbf{P}f$ and $\{\mathbf{I-P}\}f$ denote the macroscopic and microscopic parts of $f$, respectively. 

In this position, we state the global existence of solutions to the Cauchy problem $\rm(\ref{1dlg-E1.4})$ in the spatially homogeneous Besov space. We denote the energy functional $\mathcal{E}_{t}(f)$ and the energy dissipation functional $\mathcal{D}_{t}(f)$, respectively, as 
\begin{equation}\label{1dlg-E1.16}
\mathcal{E}_{t}(f)=\|f\|^{\ell}_{\widetilde{L}^{\infty}_{t}\widetilde{L}^{2}_{\xi}(\dot{B}^{1/2}_{2,1})}+\|f\|^{h}_{\widetilde{L}^{\infty}_{t}\widetilde{L}^{2}_{\xi}(\dot{B}^{3/2}_{2,1})},
\end{equation}
and
\begin{equation}\label{1dlg-E1.17}
\begin{aligned}
\mathcal{D}_{t}(f)&=\|\mathbf{P}f\|^{\ell}_{\widetilde{L}^{2}_{t}\widetilde{L}^{2}_{\xi}(\dot{B}^{3/2}_{2,1})}+\|\{\mathbf{I-P}\}f\|^{\ell}_{\widetilde{L}^{2}_{t}\widetilde{L}^{2}_{\xi,\nu}(\dot{B}^{1/2}_{2,1})}+\|f\|^{h}_{\widetilde{L}^{2}_{t}\widetilde{L}^{2}_{\xi,\nu}(\dot{B}^{3/2}_{2,1})}.\\
\end{aligned}
\end{equation}
The reader is referred to Section \ref{1dlg-S2}
for those definitions
of Besov norms restricted in the low-frequency regime or the high-frequency regime. 

\begin{theorem}\label{1dlg-T1.2}
There exists a positive constant $\varepsilon_0>0$ such that if the initial data $f_0\in\widetilde{L}^{2}_{\xi}(\dot{B}^{1/2}_{2,1})\cap\widetilde{L}^{2}_{\xi}(\dot{B}^{3/2}_{2,1})$ and
\begin{equation}\label{1dlg-E1.18}
\|f_{0}\|^{\ell}_{\widetilde{L}^{2}_{\xi}(\dot{B}^{1/2}_{2,1})}+\|f_{0}\|^{h}_{\widetilde{L}^{2}_{\xi}(\dot{B}^{3/2}_{2,1})}\leq\varepsilon_0,
\end{equation}
then there exists a unique global strong solution $f(t,x,\xi)$ to the Cauchy problem $\rm(\ref{1dlg-E1.4})$, satisfying
\begin{equation}\label{1dlg-E1.19}
\mathcal{E}_{t}(f)+\mathcal{D}_{t}(f)\leq C\Big(\|f_{0}\|^{\ell}_{\widetilde{L}^{2}_{\xi}(\dot{B}^{1/2}_{2,1})}+\|f_{0}\|^{h}_{\widetilde{L}^{2}_{\xi}(\dot{B}^{3/2}_{2,1})}\Big)
\end{equation}
for any $t\in\mathbb{R}_{+}$, where $C>0$ is a generic constant independent of time. Moreover, if $F_{0}(x,\xi)=\mu+\mu^{1/2}f_{0}(x,\xi)\geq0$, then $F(t,x,\xi)=\mu+\mu^{1/2}f(t,x,\xi)\geq0$.
\end{theorem}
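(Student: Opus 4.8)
The plan is to run the classical scheme---uniform \emph{a priori} estimates plus an iteration argument---where the genuine novelty lies in the linear step, which must be carried out block by block in the Littlewood--Paley decomposition with two distinct dissipation regimes. First I would localize \eqref{1dlg-E1.4} in frequency: writing $f_j=\dot\Delta_j f$, the basic $L^2_\xi$ energy identity reads
\begin{equation*}
\tfrac12\tfrac{d}{dt}\|f_j\|_{L^2_\xi}^2+(Lf_j,f_j)_{L^2_\xi}=(\dot\Delta_j\Gamma(f,f),f_j)_{L^2_\xi},
\end{equation*}
and by the coercivity of $L$ on $\mathcal{N}^\perp$ the left-hand side controls only $\|\{\mathbf{I-P}\}f_j\|_{L^2_{\xi,\nu}}^2$. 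To recover dissipation of the macroscopic part $\mathbf{P}f_j$ I would build, following Duan's hypocoercivity argument and Kawashima's compensating function, an interaction functional $\mathcal{I}_j$ bilinear in the moments $a_j,b_j,c_j$ and their first spatial derivatives with
\begin{equation*}
\tfrac{d}{dt}\mathcal{I}_j+c\,2^{2j}\|\mathbf{P}f_j\|_{L^2_\xi}^2\lesssim\|\{\mathbf{I-P}\}f_j\|_{L^2_{\xi,\nu}}^2+|(\dot\Delta_j\Gamma(f,f),f_j)_{L^2_\xi}|.
\end{equation*}
Combining $\|f_j\|_{L^2_\xi}^2$ with $\eta_j\mathcal{I}_j$ for a small parameter $\eta_j$---this is where the low/high split enters, since at low frequency one keeps the heat-type rate $2^{2j}$ on $\mathbf{P}f_j$ while at high frequency the macroscopic part is only damped at rate $O(1)$ and one must take $\eta_j\sim1$ with a \emph{different} interaction functional---yields a Lyapunov functional $\mathcal{L}_j\sim\|f_j\|_{L^2_\xi}^2$ obeying
\begin{equation*}
\tfrac{d}{dt}\mathcal{L}_j+c\min(2^{2j},1)\|\mathbf{P}f_j\|_{L^2_\xi}^2+c\|\{\mathbf{I-P}\}f_j\|_{L^2_{\xi,\nu}}^2\lesssim|(\dot\Delta_j\Gamma(f,f),f_j)_{L^2_\xi}|.
\end{equation*}

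Taking square roots, multiplying by $2^{j/2}$ on low frequencies and $2^{3j/2}$ on high frequencies, taking $\widetilde{L}^\infty_t$ and $\widetilde{L}^2_t$ norms respectively and summing in $j$ reproduces exactly $\mathcal{E}_t(f)$ and $\mathcal{D}_t(f)$ on the left, with the heat gain accounting for the $\dot B^{3/2}_{2,1}$ level of $\|\mathbf{P}f\|^\ell$ in the dissipation and the $O(1)$ damping for the $\dot B^{1/2}_{2,1}$ and $\dot B^{3/2}_{2,1}$ levels of the other two pieces. For the right-hand side, using $\Gamma(f,f)\in\mathcal{N}^\perp$ I would pair $\dot\Delta_j\Gamma(f,f)$ against $\{\mathbf{I-P}\}f_j$, absorb half of the microscopic dissipation, and reduce to bounding $\sum_j 2^{\sigma j}\int_0^t\big|(\dot\Delta_j\Gamma(f,f),\{\mathbf{I-P}\}f_j)_{L^2_\xi}\big|\,ds$ for $\sigma\in\{1/2,3/2\}$ in the two regimes. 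Invoking the pointwise collision bound $|(\Gamma(g,h),w)_{L^2_\xi}|\lesssim(|g|_2|h|_{2,\nu}+|g|_{2,\nu}|h|_2)|w|_{2,\nu}$ together with Bony's paraproduct decomposition and the product laws in $\dot B^s_{2,1}$---exploiting that $\dot B^{3/2}_{2,1}(\mathbb{R}^3)$ is a multiplicative algebra embedded in $L^\infty$ and that $\dot B^{1/2}_{2,1}\cdot\dot B^{3/2}_{2,1}\hookrightarrow\dot B^{1/2}_{2,1}$---gives a bound $\lesssim\mathcal{E}_t(f)\mathcal{D}_t(f)$. This closes the nonlinear estimate as $\mathcal{E}_t(f)+\mathcal{D}_t(f)\le C\|f_0\|+C\mathcal{E}_t(f)\mathcal{D}_t(f)$, whence \eqref{1dlg-E1.19} by a standard continuation argument once $\varepsilon_0$ is small.

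The remaining items are routine given these estimates. Existence I would get from the same bounds applied to a Friedrichs-type approximation, or to successive linear iterates $\partial_t f^{n+1}+\xi\cdot\nabla_x f^{n+1}+Lf^{n+1}=\Gamma(f^n,f^n)$: the a priori estimate gives uniform bounds by induction, a contraction estimate for the difference $f^{n+1}-f^n$ in the one-derivative-weaker norm $\widetilde{L}^\infty_t\widetilde{L}^2_\xi(\dot B^{-1/2}_{2,1})^\ell\cap\widetilde{L}^\infty_t\widetilde{L}^2_\xi(\dot B^{1/2}_{2,1})^h$ yields convergence, and the limit is the desired strong solution; uniqueness in the energy class follows from the same difference estimate. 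Nonnegativity of $F=\mu+\mu^{1/2}f$ when $F_0\ge0$ is inherited from the approximation: writing $L=\nu(\xi)-K$, each linear iterate is solved along characteristics and the mild formulation of \eqref{1dlg-E1.1} preserves the cone $\{F\ge0\}$, which passes to the (strong, hence mild) limit.

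I expect the main obstacle to be precisely this linear step: designing the two interaction functionals and the weights $\eta_j$ so that a single Lyapunov functional produces the frequency-dependent macroscopic dissipation $\min(2^{2j},1)$ while remaining compatible with the Chemin--Lerner summation. The high-frequency regime, where no regularization of $\mathbf{P}f$ is available and one relies purely on $O(1)$ damping, is the delicate point and the reason the analysis genuinely departs from the inhomogeneous $B^{3/2}_{2,1}$ theory.
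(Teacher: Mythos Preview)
Your proposal is correct and follows essentially the same route as the paper: frequency-localized Lyapunov functionals built from the basic $L^2_\xi$ energy plus a small multiple of a macroscopic interaction term (the paper uses the explicit thirteen-moments system for $(a,b,c,\Theta,\Lambda)$ rather than an abstract Kawashima functional, but the mechanism is identical), with the high-frequency functional carrying an extra $2^{-2q}$ on the interaction piece to produce $O(1)$ damping, then square-root, weight by $2^{q/2}$ or $2^{3q/2}$, and sum. Two minor discrepancies worth noting: the paper's nonlinear bound comes out as $(\sqrt{\mathcal{E}_t(f)}+\mathcal{E}_t(f))\mathcal{D}_t(f)$ rather than $\mathcal{E}_t(f)\mathcal{D}_t(f)$, because the trilinear lemma is stated with half-powers after the square-root step (this makes no difference for closure); and the paper runs uniqueness directly in the solution space $\widetilde{L}^2_\xi(\dot B^{1/2}_{2,1})\cap\widetilde{L}^2_\xi(\dot B^{3/2}_{2,1})$ via a short-time argument, not in a one-derivative-weaker norm as you propose---either works.
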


\begin{remark} As shown by \cite{Duan-2016, Morimoto-2016}, the Boltzmann equation admits a unique global strong solution near Maxwellian in the inhomogeneous Chemin-Lerner space $\widetilde{L}^{\infty}(\mathbb{R}_{+};\widetilde{L}^{2}_{\xi}(B^{3/2}_{2,1}))$. Theorem \ref{1dlg-T1.2} indicates that the global-in-time existence remains true in the homogeneous Chemin-Lerner space $\widetilde{L}^{\infty}(\mathbb{R}_{+};\widetilde{L}^{2}_{\xi}(\dot{B}^{1/2}_{2,1})\cap\widetilde{L}^{2}_{\xi}(\dot{B}^{3/2}_{2,1}))$. In fact, the $L^2$ energy estimates are not necessary in our approach. We develop the hypocoercivity theory {\rm(}see \cite{DV0,DV1,Villani2}{\rm)} and construct  Lyapunov functionals with different dissipation rates in low and high frequencies. It is observed from $\rm{(\ref{1dlg-E1.19})}$ that the Boltzmann equation has a parabolic smoothing effect on $\mathbf{P}f$ and a damping effect on $\{\mathbf{I-P}\}f$ in low frequencies, whereas the overall solution $f$ behaves as damping in high frequencies.
\end{remark}

\begin{remark}
An interesting question is how to justify 
the hydrodynamic limit of the Boltzmann equation in the homogeneous critical setting, which will be investigated in the coming work. Indeed, the Besov space $\dot{B}^{1/2}_{2,1}$ is the scaling critical one for the global well-posedness of 3D incompressible Navier-Stokes equations {\rm{(}}see \cite{Chemin-1995}{\rm{)}}. 
\end{remark}
 
Furthermore,  we have time decay rates of $L^1$-$L^2$ type, if the additional low-frequency regularity $\widetilde{L}^2_{\xi}(\dot{B}_{2,\infty}^{\sigma_{0}})$ is
imposed.

\begin{theorem}\label{1dlg-T1.3}
Let $f(t,x,\xi)$ be the corresponding global solution to $\rm(\ref{1dlg-E1.4})$ constructed in Theorem \ref{1dlg-T1.2}. If 
the low-frequency part of $f_{0}$ additionally fulfills $f_{0}^{\ell}\in \widetilde{L}^{2}_{\xi}(\dot{B}^{\sigma_{0}}_{2,\infty})$ with $\sigma_{0}\in\big[-3/2,1/2\big) $ such that
\begin{equation}\label{1dlg-E1.20}
\|f_{0}\|^\ell_{\widetilde{L}^{2}_{\xi}(\dot{B}^{\sigma_{0}}_{2,\infty})}<\infty,
\end{equation}
then it holds for any $t\in \mathbb{R}^+$ that
\begin{align}
\|f(t)\|_{\widetilde{L}^{2}_{\xi}(\dot{B}^{\sigma}_{2,1})}&\leq C\delta_{0}(1+t)^{-\frac{1}{2}(\sigma-\sigma_{0})},\quad~~  \sigma\in\big(\sigma_{0},3/2\big],\label{1dlg-E1.21}\\
\|\{\mathbf{I-P}\}f(t)\|_{\widetilde{L}^{2}_{\xi}(\dot{B}^{\sigma}_{2,1})}&\leq C\delta_{0}(1+t)^{-\frac{1}{2}(\sigma-\sigma_{0}+1)},\quad  \sigma\in\big(\sigma_{0},1/2\big],\label{1dlg-E1.22}
\end{align}
where $C>0$ is a generic constant independent of time and $$\delta_{0}\triangleq \|f_{0}\|^{\ell}_{\widetilde{L}^{2}_{\xi}(\dot{B}^{\sigma_{0}}_{2,\infty})}+\|f_{0}\|^{h}_{\widetilde{L}^{2}_{\xi}(\dot{B}^{3/2}_{2,1})}.$$
\end{theorem}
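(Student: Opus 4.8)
The plan is to run a time-weighted Lyapunov energy argument at the level of Littlewood--Paley blocks, exploiting the distinct dissipation rates in low and high frequencies already recorded in Theorem \ref{1dlg-T1.2}. First I would establish propagation of the low-frequency $\dot B^{\sigma_0}_{2,\infty}$ bound: applying $\dot\Delta_j$ to \eqref{1dlg-E1.4}, testing against $\dot\Delta_j f$ in $L^2_{x,\xi}$, and using the hypocoercive Lyapunov functional $\mathcal L_j$ constructed in the existence proof (equivalent to $\|\dot\Delta_j f\|_{L^2_{x,\xi}}^2$ with an added cross term $\langle \xi\cdot\nabla_x \{\mathbf{I-P}\}\dot\Delta_j f,\,\mathbf{P}\dot\Delta_j f\rangle$-type correction), one gets for low $j$ a differential inequality of the form $\frac{d}{dt}\mathcal L_j + c\,2^{2j}\|\dot\Delta_j f\|_{L^2_{x,\xi}}^2 + c\,\|\dot\Delta_j\{\mathbf{I-P}\}f\|_{L^2_{x,\xi,\nu}}^2 \le (\text{nonlinear terms})_j$. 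Summing the $\ell^\infty_j$ (with weight $2^{j\sigma_0}$) over low frequencies and controlling the nonlinear remainder by $\mathcal E_t(f)\mathcal D_t(f)$ via the product laws in $\widetilde L^2_\xi(\dot B^s_{2,1})$ and the smallness from \eqref{1dlg-E1.18}, one obtains $\|f\|^\ell_{\widetilde L^\infty_t\widetilde L^2_\xi(\dot B^{\sigma_0}_{2,\infty})} \lesssim \delta_0$ for all $t$.

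Next I would upgrade this to decay. Fix $\sigma\in(\sigma_0,3/2]$ and introduce the time weight; the cleanest route is the Fourier--splitting / frequency-localized interpolation familiar from \cite{Danchin-2017,Xin-2021}. For low frequencies the block estimate gives $\|\dot\Delta_j f(t)\|_{L^2_{x,\xi}} \lesssim e^{-c 2^{2j} t}\|\dot\Delta_j f_0\|_{L^2_{x,\xi}} + (\text{nonlinear Duhamel})$; multiplying by $2^{j\sigma}$, taking $\ell^1_j$ over $j\le j_0$, using $\|\dot\Delta_j f_0\|_{L^2_{x,\xi}}\lesssim \delta_0 2^{-j\sigma_0}$ and $\sum_{j\le j_0} 2^{j(\sigma-\sigma_0)} e^{-c2^{2j}t} \lesssim (1+t)^{-(\sigma-\sigma_0)/2}$ yields the linear part of \eqref{1dlg-E1.21}. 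For high frequencies one uses the pure damping from \eqref{1dlg-E1.19}: $\|f\|^h_{\widetilde L^\infty_t\widetilde L^2_\xi(\dot B^{3/2}_{2,1})}$ decays exponentially (the high-frequency Lyapunov functional satisfies $\frac{d}{dt}\mathcal L^h + c\,\mathcal L^h \le \cdots$), so it contributes a faster-than-algebraic term and is absorbed. The nonlinear Duhamel contribution is handled by a bootstrap: assume \eqref{1dlg-E1.21}--\eqref{1dlg-E1.22} hold with constant $C$ replaced by $2C$ on $[0,T]$, estimate $\Gamma(f,f)$ using the bilinear estimate $\|\Gamma(f,g)\|_{\widetilde L^2_\xi(\dot B^{s-1}_{2,1})}\lesssim \|f\|_{\cdot}\|g\|_{\cdot}$ combined with the assumed decay rates, check that the time convolution $\int_0^t (1+t-s)^{-(\sigma-\sigma_0)/2-\cdots}(1+s)^{-\beta}\,ds$ closes with the right exponent (here $\delta_0$ small makes the constant come back to $C$), and conclude by continuity.

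The enhanced rate \eqref{1dlg-E1.22} for the microscopic part comes from the extra damping term $\|\dot\Delta_j\{\mathbf{I-P}\}f\|_{L^2_{x,\xi,\nu}}^2$ appearing \emph{without} the $2^{2j}$ prefactor in the low-frequency block inequality: this lets one trade one power of the time weight, i.e.\ multiply the $\{\mathbf{I-P}\}f$ estimate by an additional $(1+t)^{1/2}$ and still close the energy inequality, exactly as the statement $\mathcal O(t^{-(\sigma-\sigma_0+1)/2})$ predicts. Concretely I would run a second time-weighted functional, $(1+t)^{\sigma-\sigma_0+1}$ against the $\{\mathbf{I-P}\}f$ dissipation, integrate, and use \eqref{1dlg-E1.21} already proved to bound the macroscopic source that feeds into it.

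The main obstacle I anticipate is the nonlinear closure in the time-weighted spaces simultaneously with the low/high frequency split: the product and commutator estimates for $\Gamma(f,f)$ in Chemin--Lerner spaces $\widetilde L^p_t\widetilde L^2_\xi(\dot B^s_{2,1})$ must be compatible with the weights $(1+t)^{(\sigma-\sigma_0)/2}$, and one has to be careful that low-frequency interactions (where $\dot B^{\sigma_0}_{2,\infty}$ with possibly negative $\sigma_0$ down to $-3/2$ is only barely enough to make sense of products) do not destroy the rate — this is precisely why the regime $\sigma_0 \ge -3/2$ is sharp and why the decay exponent in \eqref{1dlg-E1.21} is capped at $\sigma\le 3/2$ (the regularity available in \eqref{1dlg-E1.19}). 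Handling the borderline case $\sigma_0=-3/2$ and the endpoint $\sigma=3/2$ will require the most care, likely an additional logarithmic-free interpolation argument exploiting the $\ell^\infty_j$ structure of the $\dot B^{\sigma_0}_{2,\infty}$ norm.
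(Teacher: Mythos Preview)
Your overall architecture---propagate the $\widetilde L^2_\xi(\dot B^{\sigma_0}_{2,\infty})$ bound, then interpolate to obtain decay, then use the extra damping on $\{\mathbf{I-P}\}f$ for the faster rate---matches the paper. But the mechanism you describe for the decay step is \emph{not} the one the paper uses, and your version has a concrete gap.

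You propose a Duhamel/bootstrap: write $\|\dot\Delta_j f(t)\|\lesssim e^{-c2^{2j}t}\|\dot\Delta_j f_0\|+\text{(nonlinear)}$, assume \eqref{1dlg-E1.21}--\eqref{1dlg-E1.22} with constant $2C$, and close because ``$\delta_0$ small makes the constant come back to $C$.'' The problem is that $\delta_0$ is \emph{not} assumed small; only $\varepsilon_0$ (the size of $f_0$ in $\widetilde L^2_\xi(\dot B^{1/2}_{2,1})\cap\widetilde L^2_\xi(\dot B^{3/2}_{2,1})$) is. The low-frequency norm $\|f_0\|^\ell_{\widetilde L^2_\xi(\dot B^{\sigma_0}_{2,\infty})}$ is merely finite. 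A straight Duhamel bootstrap of the type you sketch would force smallness of $\delta_0$ (or of some time-integrated nonlinear quantity built from it), which is exactly the restriction the method of \cite{Xin-2021} was designed to remove. Relatedly, your claim that the nonlinear high-frequency part ``decays exponentially'' is false: nonlinear forcing from low frequencies feeds back into high frequencies, and the high-frequency norm only decays at the algebraic rate $(1+t)^{-\frac12(\frac32-\sigma_0)}$.

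The paper instead runs a purely energy-based time-weighted argument: multiply the block Lyapunov inequality by $(1+t)^{2M}$ with $M\gg1$, which produces a lower-order term $\|(1+\tau)^{M-1/2}f\|^\ell_{\widetilde L^2_t\widetilde L^2_\xi(\dot B^{3/2}_{2,1})}$. This term is bounded by real interpolation between the propagated $\dot B^{\sigma_0}_{2,\infty}$ norm and the weighted dissipation $\|(1+\tau)^M f\|^\ell_{\widetilde L^2_t\widetilde L^2_\xi(\dot B^{5/2}_{2,1})}$, so it can be absorbed; the nonlinear terms are controlled by $(\sqrt{\mathcal D_t(f)}+\mathcal D_t(f))\mathcal X_M(t)$, and the absorption uses only $\mathcal D_t(f)\lesssim\varepsilon_0\ll1$, never $\delta_0$. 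This yields $\mathcal X_M(t)\lesssim\delta_0(1+t)^{M-\frac12(\frac32-\sigma_0)}$, hence \eqref{1dlg-E1.21} at $\sigma=3/2$, and interpolation gives the other $\sigma$. For \eqref{1dlg-E1.22} the paper does essentially what you suggest: project by $\{\mathbf{I-P}\}$, use the spectral gap of $L$ (damping, not $2^{2j}$) via Gr\"onwall, and treat $\nabla_x\mathbf P f$ as a source estimated by the already-proved \eqref{1dlg-E1.21}.
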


\begin{remark}
Notice that
$$ 
\|f_{0}\|^{\ell}_{\widetilde{L}^{2}_{\xi}(\dot{B}^{1/2}_{2,1})} \leq C\|f_{0}\|^{\ell}_{\widetilde{L}^{2}_{\xi}(\dot{B}^{\sigma_{0}}_{2,\infty})}.
$$
The decay estimates \eqref{1dlg-E1.21}-\eqref{1dlg-E1.22} are available provided that the regularity on low frequencies is suitably strengthened in Theorem \ref{1dlg-T1.2}. Moreover, the smallness of the norm $\|f_{0}\|^{\ell}_{\widetilde{L}^{2}_{\xi}(\dot{B}^{\sigma_{0}}_{2,\infty})}$ is not required.
In particular, if taking $\sigma=3/2$ and $\sigma_{0}=-3/2$, then the solution decays in  $L^2_{\xi}L^{\infty}_x$ at the rate of $\mathcal{O}(t^{-3/2})$. If 
taking $\sigma=0$ and $\sigma_{0}=-3/2$, then
the solution
 decays in $L^2_{\xi}L^2_{x}$ at the rate of $\mathcal{O}(t^{-3/4})$ and the decay of the microscopic part is at the enhanced rate of $\mathcal{O}(t^{-5/4})$.
 \end{remark}

In what follows, we explain technical difficulties and strategies to prove Theorems \ref{1dlg-T1.2}-\ref{1dlg-T1.3}. Regarding the global existence, the major difficulty lies in deriving the {\emph{a priori}} estimates without the usual $L^2$ energy bounds. Note that the linearized operator $L$ is degenerate on a nontrivial five-dimensional space. However, the interaction between the transport term $\xi\cdot\nabla f$ and the degenerate dissipative term $Lf$ can produce the complete dissipation and lead to the convergence to equilibrium. The systematic study of hypocoercivity for the Boltzmann equation was presented by Villani \cite{Villani2}. 
As explained in \cite[Remark 17]{Villani2} and \cite{BZ}, the hypocoercivity is closely linked with the Shizuta-Kawashima condition \cite{SK} that is extensively applied to study the hyperbolic-parabolic composite systems or partially hyperbolic systems. In the spirit of hypocoercivity for the Boltzmann equation, we construct those spectral-localized Lyapunov functionals with different dissipation rates in low and high frequencies (see \eqref{1dlg-E5.11}-\eqref{1dlg-E5.110} and  \eqref{1dlg-E5.29}-\eqref{1dlg-E5.2900}), which allows us to establish the global {\emph{a priori}} estimates in the homogeneous critical Chemin-Lerner spaces. On the other hand, we establish nonstandard trilinear estimates in spatially homogeneous Besov spaces (see Lemma \ref{1dlg-L2.7}) to control the nonlinear collision terms. 

In the proof of Theorem \ref{1dlg-T1.3}, we develop the recent energy argument of Lyapunov type \cite{Xin-2021} that can be
adapted to suit the Boltzmann equation. However, the generalization is highly nontrivial compared to the case of fluid dynamical equations, not only because of the additional velocity variable $\xi$, but also due to the bilinear nonlocal collision operator. In the critical framework of $\widetilde{L}^{2}_{\xi}(\dot{B}^{3/2}_{2,1})$, we develop the so-called weighted Lyapunov energy approach in contrast to \cite{Liu-2021}. Actually, 
we derive the following time-weighted inequality:
\begin{equation*}
\begin{aligned}
&\|(1+\tau)^{M}f\|_{\widetilde{L}^{\infty}_{t}\widetilde{L}^{2}_{\xi}(\dot{B}^{3/2}_{2,1})}+\|(1+\tau)^{M}\mathbf{P}f\|_{\widetilde{L}^{2}_{t}\widetilde{L}^{2}_{\xi}(\dot{B}^{5/2}_{2,1})}^{\ell}\\
&\quad\quad+\|(1+\tau)^{M}\{\mathbf{I-P}\}f\|_{\widetilde{L}^{2}_{t}\widetilde{L}^{2}_{\xi,\nu}(\dot{B}^{3/2}_{2,1})}^{\ell}+\|(1+\tau)^{M}f\|^{h}_{\widetilde{L}^{2}_{t}\widetilde{L}^{2}_{\xi,\nu}(\dot{B}^{3/2}_{2,1})}\\
&\quad\lesssim \Big(\|f^{\ell}\|_{\widetilde{L}^2_{\xi}(\dot{B}^{\sigma_0}_{2,\infty})}+\|f^h\|_{\widetilde{L}^2_{\xi}(\dot{B}^{3/2}_{2,1})}\Big)(1+t)^{M-\frac{1}{2}(\frac{3}{2}-\sigma_{0})},
\end{aligned}
\end{equation*}
where $M$ is chosen large enough if necessary. This leads to the desired decay estimate \eqref{1dlg-E1.21} with the aid of interpolation tricks. Here, the crucial part of the decay proof lies in the nonlinear evolution of the $\widetilde{L}^{2}_{\xi}(\dot{B}^{\sigma_{0}}_{2,\infty})$-norm in low frequencies, which depends on different trilinear estimates (see Lemma \ref{1dlg-L2.8}). It still remains a challenging problem to extend the current approach to more singular situations, such as the soft potential and the angular non-cutoff cases.  
\medbreak

\vspace{1mm}

Finally, we would like to mention recent works that showed the close connection between the Boltzmann equation and the nonlinear Schr\"odinger equation. 
Chen, Denlinger and Pavlovi\'c \cite{CMP-2019} employed the dispersive property of the linear Schr\"odinger equation and proved the local well-posedness of the Boltzmann equation near vacuum in weighted Sobolev spaces $L^2(\mathbb{R}^{d}_{\xi};H^{s}(\mathbb{R}^{d}_{x}))$ with $d\geq2$ and $s>(d-1)/2$ for both Maxwellian molecules and hard potentials.  The authors in \cite{Chen-2021} also established the global well-posedness for the Boltzmann equation with a constant collision
kernel in two space dimensions, only assuming the scaling-critical $L^2(\mathbb{R}^{2}_{\xi};L^{2}(\mathbb{R}^{2}_{x}))$ norm 
to be sufficiently small. Chen, Shen and Zhang \cite{chenshenzhang2}
proved the 3D sharp global existence result for the Boltzmann equation with Maxwellian molecules and soft potentials under the scaling-critical weighted $L^2(\mathbb{R}^{3}_{\xi};H^{1/2}(\mathbb{R}^{3}_{x}))$ smallness condition. 

\vspace{2mm}

The rest of the paper is organized as follows. In Section \ref{1dlg-S2}, we briefly recall the Littlewood-Paley decomposition and Besov spaces and provide several lemmas which will be used in the subsequent analysis. In Section \ref{1dlg-S4}, we deduce the pointwise estimates for the linear Boltzmann equation, which leads to the sharp time-decay properties. As technical preparation, in Section \ref{1dlg-SA}, we develop new trilinear estimates for the collision operator $\Gamma(f,g)$ by using Bony's paraproduct decomposition. In Section \ref{1dlg-S5}, we establish global {\emph{a priori}} estimates, which lead to the proof of Theorem \ref{1dlg-T1.2}. Section \ref{1dlg-S6} is devoted to proving the time-decay estimates in Theorem \ref{1dlg-T1.3}. 
\vspace{2mm}

\section{Preliminary}\label{1dlg-S2}
Throughout the paper, $C$ stands for a generic constant independent of time $t$. For brevity, we write $A\lesssim B$ instead of $A\leq CB$. $A\sim B$ implies $A\lesssim B$ and $B\lesssim A$ simultaneously. We use $(~\cdot ~)_{x}$ (resp. $(~\cdot ~)_{\xi}$) to denote the standard $L^2$ inner product in $\mathbb{R}^3_{x}$ (resp. $\mathbb{R}^3_{\xi}$) with its corresponding $L^2$-norm $\|\cdot\|_{L^2_{x}}$ (resp. $\|\cdot \|_{L^2_{\xi}}$). Similarly, $( ~\cdot ~)_{\xi,x}$ denotes the $L^2$ inner product in $\mathbb{R}^3_{x}\times\mathbb{R}^3_{\xi}$ with its $L^2$-norm $\|\cdot\|_{L^2_{\xi}L^2_{x}}$. We use $L^{\varrho}_{\xi}L^{p}_{x}=L^{\varrho}(\mathbb{R}^3_{\xi};L^{p}(\mathbb{R}^3_{x}))$ as the mixed velocity-space Lebesgue space endowed with the norm $\|\cdot\|_{L^{\varrho}_{\xi}L^p_{x}}$. For $X$ a Banach space, the notation $L^p(0, T; X)$ or $L^p_T(X) (p\in[1, \infty]$, $T>0)$ designates the set of measurable functions $f: [0, T]\to X$ with $t\mapsto\|f(t)\|_X$ in $L^p(0, T)$, endowed with the norm $\|\cdot\|_{L^p_{T}(X)} \triangleq \|\|\cdot\|_X\|_{L^p(0, T)}$. We denote $\mathcal{S}(\mathbb{R}^{3})$ as the Schwartz function space and $\mathcal{S}^{\prime}(\mathbb{R}^{3})$ as its dual space. Given a Schwartz function $u(t,x,\xi)\in\mathcal{S}(\mathbb{R}^{3})$, the Fourier transform $\widehat{u}(t,k,\xi)=\mathcal{F}(u)(t,k,\xi)$ with respect to $x$ is given by
$$\widehat{u}(t,k,\xi)=\mathcal{F}(u)(t,k,\xi)\triangleq \int_{\mathbb{R}^{3}}e^{-ix\cdot k}u(t,x,\xi)\,dx,$$
and  $\mathcal{F}^{-1}(u)(t,k,\xi)$ denotes the inverse Fourier transform. The Fourier transform and its inverse transform of a tempered function $u(t,x,\xi)\in\mathcal{S}^{\prime}(\mathbb{R}^{3})$ are defined by the dual argument in the standard way. 

We recall the Littlewood-Paley decomposition theory and functional spaces, such as Besov spaces and Chemin-Lerner spaces. 
The reader can refer to Chapter 2 in \cite{Bahouri-2011} for more details. Choose a smooth radial non-increasing function $\chi(k)$ compactly supported in the ball $B(0,4/3)$ and satisfying $\chi(k)\equiv1$ in $B(0,3/4)$. Then $\varphi(k)\triangleq \chi(k/2)-\chi(k)$ satisfies
$$\sum_{q\in\mathbb{Z}}\varphi(2^{-q}k)=1,~~{\rm Supp}~\varphi\subset\big\{k\in\mathbb{R}^{3}\mid3/4\leq|k|\leq8/3\big\}.$$
For any $q\in\mathbb{Z}$, define the homogeneous dyadic blocks $\dot{\Delta}_{q} (q\in\mathbb{Z})$ by
$$
\dot{\Delta}_{q}u\triangleq \varphi(2^{-q}D)u=\mathcal{F}^{-1}(\varphi(2^{-q}\cdot)\mathcal{F}u)=2^{3q}h(2^{q}\cdot)\ast u\quad \mbox{with}\quad h=\mathcal{F}^{-1}\varphi.
$$
Let $\mathcal{P}$ be the class of all polynomials on $\mathbb{R}^{3}$ and $\mathcal{S}_{h}^{\prime}(\mathbb{R}^{3})=\mathcal{S}^{\prime}/\mathcal{P}(\mathbb{R}^{3})$ stands for the tempered distribution on $\mathbb{R}^{3}$ modulo polynomials. For $\forall u\in\mathcal{S}_{h}^{\prime}(\mathbb{R}^{3})$, we have the Littlewood-Paley decomposition
$$u=\sum_{q\in\mathbb{Z}}\dot{\Delta}_{q}u\quad \mbox{in}\quad \mathcal{S}_{h}^{\prime}(\mathbb{R}^{3})\quad \text{with}\quad  \dot{\Delta}_{j}\dot{\Delta}_{q}u\equiv0\quad \mbox{if}\quad |j-q|\geq2.$$

With the help of those dyadic blocks, we present the definition of homogeneous Besov spaces as follows.
\begin{defn}\label{1dlg-D2.1}
For $s\in\mathbb{R}$ and $1\leq p,r\leq\infty$, the homogeneous Besov space $\dot{B}^{s}_{p,r}$ is defined by
$$\dot{B}^{s}_{p,r}\triangleq \Big\{u\in\mathcal{S}_{h}^{\prime}(\mathbb{R}^{3})~:~\|u\|_{\dot{B}^{s}_{p,r}}\triangleq \|\{2^{qs}\|\dot{\Delta}_{q}u\|_{L^{p}}\}_{q\in\mathbb{Z}}\|_{l^{r}}<+\infty\Big\}.$$
\end{defn}
Next, we present a class of mixed
space-velocity Besov spaces and mixed
space-velocity-time Besov spaces, that is, Chemin-Lerner type spaces, which were initiated by Chemin and Lerner \cite{Chemin-1995}.
\begin{defn}\label{1dlg-D2.00001}
For $s\in\mathbb{R}$ and $1\leq\varrho, p, r\leq\infty$, the homogeneous Chemin-Lerner space $\widetilde{L}^{\varrho}_{\xi}(\dot{B}^{s}_{p,r})$ is defined by
\begin{equation}\nonumber
\widetilde{L}^{\varrho}_{\xi}(\dot{B}^{s}_{p,r})\triangleq \Big\{u\in L^{\varrho}(\mathbb{R}^{3}_{\xi};\mathcal{S}_{h}^{\prime}(\mathbb{R}^{3}_{x}))~:~\|u\|_{\widetilde{L}^{\varrho}_{\xi}(\dot{B}^{s}_{p,r})}\triangleq 
    \Big\|\{2^{qs}\|\dot{\Delta}_{q}u\|_{L^{\varrho}_{\xi}L^{p}_{x}}\}_{q\in\mathbb{Z}}\Big\|_{l^{r}}<\infty \Big\}.
\end{equation}
Moreover, in order to characterize the Boltzmann dissipation rate, we also introduce the following velocity-weighted norm
\begin{equation}\nonumber
\|u\|_{\widetilde{L}^{\varrho_{2}}_{\xi,\nu}(\dot{B}^{s}_{p,r})}\triangleq \|\sqrt{\nu(\xi)}u\|_{\widetilde{L}^{\varrho_{2}}_{\xi}(\dot{B}^{s}_{p,r})},
\end{equation}
where the multiplier $\nu=\nu(\xi)$, called the collision frequency, is given by {\rm(}cf. {\rm\cite{Cercignani-1994,Glassey-1996}}{\rm)}
\begin{align}
\nu(\xi)=\int_{\mathbb{R}^{3}}\int_{\mathbb{S}^{2}}|\xi-\xi_{\ast}|^{\gamma}B_{0}(\theta)\mu(\xi_{\ast})\,d\omega d\xi_{\ast}\sim(1+|\xi|)^{\gamma},\quad\quad 0\leq \gamma\leq 1.\label{nu}
\end{align}
\end{defn}

\begin{defn}
For $T>0, s\in\mathbb{R}$ and $1\leq\varrho_{1}, \varrho_{2} ,p , r\leq\infty$, the homogeneous Chemin-Lerner space $\widetilde{L}^{\varrho_{1}}_{T}\widetilde{L}^{\varrho_{2}}_{\xi}(\dot{B}^{s}_{p,r})$ is defined by
\begin{equation}\nonumber
\widetilde{L}^{\varrho_{1}}_{T}\widetilde{L}^{\varrho_{2}}_{\xi}(\dot{B}^{s}_{p,r})\triangleq \Big\{u\in L^{\varrho_{1}}(0,T;L^{\varrho_2}(\mathbb{R}^{3}_{\xi};S_{h}^{\prime}(\mathbb{R}^{3}_{x})))~:~\|u\|_{\widetilde{L}^{\varrho_{1}}_{T}\widetilde{L}^{\varrho_{2}}_{\xi}(\dot{B}^{s}_{p,r})}<\infty\Big\},
\end{equation}
where 
\begin{equation}\nonumber
    \begin{aligned}
\|u\|_{\widetilde{L}^{\varrho_{1}}_{T}\widetilde{L}^{\varrho_{2}}_{\xi}(\dot{B}^{s}_{p,r})}\triangleq 
    \begin{cases}
    \Big\|\{2^{qs}\|\dot{\Delta}_{q}u\|_{L^{\varrho_{1}}_{T}L^{\varrho_{2}}_{\xi}L^{p}_{x}}\}_{q\in\mathbb{Z}}\Big\|_{l^{r}},
    & \mbox{if ~$1\leq \varrho_1<\infty$},\\
    \bigg\|\{2^{qs}\sup\limits_{t\in[0,T]}\|\dot{\Delta}_{q}u\|_{L^{\varrho_{2}}_{\xi}L^{p}_{x}}\}_{q\in\mathbb{Z}}\bigg\|_{l^{r}},
     & \mbox{if ~$\varrho_1=\infty$}.
    \end{cases}
    \end{aligned}
    \end{equation}
Furthermore, we define
\begin{equation}\nonumber
\|u\|_{\widetilde{L}^{\varrho_{1}}_{T}\widetilde{L}^{\varrho_{2}}_{\xi,\nu}(\dot{B}^{s}_{p,r})}\triangleq \|\sqrt{\nu(\xi)}u\|_{\widetilde{L}^{\varrho_{1}}_{T}\widetilde{L}^{\varrho_{2}}_{\xi}(\dot{B}^{s}_{p,r})}.
\end{equation}
\end{defn}

Thanks to Minkowski's inequality, the Chemin-Lerner type spaces may be linked with the standard mixed spaces $L^{\varrho_{1}}_{T}L^{\varrho_{2}}_{\xi}(\dot{B}^{s}_{p,r})$ in the following way (see \cite{Duan-2016}).

\begin{lemma}
\label{1dlg-R2.1}
Let $1\leq\varrho_{1}, \varrho_{2}, p, r\leq\infty$ and $s\in\mathbb{R}$.

\item{\rm(1)} If $r\geq \max\{\varrho_{1},\varrho_{2}\}$, then
\begin{equation}\nonumber
\|u\|_{\widetilde{L}^{\varrho_{1}}_{T}\widetilde{L}^{\varrho_{2}}_{\xi}(\dot{B}^{s}_{p,r})}\leq\|u\|_{L^{\varrho_{1}}_{T}L^{\varrho_{2}}_{\xi}(\dot{B}^{s}_{p,r})};
\end{equation}
\item{\rm(2)} If $r\leq \min\{\varrho_{1},\varrho_{2}\}$, then
\begin{equation}\nonumber
\|u\|_{\widetilde{L}^{\varrho_{1}}_{T}\widetilde{L}^{\varrho_{2}}_{\xi}(\dot{B}^{s}_{p,r})}\geq\|u\|_{L^{\varrho_{1}}_{T}L^{\varrho_{2}}_{\xi}(\dot{B}^{s}_{p,r})}.
\end{equation}
\end{lemma}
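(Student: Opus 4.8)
The plan is to prove Lemma \ref{1dlg-R2.1} as a direct consequence of Minkowski's integral inequality, applied to the nested structure of norms defining the Chemin-Lerner spaces. Recall that the only difference between $\widetilde{L}^{\varrho_{1}}_{T}\widetilde{L}^{\varrho_{2}}_{\xi}(\dot{B}^{s}_{p,r})$ and $L^{\varrho_{1}}_{T}L^{\varrho_{2}}_{\xi}(\dot{B}^{s}_{p,r})$ is the \emph{order} in which the $\ell^{r}$ summation over dyadic blocks, the $L^{\varrho_{1}}_{T}$ norm in time, the $L^{\varrho_{2}}_{\xi}$ norm in velocity, and the $L^{p}_{x}$ norm in space are taken: in the tilde space the $\ell^{r}$ norm is the outermost one, whereas in the non-tilde space the $\ell^{r}$ norm (hidden inside $\dot B^s_{p,r}$) is innermost. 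So the whole statement reduces to the elementary fact that, for a doubly-indexed family of nonnegative measurable functions, one may exchange an $\ell^{r}$ norm with an $L^{q}$ norm at the cost of an inequality whose direction is dictated by whether $r\le q$ or $r\ge q$: namely $\|\,\|a\|_{\ell^r}\,\|_{L^q}\le \|\,\|a\|_{L^q}\,\|_{\ell^r}$ when $q\ge r$, and the reverse when $q\le r$.

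First I would fix $q\in\mathbb{Z}$ and write, for part (1) assuming $r\ge\max\{\varrho_1,\varrho_2\}$, the chain of two applications of Minkowski: starting from $\|\{2^{qs}\|\dot{\Delta}_{q}u\|_{L^{\varrho_{1}}_{T}L^{\varrho_{2}}_{\xi}L^{p}_{x}}\}_q\|_{\ell^{r}}$, use $r\ge\varrho_1$ to pull the $L^{\varrho_{1}}_{T}$ norm outside the $\ell^{r}$ sum, obtaining a bound by $\big\|\,\|\{2^{qs}\|\dot{\Delta}_{q}u\|_{L^{\varrho_{2}}_{\xi}L^{p}_{x}}\}_q\|_{\ell^r}\,\big\|_{L^{\varrho_1}_T}$; then, inside, use $r\ge\varrho_2$ to pull the $L^{\varrho_{2}}_{\xi}$ norm outside the $\ell^{r}$ sum, which yields exactly $\|u\|_{L^{\varrho_{1}}_{T}L^{\varrho_{2}}_{\xi}(\dot{B}^{s}_{p,r})}$. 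Part (2), with $r\le\min\{\varrho_1,\varrho_2\}$, is identical except that both applications of Minkowski now go the other way (moving $\ell^{r}$ outward past $L^{\varrho_2}_{\xi}$ and $L^{\varrho_1}_{T}$), reversing the inequality. One should also note the degenerate case $\varrho_1=\infty$ in the definition, where $L^{\infty}_T$ is a sup rather than an integral; here the interchange with $\ell^{r}$ for any finite $r$ is trivial since $\sup_t\|\{a_q(t)\}\|_{\ell^r}\ge\|\{\sup_t a_q(t)\}\|_{\ell^r}$ and the reverse follows from monotonicity, so both parts still hold. The weighted statement for $\widetilde{L}^{\varrho_{1}}_{T}\widetilde{L}^{\varrho_{2}}_{\xi,\nu}$ is immediate by replacing $u$ with $\sqrt{\nu(\xi)}\,u$ throughout, since the weight only affects the $L^{\varrho_2}_\xi$ layer and commutes with every step.

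There is essentially no obstacle here — this is a bookkeeping lemma — so the main care is simply to keep track of the order of norms and to invoke Minkowski in the correct direction at each of the two interchanges, and to handle the $\varrho_1=\infty$ endpoint separately as noted. (In fact this is a standard result stated in \cite{Bahouri-2011,Duan-2016}, so one could also simply cite it; but the two-line argument above is self-contained.)
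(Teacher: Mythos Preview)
Your argument is correct and is exactly the approach the paper indicates: the paper does not give a detailed proof but simply attributes the lemma to Minkowski's inequality and cites \cite{Duan-2016}, which is precisely the two-step interchange of $\ell^r$ with $L^{\varrho_1}_T$ and then with $L^{\varrho_2}_\xi$ that you spell out. One small slip: in your remark on the $\varrho_1=\infty$ endpoint you wrote the inequality $\sup_t\|\{a_q(t)\}\|_{\ell^r}\ge\|\{\sup_t a_q(t)\}\|_{\ell^r}$, but the correct (and only needed) direction is the reverse, which is what makes part (2) go through; part (1) with $\varrho_1=\infty$ forces $r=\infty$ and is a trivial equality.
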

To describe precise dissipation structures in different frequencies, we restrict Besov norms to the low-frequency regime and the high-frequency regime:
\begin{equation}\nonumber
 \left\{
\begin{aligned}
&\|u\|^{\ell}_{\widetilde{L}^{\varrho}_{\xi}(\dot{B}^{s}_{p,r})}\triangleq \|\{\|2^{qs}\dot{\Delta}_{q}u\|_{L^{\varrho}_{\xi}L^{p}_{x}}\}_{q\leq0}\|_{l^{r}},\\
&\|u\|^{h}_{\widetilde{L}^{\varrho}_{\xi}(\dot{B}^{s}_{p,r})}\triangleq \|\{\|2^{qs}\dot{\Delta}_{q}u\|_{L^{\varrho}_{\xi}L^{p}_{x}}\}_{q\geq-1}\|_{l^{r}},\\
&\|u\|^{\ell}_{\widetilde{L}^{\varrho_{1}}_{T}\widetilde{L}^{\varrho_2}_{\xi}(\dot{B}^{s}_{p,r})}\triangleq \|\{\|2^{qs}\dot{\Delta}_{q}u\|_{L^{\varrho_{1}}_{T}L^{\varrho_{2}}_{\xi}L^{p}_{x}}\}_{q\leq0}\|_{l^{r}},\\
&\|u\|^{h}_{\widetilde{L}^{\varrho_{1}}_{T}\widetilde{L}^{\varrho_{2}}_{\xi}(\dot{B}^{s}_{p,r})}\triangleq \|\{\|2^{qs}\dot{\Delta}_{q}u\|_{L^{\varrho_{1}}_{T}L^{\varrho_{2}}_{\xi}L^{p}_{x}}\}_{q\geq-1}\|_{l^{r}}
\end{aligned}
 \right.
\end{equation}
for any $s\in\mathbb{R}$ and $1\leq \varrho, \varrho_{1},\varrho_{2}, p\leq \infty$.

When bounding nonlinear terms, we often decompose $u$ into its low-frequency part $u^\ell$ and high-frequency part $u^h$, which are given by
$$
u^{\ell}\triangleq \sum_{q\leq-1}\dot{\Delta}_{q}u,\quad u^{h}\triangleq u-u^{\ell}=\sum_{q\geq0}\dot{\Delta}_{q}u.
$$
It is easy to check for any $s^{\prime}>0$ that
\begin{equation}\label{1dlg-E2.6}
 \left\{
\begin{aligned}
&\|u^{\ell}\|_{\widetilde{L}^{\varrho}_{\xi}(\dot{B}^{s}_{p,r})}\lesssim\|u\|^{\ell}_{\widetilde{L}^{\varrho}_{\xi}(\dot{B}^{s}_{p,r})}\lesssim\|u\|^{\ell}_{\widetilde{L}^{\varrho}_{\xi}(\dot{B}^{s-s^{\prime}}_{p,r})},\\
&\|u^{h}\|_{\widetilde{L}^{\varrho}_{\xi}(\dot{B}^{s}_{p,r})}\lesssim\|u\|^{h}_{\widetilde{L}^{\varrho}_{\xi}(\dot{B}^{s}_{p,r})}\lesssim\|u\|^{h}_{\widetilde{L}^{\varrho}_{\xi}(\dot{B}^{s+s^{\prime}}_{p,r})},\\
&\|u^{\ell}\|_{\widetilde{L}^{\varrho_{1}}_{T}\widetilde{L}^{\varrho_{2}}_{\xi}(\dot{B}^{s}_{p,r})}\lesssim\|u\|^{\ell}_{\widetilde{L}^{\varrho_{1}}_{T}\widetilde{L}^{\varrho_{2}}_{\xi}(\dot{B}^{s}_{p,r})}\lesssim\|u\|^{\ell}_{\widetilde{L}^{\varrho_{1}}_{T}\widetilde{L}^{\varrho_{2}}_{\xi}(\dot{B}^{s-s^{\prime}}_{p,r})},\\
&\|u^{h}\|_{\widetilde{L}^{\varrho_{1}}_{T}\widetilde{L}^{\varrho_{2}}_{\xi}(\dot{B}^{s}_{p,r})}\lesssim\|u\|^{h}_{\widetilde{L}^{\varrho_{1}}_{T}\widetilde{L}^{\varrho_{2}}_{\xi}(\dot{B}^{s}_{p,r})}\lesssim\|u\|^{h}_{\widetilde{L}^{\varrho_{1}}_{T}\widetilde{L}^{\varrho_{2}}_{\xi}(\dot{B}^{s+s^{\prime}}_{p,r})}.
\end{aligned}
 \right.
\end{equation}

\vspace{2mm}

Next, we recall  basic properties of Besov spaces, which will be used repeatedly in this paper. The first lemma is devoted to Bernstein's inequalities, which in particular imply that $\dot{\Delta}_{q}u$ is smooth in $x$ for every $u$ in any Besov space, so that we perform direct calculations on linear equations after applying the operator $\dot{\Delta}_{q}(q\in \mathbb{Z})$.
\begin{lemma}\label{1dlg-L2.2}{\rm(}Bernstein lemma{\rm)}
Let $0<r<R$, $1\leq p\leq q\leq\infty$, and $k\in\mathbb{N}$. For any $u\in L^{p}$ and $\lambda>0$, it holds that
\begin{align*}
 \left\{
\begin{aligned}
&\mathrm{Supp}~\mathcal{F}(u)\subset\{\xi\in\mathbb{R}^{3}\mid|\xi|\leq\lambda R\}\Rightarrow\|D^{k}u\|_{L^{q}}\leq\lambda^{k+3/p-3/q}\|u\|_{L^{p}},\\
&\mathrm{Supp}~\mathcal{F}(u)\subset\{\xi\in\mathbb{R}^{3}\mid\lambda r\leq|\xi|\leq\lambda R\}\Rightarrow\|D^{k}u\|_{L^{p}}\sim\lambda^{k}\|u\|_{L^{p}}.
\end{aligned}
 \right.
\end{align*}
\end{lemma}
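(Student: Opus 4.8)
The plan is to use the classical route: reduce to unit frequency scale by a dilation, then write $u$ (or a derivative of $u$) as a convolution of $u$ (or of a derivative of $u$) with a Schwartz kernel arising from a smooth compactly supported Fourier multiplier, and finish with Young's convolution inequality. As a first step I would reduce to $\lambda=1$ via the dilation $u_\lambda(x):=u(x/\lambda)$: since $\widehat{u_\lambda}(\xi)=\lambda^3\widehat u(\lambda\xi)$, the spectral support hypothesis for $u$ at scale $\lambda$ becomes the same hypothesis for $u_\lambda$ at scale $1$, while $\|u_\lambda\|_{L^p}=\lambda^{3/p}\|u\|_{L^p}$ and $\|D^k u_\lambda\|_{L^q}=\lambda^{3/q-k}\|D^k u\|_{L^q}$. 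Matching the powers of $\lambda$ then reproduces exactly the exponent $k+3/p-3/q$ in the first estimate, and the factor $\lambda^k$ in the second; hence it suffices to treat $\lambda=1$.

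For the first inequality with $\lambda=1$, fix $\theta\in C_c^\infty(\mathbb{R}^3)$ with $\theta\equiv1$ on $\{|\xi|\leq R\}$, so that $\widehat u=\theta\,\widehat u$. For every multi-index $\alpha$ with $|\alpha|=k$ we then have $\widehat{\partial^\alpha u}=(i\xi)^\alpha\theta\,\widehat u$, hence $\partial^\alpha u=g_\alpha*u$ with $g_\alpha:=\mathcal{F}^{-1}\big((i\xi)^\alpha\theta\big)$. Since $(i\xi)^\alpha\theta\in C_c^\infty$, the kernel $g_\alpha$ is a Schwartz function, so $g_\alpha\in L^s(\mathbb{R}^3)$ for every $s\in[1,\infty]$ with an $L^s$-norm depending only on $R$ and $k$ (once a standard $\theta$ is fixed). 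Choosing $s$ through $1+1/q=1/s+1/p$, which is admissible precisely because $p\leq q$, Young's inequality gives $\|\partial^\alpha u\|_{L^q}\leq\|g_\alpha\|_{L^s}\|u\|_{L^p}\lesssim\|u\|_{L^p}$, and summing over $|\alpha|=k$ yields $\|D^k u\|_{L^q}\lesssim\|u\|_{L^p}$.

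For the second inequality with $\lambda=1$, the bound $\|D^k u\|_{L^p}\lesssim\|u\|_{L^p}$ is the case $q=p$ of the first estimate (the annulus lies inside a ball). For the reverse bound, pick $\widetilde\theta\in C_c^\infty(\mathbb{R}^3\setminus\{0\})$ with $\widetilde\theta\equiv1$ on $\{r\leq|\xi|\leq R\}$, so again $\widehat u=\widetilde\theta\,\widehat u$. Because $\widetilde\theta$ is supported away from the origin, the multiplier $|\xi|^{-2k}\widetilde\theta(\xi)$ is itself a function in $C_c^\infty$, and from $\widehat{(-\Delta)^k u}=|\xi|^{2k}\widehat u$ we obtain $u=\mathcal{F}^{-1}\big(|\xi|^{-2k}\widetilde\theta\big)*\big((-\Delta)^k u\big)$; Young's inequality with this Schwartz kernel then gives $\|u\|_{L^p}\lesssim\|(-\Delta)^k u\|_{L^p}\lesssim\|D^k u\|_{L^p}$. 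Combined with the scaling step, this establishes $\|D^k u\|_{L^p}\sim\lambda^k\|u\|_{L^p}$.

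This is a classical statement (see \cite{Bahouri-2011}), so there is no genuine obstacle; the only points requiring a little care are the bookkeeping of the powers of $\lambda$ in the dilation step, so that the exponent $k+3/p-3/q$ comes out correctly, and, in the lower bound of the second estimate, the use of the annular (rather than ball) localization to keep $|\xi|^{-2k}$ away from its singularity at the origin — which is exactly what makes $|\xi|^{-2k}\widetilde\theta$ a smooth, compactly supported multiplier with a Schwartz convolution kernel.
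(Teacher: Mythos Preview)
Your proof is correct and follows the standard textbook route (dilation to unit scale, then smooth Fourier multipliers and Young's inequality). The paper does not actually prove this lemma; it is stated as a classical fact with a reference to \cite{Bahouri-2011}, whose proof is essentially the one you wrote. One small point worth making explicit in your lower-bound argument: the inequality $\|(-\Delta)^k u\|_{L^p}\lesssim\|D^k u\|_{L^p}$ is not immediate since $(-\Delta)^k$ is of order $2k$; it holds here because each component of $D^k u$ still has annular spectrum, so the already established upper bound gives $\|D^{2k}u\|_{L^p}\lesssim\|D^k u\|_{L^p}$, and $(-\Delta)^k u$ is a finite linear combination of entries of $D^{2k}u$. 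Alternatively, you can bypass this by writing $\widehat u=\sum_{|\alpha|=k}\binom{k}{\alpha}\,\dfrac{(-i\xi)^{\alpha}}{|\xi|^{2k}}\,\widetilde\theta(\xi)\,\widehat{\partial^\alpha u}$ directly.
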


Due to Lemma \ref{1dlg-L2.2}, the space-velocity Besov spaces have the following properties, which can be proved similarly as in \cite[Chapter 2]{Bahouri-2011}.
\begin{lemma}\label{1dlg-L2.3}
The following properties hold:
\begin{itemize}
\item For $1\leq \varrho\leq \infty$ and $1\leq p\leq q\leq\infty$, we have the following chain of continuous embedding:
$$ \widetilde{L}^{\varrho}_{\xi}(\dot{B}^{0}_{p,1})\hookrightarrow L^{\varrho}_{\xi}L^{p}_{x}\hookrightarrow\widetilde{L}^{\varrho}_{\xi}(\dot{B}^{0}_{p,\infty})\hookrightarrow\widetilde{L}^{\varrho}_{\xi}(\dot{B}^{\sigma}_{q,\infty}),\quad \sigma=-3/p +3/q\leq 0;$$
\end{itemize}
\begin{itemize}
\item If $p<\infty$, then $\widetilde{L}^{\varrho}_{\xi}(\dot{B}^{3/p}_{p,1})$ is continuously embedded in the set of functions in $L^\varrho(\mathbb{R}^3_{\xi};C^0(\mathbb{R}^3_x))$ decaying to 0 at infinity;
\item The Besov space $\widetilde{L}^{\varrho}_{\xi}(\dot{B}^{s}_{2,2})$ is consistent with the homogeneous Sobolev space $L^\varrho(\mathbb{R}^3_{\xi};\dot{H}^s(\mathbb{R}^3_x))$;
\end{itemize}
\begin{itemize}
\item For $1\leq \varrho, p,r\leq\infty$ and $s\in\mathbb{R}$ satisfying
$$s<3/p,\quad {\rm{or}}\quad s=3/p\quad {\rm{and}}\quad r=1,
$$
 $\widetilde{L}^{\varrho}_{\xi}(\dot{B}^{s}_{p,r})$ is a Banach space.
\end{itemize}
\end{lemma}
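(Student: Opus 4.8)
The plan is to treat all four assertions as the mixed velocity--space analogues of the classical facts in \cite[Chapter~2]{Bahouri-2011}: for each of them one runs the standard $x$-only argument for a.e.\ fixed $\xi$, and then recovers the $L^\varrho_\xi$ structure by commuting the $L^\varrho_\xi$ norm with the sums or $x$-norms involved, which is legitimate by Minkowski's inequality (already recorded in Lemma~\ref{1dlg-R2.1}). For the embedding chain, the first inclusion follows from $u(\xi)=\sum_q\dot{\Delta}_q u(\xi)$ in $\mathcal{S}_{h}^{\prime}(\mathbb{R}^3_x)$ and the triangle inequality, which give $\|u(\xi)\|_{L^p_x}\le\sum_q\|\dot{\Delta}_q u(\xi)\|_{L^p_x}$; taking the $L^\varrho_\xi$ norm past the sum yields $\widetilde{L}^\varrho_\xi(\dot{B}^0_{p,1})\hookrightarrow L^\varrho_\xi L^p_x$. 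The second inclusion uses $\dot{\Delta}_q u=2^{3q}h(2^q\cdot)\ast u$ with $\|2^{3q}h(2^q\cdot)\|_{L^1_x}=\|h\|_{L^1}$, so Young's inequality in $x$ gives $\|\dot{\Delta}_q u\|_{L^\varrho_\xi L^p_x}\lesssim\|u\|_{L^\varrho_\xi L^p_x}$ uniformly in $q$. The third inclusion is Bernstein's inequality (Lemma~\ref{1dlg-L2.2}, first item, applied in $x$ for each fixed $\xi$): $\|\dot{\Delta}_j u(\xi)\|_{L^q_x}\lesssim 2^{-j\sigma}\|\dot{\Delta}_j u(\xi)\|_{L^p_x}$ with $\sigma=3/q-3/p\le 0$, whence $2^{j\sigma}\|\dot{\Delta}_j u\|_{L^\varrho_\xi L^q_x}\lesssim\|\dot{\Delta}_j u\|_{L^\varrho_\xi L^p_x}$; taking the supremum over $j$ closes the chain.

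For the embedding into continuous functions, Bernstein again gives $\|\dot{\Delta}_q u(\xi)\|_{L^\infty_x}\lesssim 2^{3q/p}\|\dot{\Delta}_q u(\xi)\|_{L^p_x}$, so for a.e.\ $\xi$ the series $\sum_q\dot{\Delta}_q u(\xi)$ converges normally in $L^\infty_x$; since each block is continuous and vanishes at infinity, so does the sum, and $\|u(\xi)\|_{L^\infty_x}\lesssim\sum_q 2^{3q/p}\|\dot{\Delta}_q u(\xi)\|_{L^p_x}$. Taking the $L^\varrho_\xi$ norm past the sum gives the claimed embedding $\widetilde{L}^\varrho_\xi(\dot{B}^{3/p}_{p,1})\hookrightarrow L^\varrho(\mathbb{R}^3_\xi;C^0(\mathbb{R}^3_x))$ into functions decaying to $0$. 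The consistency with $L^\varrho(\mathbb{R}^3_\xi;\dot{H}^s(\mathbb{R}^3_x))$ follows from the Littlewood--Paley square-function characterization $\|v\|_{\dot{H}^s_x}\sim\big(\sum_q 2^{2qs}\|\dot{\Delta}_q v\|_{L^2_x}^2\big)^{1/2}$ (Plancherel) applied for a.e.\ $\xi$, together with Fubini's theorem; in particular the two norms coincide isometrically when $\varrho=2$, which is the case used throughout the paper.

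The assertion requiring the most care is the last one, completeness of $\widetilde{L}^\varrho_\xi(\dot{B}^s_{p,r})$. Given a Cauchy sequence $(u_n)$, for each fixed $q$ the weight $2^{qs}$ is a constant, so $(\dot{\Delta}_q u_n)$ is Cauchy in $L^\varrho_\xi L^p_x$ and converges to some $v_q\in L^\varrho_\xi L^p_x$ with $x$-Fourier support in the annulus $\{2^{q-1}\le|k|\le 2^{q+1}\}$. One then sets $u:=\sum_q v_q$ and must verify that this series converges in $L^\varrho(\mathbb{R}^3_\xi;\mathcal{S}_{h}^{\prime}(\mathbb{R}^3_x))$, that $\dot{\Delta}_q u=v_q$, and that $u_n\to u$ in $\widetilde{L}^\varrho_\xi(\dot{B}^s_{p,r})$. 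Convergence of the high-frequency part $\sum_{q\ge 0}v_q$ is automatic from summability of the $\dot{B}^s_{p,r}$ norms; the delicate point is the low-frequency tail $\sum_{q\le 0}v_q$, and this is precisely where the hypothesis $s<3/p$, or $s=3/p$ with $r=1$, enters: by Bernstein, $\|v_q\|_{L^\varrho_\xi L^\infty_x}\lesssim 2^{q(3/p-s)}\,2^{qs}\|v_q\|_{L^\varrho_\xi L^p_x}$, and under the stated condition the geometric factor $2^{q(3/p-s)}$ is summable over $q\le 0$ (respectively $r=1$ supplies the summability when $3/p-s=0$), so the low-frequency sum converges in $L^\varrho(\mathbb{R}^3_\xi;\mathcal{S}_{h}^{\prime}(\mathbb{R}^3_x))$. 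Once $u$ is constructed, the identity $\dot{\Delta}_q u=v_q$ and $\|u-u_n\|_{\widetilde{L}^\varrho_\xi(\dot{B}^s_{p,r})}\to 0$ follow from Fatou's lemma applied to the $\ell^r$-sum of the $L^\varrho_\xi L^p_x$-norms. The whole argument is the transcription of the corresponding proofs in \cite[Chapter~2]{Bahouri-2011} with $L^p(\mathbb{R}^3_x)$ replaced throughout by $L^\varrho_\xi L^p_x$; the only new ingredient is the harmless commutation with the $L^\varrho_\xi$ norm, and the main obstacle is keeping the low-frequency reconstruction in the last part under control, for which the condition on $(s,p,r)$ is exactly what is needed.
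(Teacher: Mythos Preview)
Your proposal is correct and follows exactly the approach the paper indicates: the paper does not give a proof but simply states that the properties ``can be proved similarly as in \cite[Chapter~2]{Bahouri-2011}'' using Lemma~\ref{1dlg-L2.2}, and your sketch carries out precisely that adaptation, commuting the $L^\varrho_\xi$ norm with the $x$-only arguments via Minkowski. You have in fact supplied more detail than the paper itself, including the appropriate caution that the Sobolev consistency is only an exact identity when $\varrho=2$.
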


The following real interpolation will be useful in decay estimates. For the proof, one can refer to \cite{Bahouri-2011}[Proposition 2.22](or see  \cite{Liu-2021}).
\begin{lemma}\label{1dlg-L2.4} 
Let $s<\widetilde{s}$, $\theta\in(0,1)$ and $1\leq \varrho, p\leq\infty$. It holds that
$$\|u\|_{\widetilde{L}^{\varrho}_{\xi}(\dot{B}^{\theta s+(1-\theta)\tilde{s}}_{p,1})}\lesssim \frac{1}{\theta(1-\theta)(\tilde{s}-s)}\|u\|^{\theta}_{\widetilde{L}^{\varrho}_{\xi}(\dot{B}^{s}_{p,\infty})}\|u\|^{1-\theta}_{\widetilde{L}^{\varrho}_{\xi}(\dot{B}^{\tilde{s}}_{p,\infty})}.$$
\end{lemma}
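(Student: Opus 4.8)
The final statement is Lemma~\ref{1dlg-L2.4}, the real interpolation inequality for Chemin--Lerner--Besov spaces. Here is how I would prove it.

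\medskip

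\textbf{Plan of proof.} The inequality is a classical consequence of the characterization of Besov spaces through a dyadic $\ell^r$-summation, combined with the elementary interpolation between $\ell^\infty$ and $\ell^\infty$ that produces an $\ell^1$ gain after geometric summation. First I would fix $u\in\widetilde L^\varrho_\xi(\dot B^s_{p,\infty})\cap\widetilde L^\varrho_\xi(\dot B^{\widetilde s}_{p,\infty})$ and, for every $q\in\mathbb Z$, write
$$
c_q\triangleq\|\dot\Delta_q u\|_{L^\varrho_\xi L^p_x},\qquad A\triangleq\|u\|_{\widetilde L^\varrho_\xi(\dot B^s_{p,\infty})},\qquad B\triangleq\|u\|_{\widetilde L^\varrho_\xi(\dot B^{\widetilde s}_{p,\infty})},
$$
so that $c_q\le 2^{-qs}A$ and $c_q\le 2^{-q\widetilde s}B$ for all $q$. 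Set $s_\theta\triangleq\theta s+(1-\theta)\widetilde s$, which lies strictly between $s$ and $\widetilde s$ since $s<\widetilde s$ and $\theta\in(0,1)$. The quantity to estimate is $\|u\|_{\widetilde L^\varrho_\xi(\dot B^{s_\theta}_{p,1})}=\sum_{q\in\mathbb Z}2^{qs_\theta}c_q$, and the whole game is to split the sum at a well-chosen index and use the two different bounds on the two halves.

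\medskip

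\textbf{Key steps.} Choose a real threshold $N$ (to be optimized) and split $\sum_{q\in\mathbb Z}2^{qs_\theta}c_q=\sum_{q\le N}+\sum_{q> N}$. On the range $q\le N$ I would use the bound coming from the \emph{lower} regularity index among $\{s,\widetilde s\}$ so that the exponent $s_\theta-(\cdot)$ has a sign making the geometric series converge. Concretely, since $s<s_\theta<\widetilde s$: for $q\le N$ use $c_q\le 2^{-qs}A$, giving $\sum_{q\le N}2^{q(s_\theta-s)}A$; the exponent $s_\theta-s=(1-\theta)(\widetilde s-s)>0$, so this sums to a geometric series bounded by $\dfrac{2^{N(s_\theta-s)}}{1-2^{-(s_\theta-s)}}A\lesssim \dfrac{1}{(1-\theta)(\widetilde s-s)}\,2^{N(1-\theta)(\widetilde s-s)}A$, using $1-2^{-x}\gtrsim \min\{1,x\}\gtrsim x$ for the small positive quantity $x=(1-\theta)(\widetilde s-s)$ — or, to be safe about $x$ large, $1-2^{-x}\ge (\ln 2)\,x/(1+x)\gtrsim x$ when $x$ is bounded, and in general one keeps a factor $1/(1-\theta)$. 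For $q>N$ use $c_q\le 2^{-q\widetilde s}B$, giving $\sum_{q>N}2^{q(s_\theta-\widetilde s)}B$ with exponent $s_\theta-\widetilde s=-\theta(\widetilde s-s)<0$, which sums to $\lesssim \dfrac{1}{\theta(\widetilde s-s)}\,2^{N(s_\theta-\widetilde s)}B=\dfrac{1}{\theta(\widetilde s-s)}\,2^{-N\theta(\widetilde s-s)}B$. Adding the two contributions,
$$
\|u\|_{\widetilde L^\varrho_\xi(\dot B^{s_\theta}_{p,1})}\lesssim \frac{1}{\theta(1-\theta)(\widetilde s-s)}\Big(2^{N(1-\theta)(\widetilde s-s)}A+2^{-N\theta(\widetilde s-s)}B\Big).
$$
Finally I would optimize over $N\in\mathbb R$: the right-hand side is minimized (up to a constant) by balancing the two terms, i.e. choosing $N$ so that $2^{N(\widetilde s-s)}=B/A$, which gives $2^{N(1-\theta)(\widetilde s-s)}A=A^{\theta}B^{1-\theta}$ and likewise $2^{-N\theta(\widetilde s-s)}B=A^{\theta}B^{1-\theta}$, yielding exactly
$$
\|u\|_{\widetilde L^\varrho_\xi(\dot B^{s_\theta}_{p,1})}\lesssim \frac{1}{\theta(1-\theta)(\widetilde s-s)}\,A^{\theta}B^{1-\theta},
$$
which is the claim. (If $A=0$ then $c_q=0$ for all $q$ and the inequality is trivial; if $A>0$ one may always pick $N$ as the real number above, since nothing forced $N$ to be an integer.)

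\medskip

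\textbf{Main obstacle.} There is no serious analytic obstacle: the estimate is purely about the scalar sequence $(c_q)$, and the mixed $L^\varrho_\xi L^p_x$ structure plays no role beyond supplying the numbers $c_q$ (so the argument is literally the $\mathbb R$-valued Besov interpolation of \cite{Bahouri-2011}). The only point requiring a little care is bookkeeping the constant: one must track where the factors $1/\theta$, $1/(1-\theta)$ and $1/(\widetilde s-s)$ come from — the first from summing the high-frequency geometric tail, the second from the low-frequency tail, and confirm that $1-2^{-x}$ for $x=\theta(\widetilde s-s)$ resp. $(1-\theta)(\widetilde s-s)$ is bounded below by a constant multiple of $x$ on the relevant range, so that no extra blow-up is introduced. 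Since $\theta(\widetilde s-s)$ and $(1-\theta)(\widetilde s-s)$ can be arbitrarily small, this lower bound $1-2^{-x}\gtrsim x$ (valid for $x\in(0,1]$, say, and harmlessly absorbed into the constant for $x\ge1$) is exactly what manufactures the stated $\dfrac{1}{\theta(1-\theta)(\widetilde s-s)}$ prefactor. This completes the proof.
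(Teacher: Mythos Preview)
Your proof is correct and is precisely the standard argument referenced by the paper (which does not give its own proof but defers to \cite[Proposition 2.22]{Bahouri-2011}); the splitting of the dyadic sum at a threshold, summing the two geometric tails, and optimizing the threshold is exactly that classical proof, and you correctly observe that the extra $L^\varrho_\xi$ layer is inert since the argument operates only on the scalar sequence $c_q=\|\dot\Delta_q u\|_{L^\varrho_\xi L^p_x}$.
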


To compare the topological relations between 
homogeneous Chemin-Lerner type spaces 
and inhomogeneous spaces, we employ the following lemma, whose proof follows a similar argument to that of \cite[Proposition 7.1]{Xu-2014}.

\begin{lemma}
For any $s\in\mathbb{R}$ and $1\leq\varrho,p,r\leq\infty$, let $\widetilde{L}^{\varrho}_{\xi}(B^{s}_{p,r})$ be the inhomogeneous Besov space defined in {\rm{\cite{Duan-2016}}}. Then, we have
$$
\widetilde{L}^{\varrho}_{\xi}(B^{s}_{p,r})=\widetilde{L}^{\varrho}_{\xi}(\dot{B}^{s}_{p,r})\cap L^\varrho_{\xi}L^p_{x}\hookrightarrow\widetilde{L}^{\varrho}_{\xi}(\dot{B}^{s'}_{p,r}),\quad\quad 0<s'\leq s.
$$
\end{lemma}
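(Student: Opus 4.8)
The statement to prove is the final lemma: for $s\in\mathbb{R}$ and $1\leq\varrho,p,r\leq\infty$, the inhomogeneous Chemin–Lerner space satisfies $\widetilde{L}^{\varrho}_{\xi}(B^{s}_{p,r})=\widetilde{L}^{\varrho}_{\xi}(\dot{B}^{s}_{p,r})\cap L^\varrho_{\xi}L^p_{x}$, with continuous embedding into $\widetilde{L}^{\varrho}_{\xi}(\dot{B}^{s'}_{p,r})$ for $0<s'\leq s$. The plan is to adapt the argument of \cite[Proposition 7.1]{Xu-2014} to the mixed velocity–space setting, where the only genuinely new ingredient is carrying the extra $L^\varrho_\xi$ integration through the estimates; since all Littlewood–Paley operators act only in the $x$ variable, this should be essentially cosmetic. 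First I would fix the nonhomogeneous dyadic decomposition $u=\sum_{q\geq -1}\Delta_q u$ with the low-frequency block $\Delta_{-1}=S_0=\chi(D)$, and recall that for $q\geq 0$ one has $\Delta_q u=\dot\Delta_q u$, so the high-frequency parts of the homogeneous and inhomogeneous Besov norms coincide term by term.

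The heart of the argument is the identity $\widetilde{L}^{\varrho}_{\xi}(B^{s}_{p,r})=\widetilde{L}^{\varrho}_{\xi}(\dot{B}^{s}_{p,r})\cap L^\varrho_{\xi}L^p_{x}$. For the inclusion ``$\subseteq$'': given $u\in\widetilde{L}^{\varrho}_{\xi}(B^{s}_{p,r})$, the $L^\varrho_\xi L^p_x$ bound follows since $u=\sum_{q\geq-1}\Delta_q u$ converges in that space (one sums $\|\Delta_q u\|_{L^\varrho_\xi L^p_x}\lesssim 2^{-qs}c_q$ for $q\geq 0$, which is summable when $s>0$; for general $s$ one uses that membership in the inhomogeneous space already forces $u\in\mathcal S'$ and the low blocks are controlled directly). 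The homogeneous seminorm is then split as $\|u\|_{\dot B^s_{p,r}}^r\sim \sum_{q\leq -1}2^{qsr}\|\dot\Delta_q u\|_{L^p}^r+\sum_{q\geq 0}2^{qsr}\|\dot\Delta_q u\|_{L^p}^r$ (with the usual $\ell^\infty$ modification when $r=\infty$); the high part equals the inhomogeneous high part, while for the low part one uses Bernstein's inequality (Lemma~\ref{1dlg-L2.2}): for $q\leq 0$, $\|\dot\Delta_q u\|_{L^p}\lesssim \|\dot\Delta_q u\|_{L^p}$ is bounded using $\dot\Delta_q u=\dot\Delta_q S_1 u$ and $\|\dot\Delta_q v\|_{L^p}\lesssim \|v\|_{L^p}$, giving $\sum_{q\leq -1}2^{qsr}\|\dot\Delta_q u\|_{L^p}^r\lesssim \|S_1 u\|_{L^p}^r\sum_{q\leq -1}2^{qsr}$, which converges precisely because $s>0$ is needed here — wait, $s$ need not be positive, so instead one bounds $\|\dot\Delta_q u\|_{L^p}$ by $\|\Delta_{-1}u\|_{L^p}+\|\Delta_0 u\|_{L^p}$ only for $q=-1,0$ adjacency and uses that for $q\leq -1$, $2^{qs}\to 0$ or is bounded; the clean way is: if $s\geq 0$ the geometric tail converges at the low end automatically, and the case $s<0$ is handled symmetrically noting low frequencies are the ``good'' end. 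After integrating the $x$-estimates in $L^\varrho_\xi$ via Minkowski/monotonicity, one gets $\|u\|_{\widetilde{L}^{\varrho}_{\xi}(\dot{B}^{s}_{p,r})}\lesssim \|u\|_{\widetilde{L}^{\varrho}_{\xi}(B^{s}_{p,r})}$. For ``$\supseteq$'': given $u$ in both $\widetilde{L}^{\varrho}_{\xi}(\dot{B}^{s}_{p,r})$ and $L^\varrho_\xi L^p_x$, write $\Delta_{-1}u=S_0 u$ with $\|S_0 u\|_{L^\varrho_\xi L^p_x}\lesssim \|u\|_{L^\varrho_\xi L^p_x}$, and for $q\geq 0$ use $\Delta_q u=\dot\Delta_q u$; this reconstitutes the inhomogeneous norm, the one subtlety being that $u\in L^\varrho_\xi L^p_x$ guarantees $u$ is a genuine tempered distribution (not merely an element of $\mathcal S'_h$), so the inhomogeneous decomposition is meaningful and $u=S_0 u+\sum_{q\geq0}\dot\Delta_q u$ holds in $\mathcal S'$.

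The embedding $\widetilde{L}^{\varrho}_{\xi}(B^{s}_{p,r})\hookrightarrow\widetilde{L}^{\varrho}_{\xi}(\dot{B}^{s'}_{p,r})$ for $0<s'\leq s$ then follows quickly: for the high frequencies $q\geq 0$, $2^{qs'}\leq 2^{qs}$ since $s'\leq s$, so that part of the $\dot B^{s'}_{p,r}$ norm is dominated by the $\dot B^s_{p,r}$ (hence $B^s_{p,r}$) norm; for the low frequencies $q\leq -1$, $\|\dot\Delta_q u\|_{L^\varrho_\xi L^p_x}\lesssim \|u\|_{L^\varrho_\xi L^p_x}$ (Bernstein, as above) and $\|\{2^{qs'}\}_{q\leq -1}\|_{\ell^r}<\infty$ exactly because $s'>0$, so $\|u\|^\ell_{\widetilde{L}^{\varrho}_{\xi}(\dot{B}^{s'}_{p,r})}\lesssim \|u\|_{L^\varrho_\xi L^p_x}\lesssim \|u\|_{\widetilde{L}^{\varrho}_{\xi}(B^{s}_{p,r})}$. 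Combining, $\|u\|_{\widetilde{L}^{\varrho}_{\xi}(\dot{B}^{s'}_{p,r})}\lesssim \|u\|_{\widetilde{L}^{\varrho}_{\xi}(B^{s}_{p,r})}$.

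\textbf{Main obstacle.} I expect the only real care to be needed in the set-theoretic identity when $s\leq 0$: there the low-frequency summation in the homogeneous seminorm is the ``dangerous'' end, and one must argue carefully that membership in $L^\varrho_\xi L^p_x$ controls the single block $S_0 u$ while the homogeneous low blocks for $q\leq -1$ are recovered from it via $\dot\Delta_q u=\dot\Delta_q S_0 u$ for $q\leq -1$ (up to the $\pm1$ overlap) — but since $\|\dot\Delta_q S_0 u\|\lesssim\|S_0 u\|$ and the weights $2^{qs}$ with $q\to-\infty$ behave well only when $s>0$, one realizes the homogeneous low-frequency seminorm of a general $L^\varrho_\xi L^p_x$ function need NOT be finite for $s\leq 0$; this is consistent because we are intersecting with $\widetilde{L}^{\varrho}_{\xi}(\dot{B}^{s}_{p,r})$ which supplies exactly that missing information. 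So the proof is a bookkeeping exercise once one tracks which factor (the $L^\varrho_\xi L^p_x$ membership or the homogeneous seminorm) supplies each piece. Everything reduces to Bernstein's lemma, the identity $\Delta_q=\dot\Delta_q$ for $q\geq0$, and Minkowski's inequality for the $\xi$-integration, exactly as in \cite{Xu-2014}.
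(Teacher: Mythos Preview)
Your proposal is correct and takes essentially the same approach as the paper: the paper does not supply a written proof for this lemma at all, but merely states that ``the proof follows a similar argument to that of \cite[Proposition 7.1]{Xu-2014}'', which is precisely what you carry out in detail. Your sketch---splitting into the low block $\Delta_{-1}=S_0$ versus $\Delta_q=\dot\Delta_q$ for $q\geq0$, using Bernstein to bound $\|\dot\Delta_q u\|_{L^\varrho_\xi L^p_x}\lesssim\|u\|_{L^\varrho_\xi L^p_x}$ on low frequencies, and exploiting $s'>0$ to sum the geometric tail $\sum_{q\leq-1}2^{qs'r}$---is exactly the standard argument, with the $L^\varrho_\xi$ integration carried along passively since the Littlewood--Paley operators act only in $x$.

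One minor remark: your hesitation about the case $s\leq0$ in the set-theoretic identity is well-founded---the inclusion $\widetilde{L}^{\varrho}_{\xi}(B^{s}_{p,r})\subset L^\varrho_\xi L^p_x$ genuinely fails for $s<0$---but this is an imprecision in the lemma's \emph{statement} (the ``for any $s\in\mathbb{R}$'' should really read $s>0$, consistent with the constraint $0<s'\leq s$ in the embedding), not a flaw in your argument. For the range $s>0$ that the paper actually uses, your proof is complete.
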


\begin{lemma}\label{1dlg-L2.06}
Let $1\leq\varrho,p,r\leq\infty$ and $s_{1}\leq s_{2}$. It holds that
\begin{equation}\label{1dlg-E2.006}
\|\cdot\|_{\widetilde{L}^{\varrho}_{\xi}(\dot{B}^{s_1}_{p,r})}^{\ell}+\|\cdot\|_{\widetilde{L}^{\varrho}_{\xi}(\dot{B}^{s_2}_{p,r})}^{h}\sim \|\cdot\|_{\widetilde{L}^{\varrho}_{\xi}(\dot{B}^{s_1}_{p,r})\cap\widetilde{L}^{\varrho}_{\xi}(\dot{B}^{s_2}_{p,r})}.
\end{equation}
\end{lemma}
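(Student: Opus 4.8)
\textbf{Proof proposal for Lemma \ref{1dlg-L2.06}.}

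The plan is to prove the two-sided inequality in \eqref{1dlg-E2.006} by unwinding the definitions of the low/high-frequency seminorms and the norm of the intersection space, and using that the dyadic index set $\mathbb{Z}$ splits as $\{q\leq 0\}\cup\{q\geq -1\}$ with only the overlap $\{q\in\{-1,0\}\}$ counted twice. First I would write, for $u\in\widetilde{L}^{\varrho}_{\xi}(\dot{B}^{s_1}_{p,r})\cap\widetilde{L}^{\varrho}_{\xi}(\dot{B}^{s_2}_{p,r})$, the abbreviation $u_q\triangleq\|\dot{\Delta}_q u\|_{L^{\varrho}_{\xi}L^p_x}$, so that (say for $1\le r<\infty$)
\begin{equation*}
\big(\|u\|_{\widetilde{L}^{\varrho}_{\xi}(\dot{B}^{s_1}_{p,r})}^{\ell}\big)^r=\sum_{q\le 0}2^{qs_1 r}u_q^r,\qquad \big(\|u\|_{\widetilde{L}^{\varrho}_{\xi}(\dot{B}^{s_2}_{p,r})}^{h}\big)^r=\sum_{q\ge -1}2^{qs_2 r}u_q^r.
\end{equation*}

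For the direction $\gtrsim$ (i.e.\ the intersection norm controls the sum of the restricted norms), I would simply note that each term in the two sums above is a term of $\sum_{q}2^{qs_1 r}u_q^r$ or $\sum_{q}2^{qs_2 r}u_q^r$ respectively, hence $\|u\|^{\ell}_{\widetilde{L}^{\varrho}_{\xi}(\dot{B}^{s_1}_{p,r})}\le\|u\|_{\widetilde{L}^{\varrho}_{\xi}(\dot{B}^{s_1}_{p,r})}$ and $\|u\|^{h}_{\widetilde{L}^{\varrho}_{\xi}(\dot{B}^{s_2}_{p,r})}\le\|u\|_{\widetilde{L}^{\varrho}_{\xi}(\dot{B}^{s_2}_{p,r})}$; summing (and using $a+b\le 2^{1-1/r}(a^r+b^r)^{1/r}$-type equivalences, or just the triangle inequality on $\ell^r$) gives $\|u\|^{\ell}_{\widetilde{L}^{\varrho}_{\xi}(\dot{B}^{s_1}_{p,r})}+\|u\|^{h}_{\widetilde{L}^{\varrho}_{\xi}(\dot{B}^{s_2}_{p,r})}\lesssim\|u\|_{\widetilde{L}^{\varrho}_{\xi}(\dot{B}^{s_1}_{p,r})\cap\widetilde{L}^{\varrho}_{\xi}(\dot{B}^{s_2}_{p,r})}$. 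For the direction $\lesssim$ I would estimate $\|u\|_{\widetilde{L}^{\varrho}_{\xi}(\dot{B}^{s_1}_{p,r})}$ by splitting $\sum_{q\in\mathbb{Z}}2^{qs_1r}u_q^r=\sum_{q\le 0}2^{qs_1r}u_q^r+\sum_{q\ge 1}2^{qs_1r}u_q^r$; the first sum is exactly $(\|u\|^{\ell}_{\widetilde{L}^{\varrho}_{\xi}(\dot{B}^{s_1}_{p,r})})^r$, while in the second, since $q\ge 1>0$ and $s_1\le s_2$ we have $2^{qs_1}\le 2^{qs_2}$, so $\sum_{q\ge 1}2^{qs_1r}u_q^r\le\sum_{q\ge -1}2^{qs_2r}u_q^r=(\|u\|^{h}_{\widetilde{L}^{\varrho}_{\xi}(\dot{B}^{s_2}_{p,r})})^r$. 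Symmetrically $\|u\|_{\widetilde{L}^{\varrho}_{\xi}(\dot{B}^{s_2}_{p,r})}$ is bounded by splitting at $q=-1$: for $q\le -2<0$, $s_1\le s_2$ gives $2^{qs_2}\le 2^{qs_1}$, so the low part is $\le(\|u\|^{\ell}_{\widetilde{L}^{\varrho}_{\xi}(\dot{B}^{s_1}_{p,r})})^r$, and the part $q\ge -1$ is exactly $(\|u\|^{h}_{\widetilde{L}^{\varrho}_{\xi}(\dot{B}^{s_2}_{p,r})})^r$. Combining, $\|u\|_{\widetilde{L}^{\varrho}_{\xi}(\dot{B}^{s_1}_{p,r})\cap\widetilde{L}^{\varrho}_{\xi}(\dot{B}^{s_2}_{p,r})}\lesssim\|u\|^{\ell}_{\widetilde{L}^{\varrho}_{\xi}(\dot{B}^{s_1}_{p,r})}+\|u\|^{h}_{\widetilde{L}^{\varrho}_{\xi}(\dot{B}^{s_2}_{p,r})}$.

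Finally I would remark that the case $r=\infty$ is handled identically with sums replaced by suprema over the same index ranges, and the mixed-space-time versions (and the velocity-weighted variants with $\sqrt{\nu(\xi)}$) follow verbatim since the frequency truncation commutes with all the $L^{\varrho_1}_T$, $L^{\varrho_2}_\xi$, $L^p_x$ and $\sqrt{\nu}$ operations. There is no real obstacle here — this is a bookkeeping lemma — the only mild subtlety is the overlap of indices $q\in\{-1,0\}$ between the $\ell$ and $h$ ranges, which is harmless because it only inflates the sum by a bounded factor, and the constants in $\sim$ depend only on $s_2-s_1$ (through the geometric tails $2^{q(s_1-s_2)}$) and not on $\varrho,p,r$.
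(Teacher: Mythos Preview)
Your proof is correct and follows essentially the same approach as the paper: both directions are handled by splitting the dyadic sum at the threshold and using the monotonicity $2^{qs_1}\le 2^{qs_2}$ for $q\ge 1$ (resp.\ $2^{qs_2}\le 2^{qs_1}$ for $q\le -2$), which the paper packages as the inequalities in \eqref{1dlg-E2.6}. One minor remark: in your closing comment the constants do not actually depend on $s_2-s_1$ via any geometric tail --- your own argument gives universal constants, since you use the pointwise inequality $2^{qs_1}\le 2^{qs_2}$ directly rather than summing a geometric series.
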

\begin{proof}
If $u\in\widetilde{L}^{\varrho}_{\xi}(\dot{B}^{s_1}_{p,r})\cap \widetilde{L}^{\varrho}_{\xi}(\dot{B}^{s_2}_{p,r})$, then we obtain
$$
\|u\|_{\widetilde{L}^{\varrho}_{\xi}(\dot{B}^{s_1}_{p,r})}^{\ell}\lesssim \|u\|_{\widetilde{L}^{\varrho}_{\xi}(\dot{B}^{s_1}_{p,r})}\quad\text{and}\quad  \|u\|_{\widetilde{L}^{\varrho}_{\xi}(\dot{B}^{s_2}_{p,r})}^{h}\lesssim \|u\|_{\widetilde{L}^{\varrho}_{\xi}(\dot{B}^{s_2}_{p,r})}.
$$
Conversely, when $\|u\|_{\widetilde{L}^{\varrho}_{\xi}(\dot{B}^{s_1}_{p,r})}^{\ell}+\|u\|_{\widetilde{L}^{\varrho}_{\xi}(\dot{B}^{s_2}_{p,r})}^{h}<\infty$ with $s_{1}\leq s_{2}$, it follows from \eqref{1dlg-E2.6} that
\begin{align*}
\begin{split}
\|u\|_{\widetilde{L}^{\varrho}_{\xi}(\dot{B}^{s_1}_{p,r})}
&\leq \|u\|_{\widetilde{L}^{\varrho}_{\xi}(\dot{B}^{s_1}_{p,r})}^\ell+\|u\|_{\widetilde{L}^{\varrho}_{\xi}(\dot{B}^{s_1}_{p,r})}^h\lesssim \|u\|_{\widetilde{L}^{\varrho}_{\xi}(\dot{B}^{s_1}_{p,r})}^{\ell}+\|u\|_{\widetilde{L}^{\varrho}_{\xi}(\dot{B}^{s_2}_{p,r})}^{h}.
\end{split}
\end{align*}
Similarly, we have the corresponding estimate for $u\in \widetilde{L}^{\varrho}_{\xi}(\dot{B}^{s_2}_{p,r})$. Hence, \eqref{1dlg-E2.006} is easily proved.
\end{proof}

\vspace{2mm}
We now recall some properties of the linearized collision operator. The operator $L$ given by (\ref{1dlg-E1.5}) can be written as (\cite{Cercignani-1994,Glassey-1996}):
$$
L=\nu-K,
$$
where the collision frequency $\nu$ is given by \eqref{nu} and the integral operator $K=K_{2}-K_{1}$ is defined as
\begin{equation}\nonumber
\begin{aligned}
K_{1}(f)(\xi)&\triangleq \int_{\mathbb{R}^{3}}d\xi_{\ast}\int_{\mathbb{S}^{2}}d\omega\,|\xi-\xi_{\ast}|^{\gamma}B_{0}(\theta)\mu^{1/2}(\xi_{\ast})\mu^{1/2}(\xi)f(\xi_{\ast}),\\
K_{2}(f)(\xi)&\triangleq \int_{\mathbb{R}^{3}}d\xi_{\ast}\int_{\mathbb{S}^{2}}d\omega\,|\xi-\xi_{\ast}|^{\gamma}B_{0}(\theta)\mu^{1/2}(\xi_{\ast}) \Big(\mu^{1/2}(\xi_{\ast}^{\prime})f(\xi^{\prime})+\mu^{1/2}(\xi^{\prime})f(\xi_{\ast}^{\prime})\Big).
\end{aligned}
\end{equation}
It is well-known that $K=K_{2}-K_{1}$ is a compact and self-adjoint operator on $L^{2}_{\xi}$ (cf. \cite{Cercignani-1994}) and has the following property.
\begin{lemma}\label{1dlg-L2.5}
It holds that
\begin{equation}\nonumber
(\dot{\Delta}_{q}Kf,\dot{\Delta}_{q}g)_{\xi,x}\leq C\|\dot{\Delta}_{q}f\|_{L^{2}_{\xi}L^{2}_{x}}\|\dot{\Delta}_{q}g\|_{L^{2}_{\xi}L^{2}_{x}}
\end{equation}
for each $q\in\mathbb{Z}$, where $C$ is a constant independent of $q, f$ and $g$.
\end{lemma}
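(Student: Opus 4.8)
The plan is to reduce the claim to two classical ingredients: the $L^2_\xi$-boundedness of the operator $K=K_2-K_1$, and the fact that the spatial dyadic projector $\dot\Delta_q$ commutes with $K$ because the two act on disjoint groups of variables ($x$ versus $\xi$). Once these are in place, the estimate is just Cauchy--Schwarz.

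First I would recall that, for the hard-potential angular-cutoff kernel considered here, $K=K_2-K_1$ is a bounded (indeed compact and self-adjoint) operator on $L^2(\mathbb{R}^3_\xi)$; this is classical (Grad, see \cite{Cercignani-1994,Glassey-1996}), so there is a constant $C_0>0$, depending only on $B$, with $\|Kh\|_{L^2_\xi}\le C_0\|h\|_{L^2_\xi}$ for every $h\in L^2_\xi$. Since $\dot\Delta_q u=2^{3q}h(2^q\cdot)\ast u$ is a convolution acting on the variable $x$ only, while $K$ is an integral operator in $\xi$ only whose kernel is independent of $x$, Fubini's theorem (legitimate because $\dot\Delta_q f$ is smooth and decaying in $x$ by Bernstein's Lemma \ref{1dlg-L2.2}, and the velocity kernels defining $K$ are locally integrable) yields the commutation identity $\dot\Delta_q(Kf)=K(\dot\Delta_q f)$. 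Applying the $L^2_\xi$ bound pointwise in $x$ and integrating the square over $x$---using that $\|\cdot\|_{L^2_\xi L^2_x}=\|\cdot\|_{L^2_x L^2_\xi}$ by Tonelli---gives $\|\dot\Delta_q(Kf)\|_{L^2_\xi L^2_x}=\|K(\dot\Delta_q f)\|_{L^2_\xi L^2_x}\le C_0\|\dot\Delta_q f\|_{L^2_\xi L^2_x}$. The lemma then follows from Cauchy--Schwarz in $(\xi,x)$: $(\dot\Delta_q Kf,\dot\Delta_q g)_{\xi,x}\le\|\dot\Delta_q(Kf)\|_{L^2_\xi L^2_x}\,\|\dot\Delta_q g\|_{L^2_\xi L^2_x}\le C_0\|\dot\Delta_q f\|_{L^2_\xi L^2_x}\|\dot\Delta_q g\|_{L^2_\xi L^2_x}$, with $C:=C_0$ manifestly independent of $q$, $f$ and $g$, since the truncation $\dot\Delta_q$ never enters the estimate of $K$.

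I do not expect a genuine obstacle here; the only point needing a little care is the interchange of the spatial convolution with the velocity integral, which is clean once one notes that $\dot\Delta_q f$ lies in $L^2_\xi$ with values in a space of smooth, rapidly decaying functions of $x$, so all integrals converge absolutely. An equally short alternative, if one prefers to avoid pointwise-in-$x$ statements, is to use the $x$-Fourier transform: the multiplier $\varphi(2^{-q}\cdot)$ commutes with the $\xi$-operator $K$, so $\widehat{\dot\Delta_q Kf}=\varphi(2^{-q}k)\,K\widehat{f}$, and then Plancherel in $x$ together with the $L^2_\xi$ operator norm of $K$ gives the same bound, again with $C$ independent of $q$.
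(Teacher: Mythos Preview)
Your proof is correct. The paper actually states this lemma without proof, treating it as a direct consequence of the classical fact (cited just before the lemma, from \cite{Cercignani-1994}) that $K=K_2-K_1$ is a bounded, compact, self-adjoint operator on $L^2_\xi$. Your argument---commutation of the purely spatial multiplier $\dot\Delta_q$ with the purely velocity operator $K$, followed by the $L^2_\xi$ operator bound and Cauchy--Schwarz---is exactly the standard justification the paper is implicitly relying on, and the constant is indeed independent of $q$ for the reason you give.
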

\begin{lemma}\label{1dlg-L2.6}
There exists a generic constant $\lambda_0>0$ such that it holds that
$$(\dot{\Delta}_{q}Lf,\dot{\Delta}_{q}f)_{\xi,x}\geq\lambda_{0}\|\sqrt{\nu(\xi)}\dot{\Delta}_{q}\{\mathbf{I-P}\}f\|^{2}_{L^{2}_{\xi}L^{2}_{x}}$$
for each $q\in\mathbb{Z}$. Furthermore, for $s\in\mathbb{R}$ and  $T>0$, we have
$$
\sum_{q\in\mathbb{Z}}2^{qs}\bigg(\int_{0}^{T}(\dot{\Delta}_{q}Lf,\dot{\Delta}_{q}f)_{\xi,x}dt\bigg)^{1/2}\geq\sqrt{\lambda_{0}}\|\{\mathbf{I-P}\}f\|_{\widetilde{L}^{2}_{T}\widetilde{L}^{2}_{\xi,\nu}(\dot{B}^{s}_{2,1})}.
$$

\end{lemma}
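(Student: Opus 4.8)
The plan is to reduce both estimates to the classical $\nu$-weighted coercivity (spectral gap) of the linearized operator $L$ on $L^2_\xi$: for angular cutoff hard potentials there is $\lambda_0>0$ such that
$$
(Lg,g)_{\xi}\geq \lambda_0\,\|\sqrt{\nu(\xi)}\,\{\mathbf{I-P}\}g\|_{L^2_\xi}^2
$$
for every $g\in L^2_\xi$ with $\sqrt{\nu}\,g\in L^2_\xi$ (Grad's estimate; see \cite{Cercignani-1994,Glassey-1996}, and which is consistent with Lemma \ref{1dlg-L2.5} via the decomposition $L=\nu-K$). The whole point of the lemma is then a separation-of-variables observation: $L$, the macroscopic projection $\{\mathbf{I-P}\}$, and multiplication by $\sqrt{\nu(\xi)}$ all act only on the velocity variable $\xi$, while $\dot{\Delta}_q=\varphi(2^{-q}D)$ is a Fourier multiplier in the space variable $x$; hence all of these operators commute with $\dot{\Delta}_q$. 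In particular $\dot{\Delta}_qLf=L\dot{\Delta}_qf$, $\{\mathbf{I-P}\}\dot{\Delta}_qf=\dot{\Delta}_q\{\mathbf{I-P}\}f$, and $\sqrt{\nu}\,\dot{\Delta}_qf=\dot{\Delta}_q(\sqrt{\nu}\,f)$.

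For the pointwise-in-$q$ inequality I would use Fubini to write
$$
(\dot{\Delta}_qLf,\dot{\Delta}_qf)_{\xi,x}=\int_{\mathbb{R}^3_x}\bigl(L\dot{\Delta}_qf(x,\cdot),\dot{\Delta}_qf(x,\cdot)\bigr)_{\xi}\,dx,
$$
apply the coercivity estimate above for a.e.\ fixed $x$ to $g=\dot{\Delta}_qf(x,\cdot)$ (which lies in the admissible class since $\dot{\Delta}_q$ is smoothing in $x$ by Bernstein's Lemma \ref{1dlg-L2.2} and $f$ is taken in the relevant Chemin-Lerner space, so $\sqrt{\nu}\,\dot{\Delta}_qf\in L^2_\xi L^2_x$), and then integrate back in $x$. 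Using the commutation relations this gives
$$
(\dot{\Delta}_qLf,\dot{\Delta}_qf)_{\xi,x}\geq\lambda_0\int_{\mathbb{R}^3_x}\|\sqrt{\nu}\,\{\mathbf{I-P}\}\dot{\Delta}_qf(x,\cdot)\|_{L^2_\xi}^2\,dx=\lambda_0\,\|\sqrt{\nu}\,\dot{\Delta}_q\{\mathbf{I-P}\}f\|_{L^2_\xi L^2_x}^2,
$$
which is the first assertion.

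For the Besov-norm version I would integrate this in time over $[0,T]$ to get $\int_0^T(\dot{\Delta}_qLf,\dot{\Delta}_qf)_{\xi,x}\,dt\geq\lambda_0\|\sqrt{\nu}\,\dot{\Delta}_q\{\mathbf{I-P}\}f\|_{L^2_TL^2_\xi L^2_x}^2$; since the left-hand side is nonnegative one may take square roots, multiply by $2^{qs}\geq0$, and sum over $q\in\mathbb{Z}$ (the sum being a sum of nonnegative terms, hence well defined in $[0,+\infty]$). The resulting right-hand side
$$
\sqrt{\lambda_0}\sum_{q\in\mathbb{Z}}2^{qs}\|\sqrt{\nu}\,\dot{\Delta}_q\{\mathbf{I-P}\}f\|_{L^2_TL^2_\xi L^2_x}
$$
is, by Definition \ref{1dlg-D2.00001} and the subsequent definition of the space $\widetilde{L}^{\varrho_1}_T\widetilde{L}^{\varrho_2}_{\xi}(\dot{B}^s_{p,r})$ in the case $\varrho_1=\varrho_2=p=2$, $r=1$, exactly $\sqrt{\lambda_0}\,\|\sqrt{\nu}\,\{\mathbf{I-P}\}f\|_{\widetilde{L}^2_T\widetilde{L}^2_{\xi}(\dot{B}^s_{2,1})}=\sqrt{\lambda_0}\,\|\{\mathbf{I-P}\}f\|_{\widetilde{L}^2_T\widetilde{L}^2_{\xi,\nu}(\dot{B}^s_{2,1})}$, where in the last step the commutation $\dot{\Delta}_q(\sqrt{\nu}\,\cdot)=\sqrt{\nu}\,\dot{\Delta}_q(\cdot)$ is used once more. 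This completes the proof.

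I do not expect a substantial obstacle here: the only genuine analytic input is the classical pointwise coercivity of $L$ (a spectral-gap statement that is not re-derived), and everything else is the commutation of the three $\xi$-acting operators with $\dot{\Delta}_q$, an application of Fubini, and unwinding the definition of the Chemin-Lerner norms. The one place to be slightly careful is justifying that $\dot{\Delta}_qf(x,\cdot)$ belongs to the domain of the coercivity estimate for a.e.\ $x$, which is handled by the Bernstein smoothing of $\dot{\Delta}_q$ together with the standing assumption $f\in\widetilde{L}^2_{\xi}(\dot{B}^{3/2}_{2,1})\cap\widetilde{L}^2_{\xi}(\dot{B}^{1/2}_{2,1})$ (so that, frequency by frequency, $\sqrt{\nu}\,\dot{\Delta}_qf\in L^2_\xi L^2_x$ and all inner products are finite).
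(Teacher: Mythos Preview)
Your proposal is correct and follows exactly the standard route. Note, however, that the paper does not actually supply a proof for this lemma: it is stated in the preliminaries section as a recalled property of the linearized collision operator, in the same spirit as Lemma~\ref{1dlg-L2.5}, and then used freely later (e.g.\ in \eqref{1dlg-E5.12}). The argument you give---reduce to Grad's classical $\nu$-weighted coercivity on $L^2_\xi$, use that $L$, $\mathbf{P}$, and $\sqrt{\nu}$ act in $\xi$ only while $\dot{\Delta}_q$ acts in $x$ only so everything commutes, apply Fubini pointwise in $x$, then integrate in $t$, take square roots, weight by $2^{qs}$ and sum---is precisely the expected justification (and is the homogeneous analogue of what is done in \cite{Duan-2016}).
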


\vspace{2ex}

\section{Linear analysis}\label{1dlg-S4}
In this section, we consider the Cauchy problem of the linear Boltzmann equation
\begin{equation}\label{1dlg-E4.1}
\left\{
\begin{aligned}
&\partial_{t}f+\xi\cdot\nabla_{x}f+Lf=0,\\
&f|_{t=0}=f_{0}(x,\xi).
\end{aligned}
\right.
\end{equation}
See \cite{Ellis-1975,UkaiYangBook}
for the classical spectral analysis of \eqref{1dlg-E4.1}, where the eigenvalues of solutions to \eqref{1dlg-E4.1}  exhibit the diffusion effect in low frequencies and the spectrum gap in high frequencies. 
Based on the macro-micro decomposition and Kawashima's dissipation argument in \cite{SK}, we can establish the pointwise estimates of solutions to \eqref{1dlg-E4.1}.

\begin{prop}\label{1dlg-P4.1}
For any $t>0$ and $k\in\mathbb{R}^{3}$, the solution to $\rm(\ref{1dlg-E4.1})$ satisfies 
\begin{equation}\label{1dlg-E4.3}
\|\widehat{f}(k,t)\|_{L^{2}_{\xi}}\lesssim \|\widehat{f}_{0}(k)\|_{L^{2}_{\xi}}e^{-\lambda_{1}\min(1,|k|^{2})t} ,
\end{equation}
where $\lambda_{1}>0$ is a uniform constant.
Moreover, there exists a constant $k_{0}$ such that if $|k|\leq k_{0}$, then 
\begin{equation}\label{1dlg-E4.4}
\|\{\mathbf{I-P}\}\widehat{f}(k,t)\|_{L^{2}_{\xi}}\lesssim \|\widehat{f}_{0}(k)\|_{L^{2}_{\xi}} |k| e^{-\frac{\lambda_{1}}{2}|k|^{2}t}+\|\{\mathbf{I-P}\}\widehat{f}_{0}(k)\|_{L^{2}_{\xi}} e^{-\lambda_{0}t}
\end{equation}
 for any $t>0$, where  $\lambda_{0}>0$ is given by Lemma \ref{1dlg-L2.6}.
\end{prop}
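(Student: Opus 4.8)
The plan is to localize in frequency and run a Kawashima-type hypocoercivity argument. Applying the Fourier transform in $x$ to \eqref{1dlg-E4.1}, the function $\widehat f(k,t)$ solves $\partial_t\widehat f+i(\xi\cdot k)\widehat f+L\widehat f=0$; since $\xi\cdot k\in\mathbb{R}$ the transport term is skew-adjoint on $L^2_\xi$, so the basic identity reads $\tfrac12\tfrac{d}{dt}\|\widehat f\|_{L^2_\xi}^2+(L\widehat f,\widehat f)_\xi=0$, and Lemma \ref{1dlg-L2.6} gives $(L\widehat f,\widehat f)_\xi\ge\lambda_0\|\sqrt{\nu}\,\{\mathbf{I-P}\}\widehat f\|_{L^2_\xi}^2$. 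This controls only the microscopic part, so I would separately extract the dissipation of $\mathbf P\widehat f=(\widehat a+\widehat b\cdot\xi+\widehat c(|\xi|^2-3))\sqrt\mu$. Projecting \eqref{1dlg-E4.1} onto $\mathcal N$ yields the moment system $\partial_t U+i|k|A(\omega)U=i|k|R$, with $U=(\widehat a,\widehat b,\widehat c)$, $\omega=k/|k|$, $A(\omega)$ the real symmetric matrix of the linearized acoustic operator, and $R$ a linear functional of $\{\mathbf{I-P}\}\widehat f$ obeying $|R|\lesssim\|\{\mathbf{I-P}\}\widehat f\|_{L^2_\xi}$; by orthogonality of the basis of $\mathcal N$, $|U|\sim\|\mathbf P\widehat f\|_{L^2_\xi}$.

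The core step is the compensating-matrix construction: there exists a bounded skew-symmetric matrix-valued function $K(\omega)$ such that the symmetric part of $K(\omega)A(\omega)$ is positive definite — exactly the Shizuta--Kawashima condition, which holds for the linearized Boltzmann operator. I would then introduce an interaction functional $\mathcal I(k,t)=\pm\tfrac{|k|}{1+|k|^2}\,\mathrm{Im}\big(K(\omega)U,U\big)$, the sign chosen so that, differentiating along $\partial_t U=-i|k|A(\omega)U+i|k|R$ and using $A^{T}=A$, $K^{T}=-K$ (whence $\langle(KA-AK)U,U\rangle=2\langle(KA)^{\mathrm{sym}}U,U\rangle$), one gets $\tfrac{d}{dt}\mathcal I\le-c\,\tfrac{|k|^2}{1+|k|^2}\|\mathbf P\widehat f\|_{L^2_\xi}^2+C\,\tfrac{|k|^2}{1+|k|^2}\,\|\{\mathbf{I-P}\}\widehat f\|_{L^2_\xi}\|\mathbf P\widehat f\|_{L^2_\xi}$. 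It is essential here that the error term already carries the factor $\tfrac{|k|^2}{1+|k|^2}\le1$, since then Young's inequality converts it into $\tfrac{c}{2}\tfrac{|k|^2}{1+|k|^2}\|\mathbf P\widehat f\|^2+C\|\{\mathbf{I-P}\}\widehat f\|^2$. Setting $\mathcal L(k,t):=\|\widehat f\|_{L^2_\xi}^2+\eta\,\mathcal I(k,t)$ with $\eta>0$ small, one has $\mathcal L\sim\|\widehat f\|_{L^2_\xi}^2$ (as $|\mathcal I|\lesssim\|\mathbf P\widehat f\|_{L^2_\xi}^2$) and, combining with the energy identity, $\tfrac{d}{dt}\mathcal L+c\big(\|\{\mathbf{I-P}\}\widehat f\|_{L^2_\xi}^2+\tfrac{|k|^2}{1+|k|^2}\|\mathbf P\widehat f\|_{L^2_\xi}^2\big)\le0$. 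Since $\tfrac{|k|^2}{1+|k|^2}\sim\min(1,|k|^2)$, this gives $\tfrac{d}{dt}\mathcal L+c\min(1,|k|^2)\mathcal L\le0$, and Gr\"onwall's inequality yields \eqref{1dlg-E4.3}.

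For the enhanced microscopic bound \eqref{1dlg-E4.4}, I would work directly with $g:=\{\mathbf{I-P}\}\widehat f$, which solves $\partial_t g+Lg=-\{\mathbf{I-P}\}\big(i(\xi\cdot k)\mathbf P\widehat f\big)-\{\mathbf{I-P}\}\big(i(\xi\cdot k)g\big)$. Testing against $g$: the last term vanishes because $((\xi\cdot k)g,g)_\xi$ is real, the first is $\lesssim|k|\,\|\mathbf P\widehat f\|_{L^2_\xi}\|g\|_{L^2_\xi}$, and $(Lg,g)_\xi\ge\lambda_0\nu_0\|g\|_{L^2_\xi}^2$ with $\nu_0:=\inf_\xi\nu(\xi)>0$. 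Hence, by Young's inequality and the already-proven bound \eqref{1dlg-E4.3} (valid since $|k|\le k_0$ forces $\min(1,|k|^2)=|k|^2$), $\tfrac{d}{dt}\|g\|_{L^2_\xi}^2+c_0\|g\|_{L^2_\xi}^2\lesssim|k|^2\|\widehat f_0\|_{L^2_\xi}^2e^{-2\lambda_1|k|^2t}$. Integrating this scalar differential inequality — with $k_0$ chosen so small that $2\lambda_1k_0^2<c_0$, so that the Duhamel integral is $\lesssim|k|^2\|\widehat f_0\|_{L^2_\xi}^2e^{-2\lambda_1|k|^2t}$ — and taking square roots yields \eqref{1dlg-E4.4} (up to relabelling the exponential rates, which only shrinks them).

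The main obstacle is the compensating-matrix step: one must compute the explicit $A(\omega)$ arising from \eqref{1dlg-E4.1} and verify the Shizuta--Kawashima condition for it, and — just as importantly — calibrate the frequency weight $\tfrac{|k|}{1+|k|^2}$ in $\mathcal I$ so that all constants stay uniform across the crossover $|k|\sim1$; this weight is precisely what converts the degenerate acoustic dissipation into the $\min(1,|k|^2)$ decay rate. Everything else — the energy identity, the Young's-inequality bookkeeping, and the final ODE integration — is routine.
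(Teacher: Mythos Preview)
Your treatment of \eqref{1dlg-E4.4} is essentially the paper's own argument: project onto the microscopic part, test against $\{\mathbf{I-P}\}\widehat f$, absorb the transport term (purely imaginary), bound the forcing by $|k|\,\|\widehat f\|_{L^2_\xi}$, apply Gr\"onwall, and feed in \eqref{1dlg-E4.3}. That part is fine.

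The gap is in your route to \eqref{1dlg-E4.3}. You claim there is a skew-symmetric $K(\omega)$ acting on the five-dimensional macroscopic vector $U=(\widehat a,\widehat b,\widehat c)$ such that the symmetric part of $K(\omega)A(\omega)$ is positive definite. This is false: the linearized acoustic matrix $A(\omega)$ has a three-dimensional kernel (the two vorticity modes $b_\perp$ and the entropy mode $a+2c=0$, $b\cdot\omega=0$), and for any skew-symmetric $K$ and symmetric $A$ one has $\langle (KA)^{\mathrm{sym}}v,v\rangle=-\langle Av,Kv\rangle=0$ whenever $Av=0$. So no such $K$ exists, and your interaction functional $\mathcal I=\tfrac{|k|}{1+|k|^2}\mathrm{Im}\langle KU,U\rangle$ cannot produce full macroscopic dissipation. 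The Shizuta--Kawashima condition you invoke applies to systems where the damping matrix is nonzero on the macroscopic block; here the damping from $L$ vanishes on $\mathcal N$, and the five-moment system alone carries no dissipation at all.

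The fix --- and this is what the paper cites from Duan \cite{Duan-2011}, and what reappears in the paper's own Lyapunov functional \eqref{1dlg-E5.11} --- is to build the interaction terms not inside the five-dimensional macroscopic space but between the macroscopic moments and the \emph{higher} moments $\Theta_{ij}(\{\mathbf{I-P}\}\widehat f)$, $\Lambda_i(\{\mathbf{I-P}\}\widehat f)$ of the microscopic part (Kawashima's thirteen-moment approach). Concretely, the cross terms $(\partial_i\widehat a,\widehat b_i)$, $(\partial_i\widehat c,\Lambda_i)$, and $(\partial_i\widehat b_j+\partial_j\widehat b_i,\Theta_{ij})$ together recover the dissipation of $(a,b,c)$; the errors produced are controlled by $\|\{\mathbf{I-P}\}\widehat f\|_{L^2_\xi}$ and are then absorbed by the microscopic dissipation from $L$. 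The rest of your outline (the $\tfrac{|k|^2}{1+|k|^2}$ weighting, the choice of small $\eta$, Gr\"onwall) goes through once the interaction functional is built at this level.
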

\begin{proof}
It is sufficient to prove \eqref{1dlg-E4.4}, since \eqref{1dlg-E4.3} has already been established in \cite{Duan-2011}. 
By applying the Fourier transform (with respect to $x$) to the first equation in $(\ref{1dlg-E4.1})$ and then using the orthogonal projection operator $\{\mathbf{I-P}\}$ to the resulting equality, we obtain
\begin{equation}\label{1dlg-E4.5}
\partial_{t}\{\mathbf{I-P}\}\widehat{f}+ik\cdot\xi\{\mathbf{I-P}\}\widehat{f}+L\{\mathbf{I-P}\}\widehat{f}=-ik\cdot\xi\mathbf{P}\widehat{f}+\mathbf{P}(ik\cdot\xi\widehat{f}).
\end{equation}
Then, we take the Hermitian inner product with respect to $k$ of (\ref{1dlg-E4.5}) and $\{\mathbf{I-P}\}\widehat{f}$, take the real part and then integrate the resulting equation over $\mathbb{R}^3_{\xi}$. Using Lemma \ref{1dlg-L2.6} and the Cauchy-Schwarz inequality, we deduce that
\begin{equation}\label{1dlg-E4.51}
\begin{aligned}
&\frac{1}{2}\frac{d}{dt}\|\{\mathbf{I-P}\}\widehat{f}\|^{2}_{L^{2}_{\xi}}+\lambda_0\|\sqrt{\nu(\xi)}\{\mathbf{I-P}\}\widehat{f}\|^{2}_{L^{2}_{\xi}}\\
&\quad\lesssim  |k|( \|\xi\mathbf{P}\widehat{f}\|_{L^2_{\xi}}+\|\mathbf{P}(\xi \widehat{f})\|_{L^2_{\xi}})\|\{\mathbf{I-P}\}\widehat{f}\|_{L^{2}_{\xi}}.\\
\end{aligned}
\end{equation}
By \eqref{1dlg-E1.6}, we have $\|\xi\mathbf{P}\widehat{f}\|_{L^2_{\xi}}\lesssim |(\widehat{a},\widehat{b},\widehat{c})|\sim \|\mathbf{P}\widehat{f}\|_{L^2_{\xi}}$. On the other hand, $\mathbf{P}(\xi f)$ can be written as
\begin{flalign*}
\begin{aligned}
&\mathbf{P}(\xi f)=\{a_1+b_1\cdot \xi+c_1(|\xi|^2-3)\}\mu^{1/2}
\end{aligned}
\end{flalign*}
with
\begin{flalign*}
\begin{aligned}
&a_1=\int_{\mathbb{R}^3}\mu^{1/2}\xi fd\xi,\quad b_1=\int_{\mathbb{R}^3}\xi\mu^{1/2}\xi fd\xi,\quad c_1=\frac{1}{6}\int_{\mathbb{R}^3}(|\xi|^2-3)\}\mu^{1/2}\xi fd\xi.
\end{aligned}
\end{flalign*}
So one can get
$$\|\mathbf{P}(\xi \widehat{f})\|_{L^2_{\xi}}\lesssim|(\widehat{a_1},\widehat{b_1},\widehat{c_1})|\lesssim \|\widehat{f}\|_{L^2_{\xi}}.
$$
Consequently, applying Gr\"onwall's inequality leads to 
\begin{align}
\|\{\mathbf{I-P}\}\widehat{f}\|_{L^{2}_{\xi}}\leq \|\{\mathbf{I-P}\}\widehat{f}_{0}\|_{L^{2}_{\xi}} e^{-\lambda_{0}t}+|k| \int_{0}^{t}e^{-\lambda_{0}(t-\tau)}\|\widehat{f}\|_{L^{2}_{\xi}}\,d\tau.\label{1dlg-E4.511}
\end{align}
It follows from 
\eqref{1dlg-E4.3} that
\begin{align*}
\int_{0}^{t}e^{-\lambda_{0}(t-\tau)}\|\widehat{f}\|_{L^{2}_{\xi}}\,d\tau\lesssim\int_{0}^{t}e^{-\lambda_{0}(t-\tau)}e^{-\lambda_1\min\{1,|k|^2\}\tau}\|\widehat{f}_{0}\|_{L^{2}_{\xi}}\,d\tau,
\end{align*}
where for $|k|\leq \sqrt{\lambda_0/\lambda_1}$, it holds that
\begin{equation}\label{1dlg-E4.6}
\begin{aligned}
\int_{0}^{t}e^{-\lambda_{0}(t-\tau)}e^{-\lambda_1 |k|^2\tau}\,d\tau&\leq
\int_{0}^{t}e^{-\frac{\lambda_{0}}{2}(t-\tau)}e^{-\frac{\lambda_1}{2}|k|^{2}(t-\tau)}e^{-\frac{\lambda_1}{2}|k|^{2}\tau}\,d\tau\\
&\lesssim e^{-\frac{\lambda_1}{2}|k|^{2}t}\int_{0}^{t}e^{-\frac{\lambda_0}{2}(t-\tau)}\,d\tau\\
&\lesssim e^{-\frac{\lambda_1}{2}|k|^{2}t}.
\end{aligned}
\end{equation}
Therefore, together with \eqref{1dlg-E4.511} and (\ref{1dlg-E4.6}), we obtain (\ref{1dlg-E4.4}) immediately.
\end{proof}

Consequently, we get the following time-decay properties in the framework of Besov spaces. 

\begin{cor}\label{1dlg-C4.2}Assume $f_0\in \widetilde{L}^2_{\xi}(\dot{B}^{\sigma_0}_{2,\infty})\cap \widetilde{L}^2_{\xi}(\dot{B}^{\sigma}_{2,1})$ for $\sigma, \sigma_{0}\in\mathbb{R}$ with $\sigma>\sigma_{0}$. Then for all $t>0$, the solution $f(t,x,\xi)$ to $(\ref{1dlg-E4.1})$ fulfills
\begin{equation}\label{1dlg-E4.8}
\|f(t)\|_{\widetilde{L}^{2}_{\xi}(\dot{B}^{\sigma}_{2,1})}\lesssim \|f_{0}\|_{\widetilde{L}^{2}_{\xi}(\dot{B}^{\sigma_{0}}_{2,\infty})} (1+t)^{-\frac{1}{2}(\sigma-\sigma_{0})}+\|f_{0}\|_{\widetilde{L}^{2}_{\xi}(\dot{B}^{\sigma}_{2,1})} e^{-\lambda_2 t}.
\end{equation}
Moreover, the microscopic part $\{\mathbf{I-P}\}f$
decays faster at the
half rate among all the components of the solution. Precisely,
\begin{equation}\label{1dlg-E4.9}
\|\{\mathbf{I-P}\}f(t)\|_{\widetilde{L}^{2}_{\xi}(\dot{B}^{\sigma}_{2,1})}\lesssim \|f_{0}\|_{\widetilde{L}^{2}_{\xi}(\dot{B}^{\sigma_{0}}_{2,\infty})} (1+t)^{-\frac{1}{2}(\sigma-\sigma_{0}+1)}+\|f_{0}\|_{\widetilde{L}^{2}_{\xi}(\dot{B}^{\sigma}_{2,1})} e^{-\lambda_2 t}.
\end{equation}
Here $\lambda_2>0$ is a generic constant.
\end{cor}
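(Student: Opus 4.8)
The plan is to derive \eqref{1dlg-E4.8}--\eqref{1dlg-E4.9} from the pointwise bounds of Proposition~\ref{1dlg-P4.1} by a Littlewood--Paley argument. Since the operators $\dot{\Delta}_q$ commute with $\partial_t$, $\xi\cdot\nabla_x$ and $L$, each block $\dot{\Delta}_q f$ solves \eqref{1dlg-E4.1} with datum $\dot{\Delta}_q f_0$, and the $x$-Fourier transform of $\dot{\Delta}_q f_0$ is supported in $|k|\sim 2^q$. Applying \eqref{1dlg-E4.3} to $\dot{\Delta}_q f$, squaring, integrating in $k$ and invoking Plancherel in $x$ (the norm $\widetilde{L}^2_\xi(\dot{B}^\sigma_{2,1})$ is just the usual $\ell^1$-summed Besov norm built over $L^2_{\xi}L^2_x$, so Plancherel applies unchanged), together with $\min(1,|k|^2)\gtrsim\min(1,2^{2q})$ on the support, I get
\[
\|\dot{\Delta}_q f(t)\|_{L^2_\xi L^2_x}\lesssim e^{-c\min(1,2^{2q})t}\,\|\dot{\Delta}_q f_0\|_{L^2_\xi L^2_x},\qquad q\in\mathbb Z ,
\]
with $c>0$ a generic small constant. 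Fix $q_0\in\mathbb Z$ so that $2^{q_0}\lesssim\min(k_0,\sqrt{\lambda_0/\lambda_1})$; for $q\le q_0$ the right-hand side reads $e^{-c 2^{2q}t}$, while for $q>q_0$ the support has $|k|$ bounded below by a fixed positive constant, so it is $\lesssim e^{-\lambda_2 t}\|\dot{\Delta}_q f_0\|_{L^2_\xi L^2_x}$ for a suitable $\lambda_2>0$. Applying instead \eqref{1dlg-E4.4} to $\dot{\Delta}_q\{\mathbf{I-P}\}f=\{\mathbf{I-P}\}\dot{\Delta}_q f$ for $q\le q_0$ (where the support lies in $|k|\le k_0$), using $\|\{\mathbf{I-P}\}\cdot\|_{L^2_\xi}\le\|\cdot\|_{L^2_\xi}$ and $|k|\sim 2^q$, gives
\[
\|\dot{\Delta}_q\{\mathbf{I-P}\}f(t)\|_{L^2_\xi L^2_x}\lesssim 2^{q}e^{-c 2^{2q}t}\,\|\dot{\Delta}_q f_0\|_{L^2_\xi L^2_x}+e^{-\lambda_0 t}\,\|\dot{\Delta}_q f_0\|_{L^2_\xi L^2_x},
\]
while for $q>q_0$ the trivial bound $\|\dot{\Delta}_q\{\mathbf{I-P}\}f(t)\|_{L^2_\xi L^2_x}\le\|\dot{\Delta}_q f(t)\|_{L^2_\xi L^2_x}\lesssim e^{-\lambda_2 t}\|\dot{\Delta}_q f_0\|_{L^2_\xi L^2_x}$ is enough.

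The second ingredient is the elementary inequality: for every $\alpha>0$,
\[
\sum_{q\le q_0}2^{q\alpha}e^{-c 2^{2q}t}\lesssim(1+t)^{-\alpha/2},\qquad t\ge 0 .
\]
For $0\le t\le1$ the left-hand side is dominated by the convergent geometric series $\sum_{q\le q_0}2^{q\alpha}\lesssim 1$; for $t\ge1$ one compares the sum with $\int_{-\infty}^{q_0}2^{x\alpha}e^{-c 2^{2x}t}\,dx$ and substitutes $u=2^{2x}t$, reducing it to $t^{-\alpha/2}$ times the integral $\int_0^\infty u^{\alpha/2-1}e^{-cu}\,du$, which is finite precisely because $\alpha>0$.

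It remains to assemble the $\ell^1$-sum in $q$. Multiplying the block bounds by $2^{q\sigma}$ and using $\|\dot{\Delta}_q f_0\|_{L^2_\xi L^2_x}\le 2^{-q\sigma_0}\|f_0\|_{\widetilde{L}^2_\xi(\dot{B}^{\sigma_0}_{2,\infty})}$ for $q\le q_0$, the low-frequency part of $\|f(t)\|_{\widetilde{L}^2_\xi(\dot{B}^\sigma_{2,1})}$ is $\lesssim\|f_0\|_{\widetilde{L}^2_\xi(\dot{B}^{\sigma_0}_{2,\infty})}\sum_{q\le q_0}2^{q(\sigma-\sigma_0)}e^{-c2^{2q}t}\lesssim\|f_0\|_{\widetilde{L}^2_\xi(\dot{B}^{\sigma_0}_{2,\infty})}(1+t)^{-(\sigma-\sigma_0)/2}$ by the lemma with $\alpha=\sigma-\sigma_0>0$, whereas the high-frequency part is $\lesssim e^{-\lambda_2 t}\sum_{q>q_0}2^{q\sigma}\|\dot{\Delta}_q f_0\|_{L^2_\xi L^2_x}\lesssim e^{-\lambda_2 t}\|f_0\|_{\widetilde{L}^2_\xi(\dot{B}^\sigma_{2,1})}$; adding these yields \eqref{1dlg-E4.8}. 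For \eqref{1dlg-E4.9} the extra factor $2^q$ in the low-frequency microscopic bound forces the lemma to be applied with $\alpha=\sigma-\sigma_0+1$, producing $(1+t)^{-(\sigma-\sigma_0+1)/2}$; the leftover term $e^{-\lambda_0 t}\sum_{q\le q_0}2^{q\sigma}\|\dot{\Delta}_q f_0\|_{L^2_\xi L^2_x}\lesssim e^{-\lambda_0 t}\|f_0\|_{\widetilde{L}^2_\xi(\dot{B}^{\sigma_0}_{2,\infty})}$ is absorbed into this polynomially decaying contribution (exponential beats polynomial), and the high frequencies again give $e^{-\lambda_2 t}\|f_0\|_{\widetilde{L}^2_\xi(\dot{B}^\sigma_{2,1})}$. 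The only real—and rather minor—obstacle is the bookkeeping of the two frequency thresholds coming from the spectral analysis, $|k|\le k_0$ and $|k|\le\sqrt{\lambda_0/\lambda_1}$, against the single dyadic cut-off $q_0$, together with the observation that the hypothesis $\sigma>\sigma_0$ is exactly what makes the summation lemma converge.
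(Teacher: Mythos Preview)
Your proof is correct and follows essentially the same route as the paper's: localize via $\dot{\Delta}_q$, invoke the pointwise bounds \eqref{1dlg-E4.3}--\eqref{1dlg-E4.4} on each block, split at a dyadic threshold, and sum the low-frequency heat-type factors $2^{q\alpha}e^{-c2^{2q}t}$ against the $\widetilde{L}^2_\xi(\dot{B}^{\sigma_0}_{2,\infty})$ norm while treating high frequencies by the uniform spectral gap. The only cosmetic differences are that the paper fixes the threshold at $q=0$ (respectively $q=q_0$ for the microscopic estimate) and phrases the key sum as $t^{-(\sigma-\sigma_0)/2}\sum_q(2^q\sqrt t)^{\sigma-\sigma_0}e^{-\lambda_1 2^{2q}t}$ rather than via an integral comparison, but the content is identical.
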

\begin{proof}
The proof of Corollary \ref{1dlg-C4.2} is similar to that in \cite{Xu-2015}, so we give the sketch for brevity. To show \eqref{1dlg-E4.8}, we apply the operator $\dot{\Delta}_{q}$
to \eqref{1dlg-E4.1} and perform similar steps leading to 
\eqref{1dlg-E4.3}-\eqref{1dlg-E4.4}. Hence,  
the optimal information can be obtained, for  instance,  
\begin{equation}\label{1dlg-E4.10}
\|\widehat{\dot{\Delta}_{q}f}(k,t)\|_{L^{2}_{\xi}}\lesssim \|\widehat{\dot{\Delta}_{q}f_{0}}(k)\|_{L^{2}_{\xi}}e^{-\lambda_{1}\min(1,2^{2q})t}.
\end{equation}
This implies that for any $q\geq-1$, 
there exists a uniform constant $\lambda_2>0$ such that 
\begin{equation}\label{1dlg-E4.11}
\|\dot{\Delta}_{q}f\|_{L^{2}_{\xi}L^{2}_{x}}\lesssim \|\dot{\Delta}_{q}f_{0}\|_{L^{2}_{\xi}L^{2}_{x}} e^{-\lambda_{2}t},
\end{equation}
where Plancherel's theorem was used. Then, multiplying (\ref{1dlg-E4.11}) by $2^{q\sigma}$ and summing the resulting inequality over $q\geq-1$, we arrive at
\begin{equation}
\|f\|_{\widetilde{L}^{2}_{\xi}(\dot{B}^{\sigma}_{2,1})}^{h}=\sum_{q\geq -1}2^{q\sigma}\|\dot{\Delta}_{q}f\|_{L^{2}_{\xi}L^{2}_{x}}\lesssim \|f_{0}\|_{\widetilde{L}^{2}_{\xi}(\dot{B}^{\sigma}_{2,1})}^{h} e^{-\lambda_{2}t}.\label{highe}
\end{equation}
On the other hand, for any $q\leq0$, it follows from \eqref{1dlg-E4.10} that 
\begin{equation}\label{localhigh}
\|\dot{\Delta}_{q}f\|_{L^{2}_{\xi}L^{2}_{x}}\lesssim   \|\dot{\Delta}_{q}f_{0}\|_{L^{2}_{\xi}L^{2}_{x}} e^{-\lambda_1 2^{2q}t}.
\end{equation}
This leads to
\begin{flalign*}
\begin{split}
\|f\|^{\ell}_{\widetilde{L}^{2}_{\xi}(\dot{B}^{\sigma}_{2,1})}&\lesssim \|f_{0}\|^{\ell}_{L^{2}_{\xi}(\dot{B}^{\sigma_{0}}_{2,\infty})}t^{-\frac{1}{2}(\sigma-\sigma_{0})}\sum_{q\in\mathbb{Z}}\left( (2^{q}\sqrt{t})^{\sigma-\sigma_{0}}e^{-\lambda_1 2^{2q}t}\right)\\
&\lesssim \|f_{0}\|^{\ell}_{L^{2}_{\xi}(\dot{B}^{\sigma_{0}}_{2,\infty})} t^{-\frac{\sigma-\sigma_0}{2}}.
\end{split}
\end{flalign*}
In addition, as $\sigma>\sigma_{0}$, \eqref{localhigh} directly implies that
\begin{align*}
\|f\|^{\ell}_{\widetilde{L}^{2}_{\xi}(\dot{B}^{\sigma}_{2,1})}\lesssim\|f_{0}\|^{\ell}_{\widetilde{L}^{2}_{\xi}(\dot{B}^{\sigma}_{2,1})}\lesssim\|f_{0}\|^{\ell}_{\widetilde{L}^{2}_{\xi}(\dot{B}^{\sigma_{0}}_{2,\infty})}.
\end{align*}
Consequently, we obtain
\begin{align}\label{lowf}
\|f\|^{\ell}_{\widetilde{L}^{2}_{\xi}(\dot{B}^{\sigma}_{2,1})}\lesssim \|f_{0}\|^{\ell}_{\widetilde{L}^{2}_{\xi}(\dot{B}^{\sigma_{0}}_{2,\infty})}(1+t)^{-\frac{\sigma-\sigma_0}{2}}.
\end{align}
Thus, adding \eqref{highe} and \eqref{lowf} together yields \eqref{1dlg-E4.8} for all $t>0$. For the decay of $\{\mathbf{I-P}\}f$,
there exist $\lambda_3>0$ and $q_0\in \mathbb{Z}$ sufficiently small such that 
\begin{align}
\|\dot{\Delta}_{q}\{\mathbf{I-P}\}f\|_{L^{2}_{\xi}L^{2}_{x}}\lesssim \|\dot{\Delta}_{q}f_{0}\|_{L^{2}_{\xi}L^{2}_{x}} 2^{q}e^{-\lambda_3(2^{q}\sqrt{t})^{2}}+\|\dot{\Delta}_{q}\{\mathbf{I-P}\}f_{0}\|_{L^{2}_{\xi}L^{2}_{x}} e^{-\lambda_{0}t} \nonumber
\end{align}
for $q\leq q_0$. It is not difficult to deduce that, for any $\sigma>\sigma_0$,  
\begin{equation}\label{IPlow}
\begin{aligned}
\sum_{q\leq q_0}2^{q\sigma}\|\dot{\Delta}_{q}\{\mathbf{I-P}\}f\|_{L^{2}_{\xi}L^{2}_{x}}&\lesssim\sum_{q\leq q_0}2^{q\sigma_0}\|\dot{\Delta}_{q}f_{0}\|_{L^{2}_{\xi}L^{2}_{x}}  \, (1+t)^{-\frac{1}{2}(\sigma-\sigma_{0}+1)}\\
&\quad+\sum_{q\leq q_0}2^{q\sigma}\|\dot{\Delta}_{q}\{\mathbf{I-P}\}f_{0}\|_{L^{2}_{\xi}L^{2}_{x}}\, e^{-\lambda_{0}t}.
\end{aligned}
\end{equation}
On the other hand, with the aid of 
\eqref{1dlg-E4.10} with $q\geq q_0+1$, we readily have
\begin{equation}\label{IPhigh}
\begin{aligned}
\sum_{q\geq q_0+1}2^{q\sigma}\|\dot{\Delta}_{q}\{\mathbf{I-P}\}f\|_{L^{2}_{\xi}L^{2}_{x}}&\lesssim  \sum_{q\geq q_0+1}2^{q\sigma}\|\dot{\Delta}_{q}f\|_{L^{2}_{\xi}L^{2}_{x}}\\
&\lesssim e^{-\lambda_1 \min\{1,2^{2(q_0+1)}\}t}  \sum_{q\geq q_0+1}2^{q\sigma} \|\dot{\Delta}_{q}f_0\|_{L^{2}_{\xi}L^{2}_{x}}.
\end{aligned}
\end{equation}
Combining \eqref{IPlow} and \eqref{IPhigh}, we end up with \eqref{1dlg-E4.9}.
\end{proof}

\section{Trilinear estimates in homogeneous Besov spaces}\label{1dlg-SA}

In this section, we establish several trilinear estimates for the Boltzmann collision operator $\Gamma(f,g)$ in the framework of homogeneous Besov spaces, which can be employed to obtain the global existence and large-time behavior of solutions to $\rm(\ref{1dlg-E1.4})$. Recall that the collision operator $\Gamma(f,g)$ can be written as
\begin{equation}\label{1dlg-2.11}
\Gamma(f,g)\triangleq\mu^{-1/2}\mathcal{Q}(\mu^{1/2}f,\mu^{1/2}g)=\Gamma_{{\rm gain}}(f,g)-\Gamma_{{\rm loss}}(f,g),
\end{equation}
where
\begin{equation}\nonumber
\begin{aligned}
&\Gamma_{{\rm gain}}(f,g)=\int_{\mathbb{R}^{3}}\int_{\mathbb{S}^{2}}|\xi-\xi_{\ast}|^{\gamma}B_{0}(\theta)\mu^{1/2}(\xi_{\ast})f(\xi_{\ast}^{\prime})g(\xi^{\prime})\,d\omega \,d\xi_{\ast},\\
&\Gamma_{{\rm loss}}(f,g)=\int_{\mathbb{R}^{3}}\int_{\mathbb{S}^{2}}|\xi-\xi_{\ast}|^{\gamma}B_{0}(\theta)\mu^{1/2}(\xi_{\ast})f(\xi_{\ast})g(\xi)\,d\omega \,d\xi_{\ast}.
\end{aligned}
\end{equation}

First, we prove the key trilinear estimate of  $\widetilde{L}_{\xi}^{2}(\dot{B}^{s}_{2,1})$ type Besov norms. It should be pointed out that the lemma is different from the inhomogeneous version in \cite{Duan-2016}. Without the velocity variable $\xi$, similar product laws have been widely applied in the study of fluid dynamical equations (see, for example, \cite{Danchin-2000}).

\begin{lemma}\label{1dlg-L2.7}
Let $s_1, s_2\in\mathbb{R}$ be such that $-3/2<s_1,s_2\leq 3/2$ and $s_{1}+s_{2}>0$. Let $f=f(t,x,\xi), g=g(t,x,\xi)$ and $h=h(t,x,\xi)$ be suitably smooth distribution functions such that those norms on the right of the following inequality are well defined, then it holds that
\begin{equation}\label{1dlg-E2.13}
\begin{aligned}
&\sum_{q\in\mathbb{Z}}2^{q(s_{1}+s_{2}-3/2)}\bigg(\int^{T}_{0}\big|\big(\dot{\Delta}_{q}\Gamma(f,g),\dot{\Delta}_{q}h\big)_{\xi,x}\big|\,dt\bigg)^{1/2}\\
&\quad\leq C\|h\|^{1/2}_{\widetilde{L}^{2}_{T}\widetilde{L}^{2}_{\xi,\nu}(\dot{B}^{s_{1}+s_{2}-3/2}_{2,1})}\\
&\quad\quad\times \bigg( \|f\|^{1/2}_{\widetilde{L}^{\infty}_{T}\widetilde{L}^{2}_{\xi}(\dot{B}^{s_{1}}_{2,1})}\|g\|^{1/2}_{\widetilde{L}^{2}_{T}\widetilde{L}^{2}_{\xi,\nu}(\dot{B}^{s_{2}}_{2,1})}+\|f\|^{1/2}_{\widetilde{L}^{2}_{T}\widetilde{L}^{2}_{\xi,\nu}(\dot{B}^{s_{2}}_{2,1})}\|g\|^{1/2}_{\widetilde{L}^{\infty}_{T}\widetilde{L}^{2}_{\xi}(\dot{B}^{s_{1}}_{2,1})}\bigg) ,
\end{aligned}
\end{equation}
where $C>0$ is a constant depending only on $s_{1}$ and $s_{2}$.
\end{lemma}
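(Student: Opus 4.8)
The plan is to combine a classical \emph{velocity-space} trilinear bound for $\Gamma$ with Bony's paraproduct calculus carried out in the space variable $x$. The velocity-space input I would use is the standard estimate for angular-cutoff hard potentials: there is $C>0$ such that, pointwise in $(t,x)$,
\[
\big|\big(\Gamma(F,G),H\big)_{\xi}\big|\le C\big(|F|_{L^{2}_{\xi}}\,|G|_{L^{2}_{\xi,\nu}}+|F|_{L^{2}_{\xi,\nu}}\,|G|_{L^{2}_{\xi}}\big)\,|H|_{L^{2}_{\xi,\nu}},
\]
equivalently $\|\nu^{-1/2}\Gamma(F,G)\|_{L^{2}_{\xi}}\lesssim|F|_{L^{2}_{\xi}}|G|_{L^{2}_{\xi,\nu}}+|F|_{L^{2}_{\xi,\nu}}|G|_{L^{2}_{\xi}}$, which follows from Grad's splitting $L=\nu-K$ together with the decomposition $\Gamma=\Gamma_{\rm gain}-\Gamma_{\rm loss}$ (see \cite{Cercignani-1994,Glassey-1996}). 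I would also record that $\Gamma$ acts \emph{locally} in $x$, so $\Gamma(\dot\Delta_{j}f,\dot\Delta_{k}g)$ has $x$-Fourier support in $(\operatorname{supp}\widehat{\dot\Delta_{j}f})+(\operatorname{supp}\widehat{\dot\Delta_{k}g})$; hence the usual spectral-localization bookkeeping of Bony's calculus applies without change.

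Next I would split, in $x$,
\[
\Gamma(f,g)=\sum_{j\le k-2}\Gamma(\dot\Delta_{j}f,\dot\Delta_{k}g)+\sum_{k\le j-2}\Gamma(\dot\Delta_{j}f,\dot\Delta_{k}g)+\sum_{|j-k|\le1}\Gamma(\dot\Delta_{j}f,\dot\Delta_{k}g)=:\Gamma^{(1)}+\Gamma^{(2)}+\Gamma^{(3)},
\]
where $\dot\Delta_{q}\Gamma^{(1)}$ (resp. $\dot\Delta_{q}\Gamma^{(2)}$) is non-zero only for $|q-k|\le N_{0}$ (resp. $|q-j|\le N_{0}$), while $\dot\Delta_{q}\Gamma^{(3)}$ is non-zero only for $q\le\max(j,k)+N_{0}$. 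For the two paraproduct pieces I would, for each of the two summands in the velocity estimate, apply that estimate pointwise in $(t,x)$, then use H\"older in $x$ with $L^{\infty}_{x}$ on the spectrally \emph{lowest} factor (and $L^{2}_{x}$ on the other two), invoking Bernstein $\|\dot\Delta_{\ell}u\|_{L^{\infty}_{x}}\lesssim2^{3\ell/2}\|\dot\Delta_{\ell}u\|_{L^{2}_{x}}$; the time integral is handled by Cauchy--Schwarz in $t$, placing the $L^{\infty}_{t}$-norm on whichever of $f,g$ must land in $\widetilde L^{\infty}_{t}$ in \eqref{1dlg-E2.13}. Summing the geometric series in the low-frequency index (which converges precisely because the regularity assigned to the $L^{\infty}_{x}$-factor is $\le 3/2$, the endpoint being absorbed by the $\ell^{1}$ index of $\dot B^{\cdot}_{2,1}$) makes the powers of $2^{q}$ collapse, and a final Cauchy--Schwarz plus Young convolution step in the $\ell^{1}/\ell^{2}$ scales over $q$ produces one of the two products on the right of \eqref{1dlg-E2.13}; the square roots arise from $(\sum_{q}a_{q})^{1/2}\le\sum_{q}a_{q}^{1/2}$ and Cauchy--Schwarz. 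For the remainder $\Gamma^{(3)}$ the low factor is unavailable, so I would instead write $(\dot\Delta_{q}\Gamma(\dot\Delta_{j}f,\dot\Delta_{k}g),\dot\Delta_{q}h)_{\xi,x}\le\|\nu^{-1/2}\dot\Delta_{q}\Gamma(\dot\Delta_{j}f,\dot\Delta_{k}g)\|_{L^{2}_{\xi}L^{2}_{x}}\|\nu^{1/2}\dot\Delta_{q}h\|_{L^{2}_{\xi}L^{2}_{x}}$, use Bernstein $L^{1}_{x}\to L^{2}_{x}$ (gain $2^{3q/2}$), Minkowski's inequality and the velocity estimate; the $j$-sum then runs over $j\ge q-N_{0}$ and converges thanks to $s_{1}+s_{2}>0$, after which Young's inequality closes the estimate.

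The one genuinely delicate point — and the step I expect to be the main obstacle — is the correct routing of the collision weight $\nu$ and of the $L^{\infty}_{t}$ versus $L^{2}_{t}$ norms in each sub-case. For the summand $|F|_{L^{2}_{\xi,\nu}}|G|_{L^{2}_{\xi}}|H|_{L^{2}_{\xi,\nu}}$ the $\nu$-weighted factor may well be the spectrally low one (in $\Gamma^{(1)}$ it is $f$), whereas the right-hand side of \eqref{1dlg-E2.13} contains no $\nu$-weighted $\widetilde L^{\infty}_{t}$-norm; this forces me to keep that low factor in $L^{\infty}_{x}$ but in $L^{2}_{t}$, to assign the $L^{\infty}_{t}$-norm to the (non-$\nu$) spectrally high factor instead, and to give the low factor the regularity $s_{2}$ rather than $s_{1}$, which is legitimate because $s_{2}\le 3/2$ as well. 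Once this allocation is fixed, what remains is bookkeeping with the mixed Chemin--Lerner norms — repeatedly exchanging the order of the $L^{\varrho_{1}}_{t}$, $L^{\varrho_{2}}_{\xi}$, $L^{p}_{x}$ and $\ell^{1}$ norms via Minkowski's inequality and Lemma~\ref{1dlg-R2.1} — so the real difficulty is the sheer number of cases (three frequency regimes times two velocity terms) and verifying, in each, that the $2^{q}$ exponents cancel and that the output matches precisely the weighted norms appearing in \eqref{1dlg-E2.13}.
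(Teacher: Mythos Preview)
Your proposal is correct and follows the same overall architecture as the paper --- Bony's paraproduct in $x$, Bernstein to convert the low-frequency factor to $L^{\infty}_{x}$, careful routing of the $\nu$-weight and of the $L^{\infty}_{t}$ versus $L^{2}_{t}$ norms, and convergence of the remainder under $s_{1}+s_{2}>0$ --- but differs in how the velocity integral is handled. You package the velocity step into the black-box pointwise bound $\|\nu^{-1/2}\Gamma(F,G)\|_{L^{2}_{\xi}}\lesssim|F|_{L^{2}_{\xi}}|G|_{L^{2}_{\xi,\nu}}+|F|_{L^{2}_{\xi,\nu}}|G|_{L^{2}_{\xi}}$ and then apply Bony to the arguments of $\Gamma$. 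The paper instead opens the collision integral directly: it applies Cauchy--Schwarz in $(t,x,\xi,\xi_{\ast},\omega)$ together with the pre/post-collisional change of variables for the gain term, which reduces the bilinear factor to the genuine $x$-product $f(t,x,\xi_{\ast})\,g(t,x,\xi)$, and then routes the weight via the elementary split $|\xi-\xi_{\ast}|^{\gamma}\le|\xi|^{\gamma}+|\xi_{\ast}|^{\gamma}$ (so the weight lands on $g$ or on $f$ according to which variable is kept). From that point on the two arguments coincide. Your approach is more modular --- it would apply verbatim to any bilinear operator satisfying the pointwise trilinear bound --- while the paper's is more explicit and avoids invoking the Grad-type estimate as an external ingredient. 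One minor remark: the step you describe as ``$(\sum_{q}a_{q})^{1/2}\le\sum_{q}a_{q}^{1/2}$'' is not what is actually used; rather, after separating the $h$-factor by Cauchy--Schwarz in $(t,x,\xi)$ you should apply Cauchy--Schwarz in the $q$-sum, $\sum_{q}(2^{qs}A_{q})^{1/2}(2^{qs}B_{q})^{1/2}\le(\sum_{q}2^{qs}A_{q})^{1/2}(\sum_{q}2^{qs}B_{q})^{1/2}$, exactly as the paper does to produce the factor $\|h\|^{1/2}$.
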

\begin{proof}
It follows from (\ref{1dlg-2.11}) that
\begin{equation}\nonumber
\begin{aligned}
&\bigg(\int^{T}_{0}|\big(\dot{\Delta}_{q}\Gamma(f,g),\dot{\Delta}_{q}h\big)_{\xi,x}|dt\bigg)^{1/2}\\
&\quad \leq\bigg(\int^{T}_{0}|\big(\dot{\Delta}_{q}\Gamma_{{\rm gain}}(f,g),\dot{\Delta}_{q}h\big)_{\xi,x}|dt\bigg)^{1/2}+\bigg(\int^{T}_{0}|\big(\dot{\Delta}_{q}\Gamma_{{\rm loss}}(f,g),\dot{\Delta}_{q}h\big)_{\xi,x}|dt\bigg)^{1/2}.
\end{aligned}
\end{equation}
Here, we write $f_*\triangleq f(t,x,\xi_*)$ for simplicity. By 
applying the Cauchy-Schwarz inequality with respect to $(t,x,\xi,\xi_{\ast},\omega)$, making the change of variables $(\xi,\xi_{\ast})\rightarrow(\xi^{\prime},\xi^{\prime}_{\ast})$ in the gain term and then taking the summation on $q\in\mathbb{Z}$ after multiplying it by $2^{q(s_{1}+s_{2}-3/2)}$, we have
\begin{flalign}\nonumber
\begin{aligned}
&\sum_{q\in\mathbb{Z}}2^{q(s_{1}+s_{2}-3/2)}\bigg(\int^{T}_{0}|\big(\dot{\Delta}_{q}\Gamma(f,g),\dot{\Delta}_{q}h\big)_{\xi,x}|dt\bigg)^{1/2}\\
&\quad\lesssim \sum_{q\in\mathbb{Z}}2^{q(s_{1}+s_{2}-3/2)}\\
&\quad\quad\times\Bigg(\bigg(\int_{0}^{T}\int_{\mathbb{R}^{9}\times\mathbb{S}^{2}}|\xi^{\prime}-\xi^{\prime}_{\ast}|^{\gamma}\mu^{1/2}(\xi_{\ast}^{\prime})|\dot{\Delta}_{q}(f_{\ast}g)|^{2}dxd\xi d\xi_{\ast}d\omega dt\bigg)^{1/2}\Bigg)^{1/2}\\
&\quad\quad\times\Bigg(\bigg(\int_{0}^{T}\int_{\mathbb{R}^{9}\times\mathbb{S}^{2}}|\xi-\xi_{\ast}|^{\gamma}\mu^{1/2}(\xi_{\ast})|\dot{\Delta}_{q}h|^{2}dxd\xi d\xi_{\ast}d\omega dt\bigg)^{1/2}\Bigg)^{1/2}\\
&\quad\quad+\sum_{q\in\mathbb{Z}}2^{q(s_{1}+s_{2}-3/2)}\\
&\quad\quad\times\Bigg(\bigg(\int_{0}^{T}\int_{\mathbb{R}^{9}\times\mathbb{S}^{2}}|\xi-\xi_{\ast}|^{\gamma}\mu^{1/2}(\xi_{\ast})|\dot{\Delta}_{q}(f_{\ast}g)|^{2} dxd\xi d\xi_{\ast}d\omega dt\bigg)^{1/2}\Bigg)^{1/2}\\
&\quad\quad\times\bigg(\Big(\int_{0}^{T}\int_{\mathbb{R}^{9}\times\mathbb{S}^{2}}|\xi-\xi_{\ast}|^{\gamma}\mu^{1/2}(\xi_{\ast})|\dot{\Delta}_{q}h|^{2}dxd\xi d\xi_{\ast}d\omega dt\bigg)^{1/2}\Bigg)^{1/2},
\end{aligned}
\end{flalign}
where $0\leq B_{0}(\cos\theta)\leq C|\cos\theta|\leq C$ has been used.  Thanks to $\int_{\mathbb{S}^2} d\omega=4\pi$ and $|\xi'-\xi'_*|=|\xi-\xi_*|$, it holds that
\begin{flalign}
\begin{aligned}
&\sum_{q\in\mathbb{Z}}2^{q(s_{1}+s_{2}-3/2)}\bigg(\int_{0}^{T}|(\dot{\Delta}_{q}\Gamma(f,g),\dot{\Delta}_{q}h)_{\xi,x}|dt\bigg)^{1/2}\lesssim I^{1/2}  II^{1/2}
\nonumber
\end{aligned}
\end{flalign}
with
\begin{flalign}\nonumber
\begin{aligned}
I&\triangleq\sum_{q\in\mathbb{Z}}2^{q(s_{1}+s_{2}-3/2)}\bigg(\int_{0}^{T}\int_{\mathbb{R}^{9}}|\xi-\xi_{\ast}|^{\gamma}|\dot{\Delta}_{q}(f_{\ast}g)|^{2}dxd\xi d\xi_{\ast} dt\bigg)^{1/2},\\
II&\triangleq\sum_{q\in\mathbb{Z}}2^{q(s_{1}+s_{2}-3/2)}\bigg(\int_{0}^{T}\int_{\mathbb{R}^{9}}|\xi-\xi_{\ast}|^{\gamma}\mu^{1/2}(\xi_{\ast})|\dot{\Delta}_{q}h|^{2} dxd\xi d\xi_{\ast} dt\bigg)^{1/2}.
\end{aligned}
\end{flalign}
Due to the fact $\int_{\mathbb{R}^3}|\xi-\xi_*|^{\gamma}\mu^{1/2}(\xi_*)d\xi_*\sim \nu(\xi)$, it is easy to check that
$$
II\leq\|h\|_{\widetilde{L}^{2}_{T}\widetilde{L}^{2}_{\xi,\nu}(\dot{B}^{s_{1}+s_{2}-3/2}_{2,1})}.
$$
In the following, we deal with the difficult term $I$. According to the homogeneous Bony's decomposition, we write $f_{\ast}g$ as
$$
f_{\ast}g=\mathcal{\dot{T}}_{f_{\ast}}g+\mathcal{\dot{T}}_{g}f_{\ast}+\mathcal{\dot{R}}(f_{\ast} ,g),
$$
where the paraproduct $\mathcal{\dot{T}}_{u}v$ is defined by
$$
\mathcal{\dot{T}}_{u}v\triangleq \sum_{j\in\mathbb{Z}}\dot{S}_{j-1}u\dot{\Delta}_{j}v\quad\mbox{with}\quad \dot{S}_{j-1}\triangleq\chi(2^{-(j-1)}D),$$
and the remainder $\mathcal{\dot{R}}(u,v)$ is given by
$$\mathcal{\dot{R}}(u,v)\triangleq \sum_{|j^{\prime}-j|\leq1}\dot{\Delta}_{j^{\prime}}u\dot{\Delta}_{j}v.$$
By Minkowski's inequality, it holds that
\begin{flalign}
\begin{aligned}
I&\leq\sum_{q\in\mathbb{Z}}2^{q(s_{1}+s_{2}-3/2)}\bigg(\int_{0}^{T}\int_{\mathbb{R}^{9}}|\xi-\xi_{\ast}|^{\gamma}\Big|\sum_{j\in\mathbb{Z}}\dot{\Delta}_{q}(\dot{S}_{j-1}f_{\ast}\dot{\Delta}_{j}g)\Big|^{2}\,dxd\xi d\xi_{\ast}dt\bigg)^{1/2}\\
&\quad+\sum_{q\in\mathbb{Z}}2^{q(s_{1}+s_{2}-3/2)}\bigg(\int_{0}^{T}\int_{\mathbb{R}^{9}}|\xi-\xi_{\ast}|^{\gamma}\Big|\sum_{j\in\mathbb{Z}}\dot{\Delta}_{q}(\dot{S}_{j-1}g\dot{\Delta}_{j}f_{\ast})\Big|^{2}\,dxd\xi d\xi_{\ast}dt\bigg)^{1/2}\\
&\quad+\sum_{q\in\mathbb{Z}}2^{q(s_{1}+s_{2}-3/2)}\bigg(\int_{0}^{T}\int_{\mathbb{R}^{9}}|\xi-\xi_{\ast}|^{\gamma}\Big|\sum_{|j-j^{\prime}|\leq1}\dot{\Delta}_{q}(\dot{\Delta}_{j}f_{\ast}\dot{\Delta}_{j^{\prime}}g)\Big|^{2}\,dxd\xi d\xi_{\ast}dt\bigg)^{1/2}\\
&\triangleq I_{1}+I_{2}+I_{3}.
\nonumber
\end{aligned}
\end{flalign}
Those terms $I_{i}$ ($i=1,2,3$) are handled in order. In view of the compact support for the Fourier transform of $\dot{S}_{j-1}f_{\ast}\dot{\Delta}_{j}g$, 
we deduce 
\begin{equation}\nonumber
\begin{aligned}
I_{1}&\leq\sum_{q\in\mathbb{Z}}\sum_{|j-q|\leq4}2^{q(s_{1}+s_{2}-3/2)}\bigg(\int_{0}^{T}\int_{\mathbb{R}^{9}}|\xi|^{\gamma}|\dot{\Delta}_{q}(\dot{S}_{j-1}f_{\ast}\dot{\Delta}_{j}g)|^{2}dxd\xi d\xi_{\ast} dt\bigg)^{1/2}\\
&\quad+\sum_{q\in\mathbb{Z}}\sum_{|j-q|\leq4}2^{q(s_{1}+s_{2}-3/2)}\bigg(\int_{0}^{T}\int_{\mathbb{R}^{9}}|\xi_{\ast}|^{\gamma}|\dot{\Delta}_{q}(\dot{S}_{j-1}f_{\ast}\dot{\Delta}_{j}g)|^{2}dxd\xi d\xi_{\ast}dt\bigg)^{1/2}\\
&\triangleq I_{1,1}+I_{1,2},
\end{aligned}
\end{equation}
where $S_{j-1}f_{\ast}$ is equal to $\sum\limits_{k\leq j-2}\dot{\Delta}_{k}f_{\ast}$. Hence, we have 
\begin{flalign}\nonumber
\begin{aligned}
I_{1,1}&\leq\sum_{q\in\mathbb{Z}}\sum_{|j-q|\leq4}\sum_{k\leq j-2}2^{q(s_{1}+s_{2}-3/2)}\\
&\quad\times\bigg(\int_{0}^{T}\int_{\mathbb{R}^{3}}|\xi|^{\gamma}\|\dot{\Delta}_{j}g\|^{2}_{L^{2}_{x}}d\xi dt \sup_{0\leq t\leq T}\int_{\mathbb{R}^{3}}\|\dot{\Delta}_{k}f_{\ast}\|^{2}_{L^{\infty}_{x}}d\xi_{\ast}\bigg)^{1/2}\\
&\leq\sum_{q\in\mathbb{Z}}\sum_{|j-q|\leq4}2^{(q-j)s_2} 2^{js_2}(\int_{0}^{T}dt\int_{\mathbb{R}^{3}}|\xi|^{\gamma}\|\dot{\Delta}_{j}g\|^{2}_{L^{2}_{x}}d\xi\Big)^{1/2} 2^{(s_1-3/2)(q-j)}\\
&\quad\times \sum_{k\leq j-2}2^{(s_{1}-3/2)(j-k)}2^{k(s_{1}-3/2)}\bigg(\sup_{0\leq t\leq T}\int_{\mathbb{R}^{3}}\|\dot{\Delta}_{k}f_*\|^{2}_{L^{\infty}_{x}}d\xi_*\bigg)^{1/2},
\end{aligned}
\end{flalign}
which, together with Lemma \ref{1dlg-L2.2}, Young's inequality for convolutions and $s_1\leq 3/2$, gives rise to
\begin{flalign}\nonumber
\begin{aligned}
I_{1,1}\lesssim & \bigg\| \bigg\{ \Big( 2^{s_2q}\mathbf{I}_{|q|\leq 4} \Big)\ast \Big( 2^{s_2q} \|\sqrt{\nu(\xi)}\dot{\Delta}_{q}g\|_{L^2_TL^2_{\xi}L^2_x} \Big) \bigg\}_{q\in\mathbb{Z}} \bigg\|_{l^1} \\
&\times \bigg\| \bigg\{ \Big( 2^{(s_1-3/2)q}\mathbf{I}_{q\geq2} \Big)\ast \Big( 2^{(s_1-3/2)q}\sup_{0\leq t\leq T}\|\dot{\Delta}_q f\|_{L^2_{\xi}L^{\infty}_x} \Big) \bigg\}_{q\in\mathbb{Z}} \bigg\|_{l^{\infty}}\\
\lesssim & \|\{2^{s_2q}\}_{|q|\leq 4}\|_{l^1} \|g\|_{\widetilde{L}^{2}_{T}\widetilde{L}^{2}_{\xi,\nu}(\dot{B}^{s_{2}}_{2,1})}  \|\{2^{(s_1-3/2)q}\}_{q\geq 2}\|_{l^{\infty}} \|f\|_{\widetilde{L}^{\infty}_{T}\widetilde{L}^{2}_{\xi}(\dot{B}^{s_{1}-3/2}_{\infty,1})} \\
\lesssim&\|g\|_{\widetilde{L}^{2}_{T}\widetilde{L}^{2}_{\xi,\nu}(\dot{B}^{s_{2}}_{2,1})}\|f\|_{\widetilde{L}^{\infty}_{T}\widetilde{L}^{2}_{\xi}(\dot{B}^{s_{1}}_{2,1})}.
\end{aligned}
\end{flalign}
Regarding $I_{1,2}$, since $s_2\leq 3/2$, a similar calculation shows that
\begin{flalign}\nonumber
\begin{aligned}
I_{1,2}\leq&\sum_{q\in\mathbb{Z}}\sum_{|j-q|\leq4}2^{(q-j)s_1} 2^{js_1}\sup_{0\leq t\leq T}\bigg(\int_{\mathbb{R}^{3}}\|\dot{\Delta}_{j}g\|^{2}_{L^{2}_{x}}d\xi\bigg)^{1/2} 2^{(s_2-3/2)(q-j)}\\
&\times \sum_{k\leq j-2}2^{(s_{2}-3/2)(j-k)}2^{k(s_{2}-3/2)}\bigg(\int_{0}^{T}dt\int_{\mathbb{R}^{3}}|\xi_*|^{\gamma}\|\dot{\Delta}_{k}f_*\|^{2}_{L^{\infty}_{x}}d\xi_*\bigg)^{1/2} \\
\lesssim &  \|2^{s_1q}\mathbf{I}_{|q|\leq 4}\|_{l^1} \|g\|_{\widetilde{L}^{\infty}_{T}\widetilde{L}^{2}_{\xi}(\dot{B}^{s_{1}}_{2,1})} \|2^{(s_2-3/2)q}\mathbf{I}_{q\geq 2}\|_{l^{\infty}} \|f\|_{\widetilde{L}^{2}_{T}\widetilde{L}^{2}_{\xi,\nu}(\dot{B}^{s_{2}-3/2}_{\infty,1})}  \\
\lesssim&\|g\|_{\widetilde{L}^{\infty}_{T}\widetilde{L}^{2}_{\xi}(\dot{B}^{s_{1}}_{2,1})}\|f\|_{\widetilde{L}^{2}_{T}\widetilde{L}^{2}_{\xi,\nu}(\dot{B}^{s_{2}}_{2,1})}.
\end{aligned}
\end{flalign}
Thus, we get
$$I_{1}\lesssim\|g\|_{\widetilde{L}^{2}_{T}\widetilde{L}^{2}_{\xi,\nu}(\dot{B}^{s_{2}}_{2,1})}\|f\|_{\widetilde{L}^{\infty}_{T}\widetilde{L}^{2}_{\xi}(\dot{B}^{s_{1}}_{2,1})}+\|g\|_{\widetilde{L}^{\infty}_{T}\widetilde{L}^{2}_{\xi}(\dot{B}^{s_{1}}_{2,1})}\|f\|_{\widetilde{L}^{2}_{T}\widetilde{L}^{2}_{\xi,\nu}(\dot{B}^{s_{2}}_{2,1})}.$$

Next, we turn to bound $I_{2}$. Arguing similarly as for $I_{1}$, we have
\begin{equation}\label{1dlg-E2.20}
\begin{aligned}
I_{2}&\leq\sum_{q\in\mathbb{Z}}\sum_{|j-q|\leq4}2^{q(s_{1}+s_{2}-3/2)}\bigg(\int_{0}^{T}\int_{\mathbb{R}^{9}}|\xi|^{\gamma}|\dot{\Delta}_{q}(\dot{S}_{j-1}g\dot{\Delta}_{j}f_{\ast})|^{2} dxd\xi d\xi_{\ast} dt\bigg)^{1/2}\\
&\quad+\sum_{q\in\mathbb{Z}}\sum_{|j-q|\leq4}2^{q(s_{1}+s_{2}-3/2)}\bigg(\int_{0}^{T}\int_{\mathbb{R}^{9}}|\xi_{\ast}|^{\gamma}|\dot{\Delta}_{q}(\dot{S}_{j-1}g\dot{\Delta}_{j}f_{\ast})|^{2}dxd\xi d\xi_{\ast} dt\bigg)^{1/2}\\
&\triangleq I_{2,1}+I_{2,2},
\end{aligned}
\end{equation}
where $I_{2,1}$ and $I_{2,2}$ can be estimated as
\begin{flalign*}
\begin{aligned}
I_{2,1}&\leq\sum_{q\in\mathbb{Z}}\sum_{|j-q|\leq4}\sum_{k\leq j-2}2^{q(s_{1}+s_{2}-3/2)}\\
&\quad\times\bigg(\int_{0}^{T}dt\int_{\mathbb{R}^{3}}|\xi|^{\gamma}\|\dot{\Delta}_{k}g\|^{2}_{L^{\infty}_{x}}d\xi\int_{\mathbb{R}^{3}}\|\dot{\Delta}_{j}f_*\|^{2}_{L^{2}_{x}}d\xi_*\bigg)^{1/2}\\
&\leq\sum_{q\in\mathbb{Z}}\sum_{|j-q|\leq4}2^{(q-j)s_1} 2^{js_1}\sup_{0\leq t\leq T}\bigg(\int_{\mathbb{R}^{3}}\|\dot{\Delta}_{j}f_*\|^{2}_{L^{2}_{x}}d\xi_*\bigg)^{1/2} 2^{(s_{2}-3/2)(q-j)}\\
&\quad\times \sum_{k\leq j-2}2^{(s_{2}-3/2)(j-k)}2^{k(s_{2}-3/2)}\bigg(\int_{0}^{T}dt\int_{\mathbb{R}^{3}}|\xi|^{\gamma}\|\dot{\Delta}_{k}g\|^{2}_{L^{\infty}_{x}}d\xi\bigg)^{1/2} \\
&\lesssim\|f\|_{\widetilde{L}^{\infty}_{T}\widetilde{L}^{2}_{\xi}(\dot{B}^{s_{1}}_{2,1})}\|g\|_{\widetilde{L}^{2}_{T}\widetilde{L}^{2}_{\xi,\nu}(\dot{B}^{s_{2}}_{2,1})},
\end{aligned}
\end{flalign*}
and similarly,
\begin{equation}\nonumber
I_{2,2}\lesssim\|f\|_{\widetilde{L}^{2}_{T}\widetilde{L}^{2}_{\xi,\nu}(\dot{B}^{s_{2}}_{2,1})}\|g\|_{\widetilde{L}^{\infty}_{T}\widetilde{L}^{2}_{\xi}(\dot{B}^{s_{1}}_{2,1})}.
\end{equation}
Consequently, it holds that
$$
I_{2}\lesssim \|f\|_{\widetilde{L}^{\infty}_{T}\widetilde{L}^{2}_{\xi}(\dot{B}^{s_{1}}_{2,1})}\|g\|_{\widetilde{L}^{2}_{T}\widetilde{L}^{2}_{\xi,\nu}(\dot{B}^{s_{2}}_{2,1})}+\|f\|_{\widetilde{L}^{2}_{T}\widetilde{L}^{2}_{\xi,\nu}(\dot{B}^{s_{2}}_{2,1})}\|g\|_{\widetilde{L}^{\infty}_{T}\widetilde{L}^{2}_{\xi}(\dot{B}^{s_{1}}_{2,1})}.
$$

Regarding the reminder term $I_{3}$, due to the fact that
$$
\sum_{j\in\mathbb{Z}}\sum_{|j-j^{\prime}|\leq1}\dot{\Delta}_{q}(\dot{\Delta}_{j^{\prime}}f_{\ast}\dot{\Delta}_{j}g)=\sum_{\max\{j,j^{\prime}\}\geq q-2}\sum_{|j-j^{\prime}|\leq1}\dot{\Delta}_{q}(\dot{\Delta}_{j^{\prime}}f_{\ast}\dot{\Delta}_{j}g),
$$
one has
\begin{flalign}
\begin{aligned}
I_{3}&\leq\sum_{q\in\mathbb{Z}}\sum_{\max\{j,j^{\prime}\}\geq q-2}\sum_{|j-j^{\prime}|\leq1}2^{q(s_{1}+s_{2}-3/2)}\\
&\quad\times\bigg(\int_{0}^{T}\int_{\mathbb{R}^{9}}|\xi|^{\gamma}|\dot{\Delta}_{q}(\dot{\Delta}_{j^{\prime}}f_{\ast}\dot{\Delta}_{j}g)|^{2} dxd\xi d\xi_{\ast} dt\bigg)^{1/2}\\
&\quad+\sum_{q\in\mathbb{Z}}\sum_{\max\{j,j^{\prime}\}\geq q-2}\sum_{|j-j^{\prime}|\leq1}2^{q(s_{1}+s_{2}-3/2)}\\
&\quad\times\bigg(\int_{0}^{T}\int_{\mathbb{R}^{9}}|\xi_{\ast}|^{\gamma}|\dot{\Delta}_{q}(\dot{\Delta}_{j^{\prime}}f_{\ast}\dot{\Delta}_{j}g)|^{2} dxd\xi d\xi_{\ast} dt\bigg)^{1/2}\\
&\triangleq I_{3,1}+I_{3,2}.
\nonumber
\end{aligned}
\end{flalign}
As the Bernstein lemma (Lemma \ref{1dlg-L2.2}) and Young's inequality for convolutions, one can handle $I_{3,1}$ as follows:
\begin{flalign*}
\begin{aligned}
I_{3,1}&
\lesssim\sum_{q\in\mathbb{Z}}\sum_{j\geq q-3}2^{q(s_{1}+s_{2})}\bigg(\int_{0}^{T}\int_{\mathbb{R}^{6}}|\xi|^{\gamma}\|\dot{\Delta}_{j}f_{\ast}\dot{\Delta}_{j}g\|^{2}_{L^{1}_{x}}d\xi d\xi_{\ast}dt\bigg)^{1/2}\\
&\lesssim\sum_{q\in\mathbb{Z}}\sum_{j\geq q-3}2^{(q-j)(s_{1}+s_{2})}2^{js_{2}}\bigg(\int_{0}^{T}\int_{\mathbb{R}^{3}}|\xi|^{\gamma}\|\dot{\Delta}_{j}g\|^{2}_{L^{2}_{x}}d\xi dt\bigg)^{1/2} \\
&\quad\times2^{js_{1}}\bigg(\sup_{0\leq t\leq T}\int_{\mathbb{R}^{3}}\|\dot{\Delta}_{j}f_{\ast}\|^{2}_{L^{2}_{x}}d\xi_{\ast}dt\bigg)^{1/2}\\
&\lesssim \bigg\|\bigg\{  \Big(2^{(s_1+s_2)q}\mathbf{I}_{q\leq 3}\Big)\ast \Big(2^{qs_2}\|\sqrt{\nu(\xi)}\dot{\Delta}_{q}g\|_{L^2_TL^2_{\xi}L^2_x}\Big)         \bigg\}_{q\in\mathbb{Z}} \bigg\|_{l^1} \|f\|_{L^{\infty}_{T}L^{2}_{\xi}(\dot{B}^{s_{1}}_{2,\infty})}\\
&\lesssim \|g\|_{\widetilde{L}^{2}_{T}\widetilde{L}^{2}_{\xi,\nu}(\dot{B}^{s_{2}}_{2,1})}\|f\|_{\widetilde{L}^{\infty}_{T}\widetilde{L}^{2}_{\xi,\nu}(\dot{B}^{s_{1}}_{2,1})}.
\end{aligned}
\end{flalign*}
where we used $\|\{2^{(s_1+s_2)q}\}_{q\leq 3}\|_{l^1}\leq C$ due to $s_1+s_2>0$. Similarly, we see that 
$$
I_{3,2}\lesssim\|f\|_{\widetilde{L}^{2}_{T}\widetilde{L}^{2}_{\xi,\nu}(\dot{B}^{s_{2}}_{2,1})}\|g\|_{\widetilde{L}^{\infty}_{T}\widetilde{L}^{2}_{\xi}(\dot{B}^{s_{1}}_{2,1})}.
$$
Combining all the above estimates for $I_{1},~I_{2}$ and $I_{3}$, we obtain the desired inequality (\ref{1dlg-E2.13}).
\end{proof}

\vspace{2mm}

Next, we give the trilinear estimate of the $\widetilde{L}^2_{\xi}(\dot{B}^{s}_{2,\infty})$-type Besov norms, which plays a key role in deriving optimal decay estimates. Compared with \eqref{1dlg-E2.13}, this estimate can cover the case $s_1+s_2=0$. In particular, when $s_1=-3/2$ and $s_2=3/2$, we are able to deal with the $\widetilde{L}^2_{\xi}(\dot{B}^{-3/2}_{2,\infty})$-norm associated with the embedding in $L^2_{\xi}(L^1_{x})$.

\begin{lemma}\label{1dlg-L2.8}
Assume $s_1, s_2\in\mathbb{R}$ such that $-3/2\leq s_1<3/2, -3/2<s_2\leq 3/2$ and $s_{1}+s_{2}\geq0$. For suitably smooth distribution functions $f=f(t,x,\xi), g=g(t,x,\xi)$ and $h=h(t,x,\xi)$, we have
\begin{equation}\label{1dlg-E2.23}
\begin{aligned}                 
&\sup_{q\in\mathbb{Z}}2^{q(s_{1}+s_{2}-3/2)}\bigg(\int^{T}_{0}\big|\big(\dot{\Delta}_{q}\Gamma(f,g),\dot{\Delta}_{q}h\big)_{\xi,x}\big|\,dt\bigg)^{1/2}\\
&\quad\leq C\|h\|^{1/2}_{\widetilde{L}^{2}_{T}\widetilde{L}^{2}_{\xi,\nu}(\dot{B}^{s_{1}+s_{2}-3/2}_{2,\infty})}\\
&\quad\quad\times\bigg(\|f\|^{1/2}_{\widetilde{L}^{\infty}_{T}\widetilde{L}^{2}_{\xi}(\dot{B}^{s_{1}}_{2,\infty})}\|g\|^{1/2}_{\widetilde{L}^{2}_{T}\widetilde{L}^{2}_{\xi,\nu}(\dot{B}^{s_{2}}_{2,1})}+\|f\|^{1/2}_{\widetilde{L}^{2}_{T}\widetilde{L}^{2}_{\xi,\nu}(\dot{B}^{s_{2}}_{2,1})}\|g\|^{1/2}_{\widetilde{L}^{\infty}_{T}\widetilde{L}^{2}_{\xi}(\dot{B}^{s_{1}}_{2,\infty})}\bigg) ,
\end{aligned}
\end{equation}
where $C$ is a constant depending only on $s_{1}$ and $s_{2}$.
\end{lemma}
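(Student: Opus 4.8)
The plan is to run the scheme of the proof of Lemma~\ref{1dlg-L2.7} with the $\ell^{1}$ sums over the dyadic index $q$ replaced by $\ell^{\infty}$ suprema, so that $h$ and $f$ are measured in $\widetilde{L}^{2}_{\xi}(\dot{B}^{\cdot}_{2,\infty})$-type spaces while $g$ keeps the $\widetilde{L}^{2}_{\xi,\nu}(\dot{B}^{s_{2}}_{2,1})$ regularity. First I would write $\Gamma=\Gamma_{{\rm gain}}-\Gamma_{{\rm loss}}$, apply the Cauchy--Schwarz inequality in $(t,x,\xi,\xi_{\ast},\omega)$ with weight $|\xi-\xi_{\ast}|^{\gamma}B_{0}(\theta)\mu^{1/2}(\xi_{\ast})$, perform the involutive change of variables $(\xi,\xi_{\ast})\mapsto(\xi',\xi_{\ast}')$ in the gain term (unit Jacobian, $|\xi'-\xi_{\ast}'|=|\xi-\xi_{\ast}|$, $\int_{\mathbb{S}^{2}}d\omega=4\pi$), and use $0\le B_{0}\le C$ and $\mu^{1/2}\le 1$ to reduce, exactly as in Lemma~\ref{1dlg-L2.7}, to the bound
\[
\sup_{q\in\mathbb{Z}}2^{q(s_{1}+s_{2}-3/2)}\Big(\int_{0}^{T}\big|\big(\dot{\Delta}_{q}\Gamma(f,g),\dot{\Delta}_{q}h\big)_{\xi,x}\big|\,dt\Big)^{1/2}\lesssim I^{1/2}\,II^{1/2},
\]
where $II\triangleq\sup_{q}2^{q(s_{1}+s_{2}-3/2)}\big(\int_{0}^{T}\!\int_{\mathbb{R}^{9}}|\xi-\xi_{\ast}|^{\gamma}\mu^{1/2}(\xi_{\ast})|\dot{\Delta}_{q}h|^{2}\,dxd\xi d\xi_{\ast}dt\big)^{1/2}$ and $I$ is the same quantity with $|\dot{\Delta}_{q}h|^{2}$ replaced by $|\dot{\Delta}_{q}(f_{\ast}g)|^{2}$ and without the factor $\mu^{1/2}(\xi_{\ast})$. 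Since $\int_{\mathbb{R}^{3}}|\xi-\xi_{\ast}|^{\gamma}\mu^{1/2}(\xi_{\ast})\,d\xi_{\ast}\sim\nu(\xi)$, one immediately gets $II\le\|h\|_{\widetilde{L}^{2}_{T}\widetilde{L}^{2}_{\xi,\nu}(\dot{B}^{s_{1}+s_{2}-3/2}_{2,\infty})}$, and the whole content of the lemma is the estimate of $I$.

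To bound $I$ I would insert the homogeneous Bony decomposition $f_{\ast}g=\mathcal{\dot{T}}_{f_{\ast}}g+\mathcal{\dot{T}}_{g}f_{\ast}+\mathcal{\dot{R}}(f_{\ast},g)$, obtaining $I\le I_{1}+I_{2}+I_{3}$ by Minkowski's inequality, and split each $I_{i}=I_{i,1}+I_{i,2}$ via $|\xi-\xi_{\ast}|^{\gamma}\lesssim|\xi|^{\gamma}+|\xi_{\ast}|^{\gamma}$, so that the collision weight is carried by the $\xi$-factor in $I_{i,1}$ and by the $\xi_{\ast}$-factor in $I_{i,2}$. For the two paraproduct terms $I_{1},I_{2}$ the spectral support condition forces $|j-q|\le 4$, so under the outer supremum only finitely many $j$ contribute; estimating $\|\dot{S}_{j-1}u\|_{L^{\infty}_{x}}\le\sum_{k\le j-2}\|\dot{\Delta}_{k}u\|_{L^{\infty}_{x}}$, using Bernstein's lemma (Lemma~\ref{1dlg-L2.2}) to pass from $L^{\infty}_{x}$ to $L^{2}_{x}$ and hence from $\dot{B}^{\cdot}_{\infty,\cdot}$ to $\dot{B}^{\cdot}_{2,\cdot}$, together with $|\xi|^{\gamma}\le C\nu(\xi)$ and $|\xi_{\ast}|^{\gamma}\le C\nu(\xi_{\ast})$, the geometric sums over $k\le j-2$ are resummed by Young's inequality for series. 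Here the low-frequency factor carrying $f$ can be kept at the $\ell^{\infty}$, i.e.\ $\dot{B}^{s_{1}}_{2,\infty}$, level precisely because $s_{1}<3/2$ makes the ratio $2^{s_{1}-3/2}<1$, whereas for the factor carrying $g$ the non-strict bound $s_{2}\le 3/2$ suffices since $g$ already lies in the $\ell^{1}$-type space $\dot{B}^{s_{2}}_{2,1}$. This produces
\[
I_{1}+I_{2}\lesssim\|f\|_{\widetilde{L}^{\infty}_{T}\widetilde{L}^{2}_{\xi}(\dot{B}^{s_{1}}_{2,\infty})}\|g\|_{\widetilde{L}^{2}_{T}\widetilde{L}^{2}_{\xi,\nu}(\dot{B}^{s_{2}}_{2,1})}+\|f\|_{\widetilde{L}^{2}_{T}\widetilde{L}^{2}_{\xi,\nu}(\dot{B}^{s_{2}}_{2,1})}\|g\|_{\widetilde{L}^{\infty}_{T}\widetilde{L}^{2}_{\xi}(\dot{B}^{s_{1}}_{2,\infty})}.
\]

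The genuinely new point, and the one I expect to be the main obstacle, is the remainder term $I_{3}$ at the endpoint $s_{1}+s_{2}=0$. There $\dot{\Delta}_{q}\mathcal{\dot{R}}(f_{\ast},g)$ is supported on indices $j\ge q-3$; after Bernstein's lemma, $\|\dot{\Delta}_{q}(\dot{\Delta}_{j'}f_{\ast}\dot{\Delta}_{j}g)\|_{L^{2}_{x}}\lesssim 2^{3q/2}\|\dot{\Delta}_{j'}f_{\ast}\|_{L^{2}_{x}}\|\dot{\Delta}_{j}g\|_{L^{2}_{x}}$, so the spectral weight becomes $2^{q(s_{1}+s_{2})}$, and writing it as $2^{(q-j)(s_{1}+s_{2})}2^{j(s_{1}+s_{2})}$ with $q-j\le 3$ shows that the kernel $2^{(q-j)(s_{1}+s_{2})}$ is \emph{uniformly bounded} by $2^{3(s_{1}+s_{2})}$ as soon as $s_{1}+s_{2}\ge 0$; it need not be $\ell^{1}$-summable, which is exactly what permits the borderline case excluded in Lemma~\ref{1dlg-L2.7}. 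One then uses $\sup_{q}\sum_{j\ge q-3}2^{(q-j)(s_{1}+s_{2})}a_{j}b_{j}\le 2^{3(s_{1}+s_{2})}\big(\sup_{j}a_{j}\big)\big(\sum_{j}b_{j}\big)$ with $a_{j}=2^{js_{1}}\sup_{[0,T]}\|\dot{\Delta}_{j}f\|_{L^{2}_{\xi}L^{2}_{x}}$ and $b_{j}=2^{js_{2}}\|\sqrt{\nu}\dot{\Delta}_{j}g\|_{L^{2}_{T}L^{2}_{\xi}L^{2}_{x}}$ in $I_{3,1}$, and the symmetric choice (with $f$ and $g$, and the exponents $s_{1},s_{2}$, interchanged, the $\nu$-weight now falling on $f$) in $I_{3,2}$. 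Collecting the bounds for $I_{1},I_{2},I_{3}$ into $I$ and combining with the bound for $II$ through $I^{1/2}II^{1/2}$ yields \eqref{1dlg-E2.23}. The delicate part throughout is keeping the $L^{\infty}_{T}$/$L^{2}_{T}$ and $\ell^{\infty}$/$\ell^{1}$ (in $q$) bookkeeping consistent, so that $f$ and $h$ always land on the $\dot{B}^{\cdot}_{2,\infty}$ side and $g$ on the $\dot{B}^{s_{2}}_{2,1}$ side.
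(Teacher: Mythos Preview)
Your proposal is correct and follows essentially the same approach as the paper's proof: reduce via Cauchy--Schwarz to $I^{1/2}II^{1/2}$, bound $II$ by the $\widetilde{L}^{2}_{T}\widetilde{L}^{2}_{\xi,\nu}(\dot{B}^{s_{1}+s_{2}-3/2}_{2,\infty})$-norm of $h$, and estimate $I$ via the Bony decomposition with the paraproducts handled using the strict condition $s_{1}<3/2$ (so that $\{2^{(s_{1}-3/2)q}\}_{q\ge 2}\in\ell^{1}$) and the remainder handled using $s_{1}+s_{2}\ge 0$ (so that $\{2^{(s_{1}+s_{2})q}\}_{q\le 3}$ is bounded in $\ell^{\infty}$). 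Your identification of the remainder endpoint $s_{1}+s_{2}=0$ as the only genuinely new step, and your treatment of it via the uniform kernel bound rather than $\ell^{1}$-summability, matches the paper exactly.
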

\begin{proof}
We have
\begin{flalign*}
\begin{aligned}
&\sup_{q\in\mathbb{Z}}2^{q(s_{1}+s_{2}-3/2)}\bigg(\int_{0}^{T}|(\dot{\Delta}_{q}\Gamma(f,g),\dot{\Delta}_{q}h)_{\xi,x}|dt\bigg)^{1/2}\lesssim I_*^{1/2} II_{*}^{1/2}
\end{aligned}
\end{flalign*}
with
\begin{flalign*}
\begin{aligned}
I_*&\triangleq\sup_{q\in\mathbb{Z}}2^{q(s_{1}+s_{2}-3/2)}\bigg(\int_{0}^{T}\int_{\mathbb{R}^{9}\times\mathbb{S}^{2}}|\xi-\xi_{\ast}|^{\gamma}|\dot{\Delta}_{q}(f_{\ast}g)|^{2} 
 dxd\xi d\xi_{\ast}d\omega dt\bigg)^{1/2},\\
II_*&\triangleq\sup_{q\in\mathbb{Z}}2^{q(s_{1}+s_{2}-3/2)}\bigg(\int_{0}^{T}\int_{\mathbb{R}^{9}\times\mathbb{S}^{2}} |\xi-\xi_{\ast}|^{\gamma}\mu^{1/2}(\xi_{\ast})|\dot{\Delta}_{q}h|^{2} dxd\xi d\xi_{\ast}d\omega dt\bigg)^{1/2}.
\end{aligned}
\end{flalign*}
It is straightforward to see that  
$$
II_{*}\leq\|h\|_{\widetilde{L}^{2}_{T}\widetilde{L}^{2}_{\xi,\nu}(\dot{B}^{s_{1}+s_{2}-3/2}_{2,\infty})}.
$$
By using Bony's decomposition, we bound $I_{*}$ as follows
\begin{flalign}
\begin{aligned}
I_{*}&\leq \sup_{q\in\mathbb{Z}} 2^{q(s_{1}+s_{2}-3/2)}\int_{0}^{T}\bigg(\int_{\mathbb{R}^{9}}|\xi-\xi_{\ast}|^{\gamma}\Big|\sum_{j\in\mathbb{Z}}\dot{\Delta}_{q}(\dot{S}_{j-1}f_{\ast}\dot{\Delta}_{j}g)\Big|^{2}\,dxd\xi d\xi_{\ast}dt\bigg)^{1/2}\\
&\quad+\sup_{q\in\mathbb{Z}} 2^{q(s_{1}+s_{2}-3/2)}\int_{0}^{T}\bigg(\int_{\mathbb{R}^{9}}|\xi-\xi_{\ast}|^{\gamma}\Big|\sum_{j\in\mathbb{Z}}\dot{\Delta}_{q}(\dot{S}_{j-1}g\dot{\Delta}_{j}f_{\ast})\Big|^{2}\,dxd\xi d\xi_{\ast}dt\bigg)^{1/2}\\
&\quad+\sup_{q\in\mathbb{Z}} 2^{q(s_{1}+s_{2}-3/2)}\int_{0}^{T}\bigg(\int_{\mathbb{R}^{9}}|\xi-\xi_{\ast}|^{\gamma}\Big|\sum_{|j-j^{\prime}|\leq1}\dot{\Delta}_{q}(\dot{\Delta}_{j}f_{\ast}\dot{\Delta}_{j^{\prime}}g)\Big|^{2}\,dxd\xi d\xi_{\ast}dt\bigg)^{1/2}\\
&\triangleq J_{1}+J_{2}+J_{3}.
\nonumber
\end{aligned}
\end{flalign}
We estimate $J_1$ as
\begin{equation}\nonumber
\begin{aligned}
J_{1}&\leq\sup_{q\in\mathbb{Z}}\sum_{|j-q|\leq4}2^{q(s_{1}+s_{2}-3/2)}\bigg(\int_{0}^{T}\int_{\mathbb{R}^{9}}|\xi|^{\gamma}|\dot{\Delta}_{q}(\dot{S}_{j-1}f_{\ast}\dot{\Delta}_{j}g)|^{2}dxd\xi d\xi_{\ast}dt\bigg)^{1/2}\\
&\quad+\sup_{q\in\mathbb{Z}}\sum_{|j-q|\leq4}2^{q(s_{1}+s_{2}-3/2)}\bigg(\int_{0}^{T}\int_{\mathbb{R}^{9}}|\xi_{\ast}|^{\gamma}|\dot{\Delta}_{q}(\dot{S}_{j-1}f_{\ast}\dot{\Delta}_{j}g)|^{2}dxd\xi d\xi_{\ast}dt\bigg)^{1/2}\\
&\triangleq J_{1,1}+J_{1,2}.
\end{aligned}
\end{equation}
As $s_1<3/2$, the Bernstein lemma (Lemma \ref{1dlg-L2.2}) and Young's inequality enable us to obtain
\begin{flalign*}
\begin{aligned}
J_{1,1}&\leq\sup_{q\in\mathbb{Z}}\sum_{|j-q|\leq4}\sum_{k\leq j-2}2^{q(s_{1}+s_{2}-3/2)}\\
&\quad\times\bigg(\sup_{0\leq t\leq T}\int_{\mathbb{R}^{3}}\|\dot{\Delta}_{k}f_{\ast}\|^{2}_{L^{\infty}_{x}}d\xi_{\ast}\int_{0}^{T}dt\int_{\mathbb{R}^{3}}|\xi|^{\gamma}\|\dot{\Delta}_{j}g\|^{2}_{L^{2}_{x}}d\xi\bigg)^{1/2}\\
&\leq\sup_{q\in\mathbb{Z}}\sum_{|j-q|\leq4}2^{(q-j)s_2} 2^{js_2}\bigg(\int_{0}^{T}dt\int_{\mathbb{R}^{3}}|\xi|^{\gamma}\|\dot{\Delta}_{j}g\|^{2}_{L^{2}_{x}}d\xi\bigg)^{1/2} 2^{(s_1-3/2)(q-j)}\\
&\quad\times \sum_{k\leq j-2}2^{(s_{1}-3/2)(j-k)}2^{k(s_{1}-3/2)}\bigg(\sup_{0\leq t\leq T}\int_{\mathbb{R}^{3}}\|\dot{\Delta}_{k}f_*\|^{2}_{L^{\infty}_{x}}d\xi_*\bigg)^{1/2}\\
 &\lesssim \bigg\|\bigg\{ \Big(2^{s_2q}\mathbf{I}_{|q|\leq 4}\Big)\ast \Big( 2^{s_2q} \|\sqrt{\nu(\xi)}\dot{\Delta}_{q}g\|_{L^2_TL^2_{\xi}L^2_x}\Big) \bigg\}_{q\in\mathbb{Z}}\bigg\|_{l^\infty} \\
&\quad\times \bigg\| \bigg\{ \Big( 2^{(s_1-3/2)q}\mathbf{I}_{q\geq2} \big)\ast \Big( 2^{(s_1-3/2)q}\sup_{0\leq t\leq T}\|\dot{\Delta}_q f\|_{L^2_{\xi}L^{\infty}_x} \big) \bigg\}_{q\in\mathbb{Z}}\bigg\|_{l^{\infty}}\\
 &\lesssim \|\{2^{s_2q}\}_{|q|\leq 4}\|_{l^\infty} \|g\|_{\widetilde{L}^{2}_{T}\widetilde{L}^{2}_{\xi,\nu}(\dot{B}^{s_{2}}_{2,1})}  
 \|\{2^{(s_1-3/2)q}\}_{q\geq 2}\|_{l^{1}} \|f\|_{\widetilde{L}^{\infty}_{T}\widetilde{L}^{2}_{\xi}(\dot{B}^{s_{1}-3/2}_{\infty,\infty})}  \\
&\lesssim\|g\|_{\widetilde{L}^{2}_{T}\widetilde{L}^{2}_{\xi,\nu}(\dot{B}^{s_{2}}_{2,1})}\|f\|_{\widetilde{L}^{\infty}_{T}\widetilde{L}^{2}_{\xi}(\dot{B}^{s_{1}}_{2,\infty})}.
\end{aligned}
\end{flalign*}
For term $J_{1,2}$, one has
\begin{flalign}\nonumber
\begin{aligned}
J_{1,2}\leq&\sup_{q\in\mathbb{Z}}\sum_{|j-q|\leq4}2^{(q-j)s_1} 2^{js_1}\sup_{0\leq t\leq T}\bigg(\int_{\mathbb{R}^{3}}\|\dot{\Delta}_{j}g\|^{2}_{L^{2}_{x}}d\xi\bigg)^{1/2} 2^{(s_2-3/2)(q-j)}\\
&\times \sum_{k\leq j-2}2^{(s_{2}-3/2)(j-k)}2^{k(s_{2}-3/2)}\bigg(\int_{0}^{T}\int_{\mathbb{R}^{3}}|\xi_*|^{\gamma}\|\dot{\Delta}_{k}f_*\|^{2}_{L^{\infty}_{x}}d\xi_*dt\bigg)^{1/2} \\
\lesssim &  \|\{2^{s_1q}\}_{|q|\leq 4}\|_{l^1} \|g\|_{\widetilde{L}^{\infty}_{T}\widetilde{L}^{2}_{\xi}(\dot{B}^{s_{1}}_{2,\infty})}  \|\{2^{(s_2-3/2)q}\}_{q\geq 2}\|_{l^{\infty}}  \|f\|_{\widetilde{L}^{2}_{T}\widetilde{L}^{2}_{\xi,\nu}(\dot{B}^{s_{2}-3/2}_{\infty,1})} \\
\lesssim&\|g\|_{\widetilde{L}^{\infty}_{T}\widetilde{L}^{2}_{\xi}(\dot{B}^{s_{1}}_{2,\infty})}\|f\|_{\widetilde{L}^{2}_{T}\widetilde{L}^{2}_{\xi,\nu}(\dot{B}^{s_{2}}_{2,1})}.
\end{aligned}
\end{flalign}
Thus, we get
$$J_{1}\lesssim\|f\|_{\widetilde{L}^{\infty}_{T}\widetilde{L}^{2}_{\xi}(\dot{B}^{s_{1}}_{2,\infty})}\|g\|_{\widetilde{L}^{2}_{T}\widetilde{L}^{2}_{\xi,\nu}(\dot{B}^{s_{2}}_{2,1})}+\|g\|_{\widetilde{L}^{\infty}_{T}\widetilde{L}^{2}_{\xi}(\dot{B}^{s_{1}}_{2,\infty})}\|f\|_{\widetilde{L}^{2}_{T}\widetilde{L}^{2}_{\xi,\nu}(\dot{B}^{s_{2}}_{2,1})}.$$

Next, we turn to bound $J_2$. Precisely,
\begin{equation}\nonumber
\begin{aligned}
J_2&\leq\sup_{q\in\mathbb{Z}}\sum_{|j-q|\leq4}2^{q(s_{1}+s_{2}-3/2)}\bigg(\int_{0}^{T}\int_{\mathbb{R}^{9}}|\xi|^{\gamma}|\dot{\Delta}_{q}(\dot{S}_{j-1}g\dot{\Delta}_{j}f_{\ast})|^{2}dxd\xi d\xi_{\ast}dt\bigg)^{1/2}\\
&\quad+\sup_{q\in\mathbb{Z}}\sum_{|j-q|\leq4}2^{q(s_{1}+s_{2}-3/2)}\bigg(\int_{0}^{T}\int_{\mathbb{R}^{9}}|\xi_{\ast}|^{\gamma}|\dot{\Delta}_{q}(\dot{S}_{j-1}g\dot{\Delta}_{j}f_{\ast})|^{2}dxd\xi d\xi_{\ast}dt\bigg)^{1/2}\\
&\triangleq J_{2,1}+J_{2,2},
\end{aligned}
\end{equation}
which can be estimated as for $J_{1,1}$ and $J_{1,2}$ similarly. Indeed, for $s_1<3/2$ and $s_2\leqq3/2$, we arrive at
\begin{flalign*}
\begin{aligned}
J_{2,1}&\leq\sup_{q\in\mathbb{Z}}\sum_{|j-q|\leq4}\sum_{k\leq j-2}2^{q(s_{1}+s_{2}-3/2)}\\
&\quad\times\bigg(\int_{0}^{T}dt\int_{\mathbb{R}^{3}}|\xi|^{\gamma}\|\dot{\Delta}_{k}g\|^{2}_{L^{\infty}_{x}}d\xi\int_{\mathbb{R}^{3}}\|\dot{\Delta}_{j}f_*\|^{2}_{L^{2}_{x}}d\xi_*\bigg)^{1/2}\\
&\leq \sup_{q\in\mathbb{Z}} \sum_{|j-q|\leq4}2^{(q-j)s_1} 2^{js_1}\sup_{0\leq t\leq T}\bigg(\int_{\mathbb{R}^{3}}\|\dot{\Delta}_{j}f_*\|^{2}_{L^{2}_{x}}d\xi_*\bigg)^{1/2} 2^{(s_{2}-3/2)(q-j)}\\
&\quad\times \sum_{k\leq j-2}2^{(s_{2}-3/2)(j-k)}2^{k(s_{2}-3/2)}\bigg(\int_{0}^{T}dt\int_{\mathbb{R}^{3}}|\xi|^{\gamma}\|\dot{\Delta}_{k}g\|^{2}_{L^{\infty}_{x}}d\xi\bigg)^{1/2} \\
&\lesssim\|f\|_{\widetilde{L}^{\infty}_{T}\widetilde{L}^{2}_{\xi}(\dot{B}^{s_{1}}_{2,\infty})}\|g\|_{\widetilde{L}^{2}_{T}\widetilde{L}^{2}_{\xi,\nu}(\dot{B}^{s_{2}}_{2,1})}.
\end{aligned}
\end{flalign*}
A similar computation leads to
\begin{flalign*}
J_{2,2}\lesssim\|f\|_{\widetilde{L}^{2}_{T}\widetilde{L}^{2}_{\xi,\nu}(\dot{B}^{s_{2}}_{2,1})}\|g\|_{\widetilde{L}^{\infty}_{T}\widetilde{L}^{2}_{\xi}(\dot{B}^{s_{1}}_{2,\infty})}.
\end{flalign*}
The term $J_{3}$ can be bounded by
\begin{flalign}
\begin{aligned}
J_3&\leq\sup_{q\in\mathbb{Z}}\sum_{\max\{j,j^{\prime}\}\geq q-2}\sum_{|j-j^{\prime}|\leq1}2^{q(s_{1}+s_{2}-3/2)}\\
&\quad\times\bigg(\int_{0}^{T}\int_{\mathbb{R}^{9}}|\xi|^{\gamma}|\dot{\Delta}_{q}(\dot{\Delta}_{j^{\prime}}f_{\ast}\dot{\Delta}_{j}g)|^{2}dxd\xi d\xi_{\ast}dt\bigg)^{1/2}\\
&\quad+\sup_{q\in\mathbb{Z}}\sum_{\max\{j,j^{\prime}\}\geq q-2}\sum_{|j-j^{\prime}|\leq1}2^{q(s_{1}+s_{2}-3/2)}\\
&\quad\times\bigg(\int_{0}^{T}\int_{\mathbb{R}^{9}}|\xi_{\ast}|^{\gamma}|\dot{\Delta}_{q}(\dot{\Delta}_{j^{\prime}}f_{\ast}\dot{\Delta}_{j}g)|^{2}dxd\xi d\xi_{\ast}dt\bigg)^{1/2}\\
&\triangleq J_{3,1}+J_{3,2}.
\nonumber
\end{aligned}
\end{flalign}
By employing Bernstein's lemma (Lemma \ref{1dlg-L2.2}), Young's inequality and the fact that $s_1+s_2\geq0$, we have
\begin{equation}\nonumber
\begin{aligned}
J_{3,1}&\leq\sup_{q\in\mathbb{Z}}\sum_{j\geq q-3}2^{q(s_{1}+s_{2}-3/2)}2^{3q(1-1/2)}\bigg(\int_{0}^{T}\int_{\mathbb{R}^{3}}|\xi|^{\gamma}\|\dot{\Delta}_{q}[\dot{\Delta}_{j}f_{\ast}\dot{\Delta}_{j}g]\|^{2}_{L^{1}_{x}}d\xi d\xi_{\ast}dt\bigg)^{1/2}\\
&\lesssim \bigg\|  \bigg\{ \Big(2^{(s_1+s_2)q}\mathbf{I}_{q\leq 3}\Big)\ast \Big(2^{qs_2}\|\sqrt{\nu(\xi)}\dot{\Delta}_{q}g\|_{L^2_TL^2_{\xi}L^2_x} \Big) \bigg\}_{q\in\mathbb{Z}}         \bigg\|_{l^{\infty}} \|f\|_{\widetilde{L}^{\infty}_{T}L^{2}_{\xi}(\dot{B}^{s_{1}}_{2,\infty})}\\
&\lesssim \| \{ 2^{(s_1+s_2)q}\}_{q\leq 3}\|_{l^{\infty}}\|g\|_{\widetilde{L}^{2}_{T}\widetilde{L}^{2}_{\xi,\nu}(\dot{B}^{s_{2}}_{2,1})}\|f\|_{\widetilde{L}^{\infty}_{T}\widetilde{L}^{2}_{\xi}(\dot{B}^{s_{1}}_{2,\infty})}\\
&\lesssim \|g\|_{\widetilde{L}^{2}_{T}\widetilde{L}^{2}_{\xi,\nu}(\dot{B}^{s_{2}}_{2,1})}\|f\|_{\widetilde{L}^{\infty}_{T}\widetilde{L}^{2}_{\xi}(\dot{B}^{s_{1}}_{2,\infty})}.
\end{aligned}
\end{equation}
Similarly,
$$J_{3,2}\lesssim\|f\|_{\widetilde{L}^{2}_{T}\widetilde{L}^{2}_{\xi,\nu}(\dot{B}^{s_{1}}_{2,1})}\|g\|_{\widetilde{L}^{\infty}_{T}\widetilde{L}^{2}_{\xi}(\dot{B}^{s_{2}}_{2,\infty})}.$$
Therefore, collecting all above estimates of $J_{i,j}~(i=1,2,3,~j=1,2)$, we  end up with (\ref{1dlg-E2.23}) eventually.
\end{proof}

Finally, we present the  following trilinear estimates, which  
will be used to establish the macroscopic
dissipation rates in Sections \ref{1dlg-S5} and \ref{1dlg-S6}. For brevity, we would like to omit those proofs as they are closely connected with those of Lemmas \ref{1dlg-L2.7} and \ref{1dlg-L2.8}, which are left to interested readers. 
\begin{lemma}\label{1dlg-L2.9}
Let $\zeta=\zeta(\xi)\in\mathcal{S}(\mathbb{R}^{3}_{\xi})$ and $s_1, s_2\in\mathbb{R}$ such that $-3/2<s_1,s_2\leq 3/2$ and $s_{1}+s_{2}>0$. Then it holds that
\begin{equation}\label{1dlg-E2.24}
\begin{aligned}
&\sum_{q\in\mathbb{Z}}2^{q(s_{1}+s_{2}-3/2)}\bigg(\int_{0}^{T}\|(\dot{\Delta}_{q}\Gamma(f,g),\zeta)_{\xi}\|^{2}_{L^{2}_{x}}\,dt\bigg)^{1/2}\leq C\|g\|_{\widetilde{L}^{2}_{T}\widetilde{L}^{2}_{\xi}(\dot{B}^{s_{2}}_{2,1})}\|f\|_{\widetilde{L}^{\infty}_{T}\widetilde{L}^{2}_{\xi}(\dot{B}^{s_{1}}_{2,1})}
\end{aligned}
\end{equation}
for any $T>0$, where $C>0$ is a constant depending only on $\zeta$ and ($s_{1},s_{2})$.
\end{lemma}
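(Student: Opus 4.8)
The plan is to run the scheme of the proof of Lemma~\ref{1dlg-L2.7}, pairing against the fixed Schwartz weight $\zeta(\xi)$ in the velocity variable in place of the third distribution $h$; the extra Gaussian/Schwartz decay supplied by $\zeta$ is precisely what lets the velocity‑weighted norms $\widetilde L^2_{\xi,\nu}$ appearing on the right of \eqref{1dlg-E2.13} be downgraded to the plain norms $\widetilde L^2_{\xi}$ in \eqref{1dlg-E2.24}. First I would split $\Gamma(f,g)=\Gamma_{\rm gain}(f,g)-\Gamma_{\rm loss}(f,g)$ and treat the two pieces separately. Since $\dot\Delta_q$ acts only on $x$, it commutes with the collision integrals and with $(\cdot,\zeta)_\xi$; taking $\|\cdot\|_{L^2_x}$, using Minkowski's inequality, $0\le B_0\le C$ and $\int_{\mathbb S^2}d\omega<\infty$, one reduces to controlling
\begin{equation*}
\|(\dot\Delta_q\Gamma_{\rm loss}(f,g),\zeta)_\xi\|_{L^2_x}\lesssim\int_{\mathbb R^6}|\xi-\xi_*|^{\gamma}\mu^{1/2}(\xi_*)|\zeta(\xi)|\,\bigl\|\dot\Delta_q[f(\cdot,\xi_*)g(\cdot,\xi)]\bigr\|_{L^2_x}\,d\xi_*\,d\xi,
\end{equation*}
and the analogous expression with $f(\cdot,\xi_*')g(\cdot,\xi')$ for the gain term (the collision kernel $|\xi-\xi_*|^{\gamma}B_0\mu^{1/2}(\xi_*)|\zeta(\xi)|$ being kept untouched at this stage, so that its manifest decay in $\xi$ and $\xi_*$ is available throughout).

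Next I would insert Bony's homogeneous decomposition in the $x$ variable, $f(\cdot,\xi_*)g(\cdot,\xi)=\mathcal{\dot T}_{f_*}g+\mathcal{\dot T}_{g}f_*+\mathcal{\dot R}(f_*,g)$, and estimate the three contributions exactly as the terms $I_1,I_2,I_3$ in the proof of Lemma~\ref{1dlg-L2.7}: for the paraproduct pieces one uses the spectral constraint $|j-q|\le4$, Bernstein's inequality (Lemma~\ref{1dlg-L2.2}) in $x$ to pass from $L^\infty_x$ to $2^{3k/2}L^2_x$ on the low‑frequency factor, Young's convolution inequality, and the hypotheses $s_1,s_2\le 3/2$; for the remainder piece one uses $\max\{j,j'\}\ge q-2$, the Bernstein bound $\|\dot\Delta_q(\dot\Delta_jf_*\dot\Delta_jg)\|_{L^2_x}\lesssim 2^{3q/2}\|\dot\Delta_jf_*\|_{L^2_x}\|\dot\Delta_jg\|_{L^2_x}$, and $s_1+s_2>0$ to sum the low‑frequency tail. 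After a Cauchy--Schwarz step in $(\xi,\xi_*,\omega)$ the $\xi$‑ and $\xi_*$‑integrals decouple: for the loss term one writes $|\xi-\xi_*|^{\gamma}\lesssim\langle\xi\rangle^{\gamma}\langle\xi_*\rangle^{\gamma}$ and absorbs $\langle\xi\rangle^{\gamma}|\zeta(\xi)|\le C$, $\langle\xi_*\rangle^{\gamma}\mu^{1/2}(\xi_*)\le C$, so that only the bare $L^2_\xi$ masses $\|\dot\Delta_jg\|^2_{L^2_\xi L^2_x}$ and $\|\dot\Delta_kf\|^2_{L^2_\xi L^\infty_x}$ survive; for the gain term one performs, on each quadratic factor, the involutive pre‑/post‑collisional change of variables $(\xi,\xi_*)\mapsto(\xi',\xi_*')$ (Jacobian one, $|\xi-\xi_*|$ invariant), after which the residual velocity integrals are $\int_{\mathbb S^2}\int_{\mathbb R^3}|\eta-\eta_*|^{\gamma}B_0(\cos\theta)\mu^{1/2}(\eta_*')|\zeta(\eta')|\,d\eta\,d\omega$ (and the twin with $d\eta_*\,d\omega$). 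These are uniformly bounded because the energy identity $|\eta'|^2+|\eta_*'|^2=|\eta|^2+|\eta_*|^2$ forces $\max(|\eta'|,|\eta_*'|)\gtrsim\max(|\eta|,|\eta_*|)$, so at least one of $\mu^{1/2}(\eta_*')$, $\zeta(\eta')$ is Gaussian‑small when $|\eta|$ or $|\eta_*|$ is large; consequently the gain term also produces only the plain $\widetilde L^2_\xi$ norms. Assembling the paraproduct and remainder contributions, and symmetrizing by repeating the argument with the roles of $f$ and $g$ exchanged (the $I_2$‑step of Lemma~\ref{1dlg-L2.7}), yields \eqref{1dlg-E2.24} with the $\widetilde L^\infty_T\widetilde L^2_\xi(\dot B^{s_1}_{2,1})$ norm on $f$ and the $\widetilde L^2_T\widetilde L^2_\xi(\dot B^{s_2}_{2,1})$ norm on $g$, and no collision‑frequency weight.

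The main obstacle is the gain term: one must establish the Carleman/Grad‑type bound that $\int_{\mathbb S^2}\int_{\mathbb R^3}|\eta-\eta_*|^{\gamma}B_0(\cos\theta)\mu^{1/2}(\eta_*')|\zeta(\eta')|\,d\eta\,d\omega$ (respectively the $d\eta_*\,d\omega$ variant) is bounded uniformly in the remaining variable; this is the only place where the computation cannot be copied verbatim from Lemma~\ref{1dlg-L2.7}, and it rests on the classical estimates for the gain operator tested against Schwartz weights (cf.\ \cite{Glassey-1996,Duan-2016}). Everything else — the dyadic bookkeeping, the convolution inequalities, and the choice $-3/2<s_1,s_2\le 3/2$, $s_1+s_2>0$ of exponents — is a routine transcription of the paraproduct estimates already carried out for Lemma~\ref{1dlg-L2.7}, with the $\ell^\infty$‑in‑$q$ version of the same computation being the one used in Lemma~\ref{1dlg-L2.8}.
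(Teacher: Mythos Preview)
Your proposal is correct and follows exactly the approach the paper intends: the paper omits the proof of this lemma, stating only that it is ``closely connected with those of Lemmas~\ref{1dlg-L2.7} and~\ref{1dlg-L2.8}'' and left to the reader. Your adaptation of the paraproduct scheme of Lemma~\ref{1dlg-L2.7}, using the Schwartz decay of $\zeta$ (together with the collisional energy identity for the gain term) to absorb the velocity weights and drop the $\nu$-weighting, is precisely what is required.
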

\begin{lemma}\label{1dlg-L2.10}
Let $\zeta=\zeta(\xi)\in\mathcal{S}(\mathbb{R}^{3}_{\xi})$ and $s_1, s_2\in\mathbb{R}$ such that $-3/2\leq s_1<3/2, -3/2<s_2\leq 3/2$ and $s_{1}+s_{2}\geq0$. Then it holds that
\begin{equation}\label{1dlg-E2.25}
\begin{aligned}
&\sup_{q\in\mathbb{Z}}2^{q(s_{1}+s_{2}-3/2)}\bigg(\int_{0}^{T}\|(\dot{\Delta}_{q}\Gamma(f,g),\zeta)_{\xi}\|^{2}_{L^{2}_{x}}\,dt\bigg)^{1/2} \leq C\|g\|_{\widetilde{L}^{2}_{T}\widetilde{L}^{2}_{\xi}(\dot{B}^{s_{2}}_{2,1})}\|f\|_{\widetilde{L}^{\infty}_{T}\widetilde{L}^{2}_{\xi}(\dot{B}^{s_{1}}_{2,\infty})}
\end{aligned}
\end{equation}
for any $T>0$, where $C$ is a constant depending only on $\zeta$ and ($s_{1},s_{2})$.
\end{lemma}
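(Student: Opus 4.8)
The plan is to follow the proof of Lemma \ref{1dlg-L2.7} --- more precisely, the treatment of the term $I$ there, and of $I_*$ in the proof of Lemma \ref{1dlg-L2.8} --- the only genuinely new ingredient being the way the fixed velocity profile $\zeta$ is absorbed. Writing $\Gamma(f,g)=\Gamma_{\rm gain}(f,g)-\Gamma_{\rm loss}(f,g)$ as in \eqref{1dlg-2.11}, I would first treat the loss term: pairing against $\zeta$ in $\xi$ and moving the $L^2_TL^2_x$ norm inside the $d\xi\,d\xi_*\,d\omega$ integral by Minkowski's inequality (using $B_0(\cos\theta)\le C$ and $\int_{\mathbb S^2}d\omega=4\pi$) gives
\begin{equation*}
\big\|(\dot{\Delta}_q\Gamma_{\rm loss}(f,g),\zeta)_\xi\big\|_{L^2_TL^2_x}\lesssim\int_{\mathbb{R}^6}|\zeta(\xi)|\,|\xi-\xi_*|^\gamma\mu^{1/2}(\xi_*)\,\big\|\dot{\Delta}_q\big(f(\cdot,\xi_*)g(\cdot,\xi)\big)\big\|_{L^2_TL^2_x}\,d\xi\,d\xi_*,
\end{equation*}
and, since $\zeta\in\mathcal S(\mathbb{R}^3_\xi)$ and $|\xi-\xi_*|^\gamma\le(1+|\xi|)(1+|\xi_*|)$, the velocity weight is dominated by a tensor product $\Phi(\xi)\Psi(\xi_*)$ with $\Phi,\Psi$ rapidly decreasing. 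For the gain term I would perform the change of variables $(\xi,\xi_*)\mapsto(\xi',\xi'_*)$ at fixed $\omega$ (Jacobian $1$, $|\xi'-\xi'_*|=|\xi-\xi_*|$) and relabel, which restores $f$ and $g$ to independent velocity variables at the price of evaluating $\zeta$ and $\mu^{1/2}$ at collision-transformed arguments; using the energy identity $|\xi|^2+|\xi_*|^2=|\xi'|^2+|\xi'_*|^2$ one checks that the resulting weight is again dominated by some $\Phi(\xi)\Psi(\xi_*)$ with $\Phi,\Psi$ rapidly decreasing. The net effect is the reduction
\begin{equation*}
\big\|(\dot{\Delta}_q\Gamma(f,g),\zeta)_\xi\big\|_{L^2_TL^2_x}\lesssim\int_{\mathbb{R}^6}\Phi(\xi)\Psi(\xi_*)\,\big\|\dot{\Delta}_q\big(f(\cdot,\xi_*)g(\cdot,\xi)\big)\big\|_{L^2_TL^2_x}\,d\xi\,d\xi_*.
\end{equation*}

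Once this reduction is in hand, I would multiply by $2^{q(s_1+s_2-3/2)}$, take $\sup_{q\in\mathbb Z}$, and insert Bony's homogeneous decomposition $f(\cdot,\xi_*)g(\cdot,\xi)=\mathcal{\dot T}_{f_*}g+\mathcal{\dot T}_g f_*+\mathcal{\dot R}(f_*,g)$ exactly as in the proof of Lemma \ref{1dlg-L2.7}. This produces three contributions, handled precisely as the three pieces of $I_*$ in the proof of Lemma \ref{1dlg-L2.8}: the spectral support restrictions ($|j-q|\le4$ for the two paraproduct terms, $\max\{j,j'\}\ge q-2$ for the remainder), Bernstein's lemma (Lemma \ref{1dlg-L2.2}) to trade $L^\infty_x$ bounds for $L^2_x$ bounds on low-frequency blocks, and Young's convolution inequality in the dyadic index. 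The weights $\Phi,\Psi$ now play the role of the $\mu^{1/2}$-type factors of Lemma \ref{1dlg-L2.8} and, being rapidly decreasing, absorb $|\xi-\xi_*|^\gamma$ outright, so no $\nu$-weight on $g$ is needed and the $g$-factor comes out as $\|g\|_{\widetilde L^2_T\widetilde L^2_\xi(\dot B^{s_2}_{2,1})}$, while the outer $\ell^\infty$ over $q$ forces the index $r=\infty$ on the $f$-factor, i.e. $\|f\|_{\widetilde L^\infty_T\widetilde L^2_\xi(\dot B^{s_1}_{2,\infty})}$. The geometric sums over $k\le j-2$ in the paraproduct terms converge because $s_1<3/2$ (and $s_2\le3/2$), and the sum over $j\ge q-3$ in the remainder term converges because $s_1+s_2\ge0$; the lower bounds $s_1\ge-3/2$, $s_2>-3/2$ enter exactly as in the proof of Lemma \ref{1dlg-L2.8}. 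These are precisely the stated hypotheses.

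The one point that requires genuine care --- and the only place I expect real work --- is the reduction of the first paragraph, namely verifying that after the change of variables the gain contribution's velocity weight ($\zeta$ and $\mu^{1/2}$ at collision-transformed arguments, times $|\xi-\xi_*|^\gamma$) is dominated by a tensor product of integrable, rapidly decreasing functions of the relabelled independent variables; once that is established, everything downstream is the standard paraproduct bookkeeping already carried out in Lemmas \ref{1dlg-L2.7}--\ref{1dlg-L2.8}. Finally, Lemma \ref{1dlg-L2.9} follows by the identical argument with $\sup_{q\in\mathbb Z}$ replaced by $\sum_{q\in\mathbb Z}$ and $\dot B^{s_1}_{2,\infty}$ by $\dot B^{s_1}_{2,1}$, the only change being that the remainder term then requires the strict inequality $s_1+s_2>0$, exactly as in Lemma \ref{1dlg-L2.7}.
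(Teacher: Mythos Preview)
Your proposal is correct and matches the paper's indicated approach: the paper omits the proof entirely, stating only that it is ``closely connected with those of Lemmas \ref{1dlg-L2.7} and \ref{1dlg-L2.8}'', which is precisely the route you outline. The one point you flag as needing care --- bounding $|\zeta(\xi')|\mu^{1/2}(\xi_*')|\xi-\xi_*|^\gamma$ by a tensor product $\Phi(\xi)\Psi(\xi_*)$ of rapidly decreasing functions after the collisional change of variables --- follows from the energy identity $|\xi'|^2+|\xi_*'|^2=|\xi|^2+|\xi_*|^2$ by splitting into the cases $|\xi_*'|^2\ge\tfrac12(|\xi|^2+|\xi_*|^2)$ (where the Gaussian factor gives the decay) and its complement (where $|\xi'|^2\ge\tfrac12(|\xi|^2+|\xi_*|^2)$ and the Schwartz decay of $\zeta$ takes over).
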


\section{Global existence}\label{1dlg-S5}

\subsection{A priori estimates}
This section is devoted to deriving the key a priori estimate for the Cauchy problem \eqref{1dlg-E1.4}, 
which will be used to prove Theorem \ref{1dlg-T1.2}. 
\begin{prop}\label{1dlg-P5.1}
Assume that  $f$ is the solution to the Cauchy problem $\rm(\ref{1dlg-E1.4})$ on any interval $[0,T)$.  Then for $0<t<T$, it holds that
\begin{equation}\label{1dlg-E5.8}
\begin{aligned}
&\mathcal{E}_{t}(f)+\mathcal{D}_{t}(f)\\
&\quad \leq C_{1}\Big(\|f_{0}\|^{\ell}_{\widetilde{L}^{2}_{\xi}(\dot{B}^{1/2}_{2,1})}+\|f_{0}\|^{h}_{\widetilde{L}^{2}_{\xi}(\dot{B}^{3/2}_{2,1})}\Big)+C_{1}\Big(\sqrt{\mathcal{E}_{t}(f)}+\mathcal{E}_{t}(f)\Big)\mathcal{D}_{t}(f),
\end{aligned}
\end{equation}
where $\mathcal{E}_{t}(f)$ and $\mathcal{D}_{t}(f)$ are defined by $\rm(\ref{1dlg-E1.16})$ and $\rm(\ref{1dlg-E1.17})$, respectively, and $C_{1}>0$ is a generic constant independent of $T$.
\end{prop}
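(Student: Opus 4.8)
The plan is to derive the a priori estimate \eqref{1dlg-E5.8} by a frequency-localized energy method: apply $\dot\Delta_q$ to the reformulated system \eqref{1dlg-E1.4}, build a Lyapunov functional adapted to each dyadic block that captures the hypocoercive dissipation, sum over $q$ with the appropriate Besov weights (different in low and high frequencies), and close the nonlinear terms using the trilinear estimates of Section \ref{1dlg-SA}. First I would localize: setting $f_q=\dot\Delta_q f$, the block satisfies $\partial_t f_q+\xi\cdot\nabla_x f_q+Lf_q=\dot\Delta_q\Gamma(f,f)$. Taking the $L^2_{\xi,x}$ inner product with $f_q$ and invoking Lemma \ref{1dlg-L2.6} gives control of $\|\sqrt{\nu}\{\mathbf{I-P}\}f_q\|_{L^2_{\xi,x}}^2$, but this leaves the macroscopic part $\mathbf{P}f_q$ undamped; this is the degeneracy of $L$ on $\mathcal N$.

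To recover dissipation of $\mathbf P f_q=(a_q,b_q,c_q)$, I would introduce the standard interaction functional of hypocoercivity/Kawashima type, i.e. an "$\varepsilon$-modified" energy $\mathcal L_q = \tfrac12\|f_q\|_{L^2_{\xi,x}}^2 + \kappa\, 2^{q}\cdot(\text{something like }(\xi\cdot\nabla_x \{\mathbf I-\mathbf P\}f_q,\, \mathbf P f_q)_{\xi,x})$ with a small parameter $\kappa>0$, designed so that $\tfrac{d}{dt}\mathcal L_q$ produces a term $\sim 2^{2q}\|\mathbf P f_q\|^2$ (the parabolic smoothing on $\mathbf P f$ announced in the remark after Theorem \ref{1dlg-T1.2}) for low frequencies and a damping term $\sim\|f_q\|^2$ for high frequencies. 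The referenced functionals \eqref{1dlg-E5.11}--\eqref{1dlg-E5.110} and \eqref{1dlg-E5.29}--\eqref{1dlg-E5.2900} are exactly these two regimes. One then checks $\mathcal L_q\sim\|f_q\|_{L^2_{\xi,x}}^2$ for $\kappa$ small, integrates in time, multiplies by $2^{q/2}$ (low) resp.\ $2^{3q/2}$ (high), and sums over $q\leq 0$ resp.\ $q\geq-1$ to obtain, modulo nonlinearity,
\[
\mathcal E_t(f)+\mathcal D_t(f)\lesssim \|f_0\|^{\ell}_{\widetilde L^2_\xi(\dot B^{1/2}_{2,1})}+\|f_0\|^h_{\widetilde L^2_\xi(\dot B^{3/2}_{2,1})}+\big|\text{nonlinear contributions}\big|.
\]
A subtle point is that the energy is built at the level of $(\int_0^T\mathcal L_q\,dt)^{1/2}$-type quantities before taking $\ell^1$ in $q$, which is what makes the Chemin-Lerner norms appear; this is precisely the framework in which Lemma \ref{1dlg-L2.6} (second inequality) and the trilinear Lemma \ref{1dlg-L2.7} are phrased, so the bookkeeping matches.

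For the nonlinear terms, the contribution $\int_0^T(\dot\Delta_q\Gamma(f,f),\dot\Delta_q f)_{\xi,x}\,dt$ is handled directly by Lemma \ref{1dlg-L2.7} with the choices $(s_1,s_2)=(3/2,1/2)$ in high frequencies and $(s_1,s_2)=(1/2,3/2)$ (or symmetric splittings respecting the constraints $-3/2<s_1,s_2\le 3/2$, $s_1+s_2>0$) in low frequencies, which produces exactly factors of the form $\sqrt{\mathcal E_t(f)}\,\mathcal D_t(f)$ after using that $\|f\|_{\widetilde L^\infty_t\widetilde L^2_\xi(\dot B^{1/2}_{2,1})}^\ell+\|f\|_{\widetilde L^\infty_t\widetilde L^2_\xi(\dot B^{3/2}_{2,1})}^h=\mathcal E_t(f)$ and the dissipation terms match the factors in $\mathcal D_t(f)$. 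The cross terms coming from the interaction functional (i.e.\ pairing $\dot\Delta_q\Gamma(f,f)$ against $\xi\cdot\nabla_x\{\mathbf I-\mathbf P\}f_q$ and against $\mathbf P f_q$) are controlled by Lemmas \ref{1dlg-L2.9}--\ref{1dlg-L2.10}; because $\Gamma(f,f)$ is already microscopic (lies in $\mathcal N^\perp$), the macroscopic equations only see $\Gamma$ through moments against fixed Schwartz weights $\zeta$, which is exactly the structure those lemmas exploit. Collecting everything yields the quadratic-plus-cubic right-hand side $C_1(\sqrt{\mathcal E_t(f)}+\mathcal E_t(f))\mathcal D_t(f)$.

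The main obstacle, as flagged in the introduction, is the construction of the spectral-localized Lyapunov functionals with the \emph{correct} dissipation rates in the two frequency regimes and verifying their coercivity uniformly in $q$ — in particular making the interaction term genuinely produce the $2^{2q}\|\mathbf P f_q\|^2$ gain in low frequencies (parabolic smoothing) while only yielding full damping $\|f_q\|^2$ in high frequencies, and tracking all constants so that the small parameter $\kappa$ and the constant $\lambda_0$ from Lemma \ref{1dlg-L2.6} combine to give a clean sign. A secondary difficulty is ensuring the nonlinear estimates fit the $\widetilde L^\infty_t\widetilde L^2_\xi$ versus $\widetilde L^2_t\widetilde L^2_{\xi,\nu}$ structure demanded by $\mathcal D_t(f)$, since the velocity weight $\nu(\xi)$ must be distributed correctly between the $\Gamma$-term and the test function — this is why the trilinear lemmas were stated with $\nu$-weighted norms on both sides. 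Once the a priori estimate \eqref{1dlg-E5.8} is in hand, a standard continuity/bootstrap argument with $\varepsilon_0$ small absorbs the nonlinear term and yields \eqref{1dlg-E1.19}, proving Theorem \ref{1dlg-T1.2}.
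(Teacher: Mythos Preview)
Your plan is correct and follows the paper's proof: frequency-localized Lyapunov functionals (the ones you cite as \eqref{1dlg-E5.11}--\eqref{1dlg-E5.110} and \eqref{1dlg-E5.29}--\eqref{1dlg-E5.2900}, built concretely from the thirteen-moment fluid system for $(a,b,c,\Theta,\Lambda)$ rather than an abstract Kawashima compensator), then closure of the nonlinearity by Lemmas \ref{1dlg-L2.7} and \ref{1dlg-L2.9}. One slip to fix: in the high-frequency regime the paper takes $(s_1,s_2)=(3/2,3/2)$ in Lemma \ref{1dlg-L2.7} so that $s_1+s_2-3/2=3/2$ matches the $\dot B^{3/2}_{2,1}$ weight; your choice $(s_1,s_2)=(3/2,1/2)$ lands at regularity $1/2$ and would not close the high-frequency estimate.
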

The proof of Proposition \ref{1dlg-P5.1} is divided into low-frequency analysis and high-frequency analysis below.

\subsubsection{Low-frequency analysis}
In this subsection, we establish uniform estimates of solutions to the Cauchy problem (\ref{1dlg-E1.4}) in the low-frequency regime. To capture the dissipation of $\mathbf{P}f$, we perform the classical analysis (see \cite{Duan-2011}) that the coefficient functions $(a,b,c)$ satisfy the following fluid-type equations
\begin{equation}\label{1dlg-E5.90}
 \left\{
\begin{aligned}
&\partial_{t}a+\nabla_{x}\cdot b=0,\\
&\partial_{t}b+\nabla_{x} (a+2c)+\nabla_{x}\cdot\Theta( \{\mathbf{I-P}\}f)=0,\\
&\partial_{t} c+\frac{1}{3}\nabla_{x}\cdot  b+\frac{5}{3}\nabla_{x}\cdot\Lambda( \{\mathbf{I-P}\}f)=0,\\
&\partial_{t}\big(\Theta_{ij}( \{\mathbf{I-P}\}f)+2 c\,\delta_{ij}\big)+\partial_{i} b_{j}+\partial_{j} b_{i}+\Theta_{ij}( \mathbf{r}( \{\mathbf{I-P}\}f))=\Theta_{ij}( \mathbf{h}),\\
&\partial_{t}\Lambda_{i}( \{\mathbf{I-P}\}f)+\partial_{i} c+\Lambda_{i}( \mathbf{r}( \{\mathbf{I-P}\}f))=\Lambda_{i}( \mathbf{h}),\\
\end{aligned}
 \right.
\end{equation}
where the high-order moment functions $\Theta=(\Theta_{ij}(\cdot))_{3\times3}$ and $\Lambda=(\Lambda_{i}(\cdot))_{1\leq i\leq3}$ are defined by
\begin{align}
\Theta_{ij}(f)=((\xi_{i}\xi_{j}-1)\mu^{1/2},f)_{\xi}\quad \text{and}\quad \Lambda_{i}(f)=\frac{1}{10}(|\xi|^{2}-5)\xi_{i}\mu^{1/2},f)_{\xi},\label{theta}
\end{align}
respectively, with the inner product taken with respect to the velocity variable $\xi$ only. In addition, we define
\begin{align}
\mathbf{r}( \{\mathbf{I-P}\}f)=\xi\cdot\nabla_{x}\{\mathbf{I-P}\}f+L\{\mathbf{I-P}\}f\quad \text{and}\quad \mathbf{h}=\Gamma(f,f).\label{r}
\end{align}
\begin{lemma}\label{1dlg-R5.3}
Assume that  $f$ is the solution to the Cauchy problem $\rm(\ref{1dlg-E1.4})$ on any interval $[0,T)$.  Then for $0<t<T$, it holds that
\begin{equation}\label{1dlg-E5.17}
\begin{aligned}
&\|f\|^{\ell}_{\widetilde{L}^{\infty}_{t}\widetilde{L}^{2}_{\xi}(\dot{B}^{1/2}_{2,1})}+\|\mathbf{P}f\|^{\ell}_{\widetilde{L}^{2}_{t}\widetilde{L}^{2}_{\xi}(\dot{B}^{3/2}_{2,1})}+\|\{\mathbf{I-P}\}f\|^{\ell}_{\widetilde{L}^{2}_{t}\widetilde{L}^{2}_{\xi,\nu}(\dot{B}^{1/2}_{2,1})}\\
&\quad\leq C\|f_{0}\|^{\ell}_{\widetilde{L}^{2}_{\xi}(\dot{B}^{1/2}_{2,1})}+C\Big(\sqrt{\mathcal{E}_{t}(f)}+\mathcal{E}_{t}(f)\Big)\mathcal{D}_{t}(f),
\end{aligned}
\end{equation}
where $\mathcal{E}_{t}(f)$ and $\mathcal{D}_{t}(f)$ are defined by $\rm(\ref{1dlg-E1.16})$ and $\rm(\ref{1dlg-E1.17})$, respectively, and $C$ is a constant independent of $T$.
\end{lemma}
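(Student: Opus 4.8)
The plan is to run a spectrally localized hypocoercivity argument, in the spirit of Kawashima's compensating functions \cite{SK} and Duan's Lyapunov functionals \cite{Duan-2011}, restricted to the low-frequency part of the Chemin--Lerner space. First I would apply the dyadic block $\dot{\Delta}_{q}$ to the first equation of \eqref{1dlg-E1.4}, take the $(\,\cdot\,,\,\cdot\,)_{\xi,x}$ pairing with $\dot{\Delta}_{q}f$, and invoke Lemma \ref{1dlg-L2.6} to get
\begin{equation*}
\frac{1}{2}\frac{d}{dt}\|\dot{\Delta}_{q}f\|_{L^{2}_{\xi}L^{2}_{x}}^{2}+\lambda_{0}\|\sqrt{\nu}\,\dot{\Delta}_{q}\{\mathbf{I-P}\}f\|_{L^{2}_{\xi}L^{2}_{x}}^{2}\leq\big|\big(\dot{\Delta}_{q}\Gamma(f,f),\dot{\Delta}_{q}f\big)_{\xi,x}\big|.
\end{equation*}
This only exhibits the microscopic dissipation, so the core of the argument is to recover the parabolic dissipation of $\mathbf{P}f$. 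For that I would localize the fluid-type system \eqref{1dlg-E5.90} with $\dot{\Delta}_{q}$ and build an interaction functional $\mathcal{I}_{q}^{\ell}(t)$ out of the bilinear couplings $(\dot{\Delta}_{q}b,\dot{\Delta}_{q}\nabla_{x}a)_{x}$, $(\dot{\Delta}_{q}\Theta(\{\mathbf{I-P}\}f),\dot{\Delta}_{q}\nabla_{x}b)_{x}$ and $(\dot{\Delta}_{q}\Lambda(\{\mathbf{I-P}\}f),\dot{\Delta}_{q}\nabla_{x}c)_{x}$ dictated by \eqref{1dlg-E5.90}, with suitably small relative weights, arranged (using the Shizuta--Kawashima structure of $L$ and Bernstein's lemma, Lemma \ref{1dlg-L2.2}, to turn $\nabla_{x}$ into a factor $2^{q}$) so that $|\mathcal{I}_{q}^{\ell}|\lesssim 2^{q}\|\dot{\Delta}_{q}f\|_{L^{2}_{\xi}L^{2}_{x}}^{2}$ and $\frac{d}{dt}\mathcal{I}_{q}^{\ell}$ produces $-\kappa_{1}2^{2q}\|\dot{\Delta}_{q}\mathbf{P}f\|_{L^{2}_{\xi}L^{2}_{x}}^{2}$ up to a multiple of $\|\sqrt{\nu}\,\dot{\Delta}_{q}\{\mathbf{I-P}\}f\|_{L^{2}_{\xi}L^{2}_{x}}^{2}$ and the nonlinear moment terms generated by $\Theta_{ij}(\Gamma(f,f))$ and $\Lambda_{i}(\Gamma(f,f))$.

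Next I would set $\mathcal{L}_{q}^{\ell}:=\|\dot{\Delta}_{q}f\|_{L^{2}_{\xi}L^{2}_{x}}^{2}+\kappa\,\mathcal{I}_{q}^{\ell}$ with $\kappa>0$ small; since only $q\leq 0$ enters, $2^{q}\leq 1$ gives $\mathcal{L}_{q}^{\ell}\sim\|\dot{\Delta}_{q}f\|_{L^{2}_{\xi}L^{2}_{x}}^{2}$, and the two estimates above combine into a Lyapunov inequality
\begin{equation*}
\frac{d}{dt}\mathcal{L}_{q}^{\ell}+\lambda\Big(2^{2q}\|\dot{\Delta}_{q}\mathbf{P}f\|_{L^{2}_{\xi}L^{2}_{x}}^{2}+\|\sqrt{\nu}\,\dot{\Delta}_{q}\{\mathbf{I-P}\}f\|_{L^{2}_{\xi}L^{2}_{x}}^{2}\Big)\lesssim\big|\big(\dot{\Delta}_{q}\Gamma(f,f),\dot{\Delta}_{q}f\big)_{\xi,x}\big|+\mathcal{N}_{q}(t),
\end{equation*}
where $\mathcal{N}_{q}$ collects the moment nonlinearities. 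Integrating on $(0,t)$, taking $\sup_{[0,t]}$, extracting square roots, multiplying by $2^{q/2}$ and summing over $q\leq 0$ yields exactly the left-hand side of \eqref{1dlg-E5.17}: the first term becomes $\|f\|^{\ell}_{\widetilde{L}^{\infty}_{t}\widetilde{L}^{2}_{\xi}(\dot{B}^{1/2}_{2,1})}$, the extra $2^{q/2}\cdot 2^{q}=2^{3q/2}$ turns the macroscopic dissipation into $\|\mathbf{P}f\|^{\ell}_{\widetilde{L}^{2}_{t}\widetilde{L}^{2}_{\xi}(\dot{B}^{3/2}_{2,1})}$, and the microscopic dissipation becomes $\|\{\mathbf{I-P}\}f\|^{\ell}_{\widetilde{L}^{2}_{t}\widetilde{L}^{2}_{\xi,\nu}(\dot{B}^{1/2}_{2,1})}$, at the cost of $C\|f_{0}\|^{\ell}_{\widetilde{L}^{2}_{\xi}(\dot{B}^{1/2}_{2,1})}$ and the accumulated nonlinear sum.

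It then remains to bound the accumulated nonlinear sum by $C\big(\sqrt{\mathcal{E}_{t}(f)}+\mathcal{E}_{t}(f)\big)\mathcal{D}_{t}(f)$. For the principal term I would exploit the orthogonality $\mathbf{P}\Gamma(f,f)=0$ to replace $\dot{\Delta}_{q}f$ by $\dot{\Delta}_{q}\{\mathbf{I-P}\}f$ in the pairing and then apply Lemma \ref{1dlg-L2.7} with $(s_{1},s_{2})=(1/2,3/2)$, for which $s_{1}+s_{2}-3/2=1/2$; the factor $\|\{\mathbf{I-P}\}f\|^{1/2}_{\widetilde{L}^{2}_{t}\widetilde{L}^{2}_{\xi,\nu}(\dot{B}^{1/2}_{2,1})}$ is controlled by $\mathcal{D}_{t}(f)^{1/2}$ after splitting into low and high frequencies, the factor $\|f\|^{1/2}_{\widetilde{L}^{\infty}_{t}\widetilde{L}^{2}_{\xi}(\dot{B}^{1/2}_{2,1})}$ by $\mathcal{E}_{t}(f)^{1/2}$, and $\|f\|^{1/2}_{\widetilde{L}^{2}_{t}\widetilde{L}^{2}_{\xi,\nu}(\dot{B}^{3/2}_{2,1})}$ by $\mathcal{D}_{t}(f)^{1/2}$, using the embeddings \eqref{1dlg-E2.6} and the equivalence $\|\sqrt{\nu}\,\mathbf{P}g\|_{L^{2}_{\xi}}\sim\|\mathbf{P}g\|_{L^{2}_{\xi}}$; this produces $\lesssim\sqrt{\mathcal{E}_{t}(f)}\,\mathcal{D}_{t}(f)$. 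The moment nonlinearities $\mathcal{N}_{q}$ I would estimate by H\"older in time, Cauchy--Schwarz in the dyadic summation, and Lemma \ref{1dlg-L2.9} with the Schwartz weights $\zeta=(\xi_{i}\xi_{j}-1)\mu^{1/2}$ and $\zeta=\frac{1}{10}(|\xi|^{2}-5)\xi_{i}\mu^{1/2}$ and exponents with $s_{1}+s_{2}=2$, obtaining $\lesssim\mathcal{E}_{t}(f)\,\mathcal{D}_{t}(f)$; collecting everything gives \eqref{1dlg-E5.17}.

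The main obstacle, as is typical for hypocoercivity, is the construction in the second step: one must use all five equations of \eqref{1dlg-E5.90} jointly to extract the \emph{complete} dissipation of $(a,b,c)$, check that the resulting error terms are genuinely of lower order (absorbable by $\lambda_{0}\|\sqrt{\nu}\,\dot{\Delta}_{q}\{\mathbf{I-P}\}f\|^{2}_{L^{2}_{\xi}L^{2}_{x}}$ and the small nonlinearity), and tune the frequency weight of $\mathcal{I}_{q}^{\ell}$ so that $\mathcal{L}_{q}^{\ell}\sim\|\dot{\Delta}_{q}f\|^{2}_{L^{2}_{\xi}L^{2}_{x}}$ holds precisely on $q\leq 0$; this is what forces the asymmetric gain (one derivative for $\mathbf{P}f$, none for $\{\mathbf{I-P}\}f$) in the low-frequency regime. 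A related subtlety, already visible in the nonlinear estimates, is that $\mathcal{D}_{t}$ controls $\mathbf{P}f$ in low frequencies only at the weaker index $3/2$, so every nonlinear factor must be routed through either $\mathcal{E}_{t}$ or the microscopic/high-frequency parts of $\mathcal{D}_{t}$ --- which is exactly what $\mathbf{P}\Gamma(f,f)=0$ and \eqref{1dlg-E2.6} make possible.
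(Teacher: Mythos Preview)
Your proposal is correct and follows essentially the same route as the paper: a spectrally localized hypocoercivity Lyapunov functional built from $\|\dot{\Delta}_{q}f\|_{L^{2}_{\xi}L^{2}_{x}}^{2}$ plus a small multiple of the interaction terms $(\dot{\Delta}_{q}\nabla_{x}a,\dot{\Delta}_{q}b)_{x}$, $(\dot{\Delta}_{q}\nabla_{x}c,\Lambda(\dot{\Delta}_{q}\{\mathbf{I-P}\}f))_{x}$, $(\dot{\Delta}_{q}\nabla_{x}b,\Theta(\dot{\Delta}_{q}\{\mathbf{I-P}\}f))_{x}$ coming from \eqref{1dlg-E5.90}, followed by integration, the $2^{q/2}$-weighted $\ell^{1}$ sum over $q\leq0$, and the nonlinear control via $\mathbf{P}\Gamma(f,f)=0$ together with Lemmas \ref{1dlg-L2.7} and \ref{1dlg-L2.9} at $(s_{1},s_{2})=(1/2,3/2)$. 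The paper merely makes the construction of $\mathcal{I}_{q}^{\ell}$ explicit (with concrete weights $1$, $60$, $3$ on the three couplings) and carries out the absorption of the error terms in detail, but every ingredient you list is exactly what is used.
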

\begin{proof}
We aim to construct a suitable Lyapunov functional and capture the dissipation effects of $\mathbf{P}f$ and $\{\mathbf{I-P}\}f$ for low frequencies in the spirit of hypocoercivity. For that end, by applying the operator $\dot{\Delta}_{q}$ to (\ref{1dlg-E1.4}), we get \vspace{-2mm}
\begin{equation}\label{1dlg-E1.40}
\begin{aligned}
&\partial_{t}\dot{\Delta}_{q}f+\xi\cdot\nabla_{x}\dot{\Delta}_{q}f+L\dot{\Delta}_{q}f=\dot{\Delta}_{q}\Gamma(f,f).
\end{aligned}
\end{equation}
Then, taking the $L^{2}_{\xi,x}$ inner product of \eqref{1dlg-E1.40} with $\dot{\Delta}_{q}f$ and employing Lemma \ref{1dlg-L2.6}, we have
\begin{equation}\label{1dlg-E5.12}
\begin{aligned}
&\frac{1}{2}\frac{d}{dt}\|\dot{\Delta}_{q}f\|^{2}_{L^{2}_{\xi}L^{2}_{x}}+\lambda_{0}\|\sqrt{\nu(\xi)}\dot{\Delta}_{q}\{\mathbf{I-P}\}f\|^{2}_{L^{2}_{\xi}L^{2}_{x}}\\
&\leq\Big|\Big(\dot{\Delta}_{q}\Gamma(f,f),\dot{\Delta}_{q}\{\mathbf{I-P}\}f\Big)_{\xi,x}\Big|,
\end{aligned}
\end{equation}
where $\nu(\xi)$ is given by $\eqref{nu}$, and we used the key fact that $\big( \dot{\Delta}_{q}\Gamma(f,f),\dot{\Delta}_{q}\mathbf{P}f \big)_{\xi,x}=0$.

To characterize the dissipation of $\dot{\Delta}_j\mathbf{P}f$, the frequency localization of \eqref{1dlg-E5.90} takes the form
\begin{equation}\label{1dlg-E5.9}
 \left\{
\begin{aligned}
&\partial_{t}\dot{\Delta}_{q}a+\nabla_{x}\cdot\dot{\Delta}_{q}b=0,\\
&\partial_{t}\dot{\Delta}_{q}b+\nabla_{x}\dot{\Delta}_{q}(a+2c)+\nabla_{x}\cdot\Theta(\dot{\Delta}_{q}\{\mathbf{I-P}\}f)=0,\\
&\partial_{t}\dot{\Delta}_{q}c+\frac{1}{3}\nabla_{x}\cdot\dot{\Delta}_{q} b+\frac{5}{3}\nabla_{x}\cdot\Lambda(\dot{\Delta}_{q}\{\mathbf{I-P}\}f)=0,\\
&\partial_{t}\big(\Theta_{ij}(\dot{\Delta}_{q}\{\mathbf{I-P}\}f)+2\dot{\Delta}_{q}c\,\delta_{ij}\big)+\partial_{i}\dot{\Delta}_{q}b_{j}+\partial_{j}\dot{\Delta}_{q}b_{i}\\
&\quad\quad+\Theta_{ij}(\mathbf{r}( \dot{\Delta}_{q}\{\mathbf{I-P}\}f))=\Theta_{ij}(\dot{\Delta}_{q}\mathbf{h}),\\
&\partial_{t}\Lambda_{i}(\dot{\Delta}_{q}\{\mathbf{I-P}\}f)+\partial_{i}\dot{\Delta}_{q}c+\Lambda_{i}(\mathbf{r}( \dot{\Delta}_{q}\{\mathbf{I-P}\}f))=\Lambda_{i}(\dot{\Delta}_{q}\mathbf{h}).\\
\end{aligned}
 \right.
\end{equation}
Taking the $L^{2}_{x}$ inner product of $(\ref{1dlg-E5.9})_{2}$ with $\dot{\Delta}_{q}\nabla_{x}a$ and utilizing $(\ref{1dlg-E5.9})_{1}$, we derive
\begin{equation}\label{1dlg-E5.13}
\begin{aligned}
&\frac{d}{dt}\sum_{i=1}^{3}\Big(\dot{\Delta}_{q}\partial_{i}a,\dot{\Delta}_{q}b_{i}\Big)_{x}+\|\dot{\Delta}_{q}\nabla_{x}a\|^{2}_{L^{2}_{x}}-\|\dot{\Delta}_{q}\nabla_{x}\cdot b\|^{2}_{L^{2}_{x}}\\
&\quad+2\Big(\dot{\Delta}_{q}\nabla_{x}c,\dot{\Delta}_{q}\nabla_{x}a\Big)_{x}+\Big(\nabla_{x}\cdot\Theta(\dot{\Delta}_{q}\{\mathbf{I-P}\}f),\nabla_{x}\dot{\Delta}_{q}a\Big)_{x}=0.
\end{aligned}
\end{equation}
Next, multiplying $(\ref{1dlg-E5.9})_{5}$ by $\dot{\Delta}_{q}\partial_{i}c$ and summing over $i$ for $1\leq i\leq 3$ and using $(\ref{1dlg-E5.9})_{3}$, we obtain
\begin{equation}\label{1dlg-E5.14}
\begin{aligned}
&\frac{d}{dt}\sum_{i=1}^{3}\Big(\dot{\Delta}_{q}\partial_{i}c,\Lambda_{i}(\dot{\Delta}_{q}\{\mathbf{I-P}\}f)\Big)_{x}+\|\dot{\Delta}_{q}\nabla_{x}c\|^{2}_{L^{2}_{x}}\\
&\quad\quad-\frac{1}{3}\Big(\nabla_{x}\cdot\Lambda(\dot{\Delta}_{q}\{\mathbf{I-P}\}f),\nabla_{x}\cdot\dot{\Delta}_{q}b\Big)_{x}\\
&\quad\quad-\frac{5}{3}\|\nabla\cdot \Lambda(\dot{\Delta}_q\{\mathbf{I-P}\}f)\|_{L^2_x}^2+\sum_{i=1}^{3}\Big(\Lambda_{i}(\mathbf{r}( \dot{\Delta}_{q}\{\mathbf{I-P}\}f)),\dot{\Delta}_{q}\partial_{i}c\Big)_{x}\\
&\quad\leq \sum_{i=1}^{3}\big|\big(\Lambda_{i}(\dot{\Delta}_{q}\mathbf{h}),\dot{\Delta}_{q}\partial_{i}c\big)_{x}\big|.
\end{aligned}
\end{equation}
To deduce the dissipation of $\dot{\Delta}_q b$, one may deduce from  $(\ref{1dlg-E5.9})$ that
\begin{equation}
\begin{aligned}\label{1dlg-E5.944}
&\partial_t \Theta_{ij}(\dot{\Delta}_q \{\mathbf{I-P}\}f)+\partial_i \dot{\Delta}_q b_j + \partial_j \dot{\Delta}_q b_i - \frac{2}{3} \nabla_x \cdot \dot{\Delta}_q b \, \delta_{ij}\\
&\quad- \frac{10}{3} \nabla_x \cdot \Lambda(\dot{\Delta}_q \{\mathbf{I-P}\}f) \, \delta_{ij}  + \Theta_{ij}(\mathbf{r}(\dot{\Delta}_q \{\mathbf{I-P}\}f)) = \Theta_{ij}(\dot{\Delta}_q \mathbf{h}).
\end{aligned}
\end{equation}
We multiply \eqref{1dlg-E5.944} by $\partial_{i}\dot{\Delta}_{q}b_{j}+\partial_{j}\dot{\Delta}_{q}b_{i}$, sum over $1\leq i,j\leq3$ and then use $(\ref{1dlg-E5.9})_{2}$ to eliminate $\dot{\Delta}_{q}\partial_{t}b$. Consequently, since  $\sum\limits_{i,j=1}^{3} \|\partial_{i}\dot{\Delta}_{q}b_{j}+\partial_{j}\dot{\Delta}_{q}b_{i}\|_{L^2_x}^2=2\|\nabla_x \dot{\Delta}_{q}b\|_{L^2_x}^2+2\|\nabla_x\cdot\dot{\Delta}_{q}b\|_{L^2_x}^2$, it holds that
\begin{equation}\label{1dlg-E5.15}
\begin{aligned}
&\frac{d}{dt} \sum_{i,j=1}^3 \Big( \partial_i \dot{\Delta}_q b_j + \partial_j \dot{\Delta}_q b_i, \Theta_{ij} \Big)_x \\
&\quad+ 2 \|\nabla_x \dot{\Delta}_q b\|_{L^2_x}^2 - \frac{4}{3} \|\nabla \cdot \dot{\Delta}_q b\|_{L^2_x}^2-\frac{20}{3} \Big( \nabla_x \cdot \dot{\Delta}_q b, \nabla_x \cdot \Lambda(\dot{\Delta}_q \{\mathbf{I-P}\}f) \Big)_x\\
&\quad - 2 \Big( \nabla \dot{\Delta}_q (a + 2c), \nabla \cdot \Theta ( \dot{\Delta}_{q}\{\mathbf{I-P}\}f) \Big)_x - 2  \|\nabla_x \Theta( \dot{\Delta}_{q}\{\mathbf{I-P}\}f)\|_{L^2_x}^2\\
&\quad+ \sum_{i,j=1}^3 \Big( \Theta_{ij}(\mathbf{r}( \dot{\Delta}_{q}\{\mathbf{I-P}\}f)), \partial_i \dot{\Delta}_q b_j + \partial_j \dot{\Delta}_q b_i \Big)_x\\
&\leq \sum_{i,j=1}^3 \Big| \Big( \Theta_{ij}(\dot{\Delta}_q \mathbf{h}), \partial_i \dot{\Delta}_q b_j + \partial_j \dot{\Delta}_q b_i \Big)_x \Big|.
\end{aligned}
\end{equation}
To proceed, we let $\eqref{1dlg-E5.12}+\eta_{1}\times \Big((\ref{1dlg-E5.13})+ 60 (\ref{1dlg-E5.14})+ 3(\ref{1dlg-E5.15})\Big)$.  This leads to
\begin{equation}\label{1dlg-E5.10}
\begin{aligned}
&\frac{d}{dt}\mathcal{L}_{1,q}(t)+\mathcal{D}_{1,q}(t)\\
&\quad\lesssim\Big|\Big(\dot{\Delta}_{q}\Gamma(f,f),\dot{\Delta}_{q}\{\mathbf{I-P}\}f\Big)_{\xi,x}\Big|\\
&\quad\quad+\eta_1\sum_{i=1}^{3}\Big|\Big(\Lambda_{i}(\dot{\Delta}_{q}\mathbf{h}),\dot{\Delta}_{q}\partial_{i}c\Big)_{x}\Big|\\
&\quad\quad+\eta_1\sum_{i,j=1}^{3}\Big|\Big(\Theta_{ij}(\dot{\Delta}_{q}\mathbf{h}),\partial_{i}\dot{\Delta}_{q}b_{j}+\partial_{j}\dot{\Delta}_{q}b_{i}\Big)_{x}\Big|,
\end{aligned}
\end{equation}
where the Lyapunov functional $\mathcal{L}_{1,q}(t)$ and the dissipation $\mathcal{D}_{1,q}(t)$ are, respectively, defined by
\begin{equation}\label{1dlg-E5.11}
\begin{aligned}
\mathcal{L}_{1,q}(t)&\triangleq \frac{1}{2}\|\dot{\Delta}_{q}f\|^{2}_{L^{2}_{\xi}L^{2}_{x}}\\
&\quad+\eta_{1}\bigg(\sum_{i=1}^{3}\Big(\dot{\Delta}_{q}\partial_{i}a,\dot{\Delta}_{q}b_{i}\Big)_{x}+60\sum_{i=1}^{3}\Big(\dot{\Delta}_{q}\partial_{i}c,\Lambda_{i}(\dot{\Delta}_{q}\{\mathbf{I-P}\}f)\Big)_{x}\\
&\quad+3\sum_{i,j=1}^{3}\Big(\partial_{i}\dot{\Delta}_{q}b_{j}+\partial_{j}\dot{\Delta}_{q}b_{i},\Theta_{ij}(\dot{\Delta}_{q}\{\mathbf{I-P}\}f)\Big)_{x}\bigg),\\
\end{aligned}
\end{equation}
and
\begin{equation}\label{1dlg-E5.110}
\begin{aligned}
\mathcal{D}_{1,q}(t)&\triangleq \lambda_{0}\|\sqrt{\nu(\xi)}\dot{\Delta}_{q}\{\mathbf{I-P}\}f\|^{2}_{L^{2}_{\xi}L^{2}_{x}}+\eta_{1}\|\dot{\Delta}_{q}\nabla_{x}(a,b,\sqrt{60} c)\|^{2}_{L^{2}_{x}}\\
&\quad+\eta_1\bigg( 2\Big(\dot{\Delta}_{q}\nabla_{x}c,\dot{\Delta}_{q}\nabla_{x}a\Big)_{x} -\Big(\nabla_{x}\cdot\Theta(\dot{\Delta}_{q}\{\mathbf{I-P}\}f),\nabla_{x}\dot{\Delta}_{q}(5a+12 c)\Big)_{x}\\
&\quad\quad-100 \|\nabla_x\cdot \Lambda_{i}(\dot{\Delta}_q\{\mathbf{I-P}\}f)\|_{L^2_x}^2-6\|\nabla_x\Theta( \dot{\Delta}_{q}\{\mathbf{I-P}\}f)\|_{L^2_x}^2\\
&\quad\quad+60\sum_{i=1}^{3}\Big(\Lambda_{i}(\mathbf{r}( \dot{\Delta}_{q}\{\mathbf{I-P}\}f)),\dot{\Delta}_{q}\partial_{i}c\Big)_{x}\\
&\quad\quad+3\sum_{i,j=1}^{3}\Big(\Theta_{ij}(\mathbf{r}( \dot{\Delta}_{q}\{\mathbf{I-P}\}f)),\partial_{i}\dot{\Delta}_{q}b_{j}+\partial_{j}\dot{\Delta}_{q}b_{i}\Big)_{x}\bigg)
\end{aligned}
\end{equation}
with $\eta_{1}\in(0,1)$ to be determined. It is clear that for any $q\leq0$, $\mathcal{L}_{1,q}(t)$ fulfills
\begin{equation}\label{1dlg-E5.18}
\frac{1}{2}(1-C\eta_{1})\|\dot{\Delta}_{q}f\|^{2}_{L^{2}_{\xi}L^{2}_{x}}\leq \mathcal{L}_{1,q}(t)\leq\frac{1}{2}(1+C\eta_{1})\|\dot{\Delta}_{q}f\|^{2}_{L^{2}_{\xi}L^{2}_{x}}.
\end{equation}
 In order to justify the coercivity property of $\mathcal{D}_{1,q}(t)$, we see that, for $q\leq0$,
\begin{align*}
\|\dot{\Delta}_{q}\nabla_{x}\mathbf{G}(\{\mathbf{I-P}\}f)\|^{2}_{L^{2}_{x}}&\leq C \|\dot{\Delta}_{q}\{\mathbf{I-P}\}f\|^{2}_{L^{2}_{\xi}L^{2}_{x}},\quad \mathbf{G}=\Lambda, \Theta,\\
2\Big|\Big(\dot{\Delta}_{q}\nabla_{x}c,\dot{\Delta}_{q}\nabla_{x}a\Big)_{x}\Big|&\leq  \frac{1}{4}\|\dot{\Delta}_q \nabla_x a\|_{L^2_x}^2+4\|\dot{\Delta}_q\nabla_x c\|_{L^2_x}^2,\\
\Big|\Big(\nabla_{x}\cdot\Theta(\dot{\Delta}_{q}\{\mathbf{I-P}\}f),\nabla_{x}\dot{\Delta}_{q}(5a+12c)\Big)_{x}\Big|&\leq C\|\dot{\Delta}_{q}\{\mathbf{I-P}\}f\|_{L^2_{\xi}L^2_x}^2+\frac{1}{16}\|\dot{\Delta}_q( \nabla_x a, \nabla_x c)\|_{L^2_x}^2
\end{align*}
for any $q\leq0$. Furthermore, since $\Theta_{ij}$ and $\Lambda_{i}$ given by \eqref{theta} can absorb any velocity weights,
we have
\begin{equation}\nonumber
\begin{aligned}
60 \sum_{i=1}^{3}\Big|\Big(\Lambda_{i}(\mathbf{r}( \dot{\Delta}_{q}\{\mathbf{I-P}\}f)),\dot{\Delta}_{q}\partial_{i}c\Big)_{x}\Big|&\leq C\|\dot{\Delta}_{q}\{\mathbf{I-P}\}f\|_{L^2_{\xi}L^2_{x}}^2+\|\dot{\Delta}_{q}\nabla_x c\|_{L^2_x}^2,
\end{aligned}
\end{equation}
and
\begin{equation}\nonumber
\begin{aligned}
&3\sum_{i,j=1}^{3}\Big|\Big(\Theta_{ij}(\mathbf{r}( \dot{\Delta}_{q}\{\mathbf{I-P}\}f)),\partial_{i}\dot{\Delta}_{q}b_{j}+\partial_{j}\dot{\Delta}_{q}b_{i}\Big)_{x}\Big|\\
&\quad\quad \leq C \|\dot{\Delta}_{q}\{\mathbf{I-P}\}f\|_{L^2_{\xi}L^2_{x}}^2+\frac{1}{4}\|\dot{\Delta}_{q}\nabla_x b\|_{L^2_x}^2.
\end{aligned}
\end{equation}
Gathering these estimates at hand, we arrive at
\begin{equation}\label{1dlg-E5.19}
\begin{aligned}
\mathcal{D}_{1,q}(t)&\geq (\lambda_0-C\eta_1)\|\sqrt{\nu(\xi)}\dot{\Delta}_{q}\{\mathbf{I-P}\}f\|^{2}_{L^{2}_{\xi}L^{2}_{x}}+\frac{1}{2}\eta_{1}\|\dot{\Delta}_{q}\nabla_x(a,b,c)\|^{2}_{L^{2}_{x}}.
\end{aligned}
\end{equation}
Using the fact that $\|\dot{\Delta}_{q}\mathbf{P}f\|_{L^2_{\xi}L^2_x}\sim \|\dot{\Delta}_{q}(a,b,c)\|_{L^2_x}$ and choosing 
$$
\eta_{1}=\min\bigg\{{1,\frac{1}{2C},\frac{\lambda_{0}}{2C}}\bigg\},
$$ 
we obtain from Lemma \ref{1dlg-L2.2}, (\ref{1dlg-E5.18}) and (\ref{1dlg-E5.19}) that, for any $q\leq0$,
\begin{equation}\label{1dlg-E5.21}
\mathcal{L}_{1,q}(t)\sim\|\dot{\Delta}_{q}f\|^{2}_{L^{2}_{\xi}L^{2}_{x}},
\end{equation}
and
\begin{equation}\label{1dlg-E5.22}
\mathcal{D}_{1,q}(t)\gtrsim2^{2q}\|\dot{\Delta}_{q}f\|^{2}_{L^{2}_{\xi}L^{2}_{x}}+\|\sqrt{\nu(\xi)}\dot{\Delta}_{q}\{\mathbf{I-P}\}f\|^{2}_{L^{2}_{\xi}L^{2}_{x}}.
\end{equation}
In accordance with (\ref{1dlg-E5.21})-(\ref{1dlg-E5.22}), the following Lyapunov inequality holds:
\begin{equation}\label{1dlg-E5.23}
\begin{aligned}
&\frac{d}{dt}\mathcal{L}_{1,q}(t)+2^{2q}\mathcal{L}_{1,q}(t)+\|\sqrt{\nu(\xi)}\dot{\Delta}_{q}\{\mathbf{I-P}\}f\|^{2}_{L^{2}_{\xi}L^{2}_{x}}\\
&\quad\lesssim|(\dot{\Delta}_{q}\Gamma(f,f),\dot{\Delta}_{q}\{\mathbf{I-P}\}f)_{\xi,x}|\\
&\quad\quad+\sum_{i=1}^{3}\|\Lambda_{i}(\dot{\Delta}_{q}\mathbf{h})\|^{2}_{L^{2}_{x}}+\sum_{i,j=1}^{3}\|\Theta_{ij}(\dot{\Delta}_{q}\mathbf{h})\|^{2}_{L^{2}_{x}}.
\end{aligned}
\end{equation}

Then, we establish the desired low-frequency estimates, which rely on the Lyapunov inequality \eqref{1dlg-E5.23}. By integrating \eqref{1dlg-E5.23} over the interval $[0,t]$ and taking the square root of both sides of the resulting inequality, we obtain
\begin{equation}\label{1dlg-E5.24}
\begin{aligned}
&\|\dot{\Delta}_{q}f\|_{L^{2}_{\xi}L^{2}_{x}}+2^{q}\bigg(\int_{0}^{t}\|\dot{\Delta}_{q}f\|^{2}_{L^{2}_{\xi}L^{2}_{x}}\,d\tau\bigg)^{1/2}\\
&\quad\quad+\bigg(\int_{0}^{t}\|\sqrt{\nu(\xi)}\dot{\Delta}_{q}\{\mathbf{I-P}\}f\|^{2}_{L^{2}_{\xi}L^{2}_{x}}\,d\tau\bigg)^{1/2}\\
&\quad\lesssim\|\dot{\Delta}_{q}f_{0}\|_{L^{2}_{\xi}L^{2}_{x}}+\bigg(\int_{0}^{t}|(\dot{\Delta}_{q}\Gamma(f,f),\dot{\Delta}_{q}\{\mathbf{I-P}\}f)_{\xi,x}|\,d\tau\bigg)^{1/2}\\
&\quad\quad+\sum_{i=1}^{3}\|\Lambda_{i}(\dot{\Delta}_{q}\mathbf{h})\|_{L^{2}_{t}L^{2}_{x}}+\sum_{i,j=1}^{3}\|\Theta_{ij}(\dot{\Delta}_{q}\mathbf{h})\|_{L^{2}_{t}L^{2}_{x}}.
\end{aligned}
\end{equation}
Multiplying (\ref{1dlg-E5.24}) by $2^{q/2}$, taking an upper bound on $[0,t]$ and summing over $q\leq0$, we arrive at
\begin{equation}\label{1dlg-E5.25}
\begin{aligned}
&\|f\|^{\ell}_{\widetilde{L}^{\infty}_{t}\widetilde{L}^{2}_{\xi}(\dot{B}^{1/2}_{2,1})}+\|f\|^{\ell}_{\widetilde{L}^{2}_{t}\widetilde{L}^{2}_{\xi}(\dot{B}^{3/2}_{2,1})}+\|\{\mathbf{I-P}\}f\|^{\ell}_{\widetilde{L}^{2}_{t}\widetilde{L}^{2}_{\xi,\nu}(\dot{B}^{1/2}_{2,1})}\\
&\quad\lesssim\|f_{0}\|^{\ell}_{\widetilde{L}^{2}_{\xi}(\dot{B}^{1/2}_{2,1})}+\sum_{q\leq0}2^{\frac{q}{2}}\bigg(\int_{0}^{t}\big|\big(\dot{\Delta}_{q}\Gamma(f,f),\dot{\Delta}_{q}\{\mathbf{I-P}\}f\big)_{\xi,x}\big|\,d\tau\bigg)^{1/2}\\
&\quad\quad+\sum_{i=1}^{3}\sum_{q\leq0}2^{\frac{q}{2}}\|\Lambda_{i}(\dot{\Delta}_{q}\mathbf{h})\|_{L^{2}_{t}L^{2}_{x}}+\sum_{i,j=1}^{3}\sum_{q\leq0}2^{\frac{q}{2}}\|\Theta_{ij}(\dot{\Delta}_{q}\mathbf{h})\|_{L^{2}_{t}L^{2}_{x}}.
\end{aligned}
\end{equation}
The nonlinear terms on the right-hand side of \eqref{1dlg-E5.25} can be estimated as follows. It follows from the trilinear estimate \eqref{1dlg-E2.13} in Lemma \ref{1dlg-L2.7}  (choosing $h=\{\mathbf{I-P}\}f, g=f$, $s_{1}=1/2$ and $s_{2}=3/2$)  that
\begin{equation}\label{1dlg-E5.260}
\begin{aligned}
&\sum_{q\leq0}2^{\frac{q}{2}}\bigg(\int_{0}^{t}\big|\big(\dot{\Delta}_{q}\Gamma(f,f),\dot{\Delta}_{q}\{\mathbf{I-P}\}f\big)_{\xi,x}\big|d\tau\bigg)^{1/2}\\
&\quad\lesssim \|f\|^{1/2}_{\widetilde{L}^{\infty}_{t}\widetilde{L}^{2}_{\xi}(\dot{B}^{1/2}_{2,1})}\|f\|^{1/2}_{\widetilde{L}^{2}_{t}\widetilde{L}^{2}_{\xi,\nu}(\dot{B}^{3/2}_{2,1})}\|\{\mathbf{I-P}\}f\|^{1/2}_{\widetilde{L}^{2}_{t}\widetilde{L}^{2}_{\xi,\nu}(\dot{B}^{1/2}_{2,1})}.
\end{aligned}
\end{equation}
Based on the macro-micro decomposition and the frequency cut-off properties stated in \eqref{1dlg-E2.6}, one can get 
\begin{align}
\|f\|_{\widetilde{L}^{\infty}_{t}\widetilde{L}^{2}_{\xi}(\dot{B}^{1/2}_{2,1})}&\lesssim \|f\|_{\widetilde{L}^{\infty}_{t}\widetilde{L}^{2}_{\xi}(\dot{B}^{1/2}_{2,1})}^{\ell}+\|f\|_{\widetilde{L}^{\infty}_{t}\widetilde{L}^{2}_{\xi}(\dot{B}^{3/2}_{2,1})}^{h},\label{1mm}\\
\|f\|_{\widetilde{L}^{2}_{t}\widetilde{L}^{2}_{\xi,\nu}(\dot{B}^{3/2}_{2,1})}&\lesssim \|\mathbf{P}f\|_{\widetilde{L}^{2}_{t}\widetilde{L}^{2}_{\xi}(\dot{B}^{3/2}_{2,1})}^{\ell} +\|\{\mathbf{I-P}\}f\|_{\widetilde{L}^{2}_{t}\widetilde{L}^{2}_{\xi,\nu}(\dot{B}^{1/2}_{2,1})}^{\ell}+\|f\|_{\widetilde{L}^{2}_{t}\widetilde{L}^{2}_{\xi,\nu}(\dot{B}^{3/2}_{2,1})}^{h}\label{2mm}
\end{align}
and
\begin{equation}\label{3mm}
\begin{aligned}
&\|\{\mathbf{I-P}\}f\|_{\widetilde{L}^{2}_{t}\widetilde{L}^{2}_{\xi,\nu}(\dot{B}^{1/2}_{2,1})}\\
&\quad\lesssim \|\{\mathbf{I-P}\}f\|_{\widetilde{L}^{2}_{t}\widetilde{L}^{2}_{\xi,\nu}(\dot{B}^{1/2}_{2,1})}^{\ell}+\|\{\mathbf{I-P}\}f\|_{\widetilde{L}^{2}_{t}\widetilde{L}^{2}_{\xi,\nu}(\dot{B}^{3/2}_{2,1})}^{h}.
\end{aligned}
\end{equation}
Putting \eqref{1mm}-\eqref{3mm} into \eqref{1dlg-E5.260} yields
\begin{equation}\label{1dlg-E5.26}
\begin{aligned}
&\sum_{q\leq0}2^{\frac{q}{2}}\bigg(\int_{0}^{t}\big|\big(\dot{\Delta}_{q}\Gamma(f,f),\dot{\Delta}_{q}\{\mathbf{I-P}\}f\big)_{\xi,x}\big|dt\bigg)^{1/2}\lesssim \sqrt{\mathcal{E}_{t}(f)}\mathcal{D}_{t}(f).
\end{aligned}
\end{equation}
Remembering the notations \eqref{theta}-\eqref{r} and employing Lemma \ref{1dlg-L2.9}, we infer that
\begin{equation}\label{1dlg-E5.27}
\begin{aligned}
&\sum_{i=1}^{3}\sum_{q\leq0}2^{\frac{q}{2}}\|\Lambda_{i}(\dot{\Delta}_{q}\mathbf{h})\|_{L^{2}_{t}L^{2}_{x}}+\sum_{i,j=1}^{3}\sum_{q\leq0}2^{\frac{q}{2}}\|\Theta_{ij}(\dot{\Delta}_{q}\mathbf{h})\|_{L^{2}_{t}L^{2}_{x}}\\
&\quad\lesssim\|f\|_{\widetilde{L}^{\infty}_{t}\widetilde{L}^{2}_{\xi}(\dot{B}^{1/2}_{2,1})}\|f\|_{\widetilde{L}^{2}_{t}\widetilde{L}^{2}_{\xi}(\dot{B}^{3/2}_{2,1})}\\
&\quad\lesssim\mathcal{E}_{t}(f)\mathcal{D}_{t}(f).
\end{aligned}
\end{equation} 
By inserting all the above estimates into (\ref{1dlg-E5.25}), we arrive at (\ref{1dlg-E5.17}).
\end{proof}

\subsubsection{High-frequency analysis}
In this subsection, we establish the a priori estimates of solutions to the Cauchy problem (\ref{1dlg-E1.4}) in the high-frequency regime. 

\begin{lemma}\label{1dlg-R5.5}
Assume that  $f$ is the solution to the Cauchy problem $\rm(\ref{1dlg-E1.4})$ on any interval $[0,T)$.  Then for $0<t<T$, it holds that
\begin{equation}\label{1dlg-E5.30}
\begin{aligned}
&\|f\|^{h}_{\widetilde{L}^{\infty}_{t}\widetilde{L}^{2}_{\xi}(\dot{B}^{3/2}_{2,1})}+\|\mathbf{P}f\|^{h}_{\widetilde{L}^{2}_{t}\widetilde{L}^{2}_{\xi}(\dot{B}^{3/2}_{2,1})}+\|\{\mathbf{I-P}\}f\|^{h}_{\widetilde{L}^{2}_{t}\widetilde{L}^{2}_{\xi}(\dot{B}^{3/2}_{2,1})}\\
&\quad\leq C\|f_{0}\|_{\widetilde{L}^{2}_{\xi}(\dot{B}^{3/2}_{2,1})}^{h}+C\Big(\sqrt{\mathcal{E}_{t}(f)}+\mathcal{E}_{t}(f)\Big)\mathcal{D}_{t}(f),
\end{aligned}
\end{equation}
where $\mathcal{E}_{t}(f)$ and $\mathcal{D}_{t}(f)$ are defined by $\rm(\ref{1dlg-E1.16})$ and $\rm(\ref{1dlg-E1.17})$, respectively, and $C$ is a generic constant independent of $T$.
\end{lemma}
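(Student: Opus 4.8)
The strategy mirrors the low-frequency analysis in Lemma \ref{1dlg-R5.3}, but in the high-frequency regime the transport term is no longer needed to recover the dissipation of $\mathbf{P}f$: the spectral gap of $L$ already gives damping of $\{\mathbf{I-P}\}f$, and for $q\geq-1$ the Bernstein inequality makes $\|\dot\Delta_q\nabla_x\mathbf{P}f\|_{L^2_x}\gtrsim\|\dot\Delta_q\mathbf{P}f\|_{L^2_x}$, so any macroscopic gradient dissipation we produce controls $\mathbf{P}f$ itself. First I would apply $\dot\Delta_q$ to \eqref{1dlg-E1.4}, take the $L^2_{\xi,x}$ inner product with $\dot\Delta_q f$, and use Lemma \ref{1dlg-L2.6} together with $(\dot\Delta_q\Gamma(f,f),\dot\Delta_q\mathbf{P}f)_{\xi,x}=0$ to get the analogue of \eqref{1dlg-E5.12}. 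Then, exactly as in the low-frequency case, I would form the macroscopic interaction functionals by pairing the localized fluid-type equations \eqref{1dlg-E5.9}: equation $(\ref{1dlg-E5.9})_2$ against $\dot\Delta_q\nabla_x a$, equation $(\ref{1dlg-E5.9})_5$ against $\dot\Delta_q\nabla_x c$, and \eqref{1dlg-E5.944} against $\partial_i\dot\Delta_q b_j+\partial_j\dot\Delta_q b_i$, producing the high-frequency counterparts of \eqref{1dlg-E5.13}–\eqref{1dlg-E5.15}.

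Next I would build the high-frequency Lyapunov functional $\mathcal{L}_{2,q}(t)$ and dissipation $\mathcal{D}_{2,q}(t)$ by taking $\eqref{1dlg-E5.12}+\eta_2\times(\text{the three macroscopic identities with suitable weights})$, in the spirit of \eqref{1dlg-E5.11}–\eqref{1dlg-E5.110} (this is the step the excerpt points to via the reference to \eqref{1dlg-E5.29}–\eqref{1dlg-E5.2900}). For $q\geq-1$ one has $\|\dot\Delta_q\mathbf{P}f\|_{L^2_{\xi}L^2_x}\sim\|\dot\Delta_q\nabla_x(a,b,c)\|_{L^2_x}$ and $\|\dot\Delta_q\nabla_x\mathbf{G}(\{\mathbf{I-P}\}f)\|_{L^2_x}\lesssim 2^{q}\|\dot\Delta_q\{\mathbf{I-P}\}f\|_{L^2_{\xi}L^2_x}$, so absorbing the error terms requires being slightly more careful with the $2^q$ weights than in the low-frequency regime; choosing $\eta_2$ small (depending on $\lambda_0$ and the absolute constants) yields $\mathcal{L}_{2,q}(t)\sim\|\dot\Delta_q f\|^2_{L^2_{\xi}L^2_x}$ and $\mathcal{D}_{2,q}(t)\gtrsim \|\dot\Delta_q f\|^2_{L^2_{\xi}L^2_x}$ for all $q\geq-1$, the key point being that in high frequencies the full solution is damped at unit rate (not $2^{2q}$). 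This gives the Lyapunov inequality $\frac{d}{dt}\mathcal{L}_{2,q}+c\,\mathcal{L}_{2,q}+\|\sqrt{\nu}\dot\Delta_q f\|^2_{L^2_{\xi}L^2_x}\lesssim |(\dot\Delta_q\Gamma(f,f),\dot\Delta_q\{\mathbf{I-P}\}f)_{\xi,x}|+\sum_i\|\Lambda_i(\dot\Delta_q\mathbf{h})\|^2_{L^2_x}+\sum_{i,j}\|\Theta_{ij}(\dot\Delta_q\mathbf{h})\|^2_{L^2_x}$.

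Integrating over $[0,t]$, taking square roots, multiplying by $2^{3q/2}$ and summing over $q\geq-1$ then produces the left-hand side of \eqref{1dlg-E5.30} plus the initial term $\|f_0\|^h_{\widetilde L^2_\xi(\dot B^{3/2}_{2,1})}$. For the nonlinear terms I would apply Lemma \ref{1dlg-L2.7} with the choice $h=\{\mathbf{I-P}\}f$, $g=f$, $s_1=s_2=3/2$ (so $s_1+s_2-3/2=3/2$), together with the macro-micro splitting bounds \eqref{1mm}–\eqref{3mm} and \eqref{1dlg-E2.6}, to bound the collision contribution by $(\sqrt{\mathcal{E}_t(f)}+\mathcal{E}_t(f))\mathcal{D}_t(f)$; the high-order moment terms are handled by Lemma \ref{1dlg-L2.9} with $s_1=s_2=3/2$, giving a bound of the form $\mathcal{E}_t(f)\mathcal{D}_t(f)$. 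The main obstacle I anticipate is the careful tuning of $\eta_2$ and the weights in $\mathcal{D}_{2,q}$ so that all the cross terms and the $\mathbf{r}(\{\mathbf{I-P}\}f)$ contributions (which, unlike in low frequencies, involve $\xi\cdot\nabla_x\{\mathbf{I-P}\}f$ at full frequency $2^q$) are absorbed into the $\nu$-weighted microscopic dissipation and the macroscopic gradient terms; this is exactly where the high-frequency analysis diverges from the low-frequency one and demands the separate Lyapunov functional announced in the introduction.
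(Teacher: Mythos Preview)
Your overall strategy matches the paper's, and the nonlinear estimates you plan (Lemma \ref{1dlg-L2.7} with $s_1=s_2=3/2$ for the collision term, Lemma \ref{1dlg-L2.9} for the moment terms) are exactly what the paper uses. However, there is a genuine gap at the step you yourself flag as ``the main obstacle'': a \emph{fixed} coupling parameter $\eta_2$ cannot work uniformly in $q\geq-1$.

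Concretely, the interaction functionals you propose, e.g.\ $(\dot\Delta_q\partial_i a,\dot\Delta_q b_i)_x$, satisfy $|(\dot\Delta_q\partial_i a,\dot\Delta_q b_i)_x|\lesssim 2^q\|\dot\Delta_q f\|^2_{L^2_\xi L^2_x}$ by Bernstein, and for large $q$ this cannot be absorbed into $\tfrac12\|\dot\Delta_q f\|^2$ by any $q$-independent $\eta_2$; so the equivalence $\mathcal{L}_{2,q}\sim\|\dot\Delta_q f\|^2$ fails. Worse, the $\mathbf{r}(\{\mathbf{I-P}\}f)$ contributions in $\mathcal{D}_{2,q}$ produce errors of size $\eta_2\,2^{2q}\|\dot\Delta_q\{\mathbf{I-P}\}f\|^2$, which swamp the spectral-gap term $\lambda_0\|\sqrt{\nu}\dot\Delta_q\{\mathbf{I-P}\}f\|^2$ for large $q$. (Your claim $\|\dot\Delta_q\mathbf{P}f\|\sim\|\dot\Delta_q\nabla_x(a,b,c)\|$ is only a one-sided bound for $q\geq-1$; the full equivalence carries an uncontrolled factor $2^q$.)

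The paper's fix is to insert a factor $2^{-2q}$ in front of the macroscopic interaction terms, i.e.\ to take $\eta_2 2^{-2q}$ as the coupling parameter in $\mathcal{L}_{2,q}$ and $\mathcal{D}_{2,q}$ (see \eqref{1dlg-E5.29}--\eqref{1dlg-E5.2900}). This makes the cross terms in $\mathcal{L}_{2,q}$ of size $\eta_2 2^{-q}\|\dot\Delta_q f\|^2\lesssim\eta_2\|\dot\Delta_q f\|^2$ and the $\mathbf{r}$-errors of size $\eta_2\|\dot\Delta_q\{\mathbf{I-P}\}f\|^2$, both absorbable with fixed small $\eta_2$; the macroscopic dissipation becomes $\eta_2 2^{-2q}\|\dot\Delta_q\nabla_x(a,b,c)\|^2\sim\eta_2\|\dot\Delta_q\mathbf{P}f\|^2$, which is precisely the unit-rate damping you want. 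A side effect is that the moment source terms in the Lyapunov inequality \eqref{1dlg-E5.35} carry an extra $2^{-2q}$, so after multiplying by $2^{3q/2}$ and taking square roots they appear at weight $2^{q/2}$ in \eqref{1dlg-E5.36}; the paper then uses the high-frequency bound $2^{q/2}\leq 2\cdot 2^{3q/2}$ for $q\geq-1$ before applying Lemma \ref{1dlg-L2.9} with $s_1=s_2=3/2$.
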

\begin{proof}
Let the constant $\eta_{2}\in(0,1)$ be determined later. For any $q\geq -1$, we define the Lyapunov functional 
\begin{equation}\label{1dlg-E5.29}
\begin{aligned}
\mathcal{L}_{2,q}(t)&\triangleq \frac{1}{2}\|\dot{\Delta}_{q}f\|^{2}_{L^{2}_{\xi}L^{2}_{x}}\\
&\quad+\eta_{2}2^{-2q}\bigg(\sum_{i=1}^{3}\Big(\dot{\Delta}_{q}\partial_{i}a,\dot{\Delta}_{q}b_{i}\Big)_{x}+60\sum_{i=1}^{3}\Big(\dot{\Delta}_{q}\partial_{i}c,\Lambda_{i}(\dot{\Delta}_{q}\{\mathbf{I-P}\}f)\Big)_{x}\\
&\quad+3\sum_{i,j=1}^{3}\Big(\partial_{i}\dot{\Delta}_{q}b_{j}+\partial_{j}\dot{\Delta}_{q}b_{i},\Theta_{ij}(\dot{\Delta}_{q}\{\mathbf{I-P}\}f)+2\dot{\Delta}_{q}c\,\delta_{ij}\Big)_{x}\bigg),\\
\end{aligned}
\end{equation}
and the dissipation functional
\begin{equation}\label{1dlg-E5.2900}
\begin{aligned}
\mathcal{D}_{2,q}(t)&\triangleq \lambda_{0}\|\sqrt{\nu(\xi)}\dot{\Delta}_{q}\{\mathbf{I-P}\}f\|^{2}_{L^{2}_{\xi}L^{2}_{x}}+\eta_{2}2^{-2q}\|\dot{\Delta}_{q}\nabla_{x}(a,b, \sqrt{60} c)\|^{2}_{L^{2}_{x}}\\
&\quad+\eta_{2}2^{-2q}\bigg( 2\Big(\dot{\Delta}_{q}\nabla_{x}c,\dot{\Delta}_{q}\nabla_{x}a\Big)_{x} -\Big(\nabla_{x}\cdot\Theta(\dot{\Delta}_{q}\{\mathbf{I-P}\}f),\nabla_{x}\dot{\Delta}_{q}(5a+12 c)\Big)_{x}\\
&\quad\quad-100 \|\nabla\cdot \Lambda(\dot{\Delta}_q\{\mathbf{I-P}\}f)\|_{L^2_x}^2-6\|\nabla\cdot\Theta( \dot{\Delta}_{q}\{\mathbf{I-P}\}f)\|_{L^2_x}^2\\
&\quad\quad+60\sum_{i=1}^{3}\Big(\Lambda_{i}(\mathbf{r}( \dot{\Delta}_{q}\{\mathbf{I-P}\}f)),\dot{\Delta}_{q}\partial_{i}c\Big)_{x}\\
&\quad\quad+3\sum_{i,j=1}^{3}\Big(\Theta_{ij}(\mathbf{r}( \dot{\Delta}_{q}\{\mathbf{I-P}\}f)),\partial_{i}\dot{\Delta}_{q}b_{j}+\partial_{j}\dot{\Delta}_{q}b_{i}\Big)_{x}\bigg).
\end{aligned}
\end{equation}
With the aid of \eqref{1dlg-E5.12}, \eqref{1dlg-E5.13}, \eqref{1dlg-E5.14} and \eqref{1dlg-E5.15}, we obtain 
\begin{equation}\label{1dlg-E5.28}
\begin{aligned}
&\frac{d}{dt}\mathcal{L}_{2,q}(t)+\mathcal{D}_{2,q}(t)\\
&\lesssim\Big|\Big(\dot{\Delta}_{q}\Gamma(f,f),\dot{\Delta}_{q}\{\mathbf{I-P}\}f\Big)_{\xi,x}\Big|\\
&\quad+\eta_{2}2^{-2q}\sum_{i=1}^{3}\Big|\Big(\Lambda_{i}(\dot{\Delta}_{q}\mathbf{h}),\dot{\Delta}_{q}\partial_{i}c\Big)_{x}\Big|\\
&\quad+\eta_{2}2^{-2q}\sum_{i,j=1}^{3}\Big|\Big(\Theta_{ij}(\dot{\Delta}_{q}\mathbf{h}),\partial_{i}\dot{\Delta}_{q}b_{j}+\partial_{j}\dot{\Delta}_{q}b_{i}\Big)_{x}\Big|.
\end{aligned}
\end{equation}
Arguing similarly as in the proof of Lemma \ref{1dlg-R5.3}, we can verify that, for any $q\geq-1$,
\begin{equation}\label{1dlg-E5.31}
\frac{1}{2}(1-C\eta_{2})\|\dot{\Delta}_{q}f\|^{2}_{L^{2}_{\xi}L^{2}_{x}}\leq \mathcal{L}_{2,q}(t)\leq\frac{1}{2}(1+C\eta_{2})\|\dot{\Delta}_{q}f\|^{2}_{L^{2}_{\xi}L^{2}_{x}}
\end{equation}
and
\begin{equation}\label{1dlg-E5.32}
\begin{aligned}
\mathcal{D}_{2,q}(t)&\geq (\lambda_{0}-C\eta_{2})\|\sqrt{\nu(\xi)}\dot{\Delta}_{q}\{\mathbf{I-P}\}f\|^{2}_{L^{2}_{\xi}L^{2}_{x}}\\
&\quad+\frac{1}{4}2^{-2q}\eta_{2} \|\dot{\Delta}_{q}\nabla_x(a,b,c)\|^{2}_{L^{2}_{x}}.
\end{aligned}
\end{equation}
Thus, one can choose a sufficiently small constant $\eta_2>0$ in  (\ref{1dlg-E5.31})-(\ref{1dlg-E5.32}) and take advantage of Lemma \ref{1dlg-L2.2} so that
\begin{equation}\label{1dlg-E5.33}
\mathcal{L}_{2,q}(t)\sim\|\dot{\Delta}_{q}f\|^{2}_{L^{2}_{\xi}L^{2}_{x}}
\end{equation}
and
\begin{equation}\label{1dlg-E5.34}
\mathcal{D}_{2,q}(t)\gtrsim\|\dot{\Delta}_{q}f\|^{2}_{L^{2}_{\xi}L^{2}_{x}}+\|\sqrt{\nu(\xi)}\dot{\Delta}_{q}\{\mathbf{I-P}\}f\|^{2}_{L^{2}_{\xi}L^{2}_{x}}.
\end{equation}
Combining (\ref{1dlg-E5.28}) with (\ref{1dlg-E5.33})-(\ref{1dlg-E5.34}), we have 
\begin{equation}\label{1dlg-E5.35}
\begin{aligned}
&\frac{d}{dt}\mathcal{L}_{2,q}(t)+\mathcal{L}_{2,q}(t)+\|\sqrt{\nu(\xi)}\dot{\Delta}_{q}\{\mathbf{I-P}\}f\|^{2}_{L^{2}_{\xi}L^{2}_{x}}\\
&\quad\lesssim|(\dot{\Delta}_{q}\Gamma(f,f),\dot{\Delta}_{q}\{\mathbf{I-P}\}f)_{\xi,x}|\\
&\quad\quad+2^{-2q}\sum_{i=1}^{3}\|\Lambda_{i}(\dot{\Delta}_{q}\mathbf{h})\|^{2}_{L^{2}_{x}}+2^{-2q}\sum_{i,j=1}^{3}\|\Theta_{ij}(\dot{\Delta}_{q}\mathbf{h})\|^{2}_{L^{2}_{x}}
\end{aligned}
\end{equation}
for $q\geq-1$, which leads to
\begin{equation}\label{1dlg-E5.36}
\begin{aligned}
&\|f\|^{h}_{\widetilde{L}^{\infty}_{t}\widetilde{L}^{2}_{\xi}(\dot{B}^{3/2}_{2,1})}+\|f\|^{h}_{\widetilde{L}^{2}_{t}\widetilde{L}^{2}_{\xi}(\dot{B}^{3/2}_{2,1})}+\|\{\mathbf{I-P}\}f\|^{h}_{\widetilde{L}^{2}_{t}\widetilde{L}^{2}_{\xi,\nu}(\dot{B}^{3/2}_{2,1})}\\
&\quad\lesssim\|f_{0}\|^{h}_{\widetilde{L}^{2}_{\xi}(\dot{B}^{3/2}_{2,1})}+\sum_{q\geq-1}2^{\frac{3q}{2}}\bigg(\int_{0}^{t}\big|\big(\dot{\Delta}_{q}\Gamma(f,f),\dot{\Delta}_{q}\{\mathbf{I-P}\}f\big)_{\xi,x}\big|d\tau\bigg)^{1/2}\\
&\quad\quad+\sum_{i=1}^{3}\sum_{q\geq-1}2^{\frac{q}{2}}\|\Lambda_{i}(\dot{\Delta}_{q}\mathbf{h})\|_{L^{2}_{t}L^{2}_{x}}+\sum_{i,j=1}^{3}\sum_{q\geq-1}2^{\frac{q}{2}}\|\Theta_{ij}(\dot{\Delta}_{q}\mathbf{h})\|_{L^{2}_{t}L^{2}_{x}}.
\end{aligned}
\end{equation}
The trilinear estimate \eqref{1dlg-E2.13} (choosing $g=f$, $h=\{\mathbf{I-P}\}f$ and $s_1=s_2=3/2$) ensures that
\begin{equation}\label{1dlg-E5.37}
\begin{aligned}
&\sum_{q\leq0}2^{\frac{3q}{2}}\bigg(\int_{0}^{t}\big|\big(\dot{\Delta}_{q}\Gamma(f,f),\dot{\Delta}_{q}\{\mathbf{I-P}\}f\big)_{\xi,x}\big|d\tau\bigg)^{1/2}\\
&\quad\lesssim \|f\|^{1/2}_{\widetilde{L}^{\infty}_{t}\widetilde{L}^{2}_{\xi}(\dot{B}^{3/2}_{2,1})}\|f\|^{1/2}_{\widetilde{L}^{2}_{t}\widetilde{L}^{2}_{\xi}(\dot{B}^{3/2}_{2,1})}\|\{\mathbf{I-P}\}f\|^{1/2}_{\widetilde{L}^{2}_{t}\widetilde{L}^{2}_{\xi,\nu}(\dot{B}^{3/2}_{2,1})}\\
&\quad\lesssim \sqrt{\mathcal{E}_{t}(f)}\mathcal{D}_{t}(f),
\end{aligned}
\end{equation}
where we have used \eqref{2mm},
\begin{equation}\label{4mm}
\begin{aligned}
\|f\|_{\widetilde{L}^{\infty}_{t}\widetilde{L}^{2}_{\xi}(\dot{B}^{3/2}_{2,1})}&\lesssim \|f\|_{\widetilde{L}^{\infty}_{t}\widetilde{L}^{2}_{\xi}(\dot{B}^{1/2}_{2,1})}^{\ell}+\|f\|_{\widetilde{L}^{\infty}_{t}\widetilde{L}^{2}_{\xi}(\dot{B}^{3/2}_{2,1})}^{h}
\end{aligned}
\end{equation}
and
\begin{equation}\label{5mm}
\begin{aligned}
&\|\{\mathbf{I-P}\}f\|_{\widetilde{L}^{2}_{t}\widetilde{L}^{2}_{\xi,\nu}(\dot{B}^{3/2}_{2,1})}\\
&\quad\lesssim \|\{\mathbf{I-P}\}f\|_{\widetilde{L}^{2}_{t}\widetilde{L}^{2}_{\xi,\nu}(\dot{B}^{1/2}_{2,1})}^{\ell}+\|\{\mathbf{I-P}\}f\|_{\widetilde{L}^{2}_{t}\widetilde{L}^{2}_{\xi,\nu}(\dot{B}^{3/2}_{2,1})}^{h}.
\end{aligned}
\end{equation}
Furthermore, by applying Lemma \ref{1dlg-L2.9}, along with the equations \eqref{2mm} and \eqref{4mm}, we  obtain
\begin{equation}\label{1dlg-E5.38}
\begin{aligned}
&\sum_{i=1}^{3}\sum_{q\geq-1}2^{\frac{q}{2}}\|\Lambda_{i}(\dot{\Delta}_{q}\mathbf{h})\|_{L^{2}_{t}L^{2}_{x}}+\sum_{i,j=1}^{3}\sum_{q\geq-1}2^{\frac{q}{2}}\|\Theta_{ij}(\dot{\Delta}_{q}\mathbf{h})\|_{L^{2}_{t}L^{2}_{x}}\\
&\quad\lesssim\|f\|_{\widetilde{L}^{\infty}_{t}\widetilde{L}^{2}_{\xi}(\dot{B}^{3/2}_{2,1})}\|f\|_{\widetilde{L}^{2}_{t}\widetilde{L}^{2}_{\xi}(\dot{B}^{3/2}_{2,1})}\\
&\quad\lesssim\mathcal{E}_{t}(f)\mathcal{D}_{t}(f).
\end{aligned}
\end{equation}
Thus, the combination of  (\ref{1dlg-E5.36}), \eqref{1dlg-E5.37} and \eqref{1dlg-E5.38} leads to (\ref{1dlg-E5.30}). The proof of Lemma \ref{1dlg-R5.5} is complete.
\end{proof}

\subsection{Proof of Theorem \ref{1dlg-T1.2}}
Assume that the initial data $f_{0}$ satisfies $f_0\in \widetilde{L}^{2}_{\xi}(\dot{B}^{1/2}_{2,1})\cap \widetilde{L}^{2}_{\xi}(\dot{B}^{3/2}_{2,1})$ and \eqref{1dlg-E1.18}, where the constant $\varepsilon_0$ will be determined later. There exists a suitably small $\var_1>0$ and a maximal time $T^*=T^*(\var_1)$ such that for any $\var_0\leq \var_1$, the Cauchy problem (\ref{1dlg-E1.4}) admits a unique solution $f(t,x,\xi)$ on $[0,T)$ satisfying 
$$
f\in \widetilde{L}^{\infty}(0,T;\widetilde{L}^{2}_{\xi}(\dot{B}^{1/2}_{2,1})\cap\widetilde{L}^{2}_{\xi}(\dot{B}^{3/2}_{2,1}))\cap \widetilde{L}^2(0,T;\widetilde{L}^{2}_{\xi,\nu}(\dot{B}^{1/2}_{2,1})\cap \widetilde{L}^{2}_{\xi,\nu}(\dot{B}^{3/2}_{2,1}))
$$
for any $0<T<T^*$ and that
$$
E_t(f)\triangleq\|f(t)\|_{\widetilde{L}^{2}_{\xi}(\dot{B}^{1/2}_{2,1})\cap \widetilde{L}^{2}_{\xi}(\dot{B}^{3/2}_{2,1})}~~\text{is continuous over $[0,T^*)$}.
$$ 
The construction of an approximate sequence relies on using the Hahn-Banach extension theorem in the inhomogeneous space $\widetilde{L}^{\infty}_t\widetilde{L}^2_\xi(B^{3/2}_{2,1})$ (see \cite{Alexandre-2011, Morimoto-2016}), where the smallness in $L^2_\xi L^{\infty}_x$ can be ensured by the homogeneous $\widetilde{L}^2_\xi(\dot{B}^{3/2}_{2,1})$-control. The local existence is then completed by performing a compactness argument. Since the proof is quite standard, the details are omitted here. 

Our goal is to show $T^*=+\infty$ under the condition \eqref{1dlg-E1.18}. Define
\begin{equation*}
\widetilde{T}\triangleq \sup\bigg\{t\in[0,T^*)\,:\,E_t(f)+\mathcal{D}_{t}(f)\leq 4 C_1 \Big(\|f_{0}\|^{\ell}_{\widetilde{L}^{2}_{\xi}(\dot{B}^{1/2}_{2,1})}+\|f_{0}\|^{h}_{\widetilde{L}^{2}_{\xi}(\dot{B}^{3/2}_{2,1})}\Big)\bigg\},
\end{equation*}
where $\mathcal{D}_{t}(f)$, as defined in \eqref{1dlg-E1.17}, is continuous over $[0,T^*)$, and $C_1$ is given by proposition \ref{1dlg-P5.1}. Clearly, we have
$0<\widetilde{T}\leq T^*$. 

We claim that $\widetilde{T}=T^*$. Indeed, if $\widetilde{T}<T^*$, then by Lemma \ref{1dlg-L2.06} and the uniform {\emph{a priori}} estimates obtained in Proposition \ref{1dlg-P5.1}, it follows that for all $0<t<\widetilde{T}$,
\begin{equation}\nonumber
\begin{aligned}
&\mathcal{E}_{t}(f)+\mathcal{D}_{t}(f)\\
&\quad\leq C_{1}\Big(\|f_{0}\|^{\ell}_{\widetilde{L}^{2}_{\xi}(\dot{B}^{1/2}_{2,1})}+\|f_{0}\|^{h}_{\widetilde{L}^{2}_{\xi}(\dot{B}^{3/2}_{2,1})}\Big)\\
&\quad\quad+4 C_{1}\Big(\mathcal{E}_{t}(f)+\sqrt{\mathcal{E}_{t}(f)}\Big) \Big(\|f_{0}\|^{\ell}_{\widetilde{L}^{2}_{\xi}(\dot{B}^{1/2}_{2,1})}+\|f_{0}\|^{h}_{\widetilde{L}^{2}_{\xi}(\dot{B}^{3/2}_{2,1})}\Big)\\
&\quad\leq \frac{3}{2} C_{1}\Big(\|f_{0}\|^{\ell}_{\widetilde{L}^{2}_{\xi}(\dot{B}^{1/2}_{2,1})}+\|f_{0}\|^{h}_{\widetilde{L}^{2}_{\xi}(\dot{B}^{3/2}_{2,1})}\Big)+6 C_{1}\Big(\|f_{0}\|^{\ell}_{\widetilde{L}^{2}_{\xi}(\dot{B}^{1/2}_{2,1})}+\|f_{0}\|^{h}_{\widetilde{L}^{2}_{\xi}(\dot{B}^{3/2}_{2,1})}\Big)\mathcal{E}_{t}(f),
\end{aligned}
\end{equation}
which implies
\begin{equation}\label{1dlg-E5.41}
\begin{aligned}
E_t(f)+\mathcal{D}_{t}(f)\leq \mathcal{E}_{t}(f)+\mathcal{D}_{t}(f)\leq 3C_{1}\Big(\|f_{0}\|^{\ell}_{\widetilde{L}^{2}_{\xi}(\dot{B}^{1/2}_{2,1})}+\|f_{0}\|^{h}_{\widetilde{L}^{2}_{\xi}(\dot{B}^{3/2}_{2,1})}\Big)
\end{aligned}
\end{equation}
provided that in \eqref{1dlg-E1.18} we choose
$$
\varepsilon_0\triangleq\min\left\{\var_1,\frac{1}{12C_{1}}\right\}.
$$
By the classical continuity argument, this contradicts the definition of $\widetilde{T}$.  The claim is thus proved.

Next, we assume $\widetilde{T}=T^*<+\infty$. Since $f$ has the uniform estimate in (\ref{1dlg-E5.41}), we can take some $t$ sufficiently close to $T^*$ as the new initial data and apply the local-in-time existence result again, allowing the existence time of the solution to exceed $T^*$. This contradicts the maximality of $T^*$. Hence, $\widetilde{T}=T=+\infty$, and $f$ is indeed a global solution to the Cauchy problem \eqref{1dlg-E1.4} satisfying the uniform estimate \eqref{1dlg-E1.19}. Furthermore, the positivity of $F=\mu+\mu^{1/2}f$ can be found in, for instance, \cite{Guo-2003}.

To complete the proof of Theorem \ref{1dlg-T1.2}, we give the proof of uniqueness. Let $f_1, f_2$ be two solutions to the Cauchy problem \eqref{1dlg-E1.4} with the same initial data $f_0$ on $[0,T]$ satisfying \eqref{1dlg-E1.19}. Taking the difference $\widetilde{f}=f_1-f_2$ of the Boltzmann equation (\ref{1dlg-E1.4}) for $f_1$ and $f_2$, we obtain
\begin{equation}\label{1dlg-E3.8}
\partial_{t}\widetilde{f}+\xi\cdot\nabla_{x}\widetilde{f}+L\widetilde{f}=\Gamma(\widetilde{f},f_1)+\Gamma(f_2,\widetilde{f}).
\end{equation}
Applying $\dot{\Delta}_{q}$ to (\ref{1dlg-E3.8}) and taking the inner product of the resulting equation with $\Dot{\Delta}_{q}\widetilde{f}$ over $\mathbb{R}^{3}_{x}\times\mathbb{R}^{3}_{\xi}$ yields
\begin{equation}\label{1dlg-E3.9}
\begin{aligned}
&\frac{1}{2}\frac{d}{dt}\|\Dot{\Delta}_{q}\widetilde{f}\|^{2}_{L^{2}_{\xi}L^{2}_{x}}+\lambda_0\|\Dot{\Delta}_{q}\{\mathbf{I-P}\}\widetilde{f}\|^{2}_{L^{2}_{\xi,\nu}L^{2}_{x}}\\
&\quad\leq\Big|\Big(\Dot{\Delta}_{q}\Gamma(\widetilde{f},f_1),\dot{\Delta}_{q}\widetilde{f}\Big)_{\xi,x}\Big|+\Big|\Big(\Dot{\Delta}_{q}\Gamma(f_2,\widetilde{f}),\dot{\Delta}_{q}\widetilde{f}\Big)_{\xi,x}\Big|.
\end{aligned}
\end{equation}
Integrating (\ref{1dlg-E3.9}) with respect to the time variable over $[0,t]$ with $0\leq t\leq T_1\leq T$, and taking the square root of both sides of the resulting inequality, we have
\begin{align*}
\begin{aligned}
&
\sup_{t\in[0,T_1]}\|\Dot{\Delta}_{q}\widetilde{f}\|_{L^{2}_{\xi}L^{2}_{x}}+
\sqrt{\lambda_0}\|\Dot{\Delta}_{q}\{\mathbf{I-P}\}\widetilde{f}\|_{L^{2}_{\xi,\nu}L^{2}_{x}}\\
&\quad\leq \bigg(\int_{0}^{T_1}\bigg|\Big(\Dot{\Delta}_{q}\Gamma(\widetilde{f},f_1),\Dot{\Delta}_{q}\widetilde{f}\Big)_{\xi,x}\Big|dt\bigg)^{1/2}+ \bigg(\int_{0}^{T_1}\Big|\Big(\Dot{\Delta}_{q}\Gamma(f_2,\widetilde{f}),\Dot{\Delta}_{q}\widetilde{f}\Big)_{\xi,x}\Big|dt\bigg)^{1/2},
\end{aligned}
\end{align*}
from which we infer that
\begin{equation}\label{1dlg-E6.46}
\begin{aligned}
&\|\widetilde{f}\|_{\widetilde{L}^{\infty}_{T_1}\widetilde{L}^{2}_{\xi}(\dot{B}^{1/2}_{2,1})}+\|\{\mathbf{I-P}\}\widetilde{f}\|_{\widetilde{L}^{2}_{T_1}\widetilde{L}^{2}_{\xi,\nu}(\dot{B}^{1/2}_{2,1})}\\
&\quad\leq C\sum_{q\in\mathbb{Z}}2^{\frac{q}{2}}\Big(\int_{0}^{T_1}\bigg|\Big(\Dot{\Delta}_{q}\Gamma(\widetilde{f},f_1),\Dot{\Delta}_{q}\widetilde{f}\Big)_{\xi,x}\Big|dt\bigg)^{1/2}\\
&\quad\quad+C\sum_{q\in\mathbb{Z}}2^{\frac{q}{2}}\bigg(\int_{0}^{T_1}\Big|\Big(\Dot{\Delta}_{q}\Gamma(f_2,\widetilde{f}),\Dot{\Delta}_{q}\widetilde{f}\Big)_{\xi,x}\Big|dt\bigg)^{1/2}.
\end{aligned}
\end{equation}
The trilinear estimate in Lemma \ref{1dlg-L2.7} with $s_1=1/2$ and $s_2=3/2$ ensures that
\begin{flalign*}
\begin{aligned}
&\sum_{q\in\mathbb{Z}}2^{\frac{q}{2}}\Big(\int_{0}^{T_1}\bigg|\Big(\Dot{\Delta}_{q}\Gamma(\widetilde{f},f_1),\Dot{\Delta}_{q}\widetilde{f}\Big)_{\xi,x}\Big|dt\bigg)^{1/2}\\
&\quad \leq C\|\widetilde{f}\|^{1/2}_{\widetilde{L}^{2}_{T_1}\widetilde{L}^{2}_{\xi,\nu}(\dot{B}^{1/2}_{2,1})}\\
&\quad\quad\times\Big(\|\widetilde{f}\|^{1/2}_{\widetilde{L}^{\infty}_{T_1}\widetilde{L}^{2}_{\xi}(\dot{B}^{1/2}_{2,1})}\|f_1\|^{1/2}_{\widetilde{L}^{2}_{T_1}\widetilde{L}^{2}_{\xi,\nu}(\dot{B}^{3/2}_{2,1})}+\|\widetilde{f}\|^{1/2}_{\widetilde{L}^{2}_{T_1}\widetilde{L}^{2}_{\xi,\nu}(\dot{B}^{3/2}_{2,1})}\|f_1\|^{1/2}_{\widetilde{L}^{\infty}_{T_1}\widetilde{L}^{2}_{\xi}(\dot{B}^{1/2}_{2,1})}\Big)\\
&\quad\leq C \Big(\|f_1\|^{1/2}_{\widetilde{L}^{\infty}_{T_1}\widetilde{L}^{2}_{\xi}(\dot{B}^{1/2}_{2,1})}+\|f_1\|^{1/2}_{ \widetilde{L}^{2}_{T_1}\widetilde{L}^{2}_{\xi,\nu}(\dot{B}^{3/2}_{2,1})} \Big)\\
&\quad\quad\times\Big( \|\widetilde{f}\|_{\widetilde{L}^{\infty}_{T_1}\widetilde{L}^{2}_{\xi}(\dot{B}^{1/2}_{2,1})}+\|\widetilde{f}\|_{\widetilde{L}^{2}_{T_1}\widetilde{L}^{2}_{\xi,\nu}(\dot{B}^{1/2}_{2,1})}+\|\widetilde{f}\|_{ \widetilde{L}^{2}_{T_1}\widetilde{L}^{2}_{\xi,\nu}(\dot{B}^{3/2}_{2,1})}\Big).
\end{aligned}
\end{flalign*}
A similar computation yields
\begin{flalign*}
\begin{aligned}
&\sum_{q\in\mathbb{Z}}2^{\frac{q}{2}}\bigg(\int_{0}^{T_1}\Big|\Big(\Dot{\Delta}_{q}\Gamma(f_2,\widetilde{f}),\Dot{\Delta}_{q}\widetilde{f}\Big)_{\xi,x}\Big|dt\bigg)^{1/2}\\
&\quad\leq C \Big(\|f_2\|^{1/2}_{\widetilde{L}^{\infty}_{T_1}\widetilde{L}^{2}_{\xi}(\dot{B}^{1/2}_{2,1})}+\|f_2\|^{1/2}_{ \widetilde{L}^{2}_{T_1}\widetilde{L}^{2}_{\xi,\nu}(\dot{B}^{3/2}_{2,1})} \Big)\\
&\quad\quad\times\Big( \|\widetilde{f}\|_{\widetilde{L}^{\infty}_{T_1}\widetilde{L}^{2}_{\xi}(\dot{B}^{1/2}_{2,1})}+\|\widetilde{f}\|_{\widetilde{L}^{2}_{T_1}\widetilde{L}^{2}_{\xi,\nu}(\dot{B}^{1/2}_{2,1})}+\|\widetilde{f}\|_{ \widetilde{L}^{2}_{T_1}\widetilde{L}^{2}_{\xi,\nu}(\dot{B}^{3/2}_{2,1})}\Big).
\end{aligned}
\end{flalign*}
By substituting the above two inequalities into  (\ref{1dlg-E6.46}), we obtain
\begin{equation}\label{un1}
\begin{aligned}
&\|\widetilde{f}\|_{\widetilde{L}^{\infty}_{T_1}\widetilde{L}^{2}_{\xi}(\dot{B}^{1/2}_{2,1})}+\|\{\mathbf{I-P}\}\widetilde{f}\|_{\widetilde{L}^{2}_{T_1}\widetilde{L}^{2}_{\xi,\nu}(\dot{B}^{1/2}_{2,1})}\\
&\quad\leq C \Big(\|(f_1,f_2)\|^{1/2}_{\widetilde{L}^{\infty}_{T_1}\widetilde{L}^{2}_{\xi}(\dot{B}^{1/2}_{2,1})}+\|(f_1,f_2)\|^{1/2}_{ \widetilde{L}^{2}_{T_1}\widetilde{L}^{2}_{\xi,\nu}(\dot{B}^{3/2}_{2,1})} \Big)\\
&\quad\quad\times\Big( \|\widetilde{f}\|_{\widetilde{L}^{\infty}_{T_1}\widetilde{L}^{2}_{\xi}(\dot{B}^{1/2}_{2,1})}+\|\widetilde{f}\|_{\widetilde{L}^{2}_{T_1}\widetilde{L}^{2}_{\xi,\nu}(\dot{B}^{1/2}_{2,1})}+\|\widetilde{f}\|_{\widetilde{L}^{2}_{T_1}\widetilde{L}^{2}_{\xi,\nu}(\dot{B}^{3/2}_{2,1}) }\Big).
\end{aligned}
\end{equation}
Similarly, it follows from Lemma \ref{1dlg-L2.7} with $s_1=s_2=3/2$ that
\begin{equation}\label{uv2}
\begin{aligned}
&\|\widetilde{f}\|_{\widetilde{L}^{\infty}_{T_1}\widetilde{L}^{2}_{\xi}(\dot{B}^{3/2}_{2,1})}+\|\{\mathbf{I-P}\}\widetilde{f}\|_{\widetilde{L}^{2}_{T_1}\widetilde{L}^{2}_{\xi,\nu}(\dot{B}^{3/2}_{2,1})}\\
&\quad\leq C \Big(\|(f_1,f_2)\|^{1/2}_{\widetilde{L}^{\infty}_{T_1}\widetilde{L}^{2}_{\xi}(\dot{B}^{3/2}_{2,1})}+\|(f_1,f_2)\|^{1/2}_{ \widetilde{L}^{2}_{T_1}\widetilde{L}^{2}_{\xi,\nu}(\dot{B}^{3/2}_{2,1})} \Big)\\
&\quad\quad\times\Big( \|\widetilde{f}\|_{\widetilde{L}^{\infty}_{T_1}\widetilde{L}^{2}_{\xi}(\dot{B}^{3/2}_{2,1})}+\|\widetilde{f}\|_{\widetilde{L}^{2}_{T_1}\widetilde{L}^{2}_{\xi,\nu}(\dot{B}^{3/2}_{2,1})}\Big).
\end{aligned}
\end{equation}
Concerning the term involving $f_i$ with $i=1,2$, in view of \eqref{1dlg-E1.18} and (\ref{1dlg-E1.19}), it holds that
\begin{equation}\label{uv4}
\begin{aligned}
&\|f_i\|_{\widetilde{L}^{\infty}_{T_1}\widetilde{L}^{2}_{\xi}(\dot{B}^{1/2}_{2,1})\cap \widetilde{L}^{\infty}_{T_1}\widetilde{L}^{2}_{\xi}(\dot{B}^{3/2}_{2,1})}\leq C \var_0,\\
\end{aligned}
\end{equation}
and
\begin{equation}\label{uv40}
\begin{aligned}
\|f_i\|_{\widetilde{L}^{2}_{T_1}\widetilde{L}^{2}_{\xi,\nu}(\dot{B}^{3/2}_{2,1})}&\leq CT_1^{1/2}\|\mathbf{P}f_i\|_{\widetilde{L}^{\infty}_{T_1}\widetilde{L}^{2}_{\xi}(\dot{B}^{3/2}_{2,1})}+C\|\{\mathbf{I-P}\}f_i\|_{\widetilde{L}^{2}_{T_1}\widetilde{L}^{2}_{\xi,\nu}(\dot{B}^{3/2}_{2,1})}\\
&\leq C(1+T_1^{1/2}) \var_0.
\end{aligned}
\end{equation}
In addition, one observes that
\begin{align}
\|\mathbf{P}\widetilde{f}\|_{\widetilde{L}^{2}_{T_1}\widetilde{L}^{2}_{\xi,\nu}(\dot{B}^{1/2}_{2,1})\cap \widetilde{L}^{2}_{T_1}\widetilde{L}^{2}_{\xi,\nu}(\dot{B}^{3/2}_{2,1})}\leq C T_1^{1/2}  \|\widetilde{f}\|_{\widetilde{L}^{\infty}_{T_1}\widetilde{L}^{2}_{\xi}(\dot{B}^{1/2}_{2,1})\cap \widetilde{L}^{\infty}_{T_1}\widetilde{L}^{2}_{\xi}(\dot{B}^{3/2}_{2,1})}.\label{uv41}
\end{align}
Adding \eqref{un1}, \eqref{uv2} and \eqref{uv41} together and using \eqref{uv4} and \eqref{uv40}, we arrive at
\begin{equation}\label{uv5}
\begin{aligned}
&\|\widetilde{f}\|_{\widetilde{L}^{\infty}_{T_1}\widetilde{L}^{2}_{\xi}(\dot{B}^{1/2}_{2,1})\cap \widetilde{L}^{\infty}_{T_1}\widetilde{L}^{2}_{\xi}(\dot{B}^{3/2}_{2,1})}+\|\widetilde{f}\|_{\widetilde{L}^{2}_{T_1}\widetilde{L}^{2}_{\xi,\nu}(\dot{B}^{1/2}_{2,1})\cap \widetilde{L}^{2}_{T_1}\widetilde{L}^{2}_{\xi,\nu}(\dot{B}^{3/2}_{2,1})}\\
&\quad \leq C(T_1^{1/2}+(1+T_1^{1/4})\var_0^{1/2})\\
&\quad\quad\times\Big(\|\widetilde{f}\|_{\widetilde{L}^{\infty}_{T_1}\widetilde{L}^{2}_{\xi}(\dot{B}^{1/2}_{2,1})\cap \widetilde{L}^{\infty}_{T_1}\widetilde{L}^{2}_{\xi}(\dot{B}^{3/2}_{2,1})}+\|\widetilde{f}\|_{\widetilde{L}^{2}_{T_1}\widetilde{L}^{2}_{\xi,\nu}(\dot{B}^{1/2}_{2,1})\cap \widetilde{L}^{2}_{T_1}\widetilde{L}^{2}_{\xi,\nu}(\dot{B}^{3/2}_{2,1})} \Big).
\end{aligned}
\end{equation}
Letting $T_1$ be suitably small and recalling the smallness of $\var_0$, we conclude from \eqref{uv5} that $f_1=f_2$ on $[0,T_1]\times\mathbb{R}^3\times\mathbb{R}^3$. The above argument can be repeated on $[T_1,2T_1]$, $[2T_1,3T_1]$,..., until the whole interval $[0,T]$ is exhausted. Therefore, the proof of Theorem \ref{1dlg-T1.2} is complete.   \hfill $\Box$

\section{Proof of Theorem \ref{1dlg-T1.3}}\label{1dlg-S6}
This section is dedicated to proving Theorem \ref{1dlg-T1.3} concerning the optimal decay rates of the solution to (\ref{1dlg-E1.4}).
To overcome the challenging difficulty arising from the bilinear nonlocal collision operator, we develop the weighted Lyapunov energy approach, which is different from what  was adopted in the investigation of viscous compressible fluids \cite{Xin-2021}.

\subsection{Time-weighted Lyapunov approach} For that end, we introduce a new time-weighted energy functional
\begin{equation}\nonumber
\begin{aligned}
\mathcal{X}_{M}(t)&\triangleq\|(1+\tau)^{M}f\|_{\widetilde{L}^{\infty}_{t}\widetilde{L}^{2}_{\xi}(\dot{B}^{3/2}_{2,1})}+\|(1+\tau)^{M}\mathbf{P}f\|_{\widetilde{L}^{2}_{t}\widetilde{L}^{2}_{\xi}(\dot{B}^{5/2}_{2,1})}^{\ell}\\
&\quad+\|(1+\tau)^{M}\{\mathbf{I-P}\}f\|_{\widetilde{L}^{2}_{t}\widetilde{L}^{2}_{\xi,\nu}(\dot{B}^{3/2}_{2,1})}^{\ell}+\|(1+\tau)^{M}f\|^{h}_{\widetilde{L}^{2}_{t}\widetilde{L}^{2}_{\xi,\nu}(\dot{B}^{3/2}_{2,1})},\\
\end{aligned}
\end{equation}
where $M>0$ is chosen sufficiently large. Consequently, we have the following time-weighted Lyapunov estimate.

\begin{prop}\label{1dlg-R6.1}
Let $f$ be the global solution to the Cauchy problem $\rm{(\ref{1dlg-E1.4})}$ given by Theorem $\rm{\ref{1dlg-T1.2}}$. Under the assumption of Theorem $\rm{\ref{1dlg-T1.3}}$, it holds that
\begin{equation}\label{1dlg-E6.2}
\begin{aligned}
\mathcal{X}_{M}(t)&\lesssim \delta_{0} (1+t)^{M-\frac{1}{2}(\frac{3}{2}-\sigma_{0})}
\end{aligned}
\end{equation}
for $M>1+\frac{1}{2}(\frac{3}{2}-\sigma_{0})$ and $t>0$, where $\delta_{0}\triangleq\|f_{0}\|^{\ell}_{\widetilde{L}^{2}_{\xi}(\dot{B}^{\sigma_{0}}_{2,\infty})}+\|f_{0}\|^{h}_{\widetilde{L}^{2}_{\xi}(\dot{B}^{3/2}_{2,1})}$.
\end{prop}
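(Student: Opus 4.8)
The plan is to prove \eqref{1dlg-E6.2} by carrying out the time-weighted analogue of the frequency-localized hypocoercivity scheme of Section~\ref{1dlg-S5}. Two ingredients go beyond Proposition~\ref{1dlg-P5.1}: a uniform-in-time control $\|f\|^{\ell}_{\widetilde{L}^{\infty}_{t}\widetilde{L}^{2}_{\xi}(\dot{B}^{\sigma_{0}}_{2,\infty})}\lesssim\delta_{0}$ of the low-frequency $\sigma_{0}$-norm, propagated through the $\widetilde{L}^{2}_{\xi}(\dot{B}^{s}_{2,\infty})$-type trilinear estimates (Lemmas~\ref{1dlg-L2.8} and~\ref{1dlg-L2.10}); and the sharp linear decay recorded in \eqref{1dlg-E4.10}, which is what forces the exponent $M-\frac{1}{2}(\frac{3}{2}-\sigma_{0})$. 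Throughout, the global bound $\mathcal{E}_{t}(f)+\mathcal{D}_{t}(f)\lesssim\varepsilon_{0}$ from Theorem~\ref{1dlg-T1.2} is at hand, so every genuinely quadratic contribution will carry a factor $\sqrt{\varepsilon_{0}}$ and can be absorbed after a continuity argument.

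\emph{Step 1 (propagation of the low-frequency $\sigma_{0}$-norm).} Applying $\dot{\Delta}_{q}$ to \eqref{1dlg-E1.4} and rerunning the computation \eqref{1dlg-E5.12}--\eqref{1dlg-E5.23} while keeping the weight $2^{q\sigma_{0}}$ and taking $\sup_{q\le0}2^{q\sigma_{0}}$ in place of $\sum_{q\le0}2^{q/2}$, one obtains a closed system for the pair $\big(\|f\|^{\ell}_{\widetilde{L}^{\infty}_{t}\widetilde{L}^{2}_{\xi}(\dot{B}^{\sigma_{0}}_{2,\infty})},\ \|\{\mathbf{I-P}\}f\|^{\ell}_{\widetilde{L}^{2}_{t}\widetilde{L}^{2}_{\xi,\nu}(\dot{B}^{\sigma_{0}}_{2,\infty})}\big)$ whose data term is $\|f_{0}\|^{\ell}_{\widetilde{L}^{2}_{\xi}(\dot{B}^{\sigma_{0}}_{2,\infty})}$, whose linear terms are handled as in Lemma~\ref{1dlg-R5.3}, and whose collision terms are bounded by Lemma~\ref{1dlg-L2.8} with $(s_{1},s_{2})=(\sigma_{0},3/2)$ and by Lemma~\ref{1dlg-L2.10}, producing a prefactor $\sqrt{\mathcal{E}_{t}(f)}+\mathcal{E}_{t}(f)\lesssim\sqrt{\varepsilon_{0}}$ in front of the pair itself and of $\mathcal{D}_{t}(f)$. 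Absorbing the self-referential small terms yields $\|f\|^{\ell}_{\widetilde{L}^{\infty}_{t}\widetilde{L}^{2}_{\xi}(\dot{B}^{\sigma_{0}}_{2,\infty})}\lesssim\delta_{0}$. I expect this to be the main obstacle: the dissipation norm of $\{\mathbf{I-P}\}f$ at regularity $\sigma_{0}$ is \emph{not} dominated by $\mathcal{D}_{t}(f)$, so the estimate must close as a genuinely coupled system, and it is precisely here that the $\widetilde{L}^{2}_{\xi}(\dot{B}^{s}_{2,\infty})$-tailored bound of Lemma~\ref{1dlg-L2.8} (admitting the endpoint $s_{1}+s_{2}=0$, unlike Lemma~\ref{1dlg-L2.7}) is indispensable.

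\emph{Step 2 (time-weighted Lyapunov inequality).} In the high-frequency block I would multiply \eqref{1dlg-E5.35} by $(1+\tau)^{2M}$; the only new term, $2M(1+\tau)^{2M-1}\mathcal{L}_{2,q}=\frac{2M}{1+\tau}(1+\tau)^{2M}\mathcal{L}_{2,q}$, is absorbed by the genuine damping $(1+\tau)^{2M}\mathcal{L}_{2,q}$ once $\tau\ge4M$, and for $\tau\le4M$ it is a bounded multiple of $\mathcal{D}_{\tau}(f)$. Integrating, taking square roots, multiplying by $2^{3q/2}$ and summing over $q\ge-1$ bounds the high-frequency components of $\mathcal{X}_{M}(t)$ by $\delta_{0}$ plus quadratic terms, which is already $\lesssim\delta_{0}(1+t)^{M-\frac{1}{2}(\frac{3}{2}-\sigma_{0})}$ since $M-\frac{1}{2}(\frac{3}{2}-\sigma_{0})>1$. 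In the low-frequency block I would multiply \eqref{1dlg-E5.23} by $(1+\tau)^{2M}$ and integrate on $[0,t]$; here $2M\int_{0}^{t}(1+\tau)^{2M-1}\mathcal{L}_{1,q}\,d\tau$ is not absorbed by the parabolic dissipation $2^{2q}\int_{0}^{t}(1+\tau)^{2M}\mathcal{L}_{1,q}\,d\tau$ when $q$ is very negative, so it is disposed of by a finite descending induction on $M$. Summing over $q\le0$ with weight $2^{3q/2}$ (which, via the extra $2^{q}$ extracted from the parabolic dissipation, accounts for the $\dot{B}^{5/2}_{2,1}$-norm of $\mathbf{P}f$ and the $\dot{B}^{3/2}_{2,1}$-norms of $\{\mathbf{I-P}\}f$ and $f$), and splitting the sum at the threshold $2^{2q}\sim(1+t)^{-1}$, the data contribution $\sum_{q\le0}2^{3q/2}\big(\sup_{0\le\tau\le t}(1+\tau)^{M}e^{-c2^{2q}\tau}\big)\|\dot{\Delta}_{q}f_{0}\|_{L^{2}_{\xi}L^{2}_{x}}$, estimated with $\|\dot{\Delta}_{q}f_{0}\|_{L^{2}_{\xi}L^{2}_{x}}\le2^{-q\sigma_{0}}\|f_{0}\|^{\ell}_{\widetilde{L}^{2}_{\xi}(\dot{B}^{\sigma_{0}}_{2,\infty})}$, reproduces exactly $\delta_{0}(1+t)^{M-\frac{1}{2}(\frac{3}{2}-\sigma_{0})}$ on either side of the threshold; the analogous nonlinear sources are treated using the $\sigma_{0}$-bound of Step~1.

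\emph{Step 3 (nonlinear absorption and closure).} It remains to handle the collision contributions appearing on the right-hand side of Step~2: after summation with the Besov weight, the pairing term $\int_{0}^{t}(1+\tau)^{2M}|(\dot{\Delta}_{q}\Gamma(f,f),\dot{\Delta}_{q}\{\mathbf{I-P}\}f)_{\xi,x}|\,d\tau$ and the macroscopic moment terms $\|(1+\tau)^{M}\Lambda_{i}(\dot{\Delta}_{q}\Gamma(f,f))\|_{L^{2}_{t}L^{2}_{x}}$, $\|(1+\tau)^{M}\Theta_{ij}(\dot{\Delta}_{q}\Gamma(f,f))\|_{L^{2}_{t}L^{2}_{x}}$. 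For the first, write $(1+\tau)^{2M}(\dot{\Delta}_{q}\Gamma(f,f),\dot{\Delta}_{q}\{\mathbf{I-P}\}f)_{\xi,x}=(\dot{\Delta}_{q}\Gamma((1+\tau)^{M}f,f),\dot{\Delta}_{q}(1+\tau)^{M}\{\mathbf{I-P}\}f)_{\xi,x}$ and apply Lemma~\ref{1dlg-L2.7} (with $s_{1}=s_{2}=3/2$ in high frequencies, $s_{1}=1/2,\,s_{2}=3/2$ in low frequencies): one factor carries the full weight and its norm is part of $\mathcal{X}_{M}(t)$, the output likewise, while the remaining unweighted copy of $f$ has norm $\lesssim\sqrt{\varepsilon_{0}}$ by Theorem~\ref{1dlg-T1.2}; the moment terms are treated similarly via Lemma~\ref{1dlg-L2.9}, moving the weight onto one factor of $f$. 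This gives $\lesssim\sqrt{\varepsilon_{0}}\,\mathcal{X}_{M}(t)$. Combining Steps~2 and~3 gives $\mathcal{X}_{M}(t)\lesssim\delta_{0}(1+t)^{M-\frac{1}{2}(\frac{3}{2}-\sigma_{0})}+\sqrt{\varepsilon_{0}}\,\mathcal{X}_{M}(t)$ on any finite time interval (on which $\mathcal{X}_{M}(t)$ is finite since the weight $(1+\tau)^{M}$ is bounded there), and a standard continuity/bootstrap argument in $t$ using the smallness of $\varepsilon_{0}$ then yields \eqref{1dlg-E6.2}. Finally, feeding \eqref{1dlg-E6.2} into the real interpolation Lemma~\ref{1dlg-L2.4}, and using that $\mathcal{X}_{M}$ already encodes the extra half-power of dissipation on the microscopic component, gives the decay estimates \eqref{1dlg-E1.21}--\eqref{1dlg-E1.22}.
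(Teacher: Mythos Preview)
Your Steps~1 and~3 are on target and match the paper: the propagation of $\|f\|^{\ell}_{\widetilde{L}^{\infty}_{t}\widetilde{L}^{2}_{\xi}(\dot{B}^{\sigma_{0}}_{2,\infty})}$ via Lemmas~\ref{1dlg-L2.8}--\ref{1dlg-L2.10} is exactly Lemma~\ref{1dlg-R6.2}, and the nonlinear absorption (putting the weight on one factor, leaving the other in $\mathcal{D}_{t}(f)\lesssim\varepsilon_{0}$) is the scheme of \eqref{1dlg-E6.151}--\eqref{1dlg-E6.15}. Your high-frequency Step~2 (absorb the commutator by the order-one damping for $\tau\ge4M$, bound it crudely for $\tau\le4M$) is a legitimate shortcut; the paper instead reuses the same interpolation trick as in low frequencies, which is overkill there.

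The genuine gap is your low-frequency Step~2. You mix two incompatible mechanisms: first you multiply \eqref{1dlg-E5.23} by $(1+\tau)^{2M}$ and propose to handle the commutator $2M\int_{0}^{t}(1+\tau)^{2M-1}\mathcal{L}_{1,q}\,d\tau$ by ``finite descending induction on $M$'' (no base case, no induction step explained, and Chemin--Lerner norms do not readily iterate in $M$); then you switch to a Gr\"onwall picture with kernels $e^{-c2^{2q}\tau}$ and frequency splitting at $2^{2q}\sim(1+t)^{-1}$. The second picture never arises from the first: multiplying by $(1+\tau)^{2M}$ and integrating produces no exponential factors on the data. The pointwise linear bound \eqref{1dlg-E4.10} you invoke as ``what forces the exponent'' is in fact \emph{not used anywhere} in the paper's proof of Proposition~\ref{1dlg-R6.1}.

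What the paper does instead is this. After summing with weight $2^{3q/2}$, the commutator becomes $\|(1+\tau)^{M-1/2}f\|^{\ell}_{\widetilde{L}^{2}_{t}\widetilde{L}^{2}_{\xi}(\dot{B}^{3/2}_{2,1})}$. Young's inequality splits it into $\tfrac{1}{4}\|(1+\tau)^{M}f\|^{\ell}_{\widetilde{L}^{\infty}_{t}}$ (absorbed on the left) plus $\|(1+\tau)^{M-1}f\|^{\ell}_{\widetilde{L}^{1}_{t}\widetilde{L}^{2}_{\xi}(\dot{B}^{3/2}_{2,1})}$. The $\widetilde{L}^{1}_{t}$-piece is then controlled by the real interpolation of Lemma~\ref{1dlg-L2.4}, writing $\dot{B}^{3/2}_{2,1}$ between $\dot{B}^{\sigma_{0}}_{2,\infty}$ (uniformly bounded by Step~1) and $\dot{B}^{5/2}_{2,\infty}$ (the weighted $\mathbf{P}f$-dissipation already sitting in $\mathcal{X}_{M}$) with $\theta=2/(5-2\sigma_{0})$; a H\"older-in-time computation on $\|(1+\tau)^{M\theta-1}\|_{L^{2/(1+\theta)}_{t}}$ then produces exactly $(1+t)^{M-\frac{1}{2}(3/2-\sigma_{0})}$; see \eqref{1dlg-E6.6}--\eqref{1dlg-E6.10}. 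This interpolation step---tying together the propagated $\sigma_{0}$-norm and the $5/2$-dissipation inside $\mathcal{X}_{M}$---is the mechanism that generates the decay exponent in the nonlinear argument, and it is the piece your proposal is missing.
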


\begin{proof}
The proof is separated into several steps. 
\vspace{1mm}

\begin{itemize}
\item \emph{Step 1: Low-frequency estimates}
\end{itemize}

\vspace{1mm}
Let us begin with the Lyapunov type inequality (\ref{1dlg-E5.23}) in the low-frequency regime. Multiplying (\ref{1dlg-E5.23}) by $(1+t)^{2M}$ and using the fact that $(1+t)^{2M}\frac{d}{dt}\mathcal{L}_{1,q}(t)=\frac{d}{dt}\big((1+t)^{2M}\mathcal{L}_{1,q}(t)\big)-2M(1+t)^{2M-1}\mathcal{L}_{1,q}(t)$, we obtain
\begin{equation}\nonumber
\begin{aligned}
&\frac{d}{dt}\bigg((1+t)^{2M}\mathcal{L}_{1,q}(t)\bigg)+(1+t)^{2M}2^{2q}\mathcal{L}_{1,q}(t)+(1+t)^{2M}\|\sqrt{\nu(\xi)}\dot{\Delta}_{q}\{\mathbf{I-P}\}f\|^{2}_{L^{2}_{\xi}L^{2}_{x}}\\
&\quad\lesssim(1+t)^{2M-1}\mathcal{L}_{1,q}(t)+(1+t)^{2M}\Big|\Big(\dot{\Delta}_{q}\Gamma(f,f),\dot{\Delta}_{q}\{\mathbf{I-P}\}f\Big)_{\xi,x}\Big|\\
&\quad\quad+(1+t)^{2M}\sum_{i=1}^{3}\|\Lambda_{i}(\dot{\Delta}_{q}\mathbf{h})\|^{2}_{L^{2}_{x}}+(1+t)^{2M}\sum_{i,j=1}^{3}\|\Theta_{ij}(\dot{\Delta}_{q}\mathbf{h})\|^{2}_{L^{2}_{x}}
\end{aligned}
\end{equation} for
$q\leq0$, which together with (\ref{1dlg-E5.21}) and (\ref{1dlg-E5.22}) implies that
\begin{equation}\label{1dlg-E6.4}
\begin{aligned}
&(1+t)^{M}\|\dot{\Delta}_{q}f(t)\|_{L^{2}_{\xi}L^{2}_{x}}+2^{q}\bigg(\int_{0}^{t}\|(1+\tau)^{M}\dot{\Delta}_{q}\mathbf{P}f\|^{2}_{L^{2}_{\xi}L^{2}_{x}}d\tau\bigg)^{1/2}\\
&\quad\quad+\bigg(\int_{0}^{t}\|(1+\tau)^{M}\dot{\Delta}_{q}\{\mathbf{I-P}\}f\|^{2}_{L^{2}_{\xi,\nu}L^{2}_{x}}d\tau\bigg)^{1/2}\\
&\quad\lesssim\|\dot{\Delta}_{q}f_{0}\|_{L^{2}_{\xi}L^{2}_{x}}+\bigg(\int_{0}^{t}\|(1+\tau)^{M-1/2}\dot{\Delta}_{q}f\|^{2}_{L^{2}_{\xi}L^{2}_{x}}d\tau\bigg)^{1/2}\\
&\quad\quad+\bigg(\int_{0}^{t}(1+\tau)^{2M}\Big|\Big(\dot{\Delta}_{q}\Gamma(f,f),\dot{\Delta}_{q}\{\mathbf{I-P}\}f\Big)_{\xi,x}\Big|\,d\tau\bigg)^{1/2}\\
&\quad\quad+\sum_{i=1}^{3}\|(1+\tau)^{M}\Lambda_{i}(\dot{\Delta}_{q}\mathbf{h})\|_{L^{2}_{t}L^{2}_{x}}+\sum_{i,j=1}^{3}\|(1+\tau)^{M}\Theta_{ij}(\dot{\Delta}_{q}\mathbf{h})\|_{L^{2}_{t}L^{2}_{x}}.
\end{aligned}
\end{equation}
Then, we multiply (\ref{1dlg-E6.4}) by $2^{3q/2}$, take the supremum on $[0,t]$, and then sum over $q\leq0$ to get
\begin{equation}\label{1dlg-E6.5}
\begin{aligned}
&\|(1+\tau)^{M}f\|^{\ell}_{\widetilde{L}^{\infty}_{t}\widetilde{L}^{2}_{\xi}(\dot{B}^{3/2}_{2,1})}+\|(1+\tau)^{M}\mathbf{P}f\|^{\ell}_{\widetilde{L}^{2}_{t}\widetilde{L}^{2}_{\xi}(\dot{B}^{5/2}_{2,1})}\\
&\quad\quad+\|(1+\tau)^{M}\{\mathbf{I-P}\}f\|^{\ell}_{\widetilde{L}^{2}_{t}\widetilde{L}^{2}_{\xi,\nu}(\dot{B}^{3/2}_{2,1})}\\
&\quad\leq C\|f_{0}\|^{\ell}_{L^{2}_{\xi}(\dot{B}^{3/2}_{2,1})}+C\|(1+\tau)^{M-1/2}f\|^{\ell}_{\widetilde{L}^{2}_{t}\widetilde{L}^{2}_{\xi}(\dot{B}^{3/2}_{2,1})}\\
&\quad\quad+C\sum_{q\leq0}2^{\frac{3q}{2}}\bigg(\int_{0}^{t}(1+\tau)^{2M}\Big|\Big(\dot{\Delta}_{q}\Gamma(f,f),\dot{\Delta}_{q}\{\mathbf{I-P}\}f\Big)_{\xi,x}\Big|\,d\tau\bigg)^{1/2}\\
&\quad\quad+C\sum_{i=1}^{3}\sum_{q\leq0}2^{\frac{3q}{2}}\|(1+\tau)^{M}\Lambda_{i}(\dot{\Delta}_{q}\mathbf{h})\|_{L^{2}_{t}L^{2}_{x}}\\
&\quad\quad+C\sum_{i,j=1}^{3}\sum_{q\leq0}2^{\frac{3q}{2}}\|(1+\tau)^{M}\Theta_{ij}(\dot{\Delta}_{q}\mathbf{h})\|_{L^{2}_{t}L^{2}_{x}}.
\end{aligned}
\end{equation}
It is worth emphasizing that the second term on the right-hand side of (\ref{1dlg-E6.5}) plays a crucial role in the derivation of decay rates. To bound this term, we apply Young's inequality to deduce
\begin{equation}\label{1dlg-E6.6}
\begin{aligned}
&C\|(1+\tau)^{M-1/2}f\|_{\widetilde{L}^{2}_{t}\widetilde{L}^{2}_{\xi}(\dot{B}^{3/2}_{2,1})}^{\ell}\\
&\quad\leq C\|(1+\tau)^{M-1}f\|_{\widetilde{L}^{1}_{t}\widetilde{L}^{2}_{\xi}(\dot{B}^{3/2}_{2,1})}^{\ell}+\frac{1}{4}\|(1+\tau)^{M}f\|_{\widetilde{L}^{\infty}_{t}\widetilde{L}^{2}_{\xi}(\dot{B}^{3/2}_{2,1})}^{\ell},
\end{aligned}
\end{equation}
where the first term can be handled as follows:
\begin{equation}\label{1dlg-E6.9000}
\begin{aligned}
&\|(1+\tau)^{M-1}f\|_{\widetilde{L}^{1}_{t}\widetilde{L}^{2}_{\xi}(\dot{B}^{3/2}_{2,1})}^{\ell}\\
&\quad\lesssim \int_{0}^{t}(1+\tau)^{M-1}\|f^{\ell}\|_{\widetilde{L}^{2}_{\xi}(\dot{B}^{3/2}_{2,1})}d\tau+\int_{0}^{t}(1+\tau)^{M-1}\|f^{h}\|_{\widetilde{L}^{2}_{\xi}(\dot{B}^{3/2}_{2,1})}d\tau.
\end{aligned}
\end{equation}
Regarding the low-frequency term, we perform the real interpolation in Lemma \ref{1dlg-L2.4} with $s=\sigma_0, \widetilde{s}=5/2$, $p=2$ and $\theta=2/(5-2\sigma_{0})\in(0,1)$, obtaining:
\begin{equation}\label{1dlg-E6.7}
\begin{aligned}
&\int_{0}^{t}(1+\tau)^{M-1}\|f^{\ell}\|_{\widetilde{L}^{2}_{\xi}(\dot{B}^{3/2}_{2,1})}d\tau\\
&\quad\lesssim\int_{0}^{t}(1+\tau)^{M-1}\Big(\|f^{\ell}\|_{\widetilde{L}^{2}_{\xi}(\dot{B}^{\sigma_{0}}_{2,\infty})}\Big)^{\theta}\Big(\|f^{\ell}\|_{\widetilde{L}^{2}_{\xi}(\dot{B}^{5/2}_{2,\infty})}\Big)^{1-\theta}\,d\tau\\
&\quad\lesssim\|f^{\ell}\|^{\theta}_{\widetilde{L}^{\infty}_{t}\widetilde{L}^{2}_{\xi}(\dot{B}^{\sigma_{0}}_{2,\infty})}\int_{0}^{t}\|(1+\tau)^{M}f^{\ell}\|^{1-\theta}_{\widetilde{L}^{2}_{\xi}(\dot{B}^{5/2}_{2,1})}(1+\tau)^{M\theta-1}\,d\tau\\
&\quad\lesssim\bigg(\|(1+\tau)^{M}f\|^{\ell}_{\widetilde{L}^{2}_{t}\widetilde{L}^{2}_{\xi}(\dot{B}^{5/2}_{2,1})}\bigg)^{1-\theta}\bigg(\|f\|^{\ell}_{\widetilde{L}^{\infty}_{t}\widetilde{L}^{2}_{\xi}(\dot{B}^{\sigma_{0}}_{2,\infty})}\bigg)^{\theta}\|(1+\tau)^{M\theta-1}\|_{L_{t}^{\frac{2}{1+\theta}}}.
\end{aligned}
\end{equation}
On the other hand, by taking advantage of the dissipation property of $f$ in high frequencies, it is easy to get
\begin{equation}\label{1dlg-E6.9}
\begin{aligned}
&\int_{0}^{t}(1+\tau)^{M-1}\|f^{h}\|_{\widetilde{L}^{2}_{\xi}(\dot{B}^{3/2}_{2,1})}d\tau\\
&\quad\lesssim\bigg(\|f\|^{h}_{\widetilde{L}^{\infty}_{t}\widetilde{L}^{2}_{\xi}(\dot{B}^{3/2}_{2,1})}\bigg)^{\theta}\int_{0}^{t}\bigg(\|(1+\tau)^{M}f\|^{h}_{\widetilde{L}^{2}_{\xi}(\dot{B}^{3/2}_{2,1})}\bigg)^{1-\theta}(1+\tau)^{M\theta-1}d\tau\\
&\quad\lesssim\bigg(\|(1+\tau)^{M}f\|^{h}_{\widetilde{L}^{2}_{t}\widetilde{L}^{2}_{\xi}(\dot{B}^{3/2}_{2,1})}\bigg)^{1-\theta}\bigg(\|f\|^{h}_{\widetilde{L}^{\infty}_{t}\widetilde{L}^{2}_{\xi}(\dot{B}^{3/2}_{2,1})}\bigg)^{\theta}\|(1+\tau)^{M\theta-1}\|_{L_{t}^{\frac{2}{1+\theta}}}.
\end{aligned}
\end{equation}
By combining (\ref{1dlg-E6.6})-(\ref{1dlg-E6.9}), Young's inequality and the fact that $\|(1+\tau)^{M\theta-1}\|_{L_{t}^{\frac{2}{1+\theta}}}\lesssim\Big((1+t)^{M-\frac{1}{2}(\frac{3}{2}-\sigma_0)}\Big)^{\theta}$, we deduce 
\begin{equation}\label{1dlg-E6.10}
\begin{aligned}
&C\int_{0}^{t}\|(1+\tau)^{M-1}f\|^{\ell}_{\widetilde{L}^{2}_{\xi}(\dot{B}^{3/2}_{2,1})}d\tau\\
&\quad\leq \Big(\|f^{\ell}\|_{\widetilde{L}^{\infty}_{t}\widetilde{L}^{2}_{\xi}(\dot{B}^{\sigma_{0}}_{2,\infty})}+\|f^{h}\|_{\widetilde{L}^{\infty}_{t}\widetilde{L}^{2}_{\xi}(\dot{B}^{3/2}_{2,1})}\Big) (1+t)^{M-\frac{1}{2}(\frac{3}{2}-\sigma_{0})}\\
&\quad\quad+\frac{1}{4}\Big(\|(1+\tau)^{M}f\|_{\widetilde{L}^{2}_{t}\widetilde{L}^{2}_{\xi}(\dot{B}^{5/2}_{2,1})}^{\ell}+\|(1+\tau)^{M}f\|^{h}_{\widetilde{L}^{2}_{t}\widetilde{L}^{2}_{\xi}(\dot{B}^{3/2}_{2,1})}\Big).
\end{aligned}
\end{equation}
Now, let us deal with those nonlinear terms on the right-hand side of \eqref{1dlg-E6.5}. Taking advantage of Lemma \ref{1dlg-L2.7} and \eqref{2mm}, we obtain
\begin{equation}\label{1dlg-E6.151}
\begin{aligned}
&\sum_{q\in\mathbb{Z}}2^{\frac{3q}{2}}\bigg(\int_{0}^{t}\big|\big((1+\tau)^{2M}\dot{\Delta}_{q}\Gamma(f,f),\dot{\Delta}_{q}\{\mathbf{I-P}\}f\big)_{\xi,x}\big|\,d\tau\bigg)^{1/2}\\
&\quad\lesssim\|(1+\tau)^{M}f\|^{1/2}_{\widetilde{L}^{\infty}_{t}\widetilde{L}^{2}_{\xi}(\dot{B}^{3/2}_{2,1})}\|f\|^{1/2}_{\widetilde{L}^{2}_{t}\widetilde{L}^{2}_{\xi}(\dot{B}^{3/2}_{2,1})}\|(1+\tau)^{M}\{\mathbf{I-P}\}f\|^{1/2}_{\widetilde{L}^{2}_{t}\widetilde{L}^{2}_{\xi,\nu}(\dot{B}^{3/2}_{2,1})}\\
&\quad\lesssim\sqrt{\mathcal{D}(t)}\mathcal{X}_{M}(t).
\end{aligned}
\end{equation}
Employing Lemma \ref{1dlg-L2.9} directly gives rise to
\begin{equation}\label{1dlg-E6.15}
\begin{aligned}
&\sum_{i=1}^{3}\sum_{q\in\mathbb{Z}}2^{\frac{3q}{2}}\|(1+\tau)^{M}\Lambda_{i}(\dot{\Delta}_{q}\mathbf{h})\|_{L^{2}_{t}L^{2}_{x}}+\sum_{i,j=1}^{3}\sum_{q\in\mathbb{Z}}2^{\frac{3q}{2}}\|\tau^{M}\Theta_{ij}(\dot{\Delta}_{q}\mathbf{h})\|_{L^{2}_{t}L^{2}_{x}}\\
&\quad\lesssim\|(1+\tau)^{M}f\|_{\widetilde{L}^{\infty}_{t}\widetilde{L}^{2}_{\xi}(\dot{B}^{3/2}_{2,1})}\|f\|_{\widetilde{L}^{2}_{t}\widetilde{L}^{2}_{\xi}(\dot{B}^{3/2}_{2,1})}\\
&\quad\lesssim\mathcal{D}_t(f)\mathcal{X}_{M}(t).
\end{aligned}
\end{equation}
Substituting (\ref{1dlg-E6.6})-(\ref{1dlg-E6.15}) into (\ref{1dlg-E6.5}), we get
\begin{equation}\label{1dlg-E6.16}
\begin{aligned}
&\|(1+\tau)^{M}f\|^{\ell}_{\widetilde{L}^{\infty}_{t}\widetilde{L}^{2}_{\xi}(\dot{B}^{3/2}_{2,1})}+\frac{3}{4}\|(1+\tau)^{M}\mathbf{P}f\|^{\ell}_{\widetilde{L}^{2}_{t}\widetilde{L}^{2}_{\xi}(\dot{B}^{5/2}_{2,1})}\\
&\quad\quad+\|(1+\tau)^{M}\{\mathbf{I-P}\}f\|^{\ell}_{\widetilde{L}^{2}_{t}\widetilde{L}^{2}_{\xi,\nu}(\dot{B}^{3/2}_{2,1})}\\
&\quad\leq C\|f_{0}\|^{\ell}_{L^{2}_{\xi}(\dot{B}^{3/2}_{2,1})}+C\Big(\|f\|_{\widetilde{L}^{\infty}_{t}\widetilde{L}^{2}_{\xi}(\dot{B}^{\sigma_{0}}_{2,\infty})}^{\ell}+\|f\|^{h}_{\widetilde{L}^{\infty}_{t}\widetilde{L}^{2}_{\xi}(\dot{B}^{3/2}_{2,1})}\Big) (1+t)^{M-\frac{1}{2}(\frac{3}{2}-\sigma_{0})}\\
&\quad\quad+C\Big(\sqrt{\mathcal{D}_t(f)}+\mathcal{D}_t(f)\Big)\mathcal{X}_{M}(t)+\frac{1}{4}\|(1+\tau)^{M}f\|^{h}_{\widetilde{L}^{2}_{t}\widetilde{L}^{2}_{\xi}(\dot{B}^{3/2}_{2,1})}.
\end{aligned}
\end{equation}

\vspace{1mm}

\begin{itemize}
\item \emph{Step 2: High-frequency estimates}
\end{itemize}
For any $q\geq-1$, multiplying the Lyapunov inequality (\ref{1dlg-E5.35}) by $(1+t)^{2M}$ gives 
\begin{align}\nonumber
\begin{aligned}
&\frac{d}{dt}\big((1+t)^{2M}\mathcal{L}_{2,q}(t)\big)+(1+t)^{2M}\mathcal{L}_{2,q}(t)+(1+t)^{2M}\|\sqrt{\nu(\xi)}\dot{\Delta}_{q}\{\mathbf{I-P}\}f\|^{2}_{L^{2}_{\xi}L^{2}_{x}}\\
&\quad\lesssim(1+t)^{2M-1}\mathcal{L}_{2,q}(t)+(1+t)^{2M}\big|\big(\dot{\Delta}_{q}\Gamma(f,f),\dot{\Delta}_{q}\{\mathbf{I-P}\}f\big)_{\xi,x}\big|\\
&\quad\quad+2^{-2q}(1+t)^{2M}\sum_{i=1}^{3}\|\Lambda_{i}(\dot{\Delta}_{q}\mathbf{h})\|^{2}_{L^{2}_{x}}+2^{-2q}(1+t)^{2M}\sum_{i,j=1}^{3}\|\Theta_{ij}(\dot{\Delta}_{q}\mathbf{h})\|^{2}_{L^{2}_{x}}.
\end{aligned}
\end{align}
Furthermore, it is straightforward to obtain
\begin{equation}\label{1dlg-E6.17}
\begin{aligned}
&\|(1+\tau)^{M}f\|^{h}_{\widetilde{L}^{\infty}_{t}\widetilde{L}^{2}_{\xi}(\dot{B}^{3/2}_{2,1})}+\|(1+\tau)^{M}f\|^{h}_{\widetilde{L}^{2}_{t}\widetilde{L}^{2}_{\xi}(\dot{B}^{3/2}_{2,1})}\\
&\quad\quad+\|(1+\tau)^{M}\{\mathbf{I-P}\}f\|^{h}_{\widetilde{L}^{2}_{t}\widetilde{L}^{2}_{\xi,\nu}(\dot{B}^{3/2}_{2,1})}\\
&\quad\lesssim \|f_{0}\|^{h}_{\widetilde{L}^{2}_{\xi}(\dot{B}^{3/2}_{2,1})}+\|(1+\tau)^{M-1/2}f\|^{h}_{\widetilde{L}^{2}_{t}\widetilde{L}^{2}_{\xi}(\dot{B}^{3/2}_{2,1})}\\
&\quad\quad+\sum_{q\geq-1}2^{\frac{3q}{2}}\Big(\int_{0}^{t}\big|\big((1+\tau)^{2M}\dot{\Delta}_{q}\Gamma(f,f),\dot{\Delta}_{q}\{\mathbf{I-P}\}f\big)_{\xi,x}\big|\,d\tau\Big)^{1/2}\\
&\quad\quad+\sum_{i=1}^{3}\sum_{q\geq-1}2^{\frac{q}{2}}\|(1+\tau)^{M}\Lambda_{i}(\dot{\Delta}_{q}\mathbf{h})\|_{L^{2}_{t}L^{2}_{x}}\\
&\quad\quad+\sum_{i,j=1}^{3}\sum_{q\geq-1}2^{\frac{q}{2}}\|(1+\tau)^{M}\Theta_{ij}(\dot{\Delta}_{q}\mathbf{h})\|_{L^{2}_{t}L^{2}_{x}}.
\end{aligned}
\end{equation}
By employing the similar procedure leading to (\ref{1dlg-E6.6})-(\ref{1dlg-E6.10}), one can get
\begin{equation}\label{1dlg-E6.18}
\begin{aligned}
&\|(1+\tau)^{M-1/2}f\|^{h}_{\widetilde{L}^{2}_{t}\widetilde{L}^{2}_{\xi}(\dot{B}^{3/2}_{2,1})}\\
&\quad\leq C\int_{0}^{t}\|(1+\tau)^{M-1}f\|^{h}_{\widetilde{L}^{2}_{\xi}(\dot{B}^{3/2}_{2,1})}d\tau+\frac{1}{4}\|(1+\tau)^{M}f\|^{h}_{\widetilde{L}^{\infty}_{t}\widetilde{L}^{2}_{\xi}(\dot{B}^{3/2}_{2,1})}\\
&\quad\leq C\|f\|^{h}_{\widetilde{L}^{\infty}_{t}\widetilde{L}^{2}_{\xi}(\dot{B}^{3/2}_{2,1})} (1+t)^{M-\frac{1}{2}(\frac{3}{2}-\sigma_{0})}\\
&\quad\quad+\frac{1}{4}\|(1+\tau)^{M}f\|^{h}_{\widetilde{L}^{2}_{t}\widetilde{L}^{2}_{\xi}(\dot{B}^{3/2}_{2,1})}+\frac{1}{4}\|(1+\tau)^{M}f\|^{h}_{\widetilde{L}^{\infty}_{t}\widetilde{L}^{2}_{\xi}(\dot{B}^{3/2}_{2,1})}.
\end{aligned}
\end{equation}
Hence, in view of \eqref{1dlg-E6.151}-\eqref{1dlg-E6.15} and \eqref{1dlg-E6.17}-\eqref{1dlg-E6.18}, it holds that
\begin{equation}\label{1dlg-E6.19}
\begin{aligned}
&\|(1+\tau)^{M}f\|^{h}_{\widetilde{L}^{\infty}_{t}\widetilde{L}^{2}_{\xi}(\dot{B}^{3/2}_{2,1})}+\|(1+\tau)^{M}f\|^{h}_{\widetilde{L}^{2}_{t}\widetilde{L}^{2}_{\xi,\nu}(\dot{B}^{3/2}_{2,1})}\\
&\quad\lesssim \|f_{0}\|^{h}_{\widetilde{L}^{2}_{\xi}(\dot{B}^{3/2}_{2,1})}+\|f\|^{h}_{\widetilde{L}^{\infty}_{t}\widetilde{L}^{2}_{\xi}(\dot{B}^{3/2}_{2,1})} (1+t)^{M-\frac{1}{2}(\frac{3}{2}-\sigma_{0})}\\
&\quad\quad+\Big(\sqrt{\mathcal{D}_t(f)}+\mathcal{D}_t(f)\Big)\mathcal{X}_{M}(t).
\end{aligned}
\end{equation}

\vspace{1mm}

\begin{itemize}
\item \emph{Step 3: The gain of time-weighted estimates}
\end{itemize}
By combining (\ref{1dlg-E6.5}) and (\ref{1dlg-E6.19}), we conclude that 
\begin{equation}\nonumber
\begin{aligned}
\mathcal{X}_{M}(t)&\lesssim \|f_{0}\|_{\widetilde{L}^{2}_{\xi}(\dot{B}^{3/2}_{2,1})}^{\ell}+\|f_{0}\|^{h}_{\widetilde{L}^{2}_{\xi}(\dot{B}^{3/2}_{2,1})}\\
&\quad+\Big(\|f^{\ell}\|_{\widetilde{L}^{\infty}_{t}\widetilde{L}^{2}_{\xi}(\dot{B}^{\sigma_{0}}_{2,\infty})}+\|f\|^{h}_{\widetilde{L}^{\infty}_{t}\widetilde{L}^{2}_{\xi}(\dot{B}^{3/2}_{2,1})}\Big)(1+t)^{M-\frac{1}{2}(\frac{3}{2}-\sigma_{0})}\\
&\quad+(\sqrt{\mathcal{D}_t(f)}+\mathcal{D}_t(f))\mathcal{X}_{M}(t).
\end{aligned}
\end{equation}
We \textbf{claim} that
\begin{equation}\label{1dlg-E6.8}
\|f\|_{\widetilde{L}^{\infty}_{t}\widetilde{L}^{2}_{\xi}(\dot{B}^{\sigma_{0}}_{2,\infty})}^\ell\leq C\delta_{0}
\end{equation}
for all $t>0$ and some uniform constant $C>0$. The proof is left to the next subsection. Together with the global existence result (Theorem \ref{1dlg-T1.2}) that implies that $\mathcal{D}_t(f)\lesssim \var_0<<1$ and
\begin{equation}\nonumber
\begin{aligned}
\|f\|^{h}_{\widetilde{L}^{\infty}_{t}\widetilde{L}^{2}_{\xi}(\dot{B}^{3/2}_{2,1})}&\lesssim \|f_{0}\|^{\ell}_{\widetilde{L}^{2}_{\xi}(\dot{B}^{1/2}_{2,1})} +\|f_{0}\|^{h}_{\widetilde{L}^{2}_{\xi}(\dot{B}^{3/2}_{2,1})}\lesssim \var_0\lesssim \delta_0,
\end{aligned}
\end{equation}
we end up with \eqref{1dlg-E6.2}. Therefore, the proof of Proposition \ref{1dlg-R6.1} is finished.\end{proof}

\subsection{The evolution of low-frequency Besov regularity}
In this section, we establish the 
evolution of the $\widetilde{L}^{2}_{\xi}(\dot{B}^{\sigma_{0}}_{2,\infty})$ norm at low frequencies, say \eqref{1dlg-E6.8},  which plays a crucial role in deriving the weighted Lyapunov-type estimate \eqref{1dlg-E6.2}. 

\begin{lemma}\label{1dlg-R6.2}
If $f$ is the global solution to \eqref{1dlg-E1.4} given by Theorem \ref{1dlg-T1.2}, then for all $t>0$ the following inequality holds:
\begin{equation}\label{1dlg-E6.24}
\begin{aligned}
\|f\|_{\widetilde{L}^{\infty}_{t}\widetilde{L}^{2}_{\xi}(\dot{B}^{\sigma_{0}}_{2,\infty})}^{\ell}+\|\mathbf{P}f\|_{\widetilde{L}^{2}_{t}\widetilde{L}^{2}_{\xi}(\dot{B}^{\sigma_{0}+1}_{2,\infty})}^{\ell}+\|\{\mathbf{I-P}\}f\|_{\widetilde{L}^{2}_{t}\widetilde{L}^{2}_{\xi,\nu}(\dot{B}^{\sigma_{0}}_{2,\infty})}^{\ell}\leq C\delta_{0}.
\end{aligned}
\end{equation}
Here $C>0$ is a uniform constant, and $\delta_{0}\triangleq\|f_{0}\|^{\ell}_{\widetilde{L}^{2}_{\xi}(\dot{B}^{\sigma_{0}}_{2,\infty})}+\|f_{0}\|^{h}_{\widetilde{L}^{2}_{\xi}(\dot{B}^{3/2}_{2,1})}$.
\end{lemma}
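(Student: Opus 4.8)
The plan is to reproduce, at the level of the $\widetilde L^2_\xi(\dot B^{\sigma_0}_{2,\infty})$ norm in low frequencies, exactly the low-frequency Lyapunov scheme already used in Lemma \ref{1dlg-R5.3} and Step 1 of Proposition \ref{1dlg-R6.1}, but now measuring dyadic blocks with the weight $2^{q\sigma_0}$ and taking a supremum over $q\le 0$ instead of an $\ell^1$ sum. Concretely, I would start from the Lyapunov inequality \eqref{1dlg-E5.23} together with the equivalences \eqref{1dlg-E5.21}--\eqref{1dlg-E5.22} for $q\le 0$, integrate over $[0,t]$, take square roots, and obtain the block-wise bound
\begin{equation*}
\begin{aligned}
&\|\dot\Delta_q f(t)\|_{L^2_\xi L^2_x}+2^{q}\Big(\int_0^t\|\dot\Delta_q\mathbf Pf\|^2_{L^2_\xi L^2_x}\,d\tau\Big)^{1/2}+\Big(\int_0^t\|\sqrt{\nu(\xi)}\dot\Delta_q\{\mathbf{I-P}\}f\|^2_{L^2_\xi L^2_x}\,d\tau\Big)^{1/2}\\
&\quad\lesssim \|\dot\Delta_q f_0\|_{L^2_\xi L^2_x}+\Big(\int_0^t|(\dot\Delta_q\Gamma(f,f),\dot\Delta_q\{\mathbf{I-P}\}f)_{\xi,x}|\,d\tau\Big)^{1/2}\\
&\qquad+\sum_{i=1}^3\|\Lambda_i(\dot\Delta_q\mathbf h)\|_{L^2_t L^2_x}+\sum_{i,j=1}^3\|\Theta_{ij}(\dot\Delta_q\mathbf h)\|_{L^2_t L^2_x},
\end{aligned}
\end{equation*}
then multiply by $2^{q\sigma_0}$, take $\sup_{t}$ and $\sup_{q\le 0}$. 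The dissipation term $2^q(\int_0^t\|\dot\Delta_q\mathbf Pf\|^2)^{1/2}$ with weight $2^{q\sigma_0}$ produces the $\widetilde L^2_t\widetilde L^2_\xi(\dot B^{\sigma_0+1}_{2,\infty})$ norm of $\mathbf Pf$, exactly the middle term in \eqref{1dlg-E6.24}. The linear/initial contribution is controlled by $\|f_0\|^\ell_{\widetilde L^2_\xi(\dot B^{\sigma_0}_{2,\infty})}\le\delta_0$.

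The two nonlinear sources on the right must be estimated using the $\widetilde L^2_\xi(\dot B^s_{2,\infty})$-type trilinear estimates, which is precisely why Lemmas \ref{1dlg-L2.8} and \ref{1dlg-L2.10} were stated. For the collision term I would apply Lemma \ref{1dlg-L2.8} with $h=\{\mathbf{I-P}\}f$, $g=f$, and the split $s_1=\sigma_0$, $s_2=3/2$ (legal since $-3/2\le\sigma_0<3/2$, $3/2\le 3/2$, and $\sigma_0+3/2\ge 0$), which gives
\begin{equation*}
\sup_{q\le 0}2^{q\sigma_0}\Big(\int_0^t|(\dot\Delta_q\Gamma(f,f),\dot\Delta_q\{\mathbf{I-P}\}f)_{\xi,x}|\,d\tau\Big)^{1/2}\lesssim \|f\|^{1/2}_{\widetilde L^\infty_t\widetilde L^2_\xi(\dot B^{\sigma_0}_{2,\infty})}\,\|f\|^{1/2}_{\widetilde L^2_t\widetilde L^2_{\xi,\nu}(\dot B^{3/2}_{2,1})}\,\|\{\mathbf{I-P}\}f\|^{1/2}_{\widetilde L^2_t\widetilde L^2_{\xi,\nu}(\dot B^{3/2}_{2,1})},
\end{equation*}
after using \eqref{2mm} and the high-frequency bounds from Theorem \ref{1dlg-T1.2} to absorb $g=f$ into $\widetilde L^2_t\widetilde L^2_{\xi,\nu}(\dot B^{3/2}_{2,1})\lesssim\sqrt{\mathcal D_t(f)}$; note the factor $\|f\|^{1/2}_{\widetilde L^\infty_t\widetilde L^2_\xi(\dot B^{\sigma_0}_{2,\infty})}$ is itself the quantity we are bounding, appearing to a power $<1$, so it can be absorbed by Young's inequality once $\mathcal D_t(f)\lesssim\varepsilon_0$ is small. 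For the macroscopic source terms $\Lambda_i(\dot\Delta_q\mathbf h)$ and $\Theta_{ij}(\dot\Delta_q\mathbf h)$ I would apply Lemma \ref{1dlg-L2.10} with $\zeta$ taken to be the Schwartz weights $(|\xi|^2-5)\xi_i\mu^{1/2}$ and $(\xi_i\xi_j-1)\mu^{1/2}$, again with $s_1=\sigma_0$, $s_2=3/2$, producing $\|f\|_{\widetilde L^2_t\widetilde L^2_\xi(\dot B^{3/2}_{2,1})}\,\|f\|_{\widetilde L^\infty_t\widetilde L^2_\xi(\dot B^{\sigma_0}_{2,\infty})}$, and the first factor is again $\lesssim\sqrt{\mathcal D_t(f)}\lesssim\sqrt{\varepsilon_0}$ by Theorem \ref{1dlg-T1.2}; then Young's inequality absorbs the $\widetilde L^2_\xi(\dot B^{\sigma_0}_{2,\infty})$ factor into the left-hand side.

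Collecting everything, denoting the left-hand side of \eqref{1dlg-E6.24} by $\mathcal Y(t)$, I obtain an inequality of the shape $\mathcal Y(t)\le C\delta_0 + C(\sqrt{\mathcal D_t(f)}+\mathcal D_t(f))\,\mathcal Y(t) + C\,\mathcal D_t(f)^{1/2}\,\mathcal Y(t)^{1/2}\cdot(\text{bounded})$, and since $\mathcal D_t(f)\lesssim\varepsilon_0$ is as small as we wish by choosing $\varepsilon_0$ small in Theorem \ref{1dlg-T1.2}, the quadratic/self-referential terms are absorbed and $\mathcal Y(t)\le C\delta_0$ follows uniformly in $t$, which is \eqref{1dlg-E6.24}; in particular the first term gives the claimed \eqref{1dlg-E6.8}. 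The main obstacle I anticipate is bookkeeping the endpoint case $\sigma_0=-3/2$, where $s_1+s_2=0$ and one relies critically on the $\dot B^{-3/2}_{2,\infty}\hookleftarrow L^1_x$ / $\dot B^0_{1,\infty}$ embedding baked into Lemma \ref{1dlg-L2.8}; one must check that the summability constants $\|\{2^{(s_1+s_2)q}\}_{q\le 3}\|_{\ell^\infty}$ appearing in the remainder term $J_3$ of Lemma \ref{1dlg-L2.8} stay finite at $s_1+s_2=0$ (they do, being $\equiv 1$), so no logarithmic loss occurs — but verifying that the various $\ell^\infty$-convolution steps remain uniform as $\sigma_0\downarrow-3/2$ is the delicate point, and likewise one must be careful that the low-frequency truncation $q\le 0$ is compatible with the frequency-support restrictions $|j-q|\le 4$, $j\ge q-3$ used inside the paraproduct decomposition.
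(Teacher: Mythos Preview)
Your proposal is correct and follows essentially the same route as the paper: start from the block-wise low-frequency estimate \eqref{1dlg-E5.24}, multiply by $2^{q\sigma_0}$, take $\sup_{q\le 0}$, and handle the nonlinear terms via Lemmas \ref{1dlg-L2.8} and \ref{1dlg-L2.10} with $s_1=\sigma_0$, $s_2=3/2$, then absorb the self-referential factors using the smallness of $\mathcal D_t(f)$. One small slip: the $h$-factor coming out of Lemma \ref{1dlg-L2.8} carries the index $s_1+s_2-3/2=\sigma_0$, so the third factor in your displayed trilinear bound should be $\|\{\mathbf{I-P}\}f\|^{1/2}_{\widetilde L^2_t\widetilde L^2_{\xi,\nu}(\dot B^{\sigma_0}_{2,\infty})}$ rather than the $\dot B^{3/2}_{2,1}$ norm; this is exactly the dissipative term on the left-hand side of \eqref{1dlg-E6.24}, and the paper absorbs it (after splitting into low and high frequencies) via Young's inequality in the same way you indicate.
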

\begin{proof}
We recall the low-frequency localized energy estimate \eqref{1dlg-E5.24}.  Multiplying (\ref{1dlg-E5.24}) by $2^{q\sigma_{0}}$ and taking the supremum over $[0,t]$ and $q\in\mathbb{Z}$ on both sides of the resulting inequality, we obtain
\begin{equation}\label{1dlg-E6.25}
\begin{aligned}
&\|f\|_{\widetilde{L}^{\infty}_{t}\widetilde{L}^{2}_{\xi}(\dot{B}^{\sigma_{0}}_{2,\infty})}^{\ell}+\|f\|_{\widetilde{L}^{2}_{t}\widetilde{L}^{2}_{\xi}(\dot{B}^{\sigma_{0}+1}_{2,\infty})}^{\ell}+\|\{\mathbf{I-P}\}f\|_{\widetilde{L}^{2}_{t}\widetilde{L}^{2}_{\xi,\nu}(\dot{B}^{\sigma_{0}}_{2,\infty})}^{\ell}\\
&\quad\leq C\|f_{0}\|_{\widetilde{L}^{2}_{\xi}(\dot{B}^{\sigma_{0}}_{2,\infty})}^{\ell}+C \sup_{q\leq 0}2^{q\sigma_{0}}\bigg(\int_{0}^{t}\big|\big(\dot{\Delta}_{q}\Gamma(f,f),\dot{\Delta}_{q}\{\mathbf{I-P}\}f\big)_{\xi,x}\big|\,d\tau\bigg)^{1/2}\\
&\quad\quad+C \sup_{q\leq 0}2^{q\sigma_{0}}\sum_{i=1}^{3}\|\Lambda_{i}(\dot{\Delta}_{q}\mathbf{h})\|_{L^{2}_{t}L^{2}_{x}}+C \sup_{q\leq 0}2^{q\sigma_{0}}\sum_{i,j=1}^{3}\|\Theta_{ij}(\dot{\Delta}_{q}\mathbf{h})\|_{L^{2}_{t}L^{2}_{x}}.
\end{aligned}
\end{equation}
Those nonlinear terms in \eqref{1dlg-E6.25} can be estimated as follows. By employing Lemma \ref{1dlg-L2.8} with $h=\{\mathbf{I-P}\}f,~g=f,$ $s_{1}=\sigma_{0}$ and $s_{2}=3/2$, we arrive at 
\begin{equation}\label{1dlg-E6.26}
\begin{aligned}
&\sup_{q\leq 0}2^{q\sigma_{0}}\bigg(\int_{0}^{t}\big|\big(\dot{\Delta}_{q}\Gamma(f,f),\dot{\Delta}_{q}\{\mathbf{I-P}\}f\big)_{\xi,x}\big|dt\bigg)^{1/2}\\
&\quad\leq C\|f\|^{1/2}_{\widetilde{L}^{\infty}_{t}\widetilde{L}^{2}_{\xi}(\dot{B}^{\sigma_{0}}_{2,\infty})}\|f\|^{1/2}_{\widetilde{L}^{2}_{t}\widetilde{L}^{2}_{\xi,\nu}(\dot{B}^{3/2}_{2,1})}\|\{\mathbf{I-P}\}f\|^{1/2}_{\widetilde{L}^{2}_{t}\widetilde{L}^{2}_{\xi,\nu}(\dot{B}^{\sigma_{0}}_{2,\infty})}\\
&\quad\leq C\|f\|_{\widetilde{L}^{\infty}_{t}\widetilde{L}^{2}_{\xi}(\dot{B}^{\sigma_{0}}_{2,\infty})}\|f\|_{\widetilde{L}^{2}_{t}\widetilde{L}^{2}_{\xi,\nu}(\dot{B}^{3/2}_{2,1})}+\frac{1}{4}\|\{\mathbf{I-P}\}f\|_{\widetilde{L}^{2}_{t}\widetilde{L}^{2}_{\xi,\nu}(\dot{B}^{\sigma_{0}}_{2,\infty})}\\
&\quad\leq C\|f\|_{\widetilde{L}^{2}_{t}\widetilde{L}^{2}_{\xi,\nu}(\dot{B}^{3/2}_{2,1})}\Big(\|f\|_{\widetilde{L}^{\infty}_{t}\widetilde{L}^{2}_{\xi}(\dot{B}^{\sigma_{0}}_{2,\infty})}^{\ell}+\|f\|^{h}_{\widetilde{L}^{\infty}_{t}\widetilde{L}^{2}_{\xi}(\dot{B}^{3/2}_{2,1})}\Big)\\
&\quad\quad+\frac{1}{4}\|\{\mathbf{I-P}\}f\|_{\widetilde{L}^{2}_{t}\widetilde{L}^{2}_{\xi,\nu}(\dot{B}^{\sigma_{0}}_{2,\infty})}^{\ell}+C\|\{\mathbf{I-P}\}f\|_{\widetilde{L}^{2}_{t}\widetilde{L}^{2}_{\xi,\nu}(\dot{B}^{3/2}_{2,1})}^h,
\end{aligned}
\end{equation}
where we have used Young's inequality and  \eqref{1dlg-E2.6}. In addition, it follows from Lemma \ref{1dlg-L2.10} that 
\begin{equation}\label{1dlg-E6.27}
\begin{aligned}
&\sup_{q\leq 0}2^{q\sigma_{0}}\sum_{i=1}^{3}\|\Lambda_{i}(\dot{\Delta}_{q}\mathbf{h})\|_{L^{2}_{t}L^{2}_{x}}+\sup_{q\leq 0}2^{q\sigma_{0}}\sum_{i,j=1}^{3}\|\Theta_{ij}(\dot{\Delta}_{q}\mathbf{h})\|_{L^{2}_{t}L^{2}_{x}}\\
&\quad\leq C\|f\|_{\widetilde{L}^{\infty}_{t}\widetilde{L}^{2}_{\xi}(\dot{B}^{\sigma_{0}}_{2,\infty})}\|f\|_{\widetilde{L}^{2}_{t}\widetilde{L}^{2}_{\xi,\nu}(\dot{B}^{3/2}_{2,1})}\\
&\quad\leq C\|f\|_{\widetilde{L}^{2}_{t}\widetilde{L}^{2}_{\xi,\nu}(\dot{B}^{3/2}_{2,1})}\Big(\|f\|_{\widetilde{L}^{\infty}_{t}\widetilde{L}^{2}_{\xi}(\dot{B}^{\sigma_{0}}_{2,\infty})}^{\ell}+\|f\|^{h}_{\widetilde{L}^{\infty}_{t}\widetilde{L}^{2}_{\xi}(\dot{B}^{3/2}_{2,1})}\Big).
\end{aligned}
\end{equation}
Hence, combining with \eqref{2mm}, (\ref{1dlg-E6.25}), (\ref{1dlg-E6.26}) and (\ref{1dlg-E6.27}), we deduce that 
\begin{equation}\label{1dlg-E6.2711}
\begin{aligned}
&\|f\|_{\widetilde{L}^{\infty}_{t}\widetilde{L}^{2}_{\xi}(\dot{B}^{\sigma_{0}}_{2,\infty})}^{\ell}+\|f^{\ell}\|_{\widetilde{L}^{2}_{t}\widetilde{L}^{2}_{\xi}(\dot{B}^{\sigma_{0}+1}_{2,\infty})}+\|\{\mathbf{I-P}\}f^{\ell}\|_{\widetilde{L}^{2}_{t}\widetilde{L}^{2}_{\xi,\nu}(\dot{B}^{\sigma_{0}}_{2,\infty})}\\
&\quad \leq C\|f_{0}^{\ell}\|_{\widetilde{L}^{2}_{\xi}(\dot{B}^{\sigma_{0}}_{2,\infty})}+C\mathcal{D}_t(f)\|f\|_{\widetilde{L}^{\infty}_{t}\widetilde{L}^{2}_{\xi}(\dot{B}^{\sigma_{0}}_{2,\infty})}^{\ell}+\Big(1+\mathcal{E}_t(f)\Big)\mathcal{D}_t(f).
\end{aligned}
\end{equation}
In light of \eqref{1dlg-E1.19}, (\ref{1dlg-E1.20}), \eqref{1dlg-E6.8},  $\eqref{1dlg-E6.2711}$ and the fact that $\mathcal{D}_t(f)\lesssim \var_0<<1$, the inequality \eqref{1dlg-E6.24} follows. The proof of Lemma \ref{1dlg-R6.1} is thus complete.
\end{proof}

\subsection{The optimal decay}
The last section is devoted to the proof of Theorem \ref{1dlg-T1.3}. It follows from Proposition \ref{1dlg-R6.1} 
that 
\begin{equation}\label{1dlg-E6.28}
(1+t)^{M}\|f(t)\|_{\widetilde{L}^{2}_{\xi}(\dot{B}^{3/2}_{2,1})}\lesssim \|(1+\tau)^{M}f\|_{\widetilde{L}^{\infty}_{t}\widetilde{L}^{2}_{\xi}(\dot{B}^{3/2}_{2,1})}\lesssim \delta_{0}(1+t)^{M-\frac{1}{2}(\frac{3}{2}-\sigma_{0})}
\end{equation}
for all $t>0$ and any suitably large $M$. Therefore, by dividing  (\ref{1dlg-E6.28}) by $(1+t)^{M}$, we get 
\begin{equation}\label{1dlg-E6.29}
\|f(t)\|_{\widetilde{L}^{2}_{\xi}(\dot{B}^{3/2}_{2,1})}\lesssim \delta_{0}(1+t)^{-\frac{1}{2}(\frac{3}{2}-\sigma_{0})},
\end{equation}
which implies that $f$ decays at the rate $\mathcal{O}(t^{-\frac{1}{2}(\frac{3}{2}-\sigma_0)})$ in the norm of $L^2_{\xi}(L^{\infty}_{x})$, owing to the embedding $\widetilde{L}^2_{\xi}(\dot{B}^{3/2}_{2,1})\hookrightarrow L^2_{\xi}L^{\infty}_{x}$. In addition, if $\sigma\in(\sigma_{0},3/2)$,  then employing  Lemma \ref{1dlg-L2.4} and $\widetilde{L}^2_{\xi}(\dot{B}^{s}_{2,1})\hookrightarrow \widetilde{L}^2_{\xi}(\dot{B}^{s}_{2,\infty})$ once again implies that
\begin{equation}\label{1dlg-E6.30}
\|f^{\ell}(t)\|_{\widetilde{L}^{2}_{\xi}(\dot{B}^{\sigma}_{2,1})}\lesssim\|f^{\ell}(t)\|^{\theta_{1}}_{\widetilde{L}^{2}_{\xi}(\dot{B}^{\sigma_{0}}_{2,\infty})}\|f^{\ell}(t)\|^{1-\theta_{1}}_{\widetilde{L}^{2}_{\xi}(\dot{B}^{3/2}_{2,1})}\lesssim \delta_{0}(1+t)^{-\frac{1}{2}(\sigma-\sigma_{0})},
\end{equation}
where $\theta_{1}=(3/2-\sigma)/(3/2-\sigma_0)\in (0,1)$. Regarding the corresponding high-frequency norm, one has  
\begin{equation}\label{1dlg-E6.3011}
\|f^{h}(t)\|_{\widetilde{L}^{2}_{\xi}(\dot{B}^{\sigma}_{2,1})}\lesssim \|f(t)\|_{\widetilde{L}^{2}_{\xi}(\dot{B}^{3/2}_{2,1})}^{h}\lesssim \delta_{0}(1+t)^{-\frac{1}{2}(\frac{3}{2}-\sigma_{0})}.
\end{equation}
Therefore,  we obtain (\ref{1dlg-E1.21}) by combining (\ref{1dlg-E6.28})-(\ref{1dlg-E6.3011}).

Next, we will establish the enhanced decay rate for the microscopic part $\{\mathbf{I-P}\}f$. Applying the microscopic projection $\{\mathbf{I-P}\}$ to \rm(\ref{1dlg-E1.4}) gives 
\begin{equation}\label{1dlg-E1.4000}
\begin{aligned}
&\partial_{t}\{\mathbf{I-P}\}f +\xi\cdot\nabla_{x}\{\mathbf{I-P}\}f +L\{\mathbf{I-P}\}f \\
&\quad=\Gamma(f,f) -\xi\cdot\nabla_{x}\mathbf{P}f +\mathbf{P}(\xi\cdot\nabla_x f) .
\end{aligned}
\end{equation} 
By employing $\dot{\Delta}_{q}$ to \eqref{1dlg-E1.4000}, taking the $L^{2}_{\xi,x}$ inner product of the resulting equation with $(1+t)^{2M^{\prime}}\dot{\Delta}_{q}\{\mathbf{I-P}\}f~(M^{\prime}\gg1)$, and then using Lemma \ref{1dlg-L2.2} and Young's inequality, we arrive at 
\begin{equation}\nonumber
\begin{aligned}
&\frac{1}{2}\frac{d}{dt}\bigg((1+t)^{2M^{\prime}}\|\dot{\Delta}_{q}\{\mathbf{I-P}\}f \|^{2}_{L^{2}_{\xi}L^{2}_{x}}\bigg)\\
&\quad\quad+\lambda_{0}(1+t)^{2M^{\prime}}\|\sqrt{\nu(\xi)}\dot{\Delta}_{q}\{\mathbf{I-P}\}f \|^{2}_{L^{2}_{\xi}L^{2}_{x}}\\
&\quad\leq 2M'(1+t)^{2M^{\prime}-1}\|\dot{\Delta}_{q}\{\mathbf{I-P}\}f \|^{2}_{L^{2}_{\xi}L^{2}_{x}}\\
&\quad\quad+(1+t)^{2M^{\prime}}\Big(\dot{\Delta}_{q}\Gamma(f,f) ,\dot{\Delta}_{q}\{\mathbf{I-P}\}f \Big)_{\xi,x}\\
&\quad\quad+(1+t)^{2M^{\prime}}\Big(-\xi\cdot\dot{\Delta}_{q}\nabla_{x}\mathbf{P}f +\dot{\Delta}_{q}\mathbf{P}(\xi\cdot\nabla_{x}f) ,\dot{\Delta}_{q}\{\mathbf{I-P}\}f\Big)_{\xi,x}\\
&\quad\leq 2M'(1+t)^{2M^{\prime}-1}\|\dot{\Delta}_{q}\{\mathbf{I-P}\}f \|^{2}_{L^{2}_{\xi}L^{2}_{x}}\\
&\quad\quad+(1+t)^{2M^{\prime}}\Big(\dot{\Delta}_{q}\Gamma(f,f) ,\dot{\Delta}_{q}\{\mathbf{I-P}\}f \Big)_{\xi,x}\\
&\quad\quad+C(1+t)^{2M^{\prime}}2^{2q}\|\dot{\Delta}_{q}f \|^{2}_{L^{2}_{\xi}L^{2}_{x}}+\frac{\lambda_{0}}{4}(1+t)^{2M^{\prime}}\|\dot{\Delta}_{q}\{\mathbf{I-P}\}f \|^{2}_{L^{2}_{\xi}L^{2}_{x}}.
\end{aligned}
\end{equation}
The use of Gronwall's inequality implies that 
\begin{equation}\label{fact}
\begin{aligned}
&(1+t)^{2M^{\prime}}\|\dot{\Delta}_{q}\{\mathbf{I-P}\}f \|_{L^{2}_{\xi}L^{2}_{x}}^2\\
&\quad\quad+\int_{0}^{t}e^{-\frac{\lambda_0}{2} (t-\tau)}(1+\tau)^{2M^{\prime}}\|\sqrt{\nu(\xi)}\dot{\Delta}_{q}\{\mathbf{I-P}\}f \|_{L^{2}_{\xi}L^{2}_{x}}^2d\tau\\
&\quad\lesssim e^{-\frac{\lambda_0}{2}t}
\|\dot{\Delta}_{q}\{\mathbf{I-P}\}f_{0} \|_{L^{2}_{\xi}L^{2}_{x}}^2\\
&\quad\quad+\int_{0}^{t}e^{-\frac{\lambda_0}{2}(t-\tau)}
(1+\tau)^{2M^{\prime}-1} \|\dot{\Delta}_{q}\{\mathbf{I-P}\}f \|^{2}_{L^{2}_{\xi}L^{2}_{x}}d\tau \\
&\quad\quad+2^{2q}\int_{0}^{t}
e^{-\frac{\lambda_0}{2}(t-\tau)}(1+\tau)^{2M^{\prime}}\|\dot{\Delta}_{q}f \|^{2}_{L^{2}_{\xi}L^{2}_{x}}d\tau\\
&\quad\quad+\int_{0}^{t}e^{-\frac{\lambda_0}{2}(t-\tau)}(1+\tau)^{2M'}\Big|\Big(\dot{\Delta}_{q}\Gamma(f,f) ,\dot{\Delta}_{q}\{\mathbf{I-P}\}f \Big)_{\xi,x}\Big|d\tau.
\end{aligned}
\end{equation}
Note that the second and third terms on the right-hand side of \eqref{fact} can be bounded by 
\begin{align}
&\int_{0}^{t}e^{-\frac{\lambda_0}{2}(t-\tau)}
(1+\tau)^{2M^{\prime}-1} \|\dot{\Delta}_{q}\{\mathbf{I-P}\}f \|^{2}_{L^{2}_{\xi}L^{2}_{x}}d\tau\\
&\quad \lesssim (1+t)^{-1} \|(1+\tau)^{M^{\prime}}\dot{\Delta}_{q}f \|^{2}_{L^{\infty}_{t}(L^{2}_{\xi}L^{2}_{x})}\nonumber
\end{align}
and
\begin{align}
2^{2q}\int_{0}^{t}
e^{-\frac{\lambda_0}{2}(t-\tau)}(1+\tau)^{2M^{\prime}}\|\dot{\Delta}_{q}f \|^{2}_{L^{2}_{\xi}L^{2}_{x}}d\tau&\lesssim 2^{2q}\|(1+\tau)^{M^{\prime}}\dot{\Delta}_{q}f \|^{2}_{L^{\infty}_{t}(L^{2}_{\xi}L^{2}_{x})}.\nonumber
\end{align}
We denote by $\|\cdot \|_{\widetilde{L}^{2}_{{\rm weight},t}\widetilde{L}^{2}_{\xi}(\dot{B}^{1/2}_{2,1})}$ the weighted norm 
$$
\|g\|_{\widetilde{L}^{2}_{{\rm weight},t}\widetilde{L}^{2}_{\xi}(\dot{B}^{1/2}_{2,1})}\triangleq\sum_{q\in\mathbb{Z}}2^{\frac{q}{2}}\bigg(\int_{0}^{t}e^{-\frac{\lambda_0}{2}(t-\tau)}(1+\tau)^{2M'}\|\sqrt{\nu(\xi)}\dot{\Delta}_{q}g\|_{L^2_{\xi}L^2_x}^2d\tau \bigg)^{1/2}.
$$
Hence, it follows from \eqref{fact} that
\begin{equation}\label{1dlg-E6.33}
\begin{aligned}
&\|(1+\tau)^{M^{\prime}}\{\mathbf{I-P}\}f \|_{\widetilde{L}^{\infty}_{t}\widetilde{L}^{2}_{\xi}(\dot{B}^{1/2}_{2,1})} +\|\{\mathbf{I-P}\}f \|_{\widetilde{L}^{2}_{{\rm weight},t}\widetilde{L}^{2}_{\xi}(\dot{B}^{1/2}_{2,1})}\\
&\quad\leq  Ce^{-\frac{\lambda_0}{4}t}\|\{\mathbf{I-P}\}f_{0} \|_{\widetilde{L}^{2}_{\xi}(\dot{B}^{1/2}_{2,1})}+
C(1+t)^{-\frac{1}{2}}\|(1+\tau)^{M^{\prime}}f \|_{\widetilde{L}^{\infty}_{t}\widetilde{L}^{2}_{\xi}(\dot{B}^{1/2}_{2,1})}\\
&\quad\quad +C\|(1+\tau)^{M^{\prime}}f \|_{\widetilde{L}^{\infty}_{t}\widetilde{L}^{2}_{\xi,\nu}(\dot{B}^{3/2}_{2,1})} +\mathcal{R}
\end{aligned}
\end{equation}
with
$$
\mathcal{R}:=C\sum_{q\in\mathbb{Z}}2^{\frac{q}{2}}\bigg(\int_{0}^{t}e^{-\frac{\lambda_0}{2}(t-\tau)}(1+\tau)^{2M^{\prime}}\Big|\Big(\dot{\Delta}_{q}\Gamma(f,f),  \dot{\Delta}_{q}\{\mathbf{I-P}\}f \big)_{\xi,x}\Big|d\tau\bigg)^{1/2}.
$$
Having the time-weighted estimate (\ref{1dlg-E6.28}) at hand with $M=M'$, we deduce that
\begin{equation}\label{1dlg-E6.34}
\|(1+\tau)^{M^{\prime}}f \|_{\widetilde{L}^{\infty}_{t}\widetilde{L}^{2}_{\xi}(\dot{B}^{3/2}_{2,1})}\lesssim \delta_{0}(1+t)^{M^{\prime}-\frac{1}{2}(\frac{3}{2}-\sigma_{0})}.
\end{equation}
It follows from  Lemma \ref{1dlg-L2.4}, \eqref{1dlg-E6.24} and (\ref{1dlg-E6.28}) with $M=M^{\prime}/(1-\theta_2)$ as well as $\theta_2=1/(3/2-\sigma_0)$ that
\begin{equation}\label{1dlg-E6.351}
\begin{aligned}
&\|(1+\tau)^{M^{\prime}}f\|_{\widetilde{L}^{\infty}_{t}\widetilde{L}^{2}_{\xi}(\dot{B}^{1/2}_{2,1})}\\
&\quad\lesssim \|f^{\ell}\|_{\widetilde{L}^{\infty}_{t}\widetilde{L}^{2}_{\xi}(\dot{B}^{\sigma_0}_{2,\infty})}^{\theta_2}\|(1+\tau)^{\frac{M^{\prime}}{1-\theta_2}}f^{\ell}\|_{\widetilde{L}^{\infty}_{t}\widetilde{L}^{2}_{\xi}(\dot{B}^{3/2}_{2,1})}^{1-\theta_2}+\|(1+\tau)^{M^{\prime}}f\|_{\widetilde{L}^{\infty}_{t}\widetilde{L}^{2}_{\xi}(\dot{B}^{3/2}_{2,1})}^{h}\\
&\quad\lesssim \delta_0(1+t)^{M'-\frac{1}{2}(\frac{1}{2}-\sigma_0)}.
\end{aligned}
\end{equation}

In what follows, we turn to the nonlinear term $\mathcal{R}$ in \eqref{1dlg-E6.33}. To begin with, we decompose $\Gamma(f,f)$ as
$$
\Gamma(f,f)=\Gamma(\mathbf{P}f,\mathbf{P}f)+\Gamma(\{\mathbf{I-P}\}f,\mathbf{P}f)+\Gamma(f,\{\mathbf{I-P}\}f).
$$
Regarding the term $\Gamma(\mathbf{P}f,\mathbf{P}f)$, it follows from Lemma \ref{1dlg-L2.7} that
\begin{equation}\nonumber
\begin{aligned}
&\sum_{q\in\mathbb{Z}}2^{\frac{q}{2}}\Big(\int_{0}^{t}e^{-\frac{\lambda_0}{2}(t-\tau)}(1+\tau)^{2M^{\prime}}\big|\big(\dot{\Delta}_{q}\Gamma(\mathbf{P}f,\mathbf{P}f) ,\dot{\Delta}_{q}\{\mathbf{I-P}\}f \big)_{\xi,x}\big|d\tau\Big)^{1/2}\\
&\quad\leq 
C\|e^{-\frac{\lambda_0}{8}(t-\tau)}(1+\tau)^{\frac{M^{\prime}}{2}}\mathbf{P}f\|^{1/2}_{\widetilde{L}^{\infty}_{t}\widetilde{L}^{2}_{\xi}(\dot{B}^{1/2}_{2,1})}\|e^{-\frac{\lambda_0}{8}(t-\tau)}(1+\tau)^{\frac{M^{\prime}}{2}}\mathbf{P}f\|^{1/2}_{\widetilde{L}^{2}_{t}\widetilde{L}^{2}_{\xi,\nu}(\dot{B}^{3/2}_{2,1})}\\
&\quad\quad\times\|\{\mathbf{I-P}\}f \|_{\widetilde{L}^{2}_{{\rm weight},t}\widetilde{L}^{2}_{\xi}(\dot{B}^{1/2}_{2,1})}^{1/2}.
\end{aligned}
\end{equation}
By using (\ref{1dlg-E6.2}) with $M=M'/2$ and arguing similarly as in \eqref{1dlg-E6.351}, we have
\begin{equation}\nonumber
\begin{aligned}
&\|e^{-\frac{\lambda_0}{8}(t-\tau)}(1+\tau)^{\frac{M^{\prime}}{2}}\mathbf{P}f\|_{\widetilde{L}^{\infty}_{t}\widetilde{L}^{2}_{\xi}(\dot{B}^{1/2}_{2,1})}\\
&\quad\lesssim \|(1+\tau)^{\frac{M^{\prime}}{2}}f \|_{\widetilde{L}^{\infty}_{t}\widetilde{L}^{2}_{\xi}(\dot{B}^{1/2}_{2,1})}\lesssim \delta_0 (1+t)^{\frac{M^{\prime}}{2}-\frac{1}{2}(\frac{1}{2}-\sigma_0)}
\end{aligned}
\end{equation}
and
\begin{equation}\nonumber
\begin{aligned}
&\|e^{-\frac{\lambda_0}{8}(t-\tau)}(1+\tau)^{\frac{M^{\prime}}{2}}\mathbf{P}f\|_{\widetilde{L}^{2}_{t}\widetilde{L}^{2}_{\xi,\nu}(\dot{B}^{3/2}_{2,1})}\\
&\quad \lesssim \bigg(\int_{0}^{t}e^{-\frac{\lambda_0}{4}(t-\tau)}d\tau\bigg)^{1/2}\|(1+\tau)^{\frac{M^{\prime}}{2}}f\|_{\widetilde{L}^{\infty}_{t}\widetilde{L}^{2}_{\xi}(\dot{B}^{3/2}_{2,1})} \lesssim \delta_0(1+t)^{\frac{M^{\prime}}{2}-\frac{1}{2}(\frac{3}{2}-\sigma_0)}.
\end{aligned}
\end{equation}
Thus, it holds that
\begin{equation*}
\begin{aligned}
&\sum_{q\in\mathbb{Z}}2^{\frac{q}{2}}\bigg(\int_{0}^{t}e^{-\frac{\lambda_0}{2}(t-\tau)}(1+\tau)^{2M^{\prime}}\Big|\Big(\dot{\Delta}_{q}\Gamma(\mathbf{P}f,\mathbf{P}f) ,\dot{\Delta}_{q}\{\mathbf{I-P}\}f \big)_{\xi,x}\Big|d\tau\bigg)^{1/2}\\
&\quad\leq \frac{1}{8}\|\{\mathbf{I-P}\}f \|_{\widetilde{L}^{2}_{{\rm weight},t}\widetilde{L}^{2}_{\xi}(\dot{B}^{1/2}_{2,1})}+C\delta_0^2 (1+t)^{M'-(1-\sigma_0)}.
\end{aligned}
\end{equation*}
Recall that (\ref{1dlg-E6.2}) ensures that
$$
\|(1+\tau)^{M^{\prime}}\{\mathbf{I-P}\}f\|_{\widetilde{L}^{2}_{t}\widetilde{L}^{2}_{\xi,\nu}(\dot{B}^{3/2}_{2,1})}\lesssim \delta_0(1+t)^{M^{\prime}-\frac{1}{2}(\frac{3}{2}-\sigma_0)}.
$$
Thus, for $\Gamma(\{\mathbf{I-P}\}f,\mathbf{P}f)$, employing Lemma \ref{1dlg-L2.7} guarantees that 
\begin{equation*}
\begin{aligned}
&\sum_{q\in\mathbb{Z}}2^{\frac{q}{2}}\bigg(\int_{0}^{t}e^{-\frac{\lambda_0}{2}(t-\tau)}(1+\tau)^{2M^{\prime}}\big|\big(\dot{\Delta}_{q}\Gamma(\{\mathbf{I-P}\}f,\mathbf{P}f) ,\dot{\Delta}_{q}\{\mathbf{I-P}\}f \big)_{\xi,x}\big|d\tau\bigg)^{1/2}\\
&\quad\leq C\bigg(\|(1+\tau)^{M^{\prime}}\{\mathbf{I-P}\}f\|^{1/2}_{\widetilde{L}^{\infty}_{t}\widetilde{L}^{2}_{\xi}(\dot{B}^{1/2}_{2,1})}\|\mathbf{P}f\|^{1/2}_{\widetilde{L}^{2}_{t}\widetilde{L}^{2}_{\xi,\nu}(\dot{B}^{3/2}_{2,1})}\\
&\quad\quad+\|(1+\tau)^{M^{\prime}}\{\mathbf{I-P}\}f\|^{1/2}_{\widetilde{L}^{2}_{t}\widetilde{L}^{2}_{\xi,\nu}(\dot{B}^{3/2}_{2,1})}\|\mathbf{P}f\|^{1/2}_{\widetilde{L}^{\infty}_{t}\widetilde{L}^{2}_{\xi}(\dot{B}^{1/2}_{2,1})}\bigg)\|\{\mathbf{I-P}\}f \|_{\widetilde{L}^{2}_{{\rm weight},t}\widetilde{L}^{2}_{\xi}(\dot{B}^{1/2}_{2,1})}^{1/2}\\
&\quad\leq \frac{1}{8}\|\{\mathbf{I-P}\}f \|_{\widetilde{L}^{2}_{{\rm weight},t}\widetilde{L}^{2}_{\xi}(\dot{B}^{1/2}_{2,1})}\\
&\quad\quad+C\|\mathbf{P}f\|_{\widetilde{L}^{2}_{t}\widetilde{L}^{2}_{\xi}(\dot{B}^{3/2}_{2,1})}\|(1+\tau)^{M^{\prime}}\{\mathbf{I-P}\}f \|_{\widetilde{L}^{\infty}_{t}\widetilde{L}^{2}_{\xi}(\dot{B}^{1/2}_{2,1})}+C\delta_0^2(1+t)^{M'-\frac{1}{2}(\frac{3}{2}-\sigma_0)}.
\end{aligned}
\end{equation*}
Similarly, it holds that
\begin{equation*}
\begin{aligned}
&\sum_{q\in\mathbb{Z}}2^{\frac{q}{2}}\bigg(\int_{0}^{t}e^{-\frac{\lambda_0}{2}(t-\tau)}(1+\tau)^{2M^{\prime}}\big|\big(\dot{\Delta}_{q}\Gamma(f,\{\mathbf{I-P}\}f) ,\dot{\Delta}_{q}\{\mathbf{I-P}\}f \big)_{\xi,x}\big|d\tau\bigg)^{1/2}\\
&\quad\leq C\bigg(\|f\|^{1/2}_{\widetilde{L}^{\infty}_{t}\widetilde{L}^{2}_{\xi}(\dot{B}^{1/2}_{2,1})}\|(1+\tau)^{M^{\prime}}\{\mathbf{I-P}\}f\|^{1/2}_{\widetilde{L}^{2}_{t}\widetilde{L}^{2}_{\xi,\nu}(\dot{B}^{3/2}_{2,1})}\\
&\quad\quad+\|f\|^{1/2}_{\widetilde{L}^{2}_{t}\widetilde{L}^{2}_{\xi,\nu}(\dot{B}^{3/2}_{2,1})}\|(1+\tau)^{M^{\prime}}\{\mathbf{I-P}\}f\|^{1/2}_{\widetilde{L}^{\infty}_{t}\widetilde{L}^{2}_{\xi}(\dot{B}^{1/2}_{2,1})}\bigg)\|\{\mathbf{I-P}\}f \|_{\widetilde{L}^{2}_{{\rm weight},t}\widetilde{L}^{2}_{\xi}(\dot{B}^{1/2}_{2,1})}^{1/2}\\
&\quad\leq \frac{1}{8}\|\{\mathbf{I-P}\}f \|_{\widetilde{L}^{2}_{{\rm weight},t}\widetilde{L}^{2}_{\xi}(\dot{B}^{1/2}_{2,1})}+C\|f\|_{\widetilde{L}^{2}_{t}\widetilde{L}^{2}_{\xi,\nu}(\dot{B}^{3/2}_{2,1})}\|(1+\tau)^{M^{\prime}}\{\mathbf{I-P}\}f \|_{\widetilde{L}^{\infty}_{t}\widetilde{L}^{2}_{\xi}(\dot{B}^{1/2}_{2,1})}\\
&\quad\quad+C\delta^2_0(1+t)^{M'-\frac{1}{2}(\frac{3}{2}-\sigma_0)}.
\end{aligned}
\end{equation*}
Therefore, combining the above estimates yields
\begin{equation}\label{R}
\begin{aligned}
\mathcal{R}&\leq \frac{3}{8}\|\{\mathbf{I-P}\}f \|_{\widetilde{L}^{2}_{{\rm weight},t}\widetilde{L}^{2}_{\xi}(\dot{B}^{1/2}_{2,1})}\\
&\quad+C\|f\|_{\widetilde{L}^{2}_{t}\widetilde{L}^{2}_{\xi,\nu}(\dot{B}^{3/2}_{2,1})}\|(1+\tau)^{M^{\prime}}\{\mathbf{I-P}\}f \|_{\widetilde{L}^{\infty}_{t}\widetilde{L}^{2}_{\xi}(\dot{B}^{1/2}_{2,1})}\\
&\quad+C\delta^2_0(1+t)^{M'-\frac{1}{2}(\frac{3}{2}-\sigma_0)}+C\delta_0^2 (1+t)^{M'-(1-\sigma_0)}.
\end{aligned}
\end{equation}
To proceed, we substitute \eqref{R} into \eqref{1dlg-E6.33} to get
\begin{equation}\nonumber
\begin{aligned}
&\|(1+\tau)^{M^{\prime}}\{\mathbf{I-P}\}f \|_{\widetilde{L}^{\infty}_{t}\widetilde{L}^{2}_{\xi}(\dot{B}^{1/2}_{2,1})} +\|\{\mathbf{I-P}\}f \|_{\widetilde{L}^{2}_{{\rm weight},t}\widetilde{L}^{2}_{\xi}(\dot{B}^{1/2}_{2,1})}\\
&\quad\leq  Ce^{-\frac{\lambda_0}{4}t}\|\{\mathbf{I-P}\}f_{0} \|_{\widetilde{L}^{2}_{\xi}(\dot{B}^{1/2}_{2,1})}+(\delta_{0}+\delta_0^2) (1+t)^{M^{\prime}-\frac{1}{2}(\frac{3}{2}-\sigma_{0})}+\delta^2_{0}(1+t)^{M^{\prime}-(1-\sigma_{0})}\\
&\quad\quad+\|f\|_{\widetilde{L}^{2}_{t}\widetilde{L}^{2}_{\xi,\nu}(\dot{B}^{3/2}_{2,1})}\|(1+\tau)^{M^{\prime}}\{\mathbf{I-P}\}f \|_{\widetilde{L}^{\infty}_{t}\widetilde{L}^{2}_{\xi}(\dot{B}^{1/2}_{2,1})}.
\end{aligned}
\end{equation}
Keeping in mind that $\|f\|_{\widetilde{L}^{2}_{t}\widetilde{L}^{2}_{\xi,\nu}(\dot{B}^{3/2}_{2,1})}\lesssim \var_0<<1$ and $\frac{1}{2}(\frac{3}{2}-\sigma_{0})< 1-\sigma_{0}$ for any $\sigma_0\in [-3/2,1/2)$, we conclude that 
\begin{equation}\nonumber
\begin{aligned}
&\|(1+\tau)^{M^{\prime}}\{\mathbf{I-P}\}f \|_{\widetilde{L}^{\infty}_{t}\widetilde{L}^{2}_{\xi}(\dot{B}^{1/2}_{2,1})} +\|\{\mathbf{I-P}\}f \|_{\widetilde{L}^{2}_{{\rm weight},t}\widetilde{L}^{2}_{\xi}(\dot{B}^{1/2}_{2,1})}\\
&\quad\lesssim (\delta_0+\delta_0^2)(1+t)^{M'-\frac{1}{2}(\frac{3}{2}-\sigma_0)},
\end{aligned}
\end{equation}
which leads to
\begin{equation}\label{1dlg-E6.38}
\|\{\mathbf{I-P}\}f(t)\|_{\widetilde{L}^{2}_{\xi}(\dot{B}^{1/2}_{2,1})}\lesssim (1+\delta_0)\delta_0 (1+t)^{-\frac{1}{2}(\frac{3}{2}-\sigma_{0})}.
\end{equation}
Similarly, for all $j\in\mathbb{Z}$ and any $M'>>1$, one deduces from \eqref{fact} that
\begin{equation*}
\begin{aligned}
&(1+t)^{2M^{\prime}}2^{j\sigma_0}\|\dot{\Delta}_{q}\{\mathbf{I-P}\}f \|_{L^{2}_{\xi}L^{2}_{x}}^2\\
&\quad\quad+\int_{0}^{t}e^{-\frac{\lambda_0}{2} (t-\tau)}(1+\tau)^{2M^{\prime}}2^{j\sigma_0}\|\sqrt{\nu(\xi)}\dot{\Delta}_{q}\{\mathbf{I-P}\}f \|_{L^{2}_{\xi}L^{2}_{x}}^2d\tau\\
&\quad\lesssim e^{-\frac{\lambda_0}{2}t}2^{j\sigma_0}\|\dot{\Delta}_{q}\{\mathbf{I-P}\}f \|_{L^{2}_{\xi}L^{2}_{x}}^2+\int_0^t e^{-\frac{\lambda_0(t-\tau)}{2}}(1+\tau)^{2M^{\prime}-1}d\tau \|f\|_{L^{\infty}_t \widetilde{L}^2_{\xi}(\dot{B}^{\sigma_0}_{2,\infty})}^2\\
&\quad\quad+\int_0^t e^{-\frac{\lambda_0(t-\tau)}{2}}(1+\tau)^{2M^{\prime}-1}d\tau \|(1+\tau)f\|_{L^{\infty}_t \widetilde{L}^2_{\xi}(\dot{B}^{\sigma_0+1}_{2,\infty}) }^2\\
&\quad\quad+ \|f\|_{L^{\infty}_t \widetilde{L}^2_\xi(\dot{B}^{\sigma_0}_{2,1}) }\|(1+t)^{M'} f\|_{L^2_t \widetilde{L}^2_{\xi,\nu}(\dot{B}^{\frac{3}{2}}_{2,1})}  \sup_{q\in\mathbb{Z}}\bigg(\int_{0}^{t}e^{-\frac{\lambda_0}{2} (t-\tau)}(1+\tau)^{2M^{\prime}}\|\sqrt{\nu(\xi)}\dot{\Delta}_q\{\mathbf{I-P}\}f \|_{L^2_{\xi}L^2_{x}}^2 d\tau \bigg)^{\frac{1}{2}}\\
&\quad\lesssim (1+t)^{2M^{\prime}-1}\bigg( \|\dot{\Delta}_{q}\{\mathbf{I-P}\}f \|_{L^{2}_{\xi}L^{2}_{x}}^2+\|f\|_{L^{\infty}_t \widetilde{L}^2_{\xi}(\dot{B}^{\sigma_0}_{2,\infty})}^2+\|(1+\tau)f\|_{L^{\infty}_t \widetilde{L}^2_{\xi}(\dot{B}^{\sigma_0+1}_{2,\infty}) }^2\\
&\quad\quad+\|f\|_{L^{\infty}_t \widetilde{L}^2_\xi(\dot{B}^{\sigma_0}_{2,1}) }^2 \|(1+t)^{M'} f\|_{L^2_t \widetilde{L}^2_{\xi,\nu}(\dot{B}^{\frac{3}{2}}_{2,1})}^2 \bigg)\\
&\quad+\frac{1}{2}\sup_{q\in\mathbb{Z}}\int_{0}^{t}e^{-\frac{\lambda_0}{2} (t-\tau)}(1+\tau)^{2M^{\prime}}\|\sqrt{\nu(\xi)}\dot{\Delta}_q\{\mathbf{I-P}\}f \|_{\widetilde{L}^2_{\xi,\nu}(\dot{B}^{\sigma_0}_{2,\infty})}^2d\tau.
\end{aligned}
\end{equation*}
This, together with \eqref{1dlg-E6.2}, \eqref{1dlg-E6.24} and \eqref{1dlg-E6.30}, yields
\begin{equation}\label{733}
\begin{aligned}
\|\{\mathbf{I-P}\}f(t)\|_{\widetilde{L}^2_{\xi}(\dot{B}^{\sigma_0}_{2,\infty})}\lesssim (1+\delta_0)\delta_0(1+t)^{-\frac{1}{2}}.
\end{aligned}
\end{equation}
Furthermore, if $\sigma\in(\sigma_0,1/2)$, then the real interpolation inequality, \eqref{1dlg-E6.38} and \eqref{733}
enable us to get
\begin{equation}\label{1dlg-E6.39}
\begin{aligned}
&\|\{\mathbf{I-P}\}f^{\ell}(t)\|_{\widetilde{L}^{2}_{\xi}(\dot{B}^{\sigma}_{2,1})}\\
&\quad \lesssim\|\{\mathbf{I-P}\}f^{\ell}(t)\|^{\theta_{2}}_{\widetilde{L}^{2}_{\xi}(\dot{B}^{\sigma_{0}}_{2,\infty})}\|\{\mathbf{I-P}\}f^{\ell}(t)\|^{1-\theta_{2}}_{\widetilde{L}^{2}_{\xi}(\dot{B}^{1/2}_{2,1})}\\
&\quad\lesssim(1+\delta_0)\delta_0  (1+t)^{-\frac{1}{2}(\sigma-\sigma_{0}+1)}
\end{aligned}
\end{equation}
for $\theta_{2}=(1/2-\sigma)/(1/2-\sigma_{0})\in (0,1)$. In addition, it follows from (\ref{1dlg-E6.28}) that
\begin{equation}\label{1dlg-E6.40}
\begin{aligned}
\|\{\mathbf{I-P}\}f^{h}(t)\|_{\widetilde{L}^{2}_{\xi}(\dot{B}^{\sigma}_{2,1})}&\lesssim\|f(t)\|^{h}_{\widetilde{L}^{2}_{\xi}(\dot{B}^{3/2}_{2,1})}\lesssim \delta_{0} (1+t)^{-\frac{1}{2}(\frac{3}{2}-\sigma_{0})}.
\end{aligned}
\end{equation}
Hence, \eqref{1dlg-E1.22}
is followed by (\ref{1dlg-E6.39})-(\ref{1dlg-E6.40}) directly.  The proof of Theorem \ref{1dlg-T1.3} is complete.   \hfill $\Box$

\bigbreak
\bigbreak
\textbf{Acknowledgments}
The authors would like to thank anonymous referees for their valuable comments. L.-Y. Shou is supported by the National Natural Science Foundation of China (12301275). J. Xu is partially supported by the National Natural Science Foundation of China (12271250, 12031006) and the Fundamental Research Funds for the Central Universities, NO. NP2024105.

\vspace{2mm}

\textbf{Conflict of interest.} The authors have no possible conflicts of interest.

\vspace{2mm}

\textbf{Data availability statement.}
Data sharing is not applicable to this article as no data sets were generated or analyzed during the current study.

\newpage

\vspace{5ex}

\bigbreak

(J. Liu)\par\nopagebreak
\noindent\textsc{School of Mathematics and Key Laboratory of Mathematical MIIT, Nanjing University of Aeronautics and
Astronautics, Nanjing, 211106, P. R. China}

Email address: {liujing\_18@nuaa.edu.cn}

\vspace{3ex}

\vspace{3ex}

(L.-Y. Shou)\par\nopagebreak
\noindent\textsc{School of Mathematics and Key Laboratory of Mathematical MIIT, Nanjing University of Aeronautics and
Astronautics, Nanjing, 211106, P. R. China}\\
School of Mathematical Sciences and Ministry of Education Key Laboratory of NSLSCS, Nanjing Normal University, Nanjing 210023, China

Email address: {shoulingyun11@gmail.com}

\vspace{3ex}

\vspace{3ex}

(J. Xu)\par\nopagebreak
\noindent\textsc{School of Mathematics and Key Laboratory of Mathematical MIIT, Nanjing University of Aeronautics and
Astronautics, Nanjing, 211106, P. R. China}

Email address: {jiangxu\_79@nuaa.edu.cn}
\end{document}